\newcommand{\bb}{\bm{b}}
\newcommand{\ba}{\bm{a}}
\DeclarePairedDelimiter{\abs}{\lvert}{\rvert}
\DeclarePairedDelimiter{\braces}{\{}{\}}
\DeclarePairedDelimiter{\norm}{\lVert}{\rVert}
\DeclarePairedDelimiter{\sqbrackets}{[}{]}
\DeclarePairedDelimiter{\brackets}{(}{)}
\DeclarePairedDelimiter{\floor}{\lfloor}{\rfloor}
\DeclarePairedDelimiter{\ceil}{\lceil}{\rceil}
\DeclareMathOperator*{\argmax}{argmax}
\DeclareMathOperator{\Var}{Var}
\DeclareMathOperator{\sign}{sign}
\DeclareMathOperator{\FDP}{FDP}
\DeclareMathOperator{\FNP}{FNP}
\DeclareMathOperator{\FDR}{FDR}
\DeclareMathOperator{\mFDR}{mFDR}
\DeclareMathOperator{\mR}{m\mathfrak{R}}
\DeclareMathOperator{\lc}{L_C}
\newcommand{\vphi}{\varphi}
\newcommand{\eps}{\varepsilon}
\newcommand{\iidsim}{\overset{iid}{\sim}}
\newcommand{\dif}{\mathop{}\!\mathrm{d}}
\newcommand{\azeta}{\mathcal{A}_\zeta}
\newcommand{\fr}{\mathfrak{R}}
\newcommand{\given}{\,|\,}
\definecolor{blendedblue}{rgb}{0.2,0.2,0.7}
\DeclareMathOperator{\FNR}{FNR}
\newcommand{\ind}[1]{\mbf{1}\{ #1 \}}
\newcommand{\mtc}{\mathcal}
\newcommand{\mbf}{\mathbf}
\newcommand{\be}{\beta}
\newcommand{\ka}{\kappa}
\newcommand{\la}{\lambda}
\newcommand{\La}{\Lambda}
\newcommand{\te}{\theta}
\newcommand{\ta}{\tau}
\newcommand{\veps}{\varepsilon}
\newcommand{\leqa}{\lesssim}
\newcommand{\geqa}{\gtrsim}
\newcommand{\EM}{\ensuremath}
\newcommand{\cA}{\EM{\mathcal{A}}}
\newcommand{\cC}{\EM{\mathcal{C}}}
\newcommand{\cE}{\EM{\mathcal{E}}}
\newcommand{\cJ}{\EM{\mathcal{J}}}
\newcommand{\cK}{\EM{\mathcal{K}}}
\newcommand{\cL}{\EM{\mathcal{L}}}
\newcommand{\cM}{\EM{\mathcal{M}}}
\newcommand{\cN}{\EM{\mathcal{N}}}
\newcommand{\cP}{\EM{\mathcal{P}}}
\newcommand{\cT}{\EM{\mathcal{T}}}
\newcommand{\cU}{\EM{\mathcal{U}}}
\newcommand{\RR}{\mathbb{R}}
\newcommand{\R}{\mathbb{R}}
\newcommand{\II}{\mathbf{1}}
\theoremstyle{plain}  
 \newtheorem{theorem}{Theorem}
 \newtheorem{prop}{Proposition}
 \newtheorem{lemma}{Lemma}
 \newtheorem{corollary}{Corollary}
 \newtheorem{definition}{Definition}
\theoremstyle{remark}
 \newtheorem{remark}{Remark} 
 \newtheorem{example}{Example}
 \newtheorem{assumption}{Assumption}
\newcommand{\ol}{\overline}
\begin{document}

\begin{frontmatter}

\title{Sharp multiple testing boundary for sparse sequences}

\runtitle{}

\author{\fnms{Kweku} \snm{Abraham}\ead[label=e1]{lkwa2@cam.ac.uk}}, 
\author{\fnms{Isma\"el} \snm{Castillo}\ead[label=e2]{ismael.castillo@upmc.fr}}
\and
\author{\fnms{\'Etienne} \snm{Roquain}\ead[label=e3]{etienne.roquain@upmc.fr}}
 
\affiliation{University of Cambridge  \& Sorbonne Universit\'e}

\address{University of Cambridge\\ Statistical Laboratory\\ Wilberforce Road, Cambridge CB3 0WB, UK \\
  \printead{e1} }

\address{Sorbonne Universit\'e\\
  Laboratoire de Probabilit\'es, 
Statistique et Mod\'elisation\\ 4, Place Jussieu, 75252, Paris cedex 05, France\\
 \printead{e2}, \printead{e3}}

\runauthor{Abraham, Castillo \& Roquain}

\begin{abstract}
This work investigates multiple testing by considering minimax separation rates in the sparse sequence model, when the testing risk is measured as the sum FDR+FNR (False Discovery Rate plus False Negative Rate). First using the popular beta-min separation condition, with all nonzero signals separated from $0$ by at least some amount, we determine the sharp minimax testing risk asymptotically and thereby explicitly describe the  transition from ``achievable multiple testing with vanishing risk'' to ``impossible multiple testing''. Adaptive multiple testing procedures achieving the corresponding optimal boundary are provided: the Benjamini--Hochberg procedure with a properly tuned level, and an empirical Bayes $\ell$-value (`local FDR') procedure. We prove that the FDR and FNR make non-symmetric contributions  to the testing risk for most optimal procedures, the FNR part being dominant at the boundary. 
The  multiple testing hardness is then investigated for classes of {\em arbitrary} sparse signals. A number of extensions,  including  results for classification losses and convergence rates in the case of large signals, are also investigated.

 \end{abstract}

\begin{keyword}[class=MSC]
\kwd[Primary ]{62G20, 62G15}
\end{keyword}

\begin{keyword}
\kwd{Multiple testing}
\kwd{Sharp asymptotic minimaxity}
\kwd{False Discovery Rate}
\kwd{Benjamini--Hochberg procedure}
\kwd{Frequentist analysis of Bayesian procedures} 
\end{keyword}

\end{frontmatter}

\tableofcontents

\pagebreak

\section{~~Introduction}

\subsection{~~Background}

Multiple testing is a prominent topic of contemporary statistics, with a wide spectrum of applications including for example molecular biology, neuro-imaging and astrophysics.
In this framework, many individual tests have to be performed simultaneously while controlling global error rates that take into account the multiplicity of the tests. A primary aim is to build procedures that guarantee control of a form of type I error, the most popular being the False Discovery Rate (FDR, see \eqref{deffdr} below). For instance, the celebrated Benjamini--Hochberg (BH) procedure controls the FDR under independence \cite{BH1995}. Given an FDR controlling procedure, one may then ask whether it has a controlled type II error (or, equivalently, good power), measured for instance by the False Negative Rate (FNR, see \eqref{deffnr} below). 

In this context, a natural question is that of {\em optimality}: what is the best sum of type I and type II errors that is achievable by any multiple testing procedure? 
From a testing perspective and when a single test is considered, this can be answered via minimax separation rates between the two considered hypotheses, which have been investigated for a variety of nonparametric (see, e.g., \cite{ingstersuslina}, and \cite{ginenicklbook} for an overview and further references) and high-dimensional models and loss functions, see, e.g., \cite{baraud02},  \cite{ingsterverzelentsybakov}, \cite{nicklvdg}. The analogous question for multiple testing has received attention only very recently: the case of a familywise error risk  is studied in \cite{fromontetal16}, and in the case of FDR and FNR risks for deterministic sparse signals, aspects of this problem have being investigated in \cite{erychen} and subsequently also in \cite{cr20}, \cite{rabinovich20}, \cite{rabinovichpreprint}, \cite{belitser21}. {More precise connections to these works are made below, see Section~\ref{sec:litothers}}. 
%
%
%

\subsection{~~Sparse sequence model}

For some $\te = (\te_1,\dots,\te_n)$   
consider observing independent data $X=(X_1,\dots,X_n)$ satisfying, for a family of density functions $(f_a : a \in \RR)$,
\begin{equation}\label{eqn:generalnoisemodel} X_i \sim f_{\te_i}, \quad i=1,\dots ,n.
\end{equation} 
The vector $\theta$ is assumed to be sparse; that is, to belong to the set
\begin{equation}\label{defl0} 
 \ell_0[s_n] = \left\{\te\in\RR^n,\ \norm{\theta}_0\le s_n \right\},\:\:\:\norm{\theta}_0:=\#\{1\le i\le n:\ \te_i\neq0\},
\end{equation} 
consisting of vectors that have at most $s_n$ nonzero coordinates, where $0\le s_n\le n$;
 throughout the paper we consider the sparse asymptotic setting where
\begin{equation}\label{sparse}
\mbox{$n\to\infty$, $s_n\to\infty$ and  $n/s_n\to\infty$.}
\end{equation}

We write $P_\te$ for the law of $X$ with parameter $\theta$ in \eqref{eqn:generalnoisemodel} and $E_\theta$ for the corresponding expectation. 
Write $F_a(x) = \int_{-\infty}^x f_a(t)\dif t$ and $\overline{F}_a(x)=1-F_a(x)$ for the distribution function and the tail function, respectively. 
We will later place some regularity conditions on the $F_a$ and some signal strength conditions on the $\theta$.

\begin{example}\label{ex:GaussianSequence}
The prototypical example to have in mind is the Gaussian location model, under which $f_a$ is the  density of the distribution  $\cN(a,1)$, so that
\begin{equation}
	X_i = \theta_i + \veps_i,\quad \eps_i\iidsim \cN(0,1), \quad i=1,\ldots,n.  \label{model}
\end{equation}
We first present our results in this model in Section~\ref{sec:main}, and then in Section~\ref{sec:mainmult} we give conditions for the more general model~\eqref{eqn:generalnoisemodel} under which corresponding results hold, allowing for a diverse range of models including for example Gaussian {\em scale}  models.
\end{example}

The multiple testing problem consists of testing simultaneously, for each $1\le i\le n$, the null hypothesis that there is no signal against the alternative hypothesis:
\[ H_{0,i}:``\te_i=0" \qquad \text{vs.}\qquad H_{1,i}: ``\te_i\neq 0". \]


\subsection{~~Multiple testing risks}

A multiple testing procedure is formally defined as a measurable function of the data $\vphi: x\in\R^n \mapsto (\vphi_i(x))_{1\leq i\leq n}\in \{0,1\}^n$, where, by convention, $\vphi_i(X)=1$ corresponds to rejecting the null $H_{0,i}$. As such, the procedure will depend on $n$, and with some slight abuse of terminology, when dealing with asymptotics in terms of $n$, a sequence of such procedures is sometimes simply referred to as a `procedure' for short. 

For any $\theta\in\RR^n$ and any procedure $\vphi$, the false discovery rate (FDR) and the false discovery proportion (FDP) of $\vphi$ at the parameter $\theta$ are respectively defined as
\begin{equation}\label{deffdr}
	\FDR(\theta,\vphi) = E_\te[\FDP(\theta,\vphi)], \:\:\:\FDP(\theta,\vphi)=\frac{\sum_{i=1}^n \ind{\theta_i = 0}\vphi_i(X)}{1\vee \sum_{i=1}^n \vphi_i(X)}.
\end{equation}

The false negative rate (FNR) at $\theta$ is here defined as (see, e.g., \cite{erychen})
\begin{equation} \label{deffnr}   
\FNR(\theta,\vphi) 
= E_{\theta}\left[\frac{\sum_{i=1}^n \ind{\theta_{i}\neq 0} (1-\vphi_i(X))}{1\vee \sum_{i=1}^n \ind{\theta_{i}\neq 0}}\right].
\end{equation}
The (multiple testing) combined risk at $\theta\in\RR^n$ of a procedure $\vphi$ is the sum
\[ \mathfrak{R}(\theta,\vphi) = \FDR(\theta,\vphi)+\FNR(\theta,\vphi). \] 
Given the popularity of FDR and FNR, this can be considered as a canonical notion of testing risk in the multiple testing context: it has indeed been considered for example in \cite{erychen} and the papers mentioned above. While the above is the main notion of risk used in this paper, other choices, including the classification risk, are discussed in Section~\ref{sec:orisk}.
 
\subsection{~~Separation of hypotheses and minimax testing risk} 

	To investigate questions of optimality, a natural benchmark is the minimax multiple testing risk, defined as \begin{equation}\label{minimax}
		\fr(\Theta)=\inf_{\vphi} \sup_{\te\in\Theta}\, \fr(\te,\vphi), 
	\end{equation} 
	where the infimum is over all multiple testing procedures and the parameter set $\Theta$ is some appropriate subset of $\ell_0[s_n]$. Typically, interesting parameter sets $\Theta$ are given to be ``as large as possible'', while keeping $\fr(\Theta)$ in \eqref{minimax} at least strictly smaller than $1$. Noting that the risk of the trivial procedure $\vphi_i=0$ for all $i$ is equal to $1$, the latter corresponds to parameter configurations for which `non-trivial multiple testing' is achievable.

	To investigate such `separation' rates in the present high-dimensional setting, perhaps the most popular approach is via a `beta-min' condition (see, e.g., \cite{bvdgbook}, Section 7.4) meaning that all nonzero signals are above a certain threshold value. For instance, this condition is used for the (related but different) task of consistent model selection for estimators such as the LASSO; see Section~\ref{sec:litothers} for more detail on this, and on other losses.
	\\



To fix ideas,  let us consider  the collection $\Theta(M)$ of vectors $\theta\in\ell_0[s_n]$ with nonzero coordinates taking only one possible value $M=M(n,s_n)$, in the Gaussian sequence model of Example~\ref{ex:GaussianSequence}. 
It follows from results in \cite{erychen} (Theorem~2 therein) that if 
\begin{equation} \label{boundrough}
M >a\sqrt{2\log(n/s_n)}
\end{equation}
for some constant $a>1$, then the BH procedure with appropriately vanishing parameter has a vanishing $\fr$-risk. In addition, it is proved that no {\em thresholding-type procedure} can have a non-trivial $\fr$-risk uniformly over $\Theta(M)$ if $a<1$ (Theorem~1 therein). These results suggest that, at least for thresholding procedures, the boundary of possible multiple testing is ``close to'' the threshold $\sqrt{2\log(n/s_n)}$, which we refer to as the `oracle threshold' in the sequel.
  
\subsection{~~Questions of interest} The previous discussion raises the following questions:  
\begin{itemize}
\item how does the minimax risk $\fr(\Theta)$ behave for separated alternatives (e.g., for $\Theta=\Theta(M)$ with $M$ as in \eqref{boundrough}) when the infimum in \eqref{minimax} if taken over all possible procedures, not only thresholding ones?
\item since  multiple testing is ``easy'' when $M$ is large and impossible when $M$ is too small, 
what is the precise (asymptotic) boundary of signal strength $M$ for which $\fr(\Theta)$ goes from $0$ to $1$? In other words, can one describe the transition from $0$ to $1$ in $\fr$ when $M$ decreases? This requires investigating the sharp minimax  risk $\fr(\Theta)$. 
\item are there procedures that achieve the minimax risk, at least asymptotically, without  knowledge of the sparsity parameter $s_n$?
\item suppose that rather than the $s_n$ nonzero coordinates all equalling one signal value $M$, they can be divided into two different values $M_1$ and $M_2$ (e.g., as in Figure~\ref{fig:mt_pb}, middle column). Is this an easier or more difficult multiple testing problem than the former?  
\item (as asked by one referee:) under what 
 assumptions on the noise can these questions can be addressed in a similar way?
\item (as asked by one referee:) when $M$ is large, e.g. if $a>1$ in \eqref{boundrough}, what is the optimal convergence rate of the risk to zero and can one find a procedure achieving this rate?
\end{itemize}
These and more general questions are addressed in the sequel.

\begin{figure}[h!]
\begin{center}
\includegraphics[scale=.9]{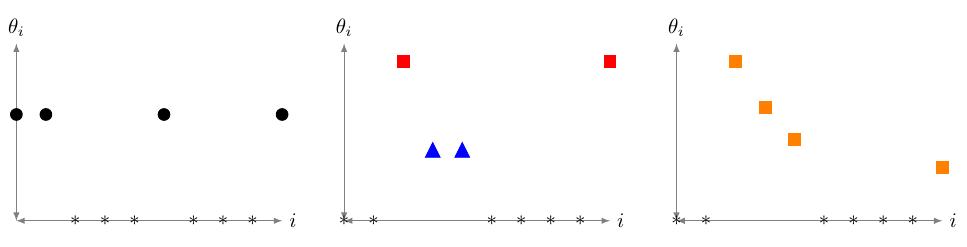}
\end{center}
\caption{Some signal configurations for nonzero $\te_i$'s to be considered.  Left: equal signal strength; Middle: two distinct signals; Right: all signal values different.  Zero $\te_i$'s are depicted by with a symbol ``$\ast$''.
 \label{fig:mt_pb}}
\end{figure}

As a specific aspect of testing problems in general, and of multiple testing in particular, it is common in practice to allow for a tolerance level, especially for the type I error, here typically for the FDR. Hence, for the combined risk under study here, it is not only the case where $\fr(\Theta)=o(1)$ that is of interest, but also the one where $\fr(\Theta)$ is of the order of a (possibly small) constant $c\in(0,1)$. In addition, since the FDR and FNR account for   errors that have different interpretations, it is also of interest to study the contribution of each error rate in the combined risk $\fr(\Theta)$. 
 The results below will show that the contributions of FDR and FNR are not symmetric in this regime.
Finally, we present our results asymptotically for simplicity -- in particular this eases the presentation of results in the new setting of multiple signal strengths considered in Section~\ref{sec:mainmult} -- but the proofs can be adapted to give some non-asymptotic bounds. 

\subsection{~~Popular procedures: BH and empirical Bayes $\ell$-values}\label{sec:popular}


Here, we describe two procedures that will be considered in the sequel. 

First, probably the most widely used multiple testing procedure is the so-called Benjamini--Hochberg procedure, introduced in \cite{BH1995}. For some level $\alpha$, it is given by 
$\vphi^{BH}_\alpha=(\ind{|X_i|\geq \hat{t} })_{1\leq i\leq n}$ where the threshold $\hat{t}=\hat{t}(\alpha) $ is defined as a specific intersection point between the empirical upper-tail distribution function of the $X_i$'s and a quantile curve of the noise distributions (see Section~\ref{secBH} for details). To achieve good performances with respect to the combined risk, we will make use of the BH procedure where $\alpha$ is chosen to be slowly decreasing with $n$, as in \cite{erychen,Bogdan2011,neuvialroquain12}. A typical choice is $\alpha=\alpha_n\asymp 1/\sqrt{\log n}$.

The second procedure uses Bayesian $\ell$-values (often also called local FDR values) with an empirical Bayes calibration. 
For a particular spike-and-slab prior $\Pi_w$ on $\R^n$ (see Section~\ref{sec:l-vals-adaptive} for details), we consider the empirical Bayes $\ell$-value procedure defined by thresholding posterior probabilities of null hypotheses at some specified level $t\in(0,1)$, i.e., $\vphi^{\hat{\ell}}_t=(\II\braces{\Pi_{\hat{w}}(\theta_i=0\mid X)<t})_{1\leq i\leq n}$, where $\hat{w}$ is the marginal maximum likelihood  estimator for $w$, as in \cite{js05, js04}. 
The choice of $t$ is not critical for obtaining a small combined risk, e.g., $t=0.3$ or $t=1/2$ are possible choices. 
The widely used empirical Bayes spike-and-slab posterior distribution was investigated in terms of estimation properties in \cite{js04, cm18}, confidence sets in \cite{cs20}, and the resulting $\ell$-value procedure was recently shown to control the FDR  in \cite{cr20}; see also \cite{bcg20} for an overview on the analysis of Bayesian high-dimensional posteriors and \cite{acr21} for a related $\ell$-value multiple testing algorithm. 

\subsection{~~Related literature, other modelling assumptions and risks} \label{sec:litothers}
In the sparse sequence model,  the interesting recent works \cite{erychen,rabinovich20} provide bounds for the $\fr$-risk. In \cite{erychen}, the separation condition \eqref{boundrough} is considered and the risk is shown to asymptotically converge to $0$ for some procedures when $a>1$, while it converges to $1$ for any thresholding based procedure when $a<1$. In \cite{rabinovich20}, non-asymptotic lower bounds and upper bounds are further derived, for thresholding procedures, in the regime where $a=a_n>1$ (possibly approaching $1$) is known. 
 These bounds are shown to be matched for the BH procedure with a suitably decreasing level. This analysis is further broadened and extended to more general models in the recent preprint \cite{rabinovichpreprint} (see also Section~\ref{sec:rab} for further discussion). In these works, which unlike the present work consider only thresholding procedures, the case where the risk converges to an arbitrary constant is not studied, hence the problem of identifying the sharp transition of the minimax risk from $0$ to $1$ was left open, as was the question of adapting to the signal strength for large signals.
 

In the multiple testing literature, a related way to measure optimality consists of finding a solution that minimizes the FNR while controlling a FDR-type error rate at level $\alpha$, see, e.g., \cite{RW2009,IH2017,Dur2019} in case of weighted procedures. This task is often done under the so-called `two-group mixture model', introduced in \cite{ETST2001}, which assumes that each null hypothesis is true with some probability, and relies on specific $\ell$-value (or local FDR) thresholding procedures, see \cite{SC2007,sun2009large,CS2009,CSWW2019} and the  recent work \cite{heller2021optimal}.  
One way the present work differs from these references is in seeking not to minimise the FNR under a constraint, but rather to minimise the combined risk $\fr$. A more important difference is that we do not posit a mixture distribution for the true parameter $\theta$, but rather assume it is deterministic and arbitrary (up to the sparsity constraint). 
 Despite these differences, we will see that an $\ell$-value thresholding based procedure (namely, $\vphi^{\hat{\ell}}_t$ as mentioned above) still achieves minimax performance.

Regarding related testing problems, the important work by Donoho and Jin \cite{DJ2004} studies the detection problem for a single null and multiple alternatives; see also the subsequent works \cite{Hall_2008,Hall_2010,Arias_Castro_2011,li2020optimality}. In addition, the sparse sequence model has been much studied in terms of estimation for quadratic or $\ell^p$-losses, here we only mention 
\cite{ABDJ2006, sucandes16} for their connections to multiple testing, where the authors use estimators related to the BH procedure.

Let us also mention that another FDR+FNR risk has been considered in \cite{GW2002}, in a model with a single alternative distribution and using the knowledge of the number of true nulls. However, the FNR definition there is different from here: the denominator equals the number of accepted nulls, rather than the number of alternatives as here. In the case of sparse signal, these two FNR notions scale very differently; the former scaling is not well-suited to deal with sparsity, while using the FNR notion considered herein enables us to exhibit a sharp phase transition phenomenon. The recent work \cite{belitser21}   
derives some robust results for model selection based procedures, including for various notions of sums FDR+FNR, but only in a range where the multiple testing risk tends to zero at a certain rate.

Finally, a different but somewhat related loss function is the Hamming or classification loss. 
The corresponding classification risk is considered in \cite{Bogdan2011,neuvialroquain12} in a two-group mixture model, with lower bounds restricted to thresholding classifiers. There,  it is proved that the BH procedure with a suitably vanishing level achieves the oracle performance. However, these results study the Bayes risk (i.e., the minimum average risk) in this mixture model and do not provide a complete minimax analysis. Further extensions are derived in \cite{JK2016}, e.g., by handling more general dependent models.
Coming back to the Gaussian sequence model \eqref{model} (with non-random $\theta_i$'s), minimax Hamming estimators are derived in \cite{butucea18, butucea21}  (see also the earlier work \cite{But2017}), where the authors study the boundary for exact recovery (the classification risk goes to $0$) and almost sure recovery (the classification risk is a vanishing fraction of the sparsity parameter). We refer to Section~\ref{sec:orisk} for more on the classification risk. 

\subsection{~~Outline and notation}


Section~\ref{sec:main} contains a first series of results in the setting of Example \ref{ex:GaussianSequence} under a `beta-min' condition on signals. This enables us to present some main results of the paper in a simple case first. Namely, the asymptotic minimax risk is computed and adaptive procedures achieving this risk are given, with the FNR shown to be the dominating term in the risk for most procedures. Section~\ref{sec:mainmult} states the main results under general conditions: the signal strengths are arbitrary and the model signal and noise distributions more general. Illustrations are provided in Section~\ref{sec:simu}. Classification loss and some other risks are investigated in Section~\ref{sec:orisk}. Section~\ref{sec:fast} considers the case of large signals with risks rapidly decreasing toward zero  and Section~\ref{sec:disc} concludes with a brief discussion. While one key pair of results is proved in Section~\ref{sec:proofs}, other proofs are postponed to the supplement \cite{ACR21supp} (references to which are given the prefix S-).  

{\em Notation.}
The density of a standard normal variable is denoted by $\phi$ and we write $\Phi$ for the cumulative distribution function $\Phi(x)=\int_{-\infty}^x \phi(t){\dif t}$ and $\overline{\Phi}=1-\Phi$ for the tail probabilities.   
We use $\varliminf_{n\to \infty},$ $\varlimsup_{n\to\infty}$ to denote the liminf or the limsup, respectively. 
We use $x_n\lesssim y_n,$ $y_n\gtrsim x_n$ or $x_n=O(y_n)$ to signify that there exists $C>0$ such that $x_n\leq Cy_n$ for all $n$ large; we write $x_n\asymp y_n$ if $x_n\lesssim y_n$ and $y_n\lesssim x_n$; we write $x_n\sim  y_n$ if $x_n/y_n\to 1$; and we write $x_n\ll y_n,$ $y_n\gg x_n$ or $x_n=o(y_n)$ if $x_n/y_n\to 0$. We use this notation correspondingly for functions $f(x)$, $g(x)$, with the limits taken either as $x\to \infty$ or as $x\to 0$ depending on context. {A sequence of random variables is said to be $o_P(1)$ if it converges to $0$ in probability under the data generating law $P_\te$. For reals $a,b$, we set $a\vee b=\max(a,b)$ and $a\wedge b=\min(a,b)$.}  Finally, for a finite set $A$, we denote by $\abs{A}$ or $\# A$ its cardinality.

\section{~~Sharp boundaries: beta-min condition and Gaussian noise}
\label{sec:main}


In this section, we focus on the Gaussian location model presented in Example~\ref{ex:GaussianSequence}. 
To evaluate the minimax risk, we define a class of configurations for $\theta$ that measures how the alternatives are separated from the null hypothesis:
for a given $a\in\RR$, set
\begin{equation} \label{singlesignal}
 \Theta = \Theta(a,s_n) =
 \bigg\{ \theta\in \ell_0[s_n] \::\: |\theta_{i}| \geq a\ \text{ for } i\in S_{\te},\ \ |S_{\te}|=s_n\bigg\},
\end{equation}
where $S_\theta=\braces{i : \theta_i\neq 0}$ denotes the support of $\theta$ 
(recall $\ell_0[s_n]$ is defined by \eqref{defl0}). 
This choice corresponds to a so-called beta-min type condition, meaning that all intensities of nonzero coefficients are required to be above a certain value $a>0$.  For example, each of the signals depicted in Figure~\ref{fig:mt_pb} belongs to some class $\Theta(a,s_n)$; in the first panel, one may take  $a$ to be the shared value of the nonzero $\te_i$'s, whereas in the third panel $a$ can at most be taken to be the smallest nonzero value. More general classes, leading to more refined results for the second and third panels in Figure~\ref{fig:mt_pb}, are considered in Section~\ref{sec:mainmult}.

\subsection{~~Minimax multiple testing risk}

Previous works in the literature \cite{erychen,rabinovich20} suggest that the phase transition for the combined risk over $\Theta(a,s_n)$ arises when $a$ is close to the oracle threshold $\sqrt{2 \log(n/s_n)}$. 
We identify here a sharp formulation for this boundary, by considering  
  for $b\in\RR$,
\begin{equation} \label{signalb}
\Theta_b=\Theta(a_b,s_n),\:\:\: \quad a_b= \sqrt{2 \log(n/s_n)} +b,
\end{equation} 
with $\Theta(a,s_n)$ as in \eqref{singlesignal}.
The following result holds.

\begin{theorem} \label{thmgeneric}
In the Gaussian location model of Example~\ref{ex:GaussianSequence} consider $\Theta_b=\Theta(a_b,s_n)$ as in \eqref{signalb}.
For any fixed $b\in \RR$, under the sparse asymptotics \eqref{sparse} the minimax $\mathfrak{R}$-risk over $\Theta_b$ verifies 
\[  \inf_\vphi \sup_{\te\in\Theta_b}  \mathfrak{R} (\theta,\vphi)=
\overline{\Phi}(b)+o(1). \]
The result also holds in the limiting cases $b=b_n\to +\infty$ and  $b=b_n\to-\infty$. 
\end{theorem}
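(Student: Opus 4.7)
The statement will follow from matching upper and lower bounds. For the upper bound $\inf_\vphi \sup_{\te\in\Theta_b} \fr(\te, \vphi) \leq \overline{\Phi}(b) + o(1)$, the plan is to analyze the oracle hard-threshold procedure $\vphi^\star_i = \II\{|X_i| \geq t_n\}$ with $t_n = \sqrt{2\log(n/s_n)}$. Since any $\te \in \Theta_b$ has signals of magnitude at least $a_b = t_n + b$, a direct Gaussian computation gives the per-coordinate miss probability bound $P_\te(|X_i| < t_n) \leq \overline{\Phi}(b) + o(1)$ uniformly on $\Theta_b$, and hence $\FNR(\te, \vphi^\star) \leq \overline{\Phi}(b) + o(1)$. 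Mills' ratio yields $\overline{\Phi}(t_n) \lesssim (s_n/n)/\sqrt{\log(n/s_n)}$, so the expected number of false rejections is $o(s_n)$, while the expected number of correct rejections is at least $s_n \Phi(b)(1+o(1))$. A standard concentration argument on the total number of rejections $\sum_i \vphi^\star_i$ (e.g.\ by Chebyshev) then yields $\FDR(\te,\vphi^\star) = o(1)$. The two limiting regimes $b_n \to \pm\infty$ are absorbed into the same analysis, with the upper bound $\overline{\Phi}(b_n)$ tending to $0$ in the first case and to $1$ in the second, trivially.

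For the lower bound, I would perform a Bayesian reduction: place on $\Theta_b$ the uniform prior $\pi$ that draws a uniformly random size-$s_n$ subset $S \subset \{1,\dots,n\}$ and sets $\te_i = a_b$ on $S$, $\te_i = 0$ off $S$, so that $\inf_\vphi \sup_{\te\in\Theta_b} \fr(\te,\vphi) \geq \inf_\vphi E_\pi[\fr(\te,\vphi)]$. By exchangeability, $q_1 := P_\pi(\vphi_i = 1 \mid i \in S)$ and $q_0 := P_\pi(\vphi_i = 1 \mid i \notin S)$ do not depend on $i$, and $E_\pi[\FNR] = 1 - q_1$. A Neyman--Pearson-type argument applied coordinate-wise (using that the posterior for $\te_i$ depends on the other $X_j$'s only through a negligible correction, since the conditional law of $\te_{-i}$ given $\{i \in S\}$ vs.\ $\{i \notin S\}$ differs in total variation by $O(1/n)$) gives the constraint $q_1 \leq \Phi(a_b - \Phi^{-1}(1-q_0)) + o(1)$. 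I then split into cases: if $q_1 \leq \Phi(b)$, then $E_\pi[\FNR] \geq \overline{\Phi}(b)$ and we are done; otherwise $\Phi^{-1}(1-q_0) < t_n$ forces $q_0 \gtrsim \overline{\Phi}(t_n - \eta)$ for some $\eta > 0$, which dominates $(s_n/n)\Phi(b)$ by an exponential factor and drives $E_\pi[\FDR]$ to $1$.

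The main technical obstacle is that $\fr$ is nonlinear in $\vphi$ through the random denominator $1 \vee \sum_i \vphi_i$ in the FDR, which blocks an immediate per-coordinate reduction in the lower bound. I expect to control this by a variance or Chernoff-type concentration bound on $\sum_i \vphi_i$ under $\pi$, effectively reducing the FDR to a ratio of expectations before applying Neyman--Pearson. The limit $b_n \to -\infty$ is also delicate: there $q_1$ must itself vanish, $E_\pi[\FNR] \to 1$, and the argument must degenerate smoothly to the trivial bound $\fr \leq 1$, which should follow from the fact that even the Bayes-optimal likelihood-ratio test has power $\Phi(-b_n) \to 0$ when the prior mass on signals is $s_n/n$.
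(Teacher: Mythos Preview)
Your upper bound is correct and essentially identical to the paper's: both analyze the oracle threshold $\vphi^\star_i=\II\{|X_i|\ge \sqrt{2\log(n/s_n)}\}$, bound the FNR directly via the Gaussian tail at $b$, and show $\FDR=o(1)$ by concentrating the counts of true and false discoveries (the paper uses Bernstein rather than Chebyshev, but this is cosmetic).

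The lower bound, however, has a genuine gap precisely where you flag the ``main technical obstacle''. Your proposed fix---concentrate $\sum_i\vphi_i$ under $\pi$ and replace the FDR by a ratio of expectations---cannot work for an \emph{arbitrary} procedure $\vphi$. Nothing prevents $\vphi$ from being, say, ``reject everything if $X_1>0$, else reject nothing'', for which $\sum_i\vphi_i$ has no concentration whatsoever. Since the infimum in the minimax risk ranges over all such procedures, you cannot assume concentration of the rejection count; this is exactly why the denominator in the FDR is hard. A secondary issue is the coordinate-wise Neyman--Pearson constraint: $\vphi_i$ may depend on the full vector $X$, and under the uniform-subset prior the posterior of $\{i\in S\}$ does not factorize. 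Your $O(1/n)$ total-variation remark is in the right spirit but is not enough as stated, since you need to control the joint law of $X_{-i}$ under the two conditionings, and the accumulated effect over $n-1$ coordinates is not obviously negligible.

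The paper sidesteps both issues by a different route. First, Lemma~\ref{lem:mtclr} deterministically lower-bounds the FDP$+$FNP integrand by a multiple of $\II\{L_\rho(\theta,\vphi)\ge \lambda\rho s_n\}$ for a \emph{weighted classification loss} $L_\rho$, via a case split on whether $\sum_i\vphi_i\le(1+\delta)s_n$ (where the denominator is controlled) or $\sum_i\vphi_i>(1+\delta)s_n$ (where the FDP itself is already large). This removes the random denominator without any concentration hypothesis on $\vphi$. Second, the classification lower bound (Theorem~\ref{thm:clarg}) uses a \emph{block} prior---partition $\{1,\dots,n\}$ into $s_n$ blocks of size $\lfloor n/s_n\rfloor$ and place exactly one signal uniformly in each block---rather than your uniform-subset prior. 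The block structure gives genuine independence across blocks, so the posterior factorizes exactly and the optimal per-block selector is identified explicitly; the loss then becomes a Poisson--binomial sum over blocks, and Bernstein's inequality finishes. The parameter $\rho$ is sent to infinity along the way (Lemma~\ref{corasympg}), which is what recovers the sharp constant $\overline\Phi(b)$.
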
 

Since $\overline{\Phi}(b)$ increases from $0$ to $1$ when $b$ decreases from $+\infty$ to $-\infty$, Theorem~\ref{thmgeneric} exhibits an asymptotic phase transition and shows that the considered boundary \eqref{signalb} is sharp.
It follows from the proof of the result that  
 the oracle thresholding rule 
 \begin{align}\label{equidealthresh}
 \vphi^*_i=\ind{\abs{X_i}\geq \sqrt{2 \log(n/s_n)}}, \:\:1\leq i\leq n,
 \end{align}
  is asymptotically minimax, {independent of the value of $b$.}

\subsection{~~Applications: non-trivial testing and conservative testing}

Let us now provide two consequences of Theorem~\ref{thmgeneric} for $\Theta_b=\Theta(a_b,s_n)$ as in \eqref{signalb}. 

\begin{corollary} \label{cor_nt}
For any fixed $b\in\mathbb{R}$, asymptotic non-trivial testing is possible over $\Theta_b$ in the Gaussian location model of Example~\ref{ex:GaussianSequence}, in that there exists a procedure $\vphi$ such that 
\[ \lim_{n\to\infty} \sup_{\te\in\Theta_b}  \mathfrak{R} (\theta,\vphi)<  1. \]
In contrast, for any sequence $b=b_n\to -\infty$, we have
\[ \lim_{n\to\infty}\inf_\vphi \sup_{\te\in\Theta_b}  \mathfrak{R} (\theta,\vphi) =  1, \]
that is, asymptotic  non-trivial testing is impossible.
\end{corollary}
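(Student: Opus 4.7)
The plan is to derive both statements directly from Theorem~\ref{thmgeneric}, as the corollary is essentially a qualitative restatement of the theorem in two extreme regimes of the parameter $b$. No new probabilistic analysis is needed; the heavy lifting lies in the minimax identity of Theorem~\ref{thmgeneric}, which I will invoke as a black box.

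For the first part, I fix $b\in\mathbb{R}$ and observe that $\overline{\Phi}(b)\in(0,1)$, so in particular $\overline{\Phi}(b)<1$. Theorem~\ref{thmgeneric} yields
\[ \inf_\vphi \sup_{\te\in\Theta_b}\mathfrak{R}(\theta,\vphi) = \overline{\Phi}(b) + o(1), \]
hence for all $n$ large enough the infimum is bounded by $(1+\overline{\Phi}(b))/2<1$. It then suffices to exhibit a single procedure realising an analogous bound: as noted right after Theorem~\ref{thmgeneric}, the oracle threshold $\vphi^*$ of \eqref{equidealthresh} (which does not depend on $b$) achieves the minimax value asymptotically, giving $\limsup_n \sup_{\te\in\Theta_b}\mathfrak{R}(\theta,\vphi^*)=\overline{\Phi}(b)<1$. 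This is enough to conclude the first claim; alternatively one could cite the BH or empirical Bayes $\ell$-value procedures from Section~\ref{sec:popular}, once their asymptotic minimaxity is established.

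For the second part I invoke the limiting case $b=b_n\to-\infty$ included in Theorem~\ref{thmgeneric}. Since $\overline{\Phi}(b_n)\to 1$, the theorem gives
\[ \inf_\vphi \sup_{\te\in\Theta_{b_n}} \mathfrak{R}(\theta,\vphi) = \overline{\Phi}(b_n) + o(1) \longrightarrow 1. \]
The matching upper bound $\leq 1$ is trivial: the null procedure $\vphi\equiv 0$ has $\FDR(\theta,\vphi)=0$ (no rejections) and, since every $\te\in\Theta_{b_n}$ has $|S_\te|=s_n\geq 1$, it has $\FNR(\theta,\vphi)=1$, so $\sup_{\te\in\Theta_{b_n}}\mathfrak{R}(\theta,\vphi)=1$. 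Combining the two bounds gives the stated limit equal to $1$.

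No step presents a genuine obstacle; the only small point requiring care is that the conclusion of Theorem~\ref{thmgeneric} is explicitly asserted in the limiting sense $b_n\to-\infty$, which is precisely what the second claim needs. If that limiting extension were unavailable, one would instead have to repeat the lower bound argument of Theorem~\ref{thmgeneric} for a sequence $b_n$ (e.g.\ by a diagonal reduction to fixed $b$), but this is not required here.
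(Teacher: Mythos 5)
Your proof is correct and takes the same route the paper intends: both parts of Corollary~\ref{cor_nt} are immediate consequences of Theorem~\ref{thmgeneric}, with the first part using $\overline{\Phi}(b)<1$ for fixed $b$ and the asymptotic minimaxity of the oracle threshold $\vphi^*$, and the second using the $b_n\to-\infty$ case together with the trivial bound $\mathfrak{R}\le 1$ from $\vphi\equiv 0$. The paper states this corollary without a separate proof precisely because the derivation is this direct.
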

We  identify a new sharp boundary for asymptotic  non-trivial testing: $a_b=\sqrt{2\log (n/s_n)} + b$ with $b\to-\infty$ corresponds to the regime where it is impossible to build a multiple testing procedure doing (asymptotically) better than the trivial ones $\vphi_i=0$ for all $i$ or $\vphi_i=1$ for all $i$. On the contrary, provided that $b$ is a finite constant (which may  be negative), some non-trivial control of the $\mathfrak{R}$-risk is possible. 

\begin{corollary} \label{cor_ct}
	Consider the Gaussian location model of Example~\ref{ex:GaussianSequence}. 
If  $b=b_n\to +\infty$, asymptotic conservative testing is possible over $\Theta_b$: there exists a procedure $\vphi$ such that
\[  \sup_{\te\in\Theta_b}  \mathfrak{R} (\theta,\vphi) \to 0, \]
as $n\to\infty$. In contrast, for any fixed $b\in\RR$,  we have
\[ \lim_{n\to\infty}\inf_\vphi \sup_{\te\in\Theta_b}  \mathfrak{R} (\theta,\vphi) >0, \]
that is, asymptotic conservative testing is impossible over $\Theta_b$ for any procedure.
\end{corollary}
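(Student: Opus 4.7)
The statement of Corollary~\ref{cor_ct} is essentially a direct repackaging of Theorem~\ref{thmgeneric}, read at the two ends of the phase diagram. My plan is therefore to deduce both halves from the sharp asymptotic identity
\[ \inf_\vphi \sup_{\te\in\Theta_b}\fr(\te,\vphi)=\overline{\Phi}(b)+o(1), \]
using the fact, explicitly claimed in Theorem~\ref{thmgeneric}, that this identity remains valid in the limiting cases $b=b_n\to\pm\infty$.

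For the achievability part ($b=b_n\to+\infty$), I would exhibit an explicit minimax-optimal procedure rather than just quote the existence of one: the oracle threshold $\vphi^*$ of \eqref{equidealthresh}, which the remark after Theorem~\ref{thmgeneric} states is asymptotically minimax uniformly in $b$. Applying Theorem~\ref{thmgeneric} along the sequence $b_n$, we obtain
\[ \sup_{\te\in\Theta_{b_n}}\fr(\te,\vphi^*)\le \overline{\Phi}(b_n)+o(1)\longrightarrow 0, \]
since $\overline{\Phi}(b_n)\to 0$ as $b_n\to+\infty$. This gives the desired conservative testing. (Alternatively one can appeal to the adaptive procedures such as the empirical Bayes $\ell$-value procedure $\vphi^{\hat\ell}_t$ of Section~\ref{sec:popular} once shown to attain the minimax bound, the only point here being that \emph{some} procedure with vanishing risk exists.)

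For the impossibility part ($b\in\RR$ fixed), Theorem~\ref{thmgeneric} directly yields
\[ \varliminf_{n\to\infty}\,\inf_\vphi \sup_{\te\in\Theta_b}\fr(\te,\vphi)=\overline{\Phi}(b), \]
and $\overline{\Phi}(b)>0$ for every real $b$, from which the displayed strict positivity follows. The only mild point to check is that the $o(1)$ terms in Theorem~\ref{thmgeneric} are uniform enough over the sequence of procedures and parameters — but since the $o(1)$ comes from the infimum-supremum on the left-hand side (not from a specific procedure), no extra work is needed.

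The main obstacle is really contained in Theorem~\ref{thmgeneric} itself, in particular the validity of the sharp asymptotic in the limiting regime $b_n\to+\infty$; conditional on that, Corollary~\ref{cor_ct} is a two-line consequence. So my proof proposal is essentially: invoke Theorem~\ref{thmgeneric} with $\overline{\Phi}(b_n)\to 0$ for one direction, and with $\overline{\Phi}(b)>0$ for the other.
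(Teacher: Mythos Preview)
Your proposal is correct and matches the paper's approach: the paper presents Corollary~\ref{cor_ct} (like Corollary~\ref{cor_nt}) as an immediate consequence of Theorem~\ref{thmgeneric}, without a separate proof, and your two-line deduction---reading off $\overline{\Phi}(b_n)\to 0$ for $b_n\to+\infty$ and $\overline{\Phi}(b)>0$ for fixed $b$---is exactly what is intended.
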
  

We thus find the following sharp boundary for asymptotic conservative testing: it is possible when $a_b=\sqrt{2\log(n/s_n)} + b$ with $b=b_n\to+\infty$, but becomes impossible when $b$ is finite.
It is interesting to note that a similar boundary with $b=b_n\to+\infty$ was identified in \cite{butucea18} for the so-called {\em almost-full recovery} problem when the risk is the Hamming loss and the goal is to correctly classify up to a $o(s_n)$ number of nonzero signals. 
 In this view, Corollary~\ref{cor_ct} can be seen as an analogue of Theorem 4.3 in \cite{butucea18} for the multiple testing risk. 
We refer to Section~\ref{sec:orisk} (and Section~\ref{sec:almostfullrecovery}) for a more detailed connection to the classification problem.

\subsection{~~Adaptation}

The asymptotically minimax procedure \eqref{equidealthresh} requires the knowledge of the sparsity parameter $s_n$.
A natural question is whether there exists an {\em adaptive} multiple testing procedure that achieves the minimax risk without using the knowledge of  $s_n$. 
Such procedures do exist. Some will require an additional 
{\em polynomial sparsity} assumption: for some unknown $c<1$,
\begin{equation} \label{polspa}
s_n \leqa n^c.
\end{equation}

\begin{theorem} \label{thm-adapt-r}
In the Gaussian location model of Example~\ref{ex:GaussianSequence}, consider $\Theta_b=\Theta(a_b,s_n)$ as in \eqref{signalb} where $b$ is either fixed in $\R$ or is a sequence $b=b_n$ tending to $+\infty$. 
There exists a multiple testing procedure $\vphi$, not depending on $s_n$ or $b$, such that in the sparse asymptotics \eqref{sparse}  
\[ \sup_{\te\in\Theta_b} \fr(\te,\vphi) = \overline\Phi(b)+o(1). \]
In particular this holds for the empirical Bayes $\ell$-value procedure \eqref{ellvalp} taken at any fixed threshold $t\in (0,1)$. 
In addition, under \eqref{polspa},  this also holds for the BH procedure \eqref{defBH} taken at a vanishing level $\alpha=\alpha_n=o(1)$ that satisfies $-\log(\alpha_n)=o(\sqrt{\log n})$.
\end{theorem}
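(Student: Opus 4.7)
The lower bound $\inf_\vphi \sup_{\te\in\Theta_b} \fr(\te,\vphi) \ge \overline\Phi(b) + o(1)$ is delivered for free by Theorem~\ref{thmgeneric}, so the task is to prove the matching upper bound for each of the two procedures. Both are in fact (possibly data-driven) thresholding rules on $|X_i|$: this is direct for BH, and for the $\ell$-value procedure it follows from the monotonicity of $x\mapsto \Pi_{\hat w}(\theta=0\mid x)$ in $|x|$, which gives an equivalent threshold $\hat\zeta(\hat w,t)$. The plan is then to reduce the problem to showing that the effective threshold $\hat t$ satisfies $\hat t = t^*_n + o_P(1)$ uniformly over $\Theta_b$, where $t^*_n := \sqrt{2\log(n/s_n)}$.

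Granting such a threshold bound, the beta-min condition $|\theta_i|\ge a_b = t^*_n + b$ yields, for $i\in S_\te$, $\hat t - |\theta_i| \le \hat t - a_b = -b + o_P(1)$, so the per-signal detection probability converges to $\Phi(b)$ and the expected number of true discoveries is $s_n\Phi(b)(1+o(1))$. By Mills' ratio, $n\overline\Phi(\hat t) \asymp s_n/\sqrt{\log(n/s_n)} = o(s_n)$, so the expected number of false discoveries is $o(s_n)$. Standard concentration of the numerator and denominator of the FDP (and of the corresponding FNR ratio) then delivers $\FDR(\te,\vphi) = o(1)$ and $\FNR(\te,\vphi) = \overline\Phi(b) + o(1)$, hence the desired upper bound on $\fr$.

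For BH at level $\alpha_n$, the threshold $\hat t_{BH}$ is characterized by a self-consistency equation of the form $\#\{i:|X_i|\ge \hat t_{BH}\} \asymp 2n\overline\Phi(\hat t_{BH})/\alpha_n$. Uniform concentration of the empirical tail count on thin threshold windows around $t^*_n$ (where under $\te\in\Theta_b$ the count is approximately $s_n$) shows that $\hat t_{BH}$ is close to the deterministic level $\overline\Phi^{-1}(\alpha_n s_n/(2n))$. Using $\overline\Phi^{-1}(u) = \sqrt{2\log(1/u)}(1+o(1))$, this expands to $\sqrt{2\log(n/s_n)+2\log(1/\alpha_n)} + o(1)$, which equals $t^*_n + o(1)$ precisely when $\log(1/\alpha_n)/\sqrt{\log(n/s_n)} \to 0$; under the polynomial-sparsity assumption \eqref{polspa} this follows from $-\log\alpha_n = o(\sqrt{\log n})$.

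For the $\ell$-value procedure, I would invoke the sharp two-sided control on the marginal MLE $\hat w$ established in \cite{cr20} (building on \cite{js04,cm18}), which, together with the signal strength $|\theta_i|\ge a_b$ in $\Theta_b$, pins $\hat w$ down on the scale $s_n/n$ with the correct logarithmic refinement. Plugging this into the explicit formula for $\hat\zeta(\hat w,t)$ obtained by rearranging $\Pi_{\hat w}(\theta_i=0\mid X_i)<t$ gives $\hat\zeta = t^*_n + o_P(1)$ uniformly over $\Theta_b$, without any polynomial-sparsity hypothesis. The main obstacle in both cases is to obtain the threshold control at the sharp $o_P(1)$ precision rather than the weaker order-of-magnitude control that would be sufficient for FDR control alone, because only this precision captures the exact constant $\overline\Phi(b)$ in the limiting risk.
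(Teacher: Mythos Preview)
Your outline follows the same route as the paper: reduce to Theorem~\ref{thmgeneric} for the lower bound, and for the upper bound establish that each procedure behaves like thresholding at $t^*_n+o_P(1)$ so as to control the FNR. However, there is a genuine gap in your FDR step. From $\hat t = t^*_n + o_P(1)$ alone one \emph{cannot} deduce $n\overline\Phi(\hat t)\asymp s_n/\sqrt{\log(n/s_n)}$: since $\overline\Phi(t^*_n-\delta)/\overline\Phi(t^*_n)\sim e^{\delta t^*_n}$, even a deviation $\delta_n=o(1)$ with $\delta_n t^*_n\to\infty$ makes $n\overline\Phi(\hat t)\gg s_n$. The additive $o_P(1)$ precision that suffices for the FNR (because $\overline\Phi$ is Lipschitz) is far too coarse for the false-discovery count, which lives on the exponential scale.

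The paper sidesteps this by handling the FDR differently for each procedure. For BH it simply invokes the exact identity $\FDR(\theta,\vphi^{BH}_\alpha)=\alpha(n-s_n)/n\le\alpha_n\to 0$; no lower bound on $\hat t$ is ever needed, and only the one-sided estimate $P_\theta(\hat t>t^*_n)\to 0$ (for a deterministic $t^*_n$ with $t^*_n-a^*_n\to 0$, Lemma~\ref{lem:tnstar_multi}) is established, purely for the FNR. For the $\ell$-value procedure the FDR is controlled not via $n\overline\Phi(\hat t)$ but through the structural bound $P_{\theta_i=0}(\ell_{i,w}\le t)\lesssim r(w,t)/\xi(r(w,t))^3$ (Lemma~\ref{lem:EVboundlemma}) combined with $\hat w\le w_+\asymp (s_n/n)\sqrt{\log(n/s_n)}$, which gives $E_\theta V'\lesssim s_n/\log(n/s_n)$ directly. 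Note also that the concentration of $\hat w$ is not quite off-the-shelf from \cite{cr20}: the boundary signals $|\theta_i|=a_b$ require an adaptation of the score-equation argument (Lemmas~\ref{lem:existence-of-w+-}--\ref{lem:concentration-of-hat{w}}), where the crucial input is $m_1(a_b,w)\gtrsim w^{-1}\overline\Phi(\zeta(w)-a_b)\asymp w^{-1}\Phi(b)$, which is only useful when $\Lambda_\infty=\overline\Phi(b)<1$.
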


Theorem~\ref{thm-adapt-r} establishes that two popular (and simple) procedures are asymptotically sharp minimax adaptive. In particular, they automatically achieve the non-trivial and conservative boundaries described in  Corollaries~\ref{cor_nt}~and~\ref{cor_ct}. One advantage of the empirical Bayes $\ell$-value procedure over the BH procedure is that it does not need any further parameter tuning in order to be valid. Nevertheless, 
an advantage of the BH procedure is that its FDR is always equal to $(1-\abs{S_\theta}/n)\alpha$ (see \cite{BY2001}) so that one has more explicit information about the first term of the combined risk.

 \begin{remark}[Case $b=b_n\to -\infty$]\label{rem:lvaluenotphibar}
When $b_n\to -\infty$, the lower bound of Theorem~\ref{thmgeneric} shows that non-trivial testing is impossible. One could in principle have $\frak{R}(\theta,\vphi)=2>1$ so that the upper bounds in Theorem~\ref{thm-adapt-r} are not automatic. However, the BH procedure at level $\alpha_n=o(1)$  achieves the limit $1=\overline{\Phi}(-\infty)$ in this case, as a consequence of the explicit control of its FDR noted above. Under  polynomial sparsity \eqref{polspa} the same is true of the $\ell$-value procedure (with $t\leq 3/4$) as an immediate consequence of Theorem 1 in \cite{cr20}.
\end{remark}

\begin{remark}[Polynomial sparsity assumption for the BH procedure]\label{rem:BHnotpoly}
The condition $-\log(\alpha_n)=o(\sqrt{\log n})$ for the BH procedure is typically achieved by choosing $\alpha_n=1/\sqrt{\log n}$. A rate of convergence of the risk to $\overline{\Phi}(b)$ of $(\log\log n)/\sqrt{\log n} + e^{-cs_n}$ can be obtained for such choice, see Theorem~\ref{thm:BHpointwise} and the paragraph thereafter.  
 In addition, the polynomial sparsity assumption can be relaxed slightly, see Remark~\ref{rem:polysparse} (second bullet).
\end{remark}

\subsection{~~Sparsity preserving procedures and dominating FNR at the boundary}\label{sec:FNRdominating}

Theorem~\ref{thmgeneric} determines the minimax optimal combined risk $\fr$, i.e. the sum $\FDR+\FNR$,  but not the balance between the two terms struck by optimal procedures. In this section, we investigate this tradeoff in more details.
For instance, the proof of Theorem~\ref{thmgeneric} shows that the oracle procedure  \eqref{equidealthresh}  achieves an optimal multiple testing risk by satisfying
\begin{equation}\label{err-oracle}
 \sup_{\te\in\Theta_b} \FDR(\te,\vphi^*) = o(1),\qquad
\sup_{\te\in\Theta_b} \FNR(\te,\vphi^*) = \overline{\Phi}(b)+o(1),
\end{equation}
that is, the FNR ``spends'' all the allowed ``budget'' from the overall minimax risk.
In this section, we show that this phenomenon holds true for most ``reasonable'' procedures. Clearly, some restriction is required on the class of procedures as the trivial test $\vphi\equiv 1$ that always rejects the null achieves the optimal asymptotic risk (of $1$) over $\Theta_{b}$ for $b=b_n\to-\infty$ and has FNR zero. We see below that for any `sparsity preserving' procedure, that is, one not overshooting the true sparsity index $s_n$ by more than some large multiplicative factor, the FNR {\em alone} cannot go below  
$\overline{\Phi}(b)$ asymptotically.

\begin{definition}\label{def:spapres}
We say that a  multiple-testing procedure $\vphi=\vphi(X)\in\{0,1\}^n$ {(or strictly, a sequence of such procedures, indexed by $n$),} is {\em sparsity-preserving over} $\Theta=(\Theta_n)_n$ (with $\Theta_n\subset\ell_0[s_n]$) {\em up to a multiplicative factor} $A=(A_n)_n$ if, as $n\to\infty$,
\begin{equation} \label{spa-pres}
\sup_{\te\in\Theta_n} P_{\te}\left[\sum_{i=1}^n \vphi_i(X) > A_n s_n \right]=o(1).
\end{equation} 
We denote by $\mathcal{S}_{A}(\Theta)=\mathcal{S}_{A}((s_n)_n,\Theta)$ the set of all such procedure sequences.
\end{definition}
The sparsity preserving property entails a total number of rejections not exceeding $A_ns_n$ with high probability, and can be interpreted as a weak notion of type I error rate control. Many procedures (or also estimators) encountered in the literature on sparse classes verify this condition for $A_n$ either a large enough constant or going to  infinity slowly, even when taking the supremum in \eqref{spa-pres} over the whole sparse class $\ell_0[s_n]$. Several examples are provided in Section~\ref{sec:sparsitypreserving}, including the BH procedure (with fixed level $\alpha<1$ or with $\alpha=\alpha_n\to 0$), the $\ell$-value procedure, the oracle thresholding procedure used to prove Theorem~\ref{thmgeneric}, and, more generally, procedures controlling the FDP \eqref{deffdr} in a specific sense. 

\begin{theorem}\label{thm-notrade}
In the setting of Theorem~\ref{thmgeneric}, consider a fixed real $b$, a sequence $B=(B_n)_n$ with $B_n^{2}\leq e^{(\log (n/s_n))^{1/4}}$ and $\varliminf_n B_n>1$, 
and consider $\mathcal{S}_{B}(\Theta_b)$ as in Definition~\ref{def:spapres}. Then we have in the sparse asymptotics \eqref{sparse}
\begin{equation}\label{equ:FNRminimax}
 \inf_{\vphi\in \mathcal{S}_{B}(\Theta_b)}\sup_{\te\in\Theta_b}\ \FNR(\te,\vphi) = \overline\Phi(b)+o(1).
 \end{equation}
 The result continues to hold if $b=b_n\to +\infty$ or $b=b_n\to -\infty$.
\end{theorem}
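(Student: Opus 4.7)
The plan is to prove matching upper and lower bounds. For the upper bound, the oracle thresholding rule $\vphi^*$ in \eqref{equidealthresh} has $E_\theta[\sum_i\vphi^*_i]\leq 2n\overline{\Phi}(\sqrt{2\log(n/s_n)})+s_n=s_n(1+o(1))$ uniformly on $\Theta_b$; a Chebyshev bound applied to this sum of independent Bernoullis then shows $\sup_{\theta\in\Theta_b}P_\theta(\sum_i\vphi^*_i>B_ns_n)=o(1)$ when $B_n\geq 2$, so $\vphi^*\in\mathcal{S}_B(\Theta_b)$. Since $\FNR\leq\fr$, Theorem~\ref{thmgeneric} then gives $\sup_{\theta\in\Theta_b}\FNR(\theta,\vphi^*)\leq\overline{\Phi}(b)+o(1)$.

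For the lower bound I reduce to a Bayesian problem. Let $\pi$ be the uniform prior on $\Theta_b$ obtained by drawing $S\subset\{1,\ldots,n\}$ of cardinality exactly $s_n$ uniformly at random and setting $\theta_i=a_b\II\{i\in S\}$. Since $\pi$ is supported on $\Theta_b$, $\sup_{\theta\in\Theta_b}\FNR(\theta,\vphi)\geq\FNR_\pi(\vphi):=E_{\theta\sim\pi}\FNR(\theta,\vphi)$. Define the truncation $\vphi'_i:=\vphi_i\II\{\sum_j\vphi_j(X)\leq B_ns_n\}$; the sparsity-preserving property and $\FNR\leq 1$ yield $\FNR_\pi(\vphi)\geq\FNR_\pi(\vphi')-o(1)$, with $\sum_i\vphi'_i\leq B_ns_n$ always. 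Fubini's theorem gives
\[
1-\FNR_\pi(\vphi')=\frac{1}{s_n}E\Bigl[\sum_i\vphi'_i(X)\eta_i(X)\Bigr],\qquad \eta_i(X):=P_\pi(i\in S\mid X).
\]

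Under $\pi$ the posterior satisfies $P_\pi(S\mid X)\propto\prod_{j\in S}e^{a_bX_j}$. A swap argument (comparing $S$ with $(S\setminus\{i\})\cup\{j\}$ for $i\in S$, $j\notin S$, whose posterior-weight ratio is $e^{a_b(X_j-X_i)}$) shows $\eta_i(X)>\eta_j(X)$ iff $X_i>X_j$. Hence the right-hand side of the previous display is maximised, over $\vphi'\in\{0,1\}^n$ with $\sum_i\vphi'_i\leq k:=B_ns_n$, by the thresholding rule $\vphi'_i=\II\{X_i\geq X_{(k)}\}$, yielding
\[
s_n(1-\FNR_\pi(\vphi'))\leq E_\pi\bigl[|S\cap\{i:X_i\geq X_{(k)}\}|\bigr].
\]

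It remains to bound the right-hand side. Set $t^*:=\sqrt{2\log(n/s_n)}-\delta_n$ with $\delta_n=(\log(n/s_n))^{-1/4}$. The tail estimate $\overline{\Phi}(u)\sim\phi(u)/u$ and the hypothesis $B_n\leq e^{(\log(n/s_n))^{1/4}}$ together imply $(n-s_n)\overline{\Phi}(t^*)\geq 2B_ns_n$ for $n$ large, and a Chebyshev bound applied to the conditionally independent Bernoulli sum $\#\{i\notin S:X_i\geq t^*\}$ then gives $P_\pi(X_{(k)}<t^*)=o(1)$. On this high-probability event $|S\cap\{i:X_i\geq X_{(k)}\}|\leq\#\{i\in S:X_i\geq t^*\}$, whose expectation is $s_n\overline{\Phi}(t^*-a_b)=s_n\Phi(b+\delta_n)=s_n(1-\overline{\Phi}(b))+o(s_n)$. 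Thus $\FNR_\pi(\vphi')\geq\overline{\Phi}(b)-o(1)$, and hence $\FNR_\pi(\vphi)\geq\overline{\Phi}(b)-o(1)$; the boundary cases $b_n\to\pm\infty$ follow from the same computation. The main technical hurdle is this joint verification that $t^*\geq\sqrt{2\log(n/s_n)}-o(1)$ and that $X_{(k)}\geq t^*$ with probability $1-o(1)$: this is precisely where the upper cap $B_n\leq e^{(\log(n/s_n))^{1/4}}$ enters, since a noticeably larger $B_n$ would permit $t^*$ to drop by a $\Theta(1)$ amount and destroy the matching with $\overline{\Phi}(b)$.
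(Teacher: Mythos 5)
Your proof is correct, but it takes a genuinely different route from the paper's. The paper does not attack the FNR directly: it first proves the intermediate bound
$\varliminf_n\inf_{\vphi\in\mathcal S_B}\sup_\theta[M_n^{-1}\FDR+\FNR]\ge\overline\Phi(b)$ for a suitable $M_n\to\infty$, which gives the FNR claim after bounding $\FDR\le1$. To prove the intermediate bound, it adapts Lemma~\ref{lem:mtclr} to show that on the sparsity-preserving event $\{D_n\le B_ns_n\}$ the weighted risk dominates (a constant times) the weighted classification loss $L_{\rho M_n}/(M_ns_n)$ with $\rho\ge B_n$ and $M_n=\rho$, and then invokes the already-proved classification lower bound (Theorem~\ref{thm:clarg}), which is itself built on a product ``one-signal-per-block'' prior (exactly one active coordinate, uniformly placed, in each of $s_n$ blocks of size $\lfloor n/s_n\rfloor$). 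The product structure makes the posterior factorise over blocks and yields a Poisson-binomial count that can be controlled non-asymptotically; it is also what permits assigning each block a different signal level $b_j$, which is essential for the multi-signal extension (Theorem~\ref{thm-notrade-Subbotin}) proved simultaneously. You instead put the uniform prior on the $\binom{n}{s_n}$ supports in $\Theta_b$, truncate $\vphi$ to $\vphi'$ to exploit sparsity preservation directly, identify the Bayes-optimal budget-$k$ procedure by the swap argument (posterior inclusion probabilities $\eta_i$ are monotone in $X_i$, so top-$k$ by $X_i$ is optimal), and then bound the signals above the $k$-th order statistic $X_{(k)}$. This is more self-contained and avoids the detour through weighted classification, at the price of being tied to the constant-signal class $\Theta_b$ (the non-product uniform prior would be considerably messier in the multi-signal/Subbotin case) and of giving only an asymptotic bound where the paper also records a non-asymptotic one. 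Your analysis correctly identifies why the cap $B_n\le e^{(\log(n/s_n))^{1/4}}$ matters --- it controls the drop $\delta_n$ needed to guarantee $X_{(k)}\ge\sqrt{2\log(n/s_n)}-o(1)$ with high probability --- and in fact, as you note implicitly, your route would accommodate any $B_n=e^{o(\sqrt{\log(n/s_n)})}$, a mild relaxation of the stated range. One small thing to be careful about in a polished write-up: in the limiting case $b=b_n\to-\infty$ you should, as in the paper's proof of Theorem~\ref{thmgeneric}, first replace $b_n$ by a sequence $b_n'\to-\infty$ with $a_{b_n'}>0$ before constructing the prior, and use $\Theta_{b_n'}\subset\Theta_{b_n}$.
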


Since Theorem~\ref{thm-adapt-r} shows that the (sparsity preserving) $\ell$-value and BH procedures have their FNR suitably upper bounded, these procedures both achieve the bound \eqref{equ:FNRminimax}, under  polynomial sparsity \eqref{polspa} for the BH procedure (note that the bound \eqref{equ:FNRminimax} trivially holds for any procedure in the case $b=b_n\to-\infty$).

Theorem~\ref{thm-notrade} sharpens the lower bounds of Theorem~\ref{thmgeneric} by stating that the combined risk $\fr$ can in fact be replaced by the FNR (i.e. the type II error) only, if one is willing to restrict slightly the class of multiple testing procedures to $\vphi$'s that do not often reject more than $B_ns_n$ hypotheses, where $B_n$ is permitted to diverge at some rate specified above. Some intuition behind this result is given in Section~\ref{sec:simu} (in particular, see the discussion of Figure~\ref{fig:mFDRvsFNR}). 

This phenomenon of dominating FNR is a novel finding. We would like to underline two related points.  First, it is specific to the regime where the $\fr$--risk is bounded away from $0$ (otherwise the lower bound in \eqref{equ:FNRminimax} is trivial); in regimes with fairly strong signals, such as ones studied in Section \ref{sec:fast}, both FDR and FNR typically vary on the same level. Second, it qualitatively explains results in Section \ref{sec:orisk}, where we will show that the sharp minimax constant for the normalised classification risk is the same as for the $\fr$--risk. Indeed, both risks have the same type II--error risk (equal to the FNR).

 A straightforward but interesting consequence of Theorem~\ref{thm-notrade} is that no sparsity preserving procedure can ``trade'' some loss in $\FDR$ for some improvement of the $\FNR$ while still staying close to the optimal risk:
 if its FDR is equal to $\alpha\in (0,1)$ asymptotically (such as the standard BH procedure for a fixed level $\alpha$), it must miss the sharp combined risk $\fr$ over $\Theta_b$ by at least an additive factor $\alpha$. Another (surprising) consequence of Theorem~\ref{thm-notrade} for `top-$K$' procedures is given in Section \ref{sec:adaptr}, see Corollary \ref{cor-topsub}.

\begin{corollary} \label{cor-subopt}
In the setting of Theorem~\ref{thm-notrade}, for some $\alpha \in(0,1)$ and $b\in\RR$, let $\vphi_\alpha$ be any sparsity preserving procedure $\vphi_\alpha \in \mathcal{S}_{B}(\Theta_b)$ such that $\varliminf_{n}  \inf_{\te\in\Theta_b}\FDR(\te,\vphi_\alpha) \geq \alpha>0$. Then $\vphi_\alpha$ must miss the asymptotically minimax combined risk by at least  $\alpha$, that is,
\[ \varliminf_{n} \sup_{\te\in\Theta_b} \fr(\te,\vphi_\alpha) \geq \alpha+ \overline\Phi(b).\]
This is in particular the case of the BH procedure with fixed level $\alpha$, as defined in \eqref{defBH}.
\end{corollary}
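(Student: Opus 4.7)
The plan is short: since $\fr = \FDR + \FNR$, evaluating $\fr$ at any single $\te \in \Theta_b$ lower-bounds $\sup_{\Theta_b}\fr$. So I would combine the $\FNR$ lower bound of Theorem~\ref{thm-notrade} (available because $\vphi_\alpha \in \mathcal{S}_{B}(\Theta_b)$ by hypothesis) with the assumed uniform lower bound on $\FDR(\cdot,\vphi_\alpha)$, by selecting a single near-worst-case parameter for the $\FNR$.

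Concretely, fix $\eps>0$. By Theorem~\ref{thm-notrade} applied to $\vphi_\alpha$, for all $n$ large enough we have $\sup_{\te\in\Theta_b}\FNR(\te,\vphi_\alpha) \geq \overline\Phi(b) - \eps$, and hence one may pick $\te_n^\ast \in \Theta_b$ with
\[
\FNR(\te_n^\ast,\vphi_\alpha) \geq \overline\Phi(b) - 2\eps.
\]
The hypothesis $\varliminf_n \inf_{\te\in\Theta_b}\FDR(\te,\vphi_\alpha) \geq \alpha$ gives, for $n$ large, $\FDR(\te_n^\ast,\vphi_\alpha) \geq \alpha - \eps$. Adding these,
\[
\sup_{\te\in\Theta_b}\fr(\te,\vphi_\alpha) \;\geq\; \fr(\te_n^\ast,\vphi_\alpha) \;\geq\; \alpha + \overline\Phi(b) - 3\eps,
\]
so $\varliminf_n \sup_{\te\in\Theta_b}\fr(\te,\vphi_\alpha) \geq \alpha + \overline\Phi(b) - 3\eps$; letting $\eps\to 0$ closes the main inequality.

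To deduce the ``in particular'' clause for $\vphi_\alpha^{BH}$ at fixed level $\alpha$, I would check the two ingredients. First, by the Benjamini--Yekutieli identity under independence, $\FDR(\te,\vphi_\alpha^{BH}) = (1-|S_\te|/n)\alpha$; for $\te\in\Theta_b$ we have $|S_\te|=s_n$ and $s_n/n \to 0$ by \eqref{sparse}, so $\inf_{\te\in\Theta_b}\FDR(\te,\vphi_\alpha^{BH}) = (1-s_n/n)\alpha \to \alpha$, verifying the FDR hypothesis. Second, $\vphi_\alpha^{BH}$ is sparsity preserving over $\Theta_b$ (even over $\ell_0[s_n]$) for a suitable $B_n$ in the admissible range of Theorem~\ref{thm-notrade}, as recorded in Appendix~\ref{sec:sparsitypreserving}.

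The main obstacle is not in this corollary itself: granted Theorem~\ref{thm-notrade}, the argument is pure bookkeeping, the key observation being that one can \emph{simultaneously} make $\FNR$ nearly worst-case (via Theorem~\ref{thm-notrade}) and $\FDR$ nearly $\alpha$ (via the assumed uniform lower bound) at the \emph{same} parameter $\te_n^\ast$, because the $\FDR$ bound is uniform. The only mildly delicate verification is the sparsity preservation of BH at fixed level, for which one reduces to the identity $\FDR = (1-|S_\te|/n)\alpha$ together with control of the denominator $1\vee \sum_i \vphi_i^{BH}$; this is deferred to the appendix and requires no argument specific to the class $\Theta_b$.
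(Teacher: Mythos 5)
Your proof is correct and follows the same route the paper indicates: combine the FNR lower bound of Theorem~\ref{thm-notrade} with the assumed uniform FDR lower bound at a common near--worst-case parameter $\te_n^\ast$, and for BH invoke the identity \eqref{FDRBH} together with the sparsity-preserving property from Appendix~\ref{sec:sparsitypreserving}. The $\eps$-bookkeeping and the choice of an admissible $B_n\to\infty$ within $[2,e^{(\log(n/s_n))^{1/4}}]$ are handled correctly.
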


Corollary~\ref{cor-subopt} follows from Theorem~\ref{thm-notrade}, the fact that the BH procedure with fixed parameter is sparsity preserving (Section~\ref{sec:sparsitypreserving}) and the explicit expression for the FDR of the BH procedure, see \eqref{FDRBH}.

Finally, for completeness, let us mention that for a procedure which is not sparsity preserving, trading FDR for FNR is formally possible. A (somewhat degenerate) example is given in Section~\ref{sec:sparsitypreserving}: see Example~\ref{tradepossible}. 



\section{~~Sharp boundaries: arbitrary signal strengths and beyond Gaussian noise} \label{sec:mainmult}

This section presents more refined results, by relaxing the assumptions on both the signal strength and the noise. 
These results show exactly how the limiting risk depends on a measure of the signal strengths, in a general sequence model which allows for non-additive and non-Gaussian noise. This generalisation also sheds light on the previous Gaussian results, by revealing an underlying sufficient set of  assumptions for the proofs. In addition, we are also able to prove local, ``pointwise'', versions of the results, that we largely postpone to Section~\ref{sec:local} for clarity of presentation. 


\subsection{~~Extended noise assumption}

Recall from \eqref{eqn:generalnoisemodel}--\eqref{sparse} that we assume $X_i\sim f_{\theta_i}$, $i\leq n$, independently, with $(f_a : a \in \RR)$ a family of densities and $\theta\in\ell_0[s_n]$, with $s_n\to\infty$ and $n/s_n\to \infty$. Recall the notation $F_a $ and $\overline{F}_a$ therein.

\begin{assumption}\label{ass:generalnoise}
	There exists a constant $L$ such that each $F_a$ is $L$-Lipschitz. There exist sequences of positive numbers $a_n^*\to \infty$, $\delta_n\to 0$ such that
	\begin{align} \label{eqn:modelassumption2}
		(n/s_n)\overline{F}_0\brackets[\big]{a_n^*-\delta_n} \to \infty, \\
		\label{eqn:modelassumption3}
		(n/s_n)\overline{F}_0\brackets[\big]{a_n^*} \to 0.
	\end{align}
The density $f_0$ is continuous and positive on $\RR$.
Further assume one of
\begin{enumerate}[A]
	\item$\hspace{-.2cm}.$ \label{ass:location} $f_{-a}(-x)=f_a(x)$ for $a,x\in\RR$, 
	$\overline{F}_a(x)$ is  increasing in $a\in\RR$ and, for $a>0$,
	\begin{equation}\label{eqn:monotonicity-location} 
		f_{a}(x)/f_0(x) \text { is increasing in $x\in\RR$.}
	\end{equation}
	\item$\hspace{-.2cm}.$ \label{ass:scale} 
	$f_a(-x)=f_a(x)$ for all $a,x\in \RR$, $\overline{F}_a(x)$ is increasing in $a>0$ for $x>0$, and 
	for $a\neq 0$
	\begin{equation}\label{eqn:monotonicity-scale}
		f_{a}(x)/f_0(x) \text { is increasing in $x>0$.}
		\end{equation}
	\end{enumerate}
\end{assumption}
Assumption~\ref{ass:generalnoise} was partly inspired by \cite{rabinovichpreprint}. It is explored in depth in Section~\ref{sec:VerificationOfAssumption} and we report essential facts below.

\begin{remark}\label{rem:ass1} 
Assumption~\ref{ass:generalnoise}\ref{ass:location} is designed for location models:  the symmetry property of the densities, and monotonicity and Lipschitz continuity of the distribution functions  are automatic in a symmetric location model where $f_a(x)=f_0(x-a),$ $f_0(x)=f_0(-x)$ for some bounded positive density $f_0$. Assumption~\ref{ass:generalnoise}\ref{ass:scale} is designed for scale models: the symmetry property of the densities and monotonicity and Lipschitz continuity of the distribution functions  are again automatic if $f_a(x)=f_0(x/\sqrt{1+\abs{a}})$ and $f_0(x)=f_0(-x)$ for some bounded positive density $f_0$.
Next,  \eqref{eqn:modelassumption2}--\eqref{eqn:modelassumption3} are conditions on the `null' distribution $\ol{F}_0$ that allow for a sharp description of the minimax risk on the boundary, as in the previous Gaussian case.
More precisely, \eqref{eqn:modelassumption2} means that the expected number of nulls larger than $a_n^*-\delta_n$ is much larger than $s_n$, while \eqref{eqn:modelassumption3} means that the expected number of nulls larger than $a_n^*$ is $o(s_n)$. Hence, up to a concentration argument, and assuming that a non-trivial portion of the $s_n$ true signals can be recovered, \eqref{eqn:modelassumption2} and \eqref{eqn:modelassumption3} entail that the FDR of a thresholding based procedure $\varphi_t=\II_{\abs{X_i}\geq t}$ has a sharp decay from $1$ to $0$ when $t$ increases from $a_n^*-\delta_n$ to $a_n^*$. Thus, under Assumption~\ref{ass:generalnoise}, the optimal threshold should be at $a_n^*+o(1)$. This sharp transition is an essential phenomenon in this setting; it will be further illustrated in Section~\ref{sec:illustrating-tradeoff}. 

\end{remark}

We now give two illustrative examples of settings in which Assumption~\ref{ass:generalnoise} holds.

\begin{example}\label{example:SubbotinLocation}
Assumption~\ref{ass:generalnoise}\ref{ass:location} holds with $a_n^*= (\zeta \log(n/s_n))^{1/\zeta}$ for the Subbotin (generalised Gaussian) location model, with $f_a$ denoting the law of $X=a+\eps$, $\eps\sim \phi_\zeta$,
\begin{equation}\label{eqn:def:Subbotin-density} \phi_{\zeta}(x) = L_\zeta^{-1} e^{-\abs{x}^\zeta/\zeta}, \quad \zeta>1;
\end{equation} 
see Lemma~\ref{lem:assumptionok}.
In particular, this model includes standard Gaussian noise as the case $\zeta =2$, while the excluded case $\zeta=1$ would correspond to Laplace noise.
In this setting we will write $\overline{\Phi}_\zeta (x)= \int_x^\infty \phi_\zeta(t)\dif t$ and $\Phi_\zeta=1-\overline{\Phi}_\zeta$.
Note that when we write $a_n^*$ in reference to the Subbotin model, we will mean the value $(\zeta \log(n/s_n))^{1/\zeta}$ except where specified otherwise.
\end{example}

\begin{example}\label{example:SubbotinScale}
Assumption~\ref{ass:generalnoise}\ref{ass:scale} holds with $a_n^*= (\zeta \log(n/s_n))^{1/\zeta}$ for the Subbotin scale model, with $f_a$ denoting the law of 
$X=(1+|a|)^{1/2}\eps$, $\eps\sim \phi_\zeta$, with $\phi_\zeta$ given by \eqref{eqn:def:Subbotin-density},
 see Lemma~\ref{lem:assumptionok}.
\end{example}

 In this section, we also consider a more general parameter set, with different signal strengths.
For a vector $\ba=(a_1,\dots,a_{s_n})$ (implicitly indexed by $n$) with $a_j>0$ for $1\leq j\leq s_n$, define 
\begin{equation}\label{parametersetmultiscale}
\Theta(\ba,s_n)=\braces[\Big]{ \theta\in \ell_0[s_n] \: : \: 
	\exists\,  
	i_1, \ldots,  i_{s_n} \mbox{ all distinct, }  \abs{\theta_{i_j}} \geq a_j,\ 1\leq j \leq s_n}.
	\end{equation}
	Note this generalises the definition \eqref{singlesignal}: we recover $\Theta(\ba,s_n)=\Theta(a,s_n)$ if $\ba=(a,a,\dots,a)$. In general, we have only the inclusion $\Theta(\ba,s_n)\subseteq \Theta(\min_{1\leq j\leq s_n} a_j,s_n)$, so that this definition allows for considering a more subtle minimax tradeoff. A key quantity is
	\begin{equation}\label{eqn:LambdaN}
		\Lambda_n (\ba)=s_n^{-1} \sum_{j =1 }^{s_n} F_{a_j} \left(a_n^* \right);
	\end{equation}
	this can be seen roughly as a measure of the (lack of) signal strength for the parameter set $\Theta(\ba,s_n)$. 
In a slight abuse of notation, if $\theta$ is a vector whose non-zero entries have absolute values $a_1,\dots,a_{s_n}$ 
and recalling the notation $S_\theta=\braces{i: \theta_i\neq 0}$, we also write
\begin{equation}\label{eqn:def:Lambda_n(theta)}
	\Lambda_n(\theta) = 
	s_n^{-1}\sum_{i\in S_\theta} F_{\abs{\theta_i}}(a_n^*).
\end{equation}

We show in Theorem~\ref{th:minimax-multilevel} below that the asymptotic behaviour of $\Lambda_n(\ba)$ drives the minimax risk over the class $\Theta(\ba,s_n)$. We now give three examples of location models to illustrate how different signal shapes and values affect $\Lambda_n(\ba)$.

\begin{example}[Location model with single signal strength close to $a_n^*$]\label{ex:singlesignalstrengths}
A simple case is the Subbotin location model of Example~\ref{example:SubbotinLocation}, with  $\theta_i =  a_n^*+b$ for all $i\in S_\theta$, for some fixed $b\in \R$. Then $\theta$ lies on the boundary of the beta-min parameter set and $\Lambda_n$ in \eqref{eqn:def:Lambda_n(theta)} reads $\Lambda_n(\theta) = \overline{\Phi}_{\zeta}(b)$. This corresponds to the  first case in Figure \ref{fig:mt_pb}. 
\end{example}


\begin{example}[Location model with two signal strengths near $a_n^*$]\label{ex:twosignalstrengths}
	For fixed $x,y\in \R^2$, let 
	$M=\max(x,y),$ $m=\min(x,y)$.
	Consider the Subbotin location model of Example~\ref{example:SubbotinLocation}, with $\theta_i = a_n^*+M$ for $\lfloor s_n \beta\rfloor$ coefficients in $S_\theta$ and  $\theta_i=a_n^*+m$ for the other coefficients in $S_\theta$, where
	$\beta\in (0,1)$ is a given proportion of stronger signal. 
	Then inserting $F_a=\Phi_\zeta(\cdot-a)$ in \eqref{eqn:def:Lambda_n(theta)} we obtain 	
		\begin{align*} 
		\Lambda_n(\theta)&=\frac{\lfloor s_n \beta\rfloor}{s_n} \overline{\Phi}_\zeta(M)+\frac{s_n-\lfloor s_n \beta\rfloor}{s_n} \overline{\Phi}_\zeta(m)\\
		& \to \beta \overline{\Phi}_\zeta(M)+(1-\beta) \overline{\Phi}_\zeta(m) =: \Lambda_{\infty},
	\end{align*}
as $n\to \infty$.	
This corresponds to the second picture of Figure \ref{fig:mt_pb}. Level sets of $\Lambda_{\infty}=\Lambda_{\infty}(x,y)$ for this $\theta$ 
	are displayed in Figure~\ref{fig:generalboundary} below. 
\end{example}

In the previous examples, we have taken signals around the critical threshold $a_n^*$. We can allow more generally for arbitrary nonzero signals.

\begin{example}[Gaussian location model, mixed signal] Consider to fix ideas the Gaussian
model of Example \ref{ex:GaussianSequence} and suppose the nonzero entries of $\theta$ are given by $(Ai/s_n)\sqrt{2\log(n/s_n)}$ for $i=1,\ldots,s_n$ and $A\ge 1$ fixed. This is a special example of the third case in  Figure \ref{fig:mt_pb}. Then 
\[ \La_n(\theta) = A^{-1} + o(1). \]  
(to check this, one can for example separate signal coordinates $i$'s into three subsets delimited by $(s_n/A)(1\pm r_n)$ for $r_n=o(1)$ suitably slowly). In this example, what contributes to $\La_n(\te)$  is the proportion of signals below the (asymptotically) optimal threshold $a_n^*=\sqrt{2\log(n/s_n)}$.
\end{example}


\subsection{~~Main results in the general setting}
\label{sec:genresults}

We extend Theorems~\ref{thmgeneric},~\ref{thm-adapt-r} and \ref{thm-notrade}, respectively, to the more general noise and parameter set 
 introduced in this section.

\begin{theorem}\label{th:minimax-multilevel}
Consider the sparse sequence model \eqref{eqn:generalnoisemodel}--\eqref{sparse} with Assumption~\ref{ass:generalnoise}. 
Consider a vector $\ba=(a_1,\dots,a_{s_n})\in\RR_+^{s_n}$, $\Theta(\ba,s_n)$ defined by \eqref{parametersetmultiscale} and $\Lambda_n(\ba)$ defined by \eqref{eqn:LambdaN}. Then
\begin{itemize} 
\item Under Assumption~\ref{ass:generalnoise}\ref{ass:location}, 
\[\inf_\vphi \sup_{\theta \in \Theta(\ba,s_n)} \mathfrak{R}(\theta,\vphi) = \Lambda_n(\ba) +o(1);\]
\item Under Assumption~\ref{ass:generalnoise}\ref{ass:scale}, 
\[\inf_\vphi \sup_{\theta \in \Theta(\ba,s_n)} \mathfrak{R}(\theta,\vphi) = 2\Lambda_n(\ba)-1 +o(1).\]
\end{itemize}
In each case the risk bound is achieved by a thresholding procedure $\vphi_i(X)=\II\braces{\abs{X_i}>a_n^*}$ with $a_n^*$ as in Assumption~\ref{ass:generalnoise}.
\end{theorem}
Theorem~\ref{th:minimax-multilevel} in particular applies in the Subbotin location model of Example~\ref{example:SubbotinLocation} with $\overline{F}_0=\overline{\Phi}_\zeta$, $a_n^*=(\zeta\log(n/s_n))^{1/\zeta}$ and a suitable choice of $\delta_n$, see Lemma~\ref{lem:assumptionok}. 

 {To prove this theorem, an easy heuristic can be built from Remark~\ref{rem:ass1}, since by the reasoning therein only a threshold $t=a_n^*+o(1)$ leads to non-trivial FDR and the FNR for that threshold is precisely of order $\Lambda_n(\ba) $ and $2\Lambda_n(\ba)-1$ (under Assumption~\ref{ass:generalnoise}\ref{ass:location} and ~\ref{ass:generalnoise}\ref{ass:scale}, respectively). 
 However, the rigorous proof is significantly more involved; in particular, the lower bound argument is more sophisticated, because we do not restrict to thresholding procedures, see Section~\ref{sec:proofs}.}


Recall our motivating question from Figure~\ref{fig:mt_pb}, of whether all nonzero signals being equal (left panel) is easier or harder for multiple testing than half taking one value and half another value (middle panel). 
Applying Theorem~\ref{th:minimax-multilevel} in Example~\ref{ex:twosignalstrengths}, the asymptotic difficulty of testing is governed by the quantity $\Lambda_\infty=\Lambda_\infty(x,y)$. See Section~\ref{sec:illustrating-tradeoff} for some examples of how $\Lambda_\infty$ varies with $x$ and $y$, but note immediately that $\Lambda_\infty$ is smaller than the bound $\overline{\Phi}_\zeta(m)$ attainable using that $\Theta((a_n^*+m,\dots,a_n^*+M,\dots),s_n)$ is a subset of $\Theta(a_n^*+m,s_n)$.

The procedure exhibited to prove the upper bound in Theorem~\ref{th:minimax-multilevel}, like that of Theorem~\ref{thmgeneric}, is an `oracle' procedure requiring knowledge of the $a_n^*$, which typically depends on the sparsity $s_n$. 
The next result gives an upper bound on the pointwise and uniform risk of the Benjamini--Hochberg procedure under Assumption~\ref{ass:generalnoise}; this will yield adaptivity to $s_n$ in many settings. Let us introduce the following condition on the level $\alpha=\alpha_n$ of the BH procedure: 
\begin{equation}\label{equcondalpha}
\frac{3n\overline{F}_0(a_n^*)}{s_n(1-\Lambda_n(\theta))} 
\le \alpha_n\leq \min\braces[\Big]{1,\frac{n\overline{F}_0(a_n^*-\delta_n)}{s_n}}\,\mbox{ for $n$ large enough}.
\end{equation}
\begin{theorem} \label{thm:BHpointwise}
	Consider the sparse sequence model \eqref{eqn:generalnoisemodel}--\eqref{sparse} and grant Assumption~\ref{ass:generalnoise}\ref{ass:location}. 
Then the BH procedure taken at a level $\alpha=\alpha_n$ obeying \eqref{equcondalpha} satisfies, for all $\theta\in \R^n$ with $\abs{S_\theta}=s_n$, for $n$ large enough,
\begin{equation}\label{eqn:BHrisk} \fr(\te,\vphi) \leq \La_n(\theta) + \alpha_n + \exp\brackets[\Big]{-\frac{(1-\Lambda_n(\theta))^2}{32}s_n}.\end{equation}
It follows that the BH procedure achieves the bound of Theorem~\ref{th:minimax-multilevel}, 
\[ \sup_{\theta \in \Theta(\ba,s_n)} \mathfrak{R}(\theta,\vphi) \leq \Lambda_n(\ba) + o(1).\]
In addition, if $\Lambda_n(\ba)$ is bounded away from 1 we have the concrete expression $\alpha_n + \exp\brackets[\big]{-(1-\Lambda_n(\ba))^2 s_n/32}$ for the $o(1)$ terms.
Finally, if we instead grant Assumption~\ref{ass:generalnoise}\ref{ass:scale}, the same bounds hold with $\Lambda_n(\cdot)$ replaced by $2\Lambda_n(\cdot)-1$.

\end{theorem}

Theorem~\ref{thm:BHpointwise} is proved in Section~\ref{sec:proofBHfirsth}. It implies that the risk bound of Theorem~\ref{th:minimax-multilevel} can be attained adaptively to $s_n$ in any case where a valid level $\alpha_n$ can be chosen. 
Typically, the condition \eqref{equcondalpha} is achieved for $\Lambda_n(\theta)$ bounded away from 1 by choosing $\alpha_n\to 0$ with $(n/s_n)\overline{F}_0(a_n^*)/\alpha_n\to 0$ (and noting that $(n/s_n)\overline{F}_0(a_n^*-\delta_n)\geq 1$ for $n$ large). 
	For instance, 
 in the Subbotin case with parameter $\zeta$, under the assumption of polynomial sparsity \eqref{polspa}, condition \eqref{equcondalpha} can be achieved if $\alpha_n=o(1)$ satisfies $\log(1/\alpha_n) = o (( \log n )^{1-1/\zeta})$, see Lemma~\ref{lem:assumptionok} and Remark~\ref{rem:polysparse}. 
 In addition, choosing $\alpha_n\asymp (\log n)^{-(1-1/\zeta)}$ leads to the convergence rate
 $
 \brackets[\big]{\fr(\te,\vphi) - s_n^{-1}\sum_{i\in S_\theta} F_{\abs{\theta_i}}((\zeta \log(n/s_n))^{1/\zeta})} \lesssim\frac{\log\log n}{(\log n)^{1-1/\zeta}} + e^{-cs_n},
 $
 for some constant $c>0$, see again Remark~\ref{rem:polysparse}. Note that this shares similarities with the rate obtained in \cite{neuvialroquain12}  for the classification risk of BH procedure.

\begin{remark}
	The inequality in \eqref{eqn:BHrisk} is in fact an \emph{equality}, up to $o(1)$ terms, so that the result quantifies fairly precisely the risk actually obtained by the Benjamini--Hochberg procedure in a sparse sequence model setting: see Remark~\ref{rem:BH-pointwise-lowerbound} after Theorem~\ref{thm:pointwise-risk}. 
\end{remark}

%

Let us remark that the $\ell$-value procedure also achieves the bound in the Gaussian case with multiple signal levels: see Section~\ref{sec:proof-lvals}. We conjecture that the $\ell$-value procedure also achieves the bound in the Subbotin model, and possibly even in the general noise model under Assumption~\ref{ass:generalnoise}, but proving this would require substantial extra technical work.


The following result, proved in Section~\ref{thm-notrade-Subbotin}, shows that the FNR is the dominating term at the boundary when focusing on sparsity preserving procedures even in the general framework of this section. 
Recall that the BH procedure is sparsity preserving (proved in Section~\ref{sec:sparsitypreserving}), hence achieves the bounds of Theorem~\ref{thm-notrade-Subbotin} adaptively  under the conditions of Theorem~\ref{thm:BHpointwise}.


\begin{theorem}\label{thm-notrade-Subbotin}
Consider the settings of Theorem~\ref{th:minimax-multilevel}. 
 For any sequence $B=(B_n)_n$ with $B_n^2\leq \tfrac{1}{3}(n/ s_n)\overline{F}_0(a_n^*-\delta_n)$ and $\varliminf_n B_n >1$, %
 let $\mathcal{S}_{B}$ denote the set of sparsity preserving procedures over the set $\Theta=\Theta(\ba,s_n)$, as in Definition~\ref{def:spapres}.
 Then the conclusions of Theorem~\ref{th:minimax-multilevel} hold with $\mathfrak{R}(\theta,\vphi)$ replaced by $\FNR(\theta,\vphi)$ if the infimum is taken only over $\vphi\in \mathcal{S}_{B}$.
\end{theorem}
In the Subbotin case with parameter $\zeta>1$, under the assumption of polynomial sparsity \eqref{polspa}, the condition $B_n^2\leq \tfrac{1}{3}(n/ s_n)\overline{F}_0(a_n^*-\delta_n)$ is satisfied if $B_n^2 \leq \exp((\log n)^\upsilon)$ for some $\upsilon\in (0,1-1/\zeta)$, see Remark~\ref{rem:polysparse}.

Let us finally mention that this result entails the sub-optimality of any sparsity preserving procedure with FDR asymptotically above some $\alpha>0$. That is, Corollary~\ref{cor-subopt} also extends to the more general framework of this section.

\subsection{~~Testability}\label{sec:testability}
While Theorem~\ref{th:minimax-multilevel} is stated as a minimax result over the set $\Theta$, it is possible to formulate results for a given collection of signal strengths only, in a `pointwise in $\theta$' sense. This completely solves the question of {\em characterising} the difficulty of the testing problem for an arbitrary sparse vector $\theta$. We postpone the rigorous statements to the Section~\ref{sec:local}, see Theorem ~\ref{thm:pointwise-risk} therein. Given its importance we formulate a corollary thereof.
	 
\begin{corollary}[Difficulty of testing for arbitrary sparse $\theta$] \label{cortest}	
Fix $\alpha\in(0,1)$.	Suppose we know $\theta\in\ell_0[s_n]$ has $s_n$ nonzero coordinates with {\em arbitrary} absolute values $\abs{\theta_j}, j\in S_\theta$, whose \emph{values} may be known or unknown but whose \emph{positions and signs} are unknown. Then there exists a multiple testing procedure with $\fr$--risk asymptotically less than $\alpha$  for this problem  if and only if, for  $a_n^*$ as in Assumption~\ref{ass:generalnoise}, and under~\ref{ass:generalnoise}\ref{ass:location},
	\[ \varlimsup_{n}\ \frac{1}{s_n} \sum_{j\in S_{\theta}} F_{\abs{\theta_j}}(a_n^*) \le \alpha,\]
and respectively smaller than or equal to $(1+\alpha)/2$ under~\ref{ass:generalnoise}\ref{ass:scale}. 
\end{corollary}
	Hence, given different arbitrary shapes of signals (again as in Figure~\ref{fig:mt_pb}), to compare the difficulty of the corresponding multiple testing problems, it suffices to compare the corresponding limsups in the last display: the value of the limsup characterises the (asymptotic) difficulty of the multiple testing problem for any \emph{arbitrary} $\theta$ with $s_n$ nonzero coordinates.

\section{~~Illustrations} \label{sec:simu}

In this section, we present numerical illustrations for the results stated above, as well as provide some intuition behind these.

\subsection{~~Illustrating FDR/FNR tradeoff} \label{sec:illustrating-tradeoff}

Theorems~\ref{thm-notrade}~and~\ref{thm-notrade-Subbotin} establish that the FNR is asymptotically the driving force for optimal procedures: any optimal (sparsity preserving) procedure has an FDR vanishing when $n$ goes to infinity and only the FNR matters in the minimax risk.

\begin{figure}[h!]
\begin{center}
\begin{tabular}{cc}
$\zeta=1.5$ & $\zeta=2$\vspace{-1cm}\\
\includegraphics[scale=0.5]{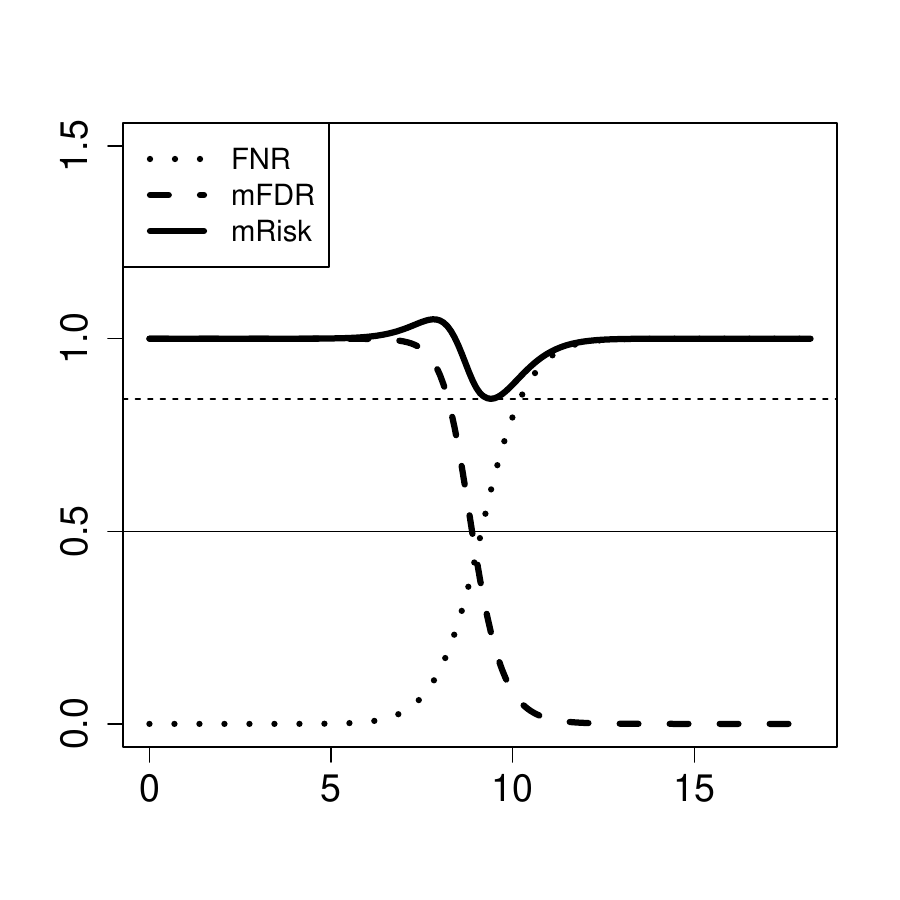}&
\includegraphics[scale=0.5]{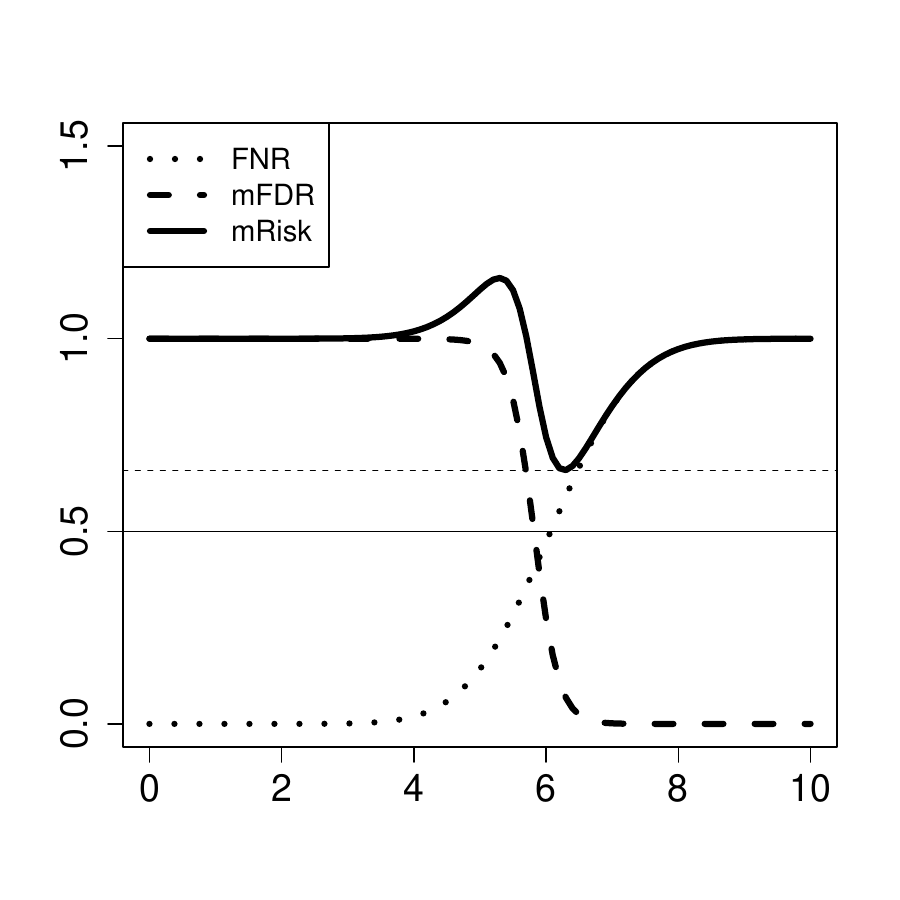}\vspace{-5mm}\\
$\zeta=3$ & $\zeta=5$\vspace{-1cm}\\
\includegraphics[scale=0.5]{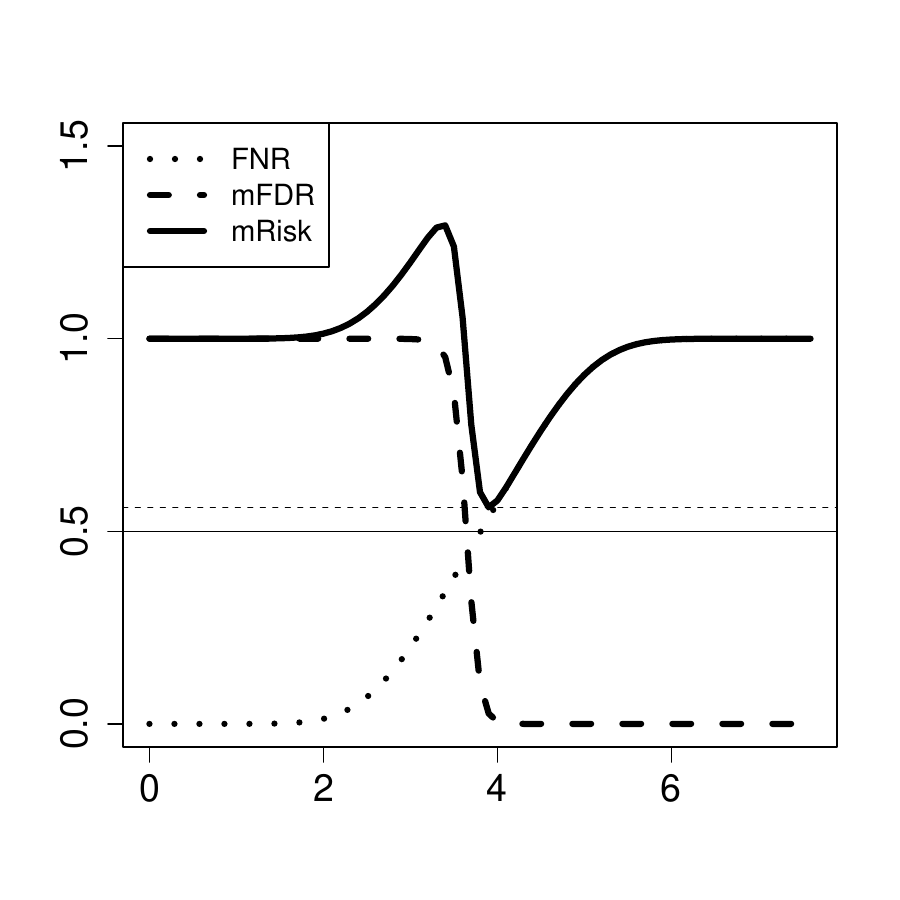}&
\includegraphics[scale=0.5]{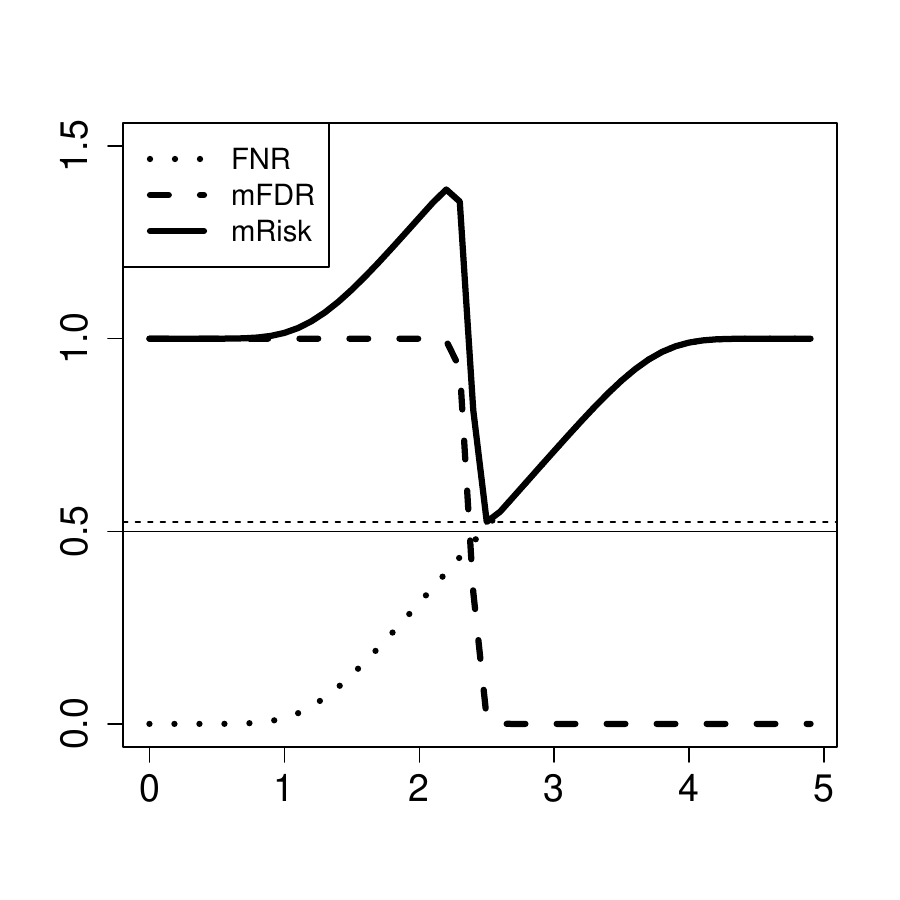}\vspace{-5mm}
\end{tabular}
\end{center}
\caption{Finite-sample mFDR \eqref{equ:mFDRt}, FNR \eqref{equ:FNRt} and $\mR$ \eqref{equ:mRt} for a thresholding procedure $\varphi_t=\II_{\abs{X_i}\geq t}$, as a function of the threshold $t$ ($x$-axis). Here, $n=10^{10}$, $s_n=10^2$. 
 The solid horizontal line marks the asymptotic minimax risk of $1/2$, and is approached by the dashed horizontal denoting $\mR(\vphi)$ for $\vphi$ an optimal thresholding procedure.
 \label{fig:mFDRvsFNR}}
\end{figure}

This phenomenon is illustrated in Figure~\ref{fig:mFDRvsFNR} for thresholding based procedures $\varphi_t=\II_{\abs{X_i}\geq t}$, $t\in \RR$.
For ease of computations, instead of the FDR, we consider here the closely related  marginal FDR (corresponding to the ratio of expectations in \eqref{deffdr}), denoted by mFDR. 
Hence, in the Subbotin setting of Example~\ref{example:SubbotinLocation}, we consider the marginal  risk 
\begin{align}
\mR(\varphi_t)&=\mFDR(\varphi_t)+\FNR(\varphi_t);\label{equ:mRt}\\
\FNR(\varphi_t)&= \overline{\Phi}_\zeta(t-a_n^*) + \overline{\Phi}_\zeta(t+a_n^*);\label{equ:FNRt}\\
\mFDR(\varphi_t)&=\frac{2(n-s_n)\overline{\Phi}_\zeta(t)}{2(n-s_n)\overline{\Phi}_\zeta(t)+s_n (1-\FNR(\varphi_t))},\label{equ:mFDRt} 
\end{align}
for which the parameter $\theta$ is chosen on the border of $\Theta_b=\Theta(a_b,s_n)$ (for $b=0$) as follows: $\theta_i=a_n^*=\{\zeta\log{n/s_n}\}^{1/\zeta}$ if $1\leq i\leq s_n$ and $\theta_i=0$ otherwise. (It can be shown that our main theorems also hold with the marginal risk $\mR$ instead of the original risk $\fr$.)
 
 We can make the following general comments: first, as the threshold $t$ increases (along the $x$-axes in Figure~\ref{fig:mFDRvsFNR}), fewer rejections are made, so that $\FNR(\varphi_t)$ increases with $t$ and $\mFDR(\varphi_t)$ decreases, hence there is a tradeoff for the finite sample (marginal) risk $\mR(\varphi_t)$: the minimum is displayed with the light dashed horizontal line. 
 This finite sample risk can be compared to its asymptotic counterpart (thin solid horizontal line).  We note that the asymptotic regime is not reached for the current choice of $n$, $s_n$ if $\zeta=1.5$, but is approached as $\zeta$ increases, with almost a  perfect matching when $\zeta=5$. This is well expected from the convergences rates, for example obtainable in Theorem~\ref{thm:BHpointwise}, that are faster for $\zeta$ larger
 so that the case $\zeta=5$ well approximates the  asymptotic picture.  
 Second, when the asymptotics are (close to being) reached, we see that the dominating part in the risk at the point where the risk is minimum is the FNR, as Theorems~\ref{thm-notrade}~and~\ref{thm-notrade-Subbotin} establish. The pictures above give an  interpretation of these results: 
 since the transition from $0$ to $1$ is much more abrupt for the (m)FDR,  we should make the (m)FDR close to $0$ to make a good tradeoff; the mFDR is close to a step function, so achieving $\mFDR < 1-\eps$ requires essentially the same threshold as achieving $\mFDR< \eps$, hence to make the optimal tradeoff will require mFDR close to zero.

This sharp transition can be seen theoretically from Assumption~\ref{ass:generalnoise} (see also Remark~\ref{rem:ass1}); indeed it follows from \eqref{equ:FNRt}--\eqref{equ:mFDRt} that the mFDR of the $\vphi_t$ is, with $\overline{F}_0 = \overline{\Phi}_\zeta$,
\[ \frac{1}{1+ \left[1-\overline{F}_0(t-a_n^*)+o(1)\right] \left[2 ((n-s_n)/s_n)\overline{F}_0(t)\right]^{-1} }.\] 
By \eqref{eqn:modelassumption2}, this tends to zero for $t=a_n^*$, and by \eqref{eqn:modelassumption3}, it tends to 1 for $t=a_n^*-\delta_n$.


\subsection{~~Illustrating minimax risks for two signal strengths}
%
The asymptotic minimax risk 
 found in Section~\ref{sec:mainmult} is a function of the multiple signal strengths that we propose to illustrate in this section. For simplicity, we focus on the case of the Subbotin location model with two signal strengths, see Example~\ref{ex:twosignalstrengths}, which corresponds to considering the functional
\begin{equation}\label{Lambdainftyex}
 \Lambda_{\infty}:(x,y)\in \R^2 \mapsto   \Lambda_{\infty}(x,y)=\beta \overline{\Phi}_\zeta(\max(x,y))+(1-\beta) \overline{\Phi}_\zeta(\min(x,y)).
\end{equation}



\begin{figure}[h!]
\begin{center}
\begin{tabular}{ccc}
\vspace{-1cm}
\rotatebox{90}{\hspace{-3cm}$\zeta=2$}&$\beta=1/2$ &$\beta=1/4$\\
\vspace{-2cm}
&\hspace{-5mm}\includegraphics[scale=0.6]{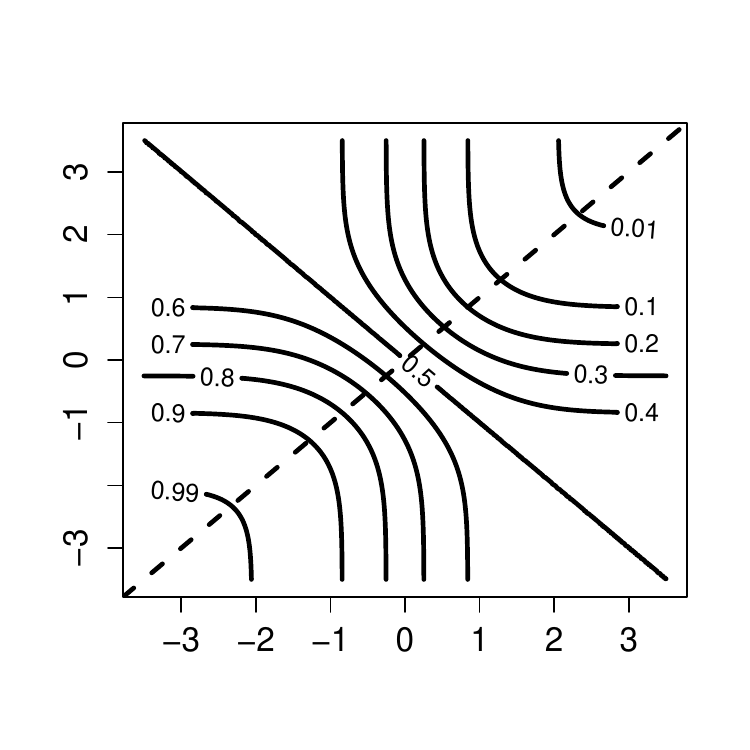}&\hspace{-1cm}\includegraphics[scale=0.6]{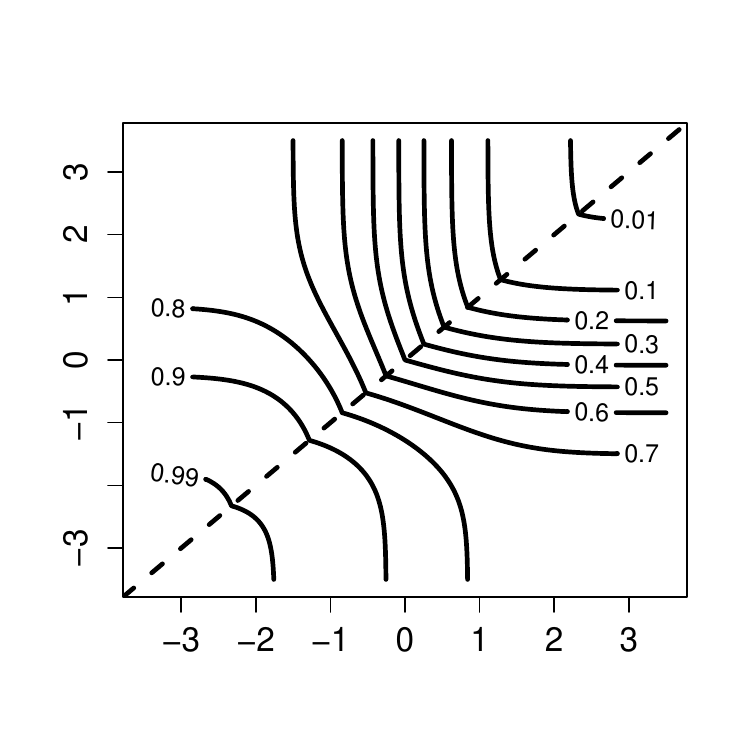}\\
\vspace{-1cm}
\rotatebox{90}{\hspace{3.6cm}$\zeta=4$}&\hspace{-5mm}\includegraphics[scale=0.6]{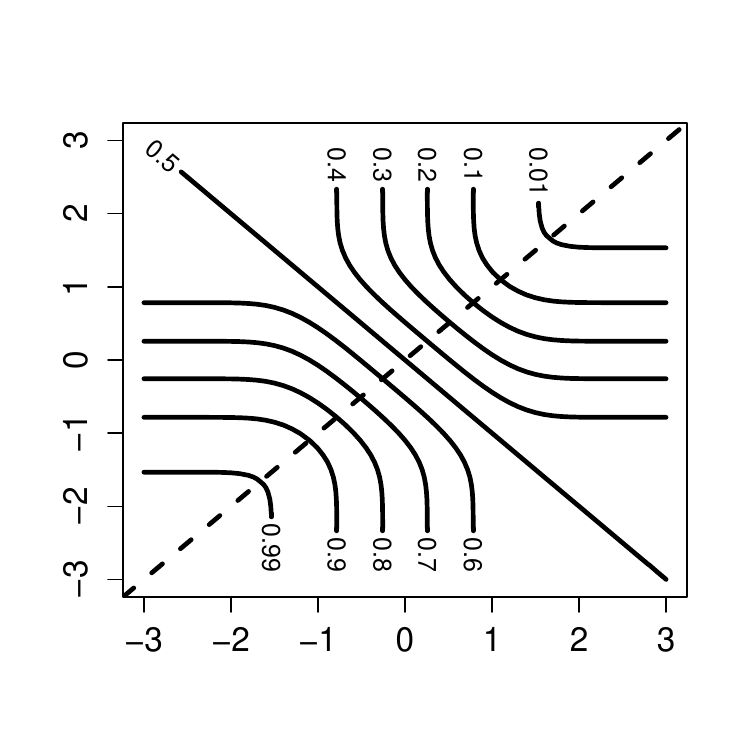}&\hspace{-1cm}\includegraphics[scale=0.6]{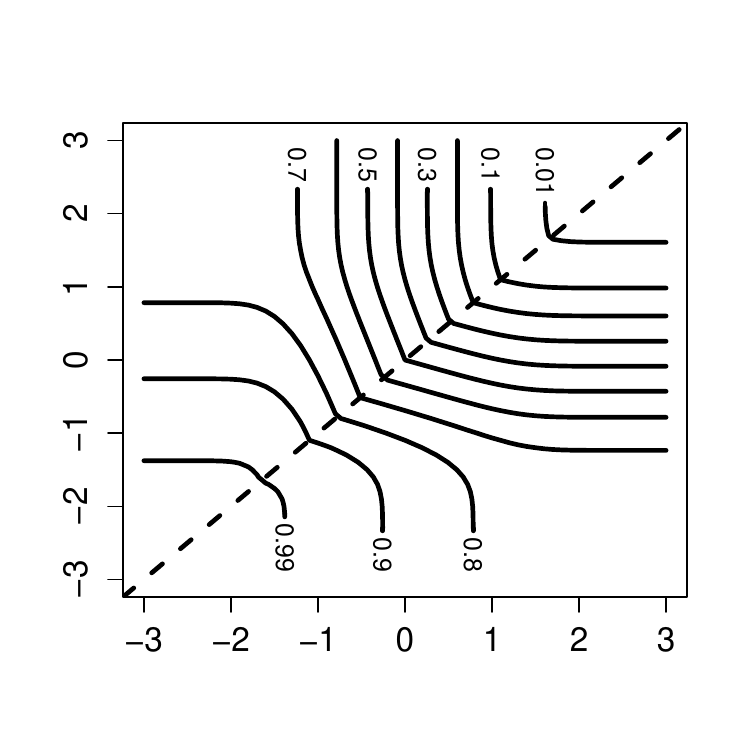}
\end{tabular}
\caption{Level sets of the asymptotic boundary, that is, of the function $\Lambda_\infty(x,y)$ as defined by \eqref{Lambdainftyex} in the case of two signal strengths, with $\lfloor s_n \beta\rfloor$ nonzero means equal to $\{\zeta\log{n/s_n}\}^{1/\zeta}+\max(x,y)$ and $s_n-\lfloor s_n \beta\rfloor$ nonzero means equal to $\{\zeta\log{n/s_n}\}^{1/\zeta}+\min(x,y)$. Left: $\beta=1/2$; right: $\beta=1/4$. Top: Gaussian noise; Bottom: Subbotin noise with $\zeta=4$. \label{fig:generalboundary}}
\end{center}
\end{figure}
Level sets of this function are displayed  in Figure~\ref{fig:generalboundary} for $\beta\in \{1/4,1/2\}$ and $\zeta\in \{2,4\}$. 
As we can see, while the behaviour on the diagonal $x=y$ matches that of the single signal strength case, the asymptotic minimax risk has various behaviours off this diagonal, when there are two different signal strengths. 
 For instance, when $\beta=1/2$, we see that the signal strengths $(x,-x)$ and $(0,0)$ are equally risky ($\La_\infty(0,0)=\La_\infty(x,-x)$ for $x>0$) while $(1,3)$ is more risky than $(2,2)$ (this can be shown by explicit computation or convexity).
In general, the shape of the level sets depends on the proportion $\beta$ and on $\zeta$. 
First, $\beta$ strongly affects the level sets: $\Lambda_{\infty}$ is larger for $\beta=1/4$ than for $\beta=1/2$ off the diagonal $x=y$. This is to be expected because when $\beta=1/4$, only a proportion $1/4$ of the nonzero means are equal to the larger value $\{\zeta\log{n/s_n}\}^{1/\zeta}+\max(x,y)$ while the other nonzero values are equal to $\{\zeta\log{n/s_n}\}^{1/\zeta}+\min(x,y)$. This is clearly a less favorable situation compared to the case where the proportions are balanced ($\beta=1/2$). 
Second, $\zeta$ also affects the level sets (although less severely): increasing $\zeta$ makes the level sets flatter in the center of the $(x,y)$-picture. This difference comes from the fact that the tails of the $\zeta$-Subbotin distribution are lighter for larger values of $\zeta$.


\begin{figure}[h!]
\begin{center}
\hspace{-10mm}
\includegraphics[scale=0.45]{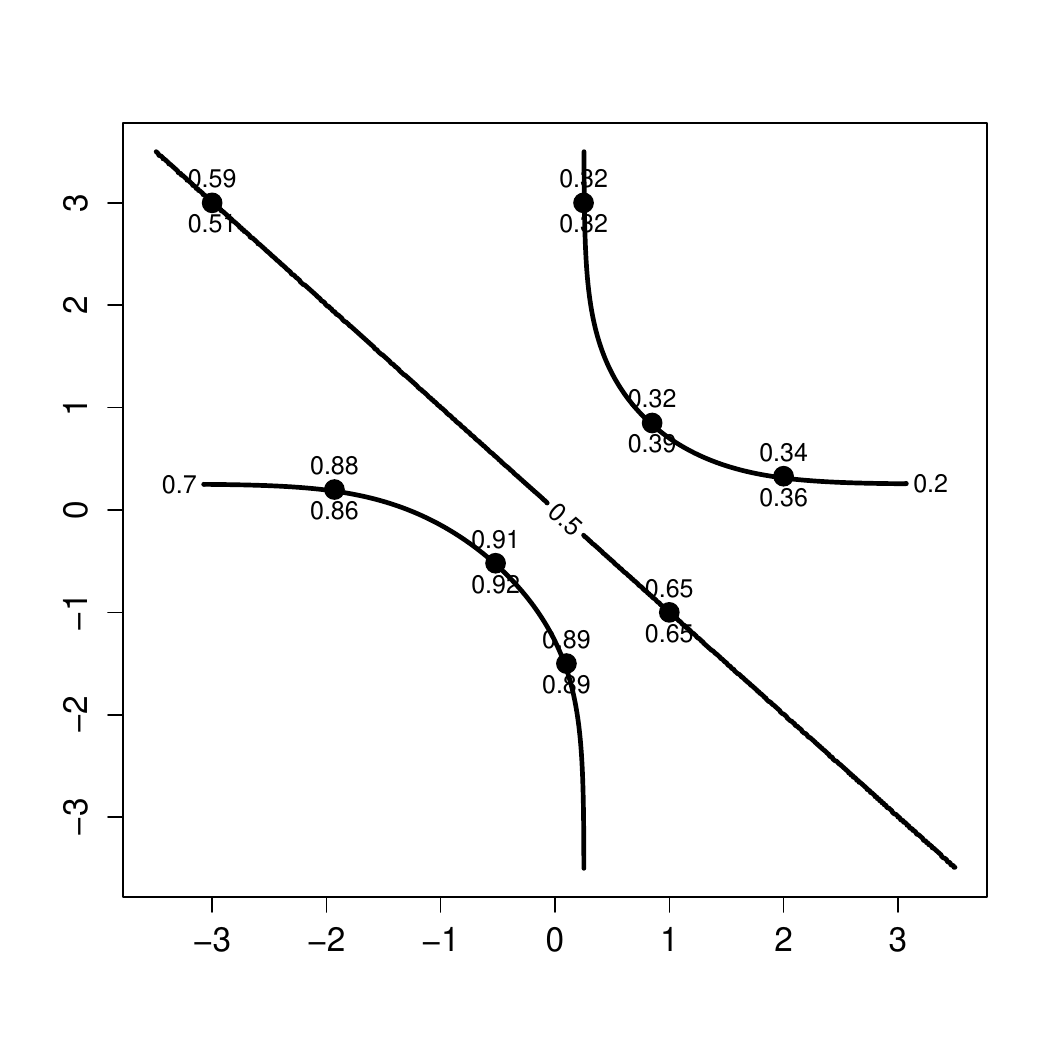}
\hspace{-10mm}
\vspace{-1cm}
\includegraphics[scale=0.45]{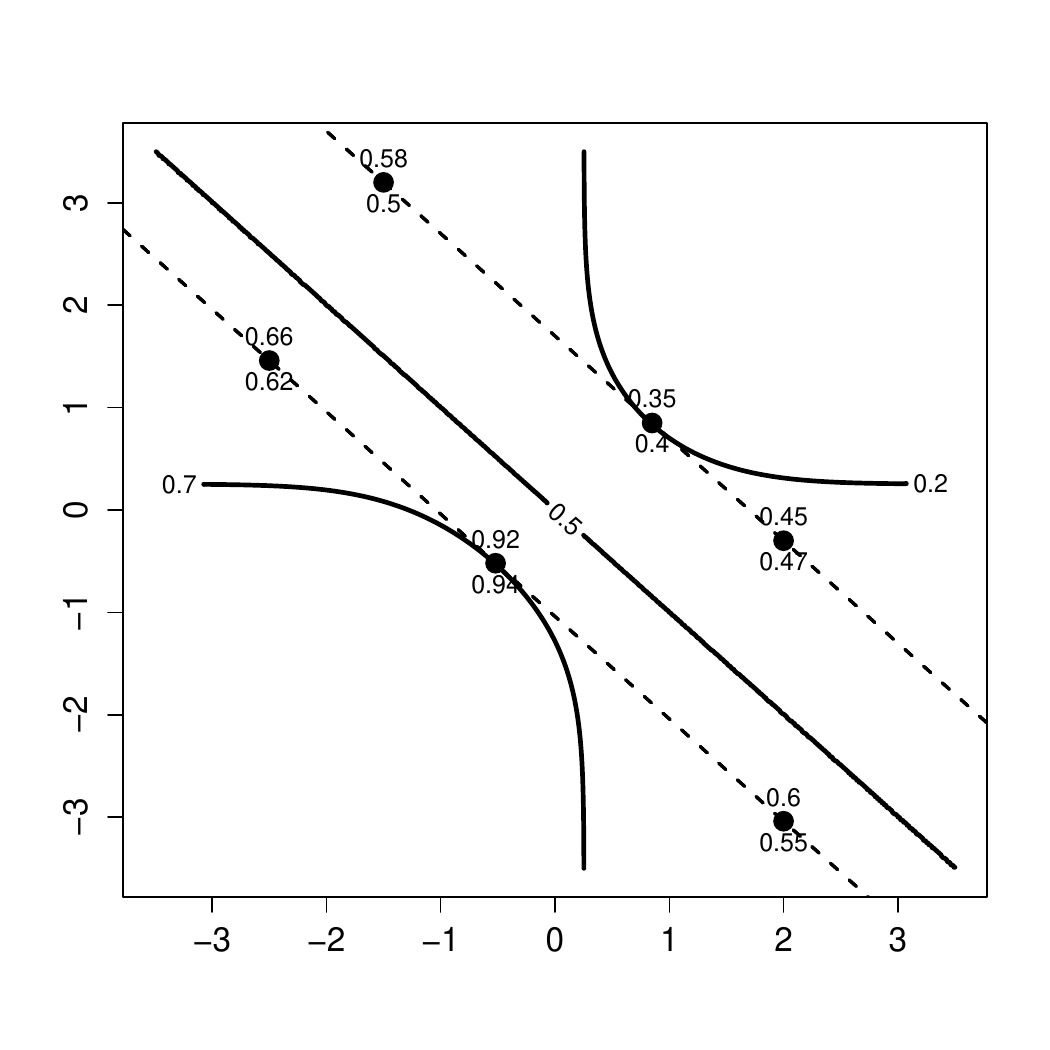}
\caption{Same as Figure~\ref{fig:generalboundary} (top-left, Gaussian case with $\beta=1/2$) with finite sample risk ($n=10^6$, $s_n=20$) at some particular configurations (displayed by black dots). The risk is computed via $100$ Monte-Carlo simulations. Two (asymptotically) minimax procedures are implemented:  the $\ell$-value procedure ($t=0.3$, risk displayed below each dot) and the BH procedure ($\alpha=0.1$, risk displayed above each dot). Left: dots are located on the $\Lambda_\infty$-level sets of values in $\{0.7,0.5,0.2\}$ of the boundary function. Right: dots are located on lines such that the average of  $x$ and $y$ is kept constant (not $\Lambda_\infty$-level sets). \label{fig:generalboundarywithpoints}}
\end{center}
\end{figure}

Figure~\ref{fig:generalboundarywithpoints} further illustrates how the (asymptotic) level sets of $\Lambda_{\infty}$ are approached for large finite samples: it displays some finite-sample risks  $\mathfrak{R} (\theta,\vphi)$ ($n=10^6$, $s_n=20$) of the minimax procedures described in Section~\ref{sec:popular}: $\ell$-value for $t=0.3$ and BH procedure at level $\alpha=0.1$, for some parameter $\theta$ corresponding to configurations  $(x,y)$ taken from the Gaussian and $\beta=1/2$ plot (top-left panel of Figure~\ref{fig:generalboundary}, see again Example~\ref{ex:twosignalstrengths}). 
%
These pointwise risks are estimated via $100$ Monte-Carlo simulations.
As expected from the slow convergence already discussed above, the finite sample risk has not yet converged to the minimax risk. However, we can see that the global monotonicity of the risk is maintained. More precisely, the left panel of Figure~\ref{fig:generalboundarywithpoints} displays the finite sample risks at points taken along asymptotic level sets. We can see that the values reported for the finite sample risks are  near-constant along asymptotic level sets (up to the Monte-Carlo errors), suggesting that the shapes of the finite sample level sets are close to those of their asymptotic counterparts. One exception for which the convergence appears to be slower is for configuration $(x,y)=(-3,3)$: non-asymptotically this configuration seems to be easier than for example $(x,y)=(1,-1)$ (although asymptotically equivalent). One possible explanation is that an extreme value of $y$ makes it easily detectable for finite $n$. 
On the other hand, the right panel of Figure~\ref{fig:generalboundarywithpoints} displays the finite sample risks for points taken along lines which are not asymptotic level sets. The results are markedly different from the left panel: the values vary much more along these lines. Hence, these simple lines, which are not asymptotic level sets,  are also far from being finite-sample levels sets. This suggests that the functional $\Lambda_{\infty}$ can be used for comparing testing difficulties of given collections of signal strengths  for $n$ large.

\section{~~Extensions to other risks: FDR-controlling procedures and classification} \label{sec:orisk}
Results in this section are for simplicity stated in the Gaussian location model under beta-min conditions of Section~\ref{sec:main}, but versions also exist in the general  setting of Section~\ref{sec:mainmult}.
\subsection{~~Combined testing risk with given FDR control}
The following proposition is easily obtained from our results, but particularly relevant for multiple testing since common intuition says that FDR can be traded for FNR and vice-versa.  
From this perspective, an interesting alternative risk to $\fr$ is, with $z_+=z\vee 0=\max(z,0)$,
\[ \mathfrak{R}_\alpha(\theta,\vphi) = (\FDR(\theta,\vphi) - \alpha)_+ +\FNR(\theta,\vphi).\]
Proposition~\ref{prop-riskt} shows that allowing for some false discoveries by targeting $\mathfrak{R}_\alpha\to 0$ rather than $\mathfrak{R}=\mathfrak{R}_0\to 0$ does not allow for weaker boundary conditions.
\begin{prop} \label{prop-riskt}
For $\alpha\in[0,1)$ and a fixed real $b$, consider the set of sparsity preserving procedures $\mathcal{S}_{B}(\Theta_b)$ as in Definition~\ref{def:spapres}. Under the conditions of Theorem~\ref{thm-notrade},
\[ \inf_{\vphi\in \mathcal{S}_{B}(\Theta_b)}\sup_{\te\in\Theta_b}\, \mathfrak{R}_\alpha(\theta,\vphi)= \overline\Phi(b)+o(1).\]
\end{prop}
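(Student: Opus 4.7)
The plan is to sandwich the minimax $\mathfrak{R}_\alpha$-risk between matching lower and upper bounds, both of which flow directly from results already established in the paper.

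For the lower bound, I would use the pointwise inequality $\mathfrak{R}_\alpha(\theta,\vphi) \geq \FNR(\theta,\vphi)$, which is immediate since the first summand $(\FDR(\theta,\vphi)-\alpha)_+$ is non-negative. Restricting the infimum to sparsity-preserving procedures and applying Theorem~\ref{thm-notrade} yields
\[
\inf_{\vphi\in\mathcal{S}_B(\Theta_b)} \sup_{\theta\in\Theta_b} \mathfrak{R}_\alpha(\theta,\vphi) \;\geq\; \inf_{\vphi\in\mathcal{S}_B(\Theta_b)} \sup_{\theta\in\Theta_b} \FNR(\theta,\vphi) \;=\; \overline\Phi(b) + o(1).
\]

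For the upper bound, I would exhibit a procedure attaining the bound: either the empirical Bayes $\ell$-value procedure $\vphi^{\hat{\ell}}_t$ with fixed $t\in(0,1)$, or, under the polynomial sparsity condition \eqref{polspa}, the BH procedure at a level $\alpha_n=o(1)$ satisfying the decay assumption of Theorem~\ref{thm-adapt-r}. Both procedures belong to $\mathcal{S}_B(\Theta_b)$ (see Appendix~\ref{sec:sparsitypreserving}). Crucially, for these candidates the FDR is itself asymptotically negligible: for BH this follows from the explicit formula $\FDR(\theta,\vphi^{BH}_{\alpha_n}) = (1-|S_\theta|/n)\alpha_n \to 0$, and for $\vphi^{\hat\ell}_t$ it follows from Theorem~1 in \cite{cr20}. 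Hence $(\FDR(\theta,\vphi)-\alpha)_+ \le \FDR(\theta,\vphi) = o(1)$ uniformly over $\Theta_b$, and combining with the FNR bound $\sup_{\theta\in\Theta_b}\FNR(\theta,\vphi) \le \overline\Phi(b)+o(1)$ contained in the proof of Theorem~\ref{thm-adapt-r} gives $\sup_{\theta\in\Theta_b}\mathfrak{R}_\alpha(\theta,\vphi) \le \overline\Phi(b)+o(1)$.

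There is no genuine obstacle; the proposition is essentially a corollary of Theorem~\ref{thm-notrade}. The only thing to notice is conceptual: softening the FDR penalty by an additive tolerance $\alpha$ brings no asymptotic improvement, because (i) by Theorem~\ref{thm-notrade} the FNR alone already saturates the minimax risk at the boundary, and (ii) the known near-optimal procedures have FDR that vanishes anyway, so the FDR term contributes $o(1)$ with or without the slack $\alpha$.
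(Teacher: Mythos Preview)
Your proof is correct and matches the paper's for the lower bound (both use $\mathfrak{R}_\alpha \ge \FNR$ and Theorem~\ref{thm-notrade}). For the upper bound, the paper takes a slightly shorter path: it uses the pointwise inequality $\mathfrak{R}_\alpha \le \mathfrak{R}$ directly and then invokes Theorem~\ref{thmgeneric} (together with the fact, established in the proof of Theorem~\ref{thm-notrade}, that the oracle procedure achieving the $\mathfrak{R}$-bound is sparsity preserving). This avoids having to separately track the FDR of a specific procedure; since $(\FDR-\alpha)_+ \le \FDR$, the bound $\mathfrak{R}_\alpha \le \mathfrak{R}$ is immediate and suffices. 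Your route via explicit FDR control of the adaptive procedures is perfectly fine but does a bit more work than necessary, and your appeal to \cite{cr20} for the $\ell$-value FDR is not strictly needed here (the paper's own Step~3 in the proof of Theorem~\ref{thm-adapt-r_subbotin}(i) already gives $\FDR\to 0$ for fixed $b$ without polynomial sparsity).
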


Proposition~\ref{prop-riskt} is an immediate consequence of Theorems~\ref{thmgeneric} and~\ref{thm-notrade}  and of the bounds $\FNR\leq \mathfrak{R}_\alpha\leq \mathfrak{R}$.
From the multiple testing literature perspective, this result is relatively counter-intuitive as allowing a looser FDR control is generally considered to be beneficial for the power of the procedure. This is of course true, but as was shown in Figure~\ref{fig:mFDRvsFNR} and the discussion thereof, for large $n$ the FDR curve is much steeper than the FNR curve, so that the two cannot be traded efficiently (at least for the noise distributions considered here).

\subsection{~~Classification:  sharp adaptive minimaxity}
\label{sec:classexp} 
The classification (Hamming) loss in terms of classes $\{0\}$ and $\RR\setminus \{0\}$ is defined by
\begin{equation}\label{equLC}
 \lc(\te,\vphi) = \sum_{i=1}^n \left(\ind{\te_i=0}\ind{\vphi_i\neq 0}
+ \ind{\te_i\neq 0}\ind{\vphi_i = 0}\right). 
 \end{equation}
This risk is studied for sparse vectors in \cite{butucea18} (see also \cite{butucea21}), with a focus on different regimes of large signals.  A testing procedure $\vphi\in\{0,1\}^n$  is said to {\em achieve almost full recovery} \cite{butucea18} with respect to the Hamming loss over a subset $\Theta_{s_n}\subset \ell_0[s_n]$ if, as $n\to\infty$, 
\[ \sup_{\te\in\Theta_{s_n}} E_\te \lc(\te,\vphi)/s_n =o(1). \]
This notion is quite close (although not equivalent) to the notion of conservative testing as considered in Corollary~\ref{cor_ct}, see Section \ref{sec:almostfullrecovery}
  for a precise comparison. A complementary notion from \cite{butucea18} is that of {\em exact} recovery, which will be relevant in Section \ref{sec:fast} below.

To allow for a direct comparison with \cite{butucea18}, we introduce the class considered therein, defined, for $\Theta(a_b,s)$ as in \eqref{signalb}, by
\begin{equation}\label{thetaprime}
 \Theta_b'=\Theta'_b(s_n)= \bigcup_{0\le s\le s_n} \Theta(a_b,s).
\end{equation} 
Note that all our lower bound results imply the same bounds for this larger class $ \Theta_b'\supset \Theta_b=\Theta(a_b,s_n)$, and the following result gives a corresponding upper bound on this class. We restrict to Gaussian noise to fix ideas, but as before, similar results hold more generally. 

\begin{theorem} \label{thm-adapt-cl}
Consider the Gaussian location model of Example \ref{ex:GaussianSequence}. 
If one sets $a_b=\sqrt{2\log(n/s_n)}+b$ for an arbitrary real $b$ or a sequence $b=b_n\to \pm\infty$,  then for $\Theta_b'=\Theta_b'(s_n)$ as in \eqref{thetaprime}, the sharp asymptotic minimax risk for classification is 
\[ 
\inf_{\vphi} \sup_{\te\in\Theta_b'} E_\te \lc(\te,\vphi)/s_n = \overline\Phi(b)+o(1). \]

For $\vphi=\vphi^{\hat{\ell}}$ the empirical Bayes $\ell$-value procedure \eqref{ellvalp} or  $\vphi=\vphi^{BH}$ the BH procedure \eqref{defBH} at a level $\alpha=\alpha_n = o(1)$ with $-\log \alpha = o((\log n)^{1/2})$ (additionally assuming the polynomial sparsity \eqref{polspa}), the bound is achieved: for any real $b$, or for $b=b_n\to\pm\infty$,
\[\sup_{\te\in\Theta_b'} E_\te \lc(\te,\vphi)/s_n = \overline\Phi(b)+o(1).\]
\end{theorem}

Theorem~\ref{thm-adapt-cl} is proved in Section~\ref{sec:proof-of-thm-adapt-cl}. These results complement some of the results of Sections 4-5 in \cite{butucea18} in the almost sure recovery regime.  
Theorem 5.2 therein states that there exists an adaptive procedure that achieves almost full recovery if $b_n\geqa \log\log{n}$. Theorem~\ref{thm-adapt-cl} shows that the $\ell$-value procedure achieves it under the weakest possible condition $b_n\to +\infty$, not requiring a particular rate. Further, Theorem~\ref{thm-adapt-cl} 
 investigates the setting of a finite $b$. Over the class $\Theta_b'$, Theorem  4.2(ii) in \cite{butucea18} provides an in-expectation lower bound that is asymptotically similar to that from Theorem~\ref{thm-adapt-cl}, but only for the case of $b\ge 0$ (which essentially amounts to $W>0$  in the notation from \cite{butucea18}); Theorem~\ref{thm-adapt-cl} provides the sharp asymptotic constant for any real $b$, and asserts that it can be achieved by an adaptive (i.e. not dependent on $s_n$ or $b$) procedure. 
 
In-probability results complementing Theorem \ref{thm-adapt-cl}, covering also the case of general signals, are provided in Section \ref{sec:classification-in-prob}. 
Let us remark that in the above result, in contrast to Theorem~\ref{thm-adapt-r} (and Remark \ref{rem:lvaluenotphibar}), we are able to include the case $b=b_n\to-\infty$ for the $\ell$-value procedure without extra assumptions.

\section{~~Large signal regime and faster minimax rates} \label{sec:fast}

In this section, we work in the Subbotin location model presented in Example~\ref{example:SubbotinLocation} (sometimes restricted to the Gaussian case for simplicity). 
An anonymous referee suggested to investigate a `large signal' regime  for which nonzero signals have an amplitude larger than $M=M(r)=(\zeta r\log{n})^{1/\zeta}$ while the sparsity satisfies, say, $s_n\asymp n^{1-\be}$, for some $r>\beta$. 
In Section~\ref{sec:fastmr}, we identify the minimax risk in this large signal regime.
In Section~\ref{sec:fastmr_adapt}, we consider the possibility of adaptation to both the unknown sparsity and signal strength.

%
Let us first formally introduce an appropriate parameter space of large signals. For some $\be\in(0,1)$, $\zeta>1$, and $r>\be$, for fixed $0<a<b$, define
\begin{align}
  \Theta(r,\be) &= \bigcup_{s_n\in[an^{1-\be},bn^{1-\be}]}\{ \te \in\ell_0[s_n]:\ |S_\te|=s_n,\ \ |\te_i|\ge M(r)  \text{ for all } i\in S_\te \},  \label{classab}\\
   M & =M(r)=(\zeta r\log{n})^{1/\zeta}.                   \label{emer}
\end{align} 
 A main motivation for considering classes \eqref{classab} is to investigate the rate at which the maximum risk goes to $0$ in Corollary \ref{cor_ct} (i.e. Theorem \ref{thmgeneric} for $b\to+\infty$ fast). The recent work \cite{millerstep23} considers a related problem for the classification loss, but without looking at precise convergence rates. Let us mention that all our results below are also valid for the (normalised) classification loss.

\subsection{~~Fast minimax rate} \label{sec:fastmr}

The following result holds and resembles  Theorem~2 of \cite{rabinovich20} for Subbotin noise. There are important differences: first, the latter work studied so--called ``generalised Gaussian'' noises which do not contain the Gaussian or Subbotin noises considered here, and the obtained rates then differ by logarithmic factors; second, Theorem~\ref{thmRabinovic} below provides the minimax rate over {\em all} possible estimators, not only thresholding estimators; finally, the nonzero signals in class \eqref{classab} are not necessarily equal (if they were be, averaging procedures could be considered that would allow one to estimate signal strength more easily in some cases).

%

\begin{theorem} \label{thmRabinovic}
Consider the Subbotin location model (Example~\ref{example:SubbotinLocation}) for some $\zeta>1$ and the parameter set  $\Theta(r,\be)$ defined by \eqref{classab}--\eqref{emer}. 
Let $\kappa=\kappa(r,\beta,\zeta)$ be the unique  element of $(0,r/2^\zeta)$ such that $(r^{1/\zeta} -\kappa^{1/\zeta})^\zeta-\kappa= \beta$. 
 Then the thresholding procedure
$ \vphi^*_i=\II_{|X_i|\ge  t_n^*} $ based upon the threshold
\begin{equation}\label{optthresholdRab}
t_n^* =   (\zeta  \log n)^{1/\zeta} (r^{1/\zeta} - \kappa^{1/\zeta}) = (\zeta (\beta + \kappa) \log n)^{1/\zeta} 
\end{equation}
is asymptotically rate minimax: 
\[  \sup_{\te\in\Theta(r,\be) }  \mathfrak{R} (\theta,\vphi^*)\asymp \inf_\vphi \sup_{\te\in\Theta(r,\be) }  \mathfrak{R} (\theta,\vphi) \asymp n^{-\kappa}/(\log n)^{1-1/\zeta}. \]
The same result holds for classification upon replacing $\fr$ by $E_\te\lc/n^{1-\be}$.  
\end{theorem} 


Theorem~\ref{thmRabinovic} is proved in Section~\ref{proof:thmRabinovic}.
Let us first notice that the threshold $t_n^*$ in \eqref{optthresholdRab} is markedly larger than the threshold  $(\zeta \beta \log n)^{1/\zeta}$ optimal  in the boundary case (see Theorem~\ref{th:minimax-multilevel}). In particular, if $r$ is much larger than $\beta$, we have $\kappa\approx r/2^\zeta$ and $t_n^*\approx 0.5 (\zeta  r\log n)^{1/\zeta}$, and the minimax risk is converging to $0$ roughly at the rate $n^{-r/2^\zeta}$. 
Second, an important observation is that the FNR and FDR are of the same order in this regime (see the upper bound in the proof). In particular, the FNR part is not dominating for the class of signals \eqref{classab}, which is markedly different from the results we obtained in the regime where the signal is at the boundary (see Theorems~\ref{thm-notrade}, \ref{thm-notrade-Subbotin} and \ref{thm-notrade-pointwise}).


For the classification risk $E_\te \lc$, we note that the bound achieved using Theorem \ref{thmRabinovic} goes to $0$ if $\ka\ge 1-\be$, in which case one has exact recovery in the sense of \cite{butucea18}. We also note that using similar arguments, we can obtain minimax optimal rates also in probability for $\lc$; also, for large signals, one can show that the support of $\te$ is recovered with probability going to $1$ and study rates for the probability of making at least one error; see Theorem \ref{thm:inprlarge}. 

\subsection{~~Adaptation to fast minimax rates}\label{sec:fastmr_adapt}

We now study possible adaptation in the context of the minimax result of Theorem~\ref{thmRabinovic}. The class  $\Theta(r,\be)$ depends on two parameters and so adaptation can be considered with respect to both. We focus on the Gaussian case $\zeta=2$ for simplicity: this case already captures the main phenomena at stake (the first two cases of Theorem \ref{thmlargeadapt} below will in fact be obtained also for Subbotin noise). The `top--$s_n$ procedure' will refer to a procedure that selects exactly the $s_n$ largest observations in absolute value.  
The following theorem summarises our results on the possibility of adaptation. Precise statements can be found in Section~\ref{sec:prooffastrate}. 
 
 \begin{theorem}[Adaptation to large signals, summary in Gaussian case]\label{thmlargeadapt} 
 Consider the setting of Theorem \ref{thmRabinovic} with $\zeta=2$ (Gaussian noise).  The following points consider the adaptation problem over the class $\Theta(r,\be)$ for some $\be\in(0,1)$ and $\be<r$. The results hold for the $\fr$--risk as well as for the normalised classification risk $E_\te \lc(\te,\vphi)/n^{1-\be}$. 
 \begin{enumerate}[(i)] 
  \item Simultaneous adaptation to $(r,\be)$ is impossible over the full range of parameters.
 \item Known exact sparsity $s_n$, unknown $r$: the top--$s_n$ procedure provides adaptation to $r$.
 \item Unknown $\be$, known $r$: a plug-in procedure provides adaptation to $\be$.
 \item Simultaneous adaptation to $(r,\be)$ is possible in the regime $\ka(r,\be,2)<1-\be$.
 \end{enumerate}  
Moreover, for point (i) the loss due to adaptation is polynomial in $n$.  
\end{theorem} 

Theorem \ref{thmlargeadapt} can be interpreted as follows: in the case of large signals in the class \eqref{classab}, when $r$ becomes too large, the rate goes very quickly to zero and adaptation becomes impossible. The intuitive reason is that for very large signals even only one error in the recovery of the support can make the rate drop. In fact, the region in which adaptation is not possible is the one where the optimal (nonnormalised) classification risk is $o(1)$, which corresponds to the region of  exact recovery considered in \cite{butucea18}.  

The results also provide a simple non-artificial example of a setting where there is a polynomial--in--$n$ loss for adaptation (as opposed to more commonly observed logarithmic--in--$n$ losses in nonparametrics).


\begin{remark}[optimalities for top--$s_n$ procedure]\label{rem:topKsuboptimal}
The top--$s_n$ procedure, which is adaptive to $r$ here for large signals, is by contrast not sharp minimax (even though it is oracle) when less signal strength is present:  the no-trade-off result Theorem \ref{thm-notrade} actually enables one to derive its precise risk for the class $\Theta_b$, see Corollary \ref{cor-topsub}. 
\end{remark}

\section{~~Discussion} \label{sec:disc}
\if TT

{\em Overview of the results. }

This work derived new results for multiple testing from a minimax point of view, in particular deriving the sharp minimax constant for the sum risk  $\fr=\FDR+\FNR$.  
Allowing for a variety of possible noise distributions including standard Gaussian noise, we first considered the beta-min condition on signals, and next allowed for arbitrary sparse signals. This enables one to qualitatively compare difficulties of multiple testing problems, such as the ones depicted on Figure~\ref{fig:mt_pb}. For such general signals, a notable finding is that the overall testing difficulty can be expressed in terms of a necessary and sufficient condition on a certain average, $\Lambda_n$ in \eqref{eqn:LambdaN}, where the strength of a signal is formulated by comparison to the `oracle threshold' (e.g., $\sqrt{2\log(n/s_n)}$ for Gaussian noise).

As a special case, it follows from this work that the ``boundary'' for the testing problem corresponds to slightly weaker signals {(i.e., classes  $\Theta_b$ with fixed $b=\overline{\Phi}^{-1}(t)$ for a target risk $t$ in the Gaussian case)} compared to classification with almost sure support recovery, for which one needs to be well above the oracle threshold ($b=b_n\to +\infty$).

The reason is that, as is common in (multiple) testing, one allows for a certain percentage of error in the overall testing risk. An important message regarding this tolerance level is that asymptotically the two types of errors, FDR and FNR, are not symmetric in terms of their contribution to this level: for any optimal procedure, as long as it is sparsity preserving, the FNR dominates, at least for the noise distributions considered here. This main finding of the paper has consequences for (sub-)optimality of some popular procedures, and also intuitively explains why we are able to obtain similar results for multiple testing and classification losses, even at level of sharp constants. 

Although originally one main goal was to investigate the phase transition in terms of constants from ``easy'' to ``impossible'' multiple testing, the techniques developed are useful for the case of strong signals too. In this setting, very different results are derived: while adaptation to the sharp constant is possible for the worst-case risk, full adaptation to the rate of decrease of the multiple testing risk is impossible for large signals, despite the setting being ``easier''. Adaptation becomes possible when restricting to certain subregions of signal/sparsity or assuming one of the two is known. 


{\em Adaptive procedures: comparing BH and $\ell$-values.} We have shown that two popular  procedures reach the optimal (local minimax) bounds in an adaptive manner: the $\ell$-value procedure and a properly tuned BH procedure. Both have optimal behaviour under relatively similar conditions, but each has its own merits in certain settings. One important message is that, even if the overall risk  $\fr$ is allowed to be at least $\alpha$, taking the standard BH procedure with fixed $\alpha$ parameter does not lead to an optimal risk: taking $\alpha=\alpha_n$ to go to zero is really needed in order to achieve optimality (see Corollary~\ref{cor-subopt}).

The $\ell$-value procedure does not need a tuning parameter going to zero: it can be applied, for example, for $t=1/2$. One can interpret this as the fact that it is already on the correct ``scale'' for the $\fr$-risk (this is perhaps unsurprising, as this procedure relates to the Bayes classifier for the Hamming risk when the prior is correct), while BH is scaled for the FDR, i.e. it essentially prescribes the FDR value. Since the FDR has to be negligible for certain parameters for optimal procedures with respect to $\fr$ (for sparsity preserving ones, as directly follows from combining  Theorems~\ref{th:minimax-multilevel} and \ref{thm-notrade-Subbotin}), this  explains why one needs to take $\alpha=\alpha_n\to 0$ to achieve optimality for BH. 

On the other hand, since $\ell$-values are in principle not immediately designed for a (frequentist) FDR-control, proving that the $\ell$-value procedure controls the FDR can be non-trivial; in \cite{cr20} it was shown to be the case for arbitrary sparse signals, but required significant technical work (a result that is invoked here only to handle the somewhat degenerate case $b_n\to -\infty$ in Remark~\ref{rem:lvaluenotphibar}). 

From a more practical point of view, choosing between these two competitors depends on the pursued aim:  a user interested solely in the combined risk may use the $\ell$-value procedure because it is more intrinsic and  asymptotically optimal  for a fixed  value of the parameter $t$ (in the case of the BH-$\alpha$ procedure the level $\alpha$ parameter needs to be tuned appropriately), while if the FDR value should be also controlled or known, they could use the BH procedure with a level $\alpha$ satisfying the convergence requirements. Also note that we have been able to prove optimality of the BH procedure under broader noise assumptions than for the $\ell$-value procedure, though we conjecture the latter will work in all models considered here.

{\em Future directions.}  
This work paves the way for several future investigations. First, we expect many ideas relevant here for the Gaussian sequence model to transport to multiple testing for more complex models, such as high dimensional linear regression. Second, since the combined risk involves two parts with markedly different behaviors at the boundary, it could be valuable to find another notion of risk making a better balance between these two terms. Deriving such a  risk notion would have interesting consequences for building semi-supervised machine learning algorithms achieving an appropriate FDR/FNR tradeoff. 
Finally, the multiple signal framework introduced here is worth investigating for estimation-type risks, where the constant $\La_\infty=\lim_n \Lambda_n $ is expected to still play a key role.

\section{~~Proof of Theorems~\ref{thmgeneric} and \ref{th:minimax-multilevel}} \label{sec:proofs}

In this section, we give the proof of Theorems~\ref{thmgeneric} and \ref{th:minimax-multilevel}.
In fact we shall restrict our attention to proving Theorem~\ref{th:minimax-multilevel}, because Theorem~\ref{thmgeneric} is a direct consequence of it. Indeed, Lemma~\ref{lem:assumptionok} verifies the conditions of Theorem~\ref{th:minimax-multilevel} (first bullet point) in the setting of Theorem~\ref{thmgeneric} for $f_a(x)=\phi(x-a)$ and $a_n^*=\sqrt{2\log(n/s_n)}$; note that for $\ba=(a_n^*+b,\dots,a_n^*+b)$ we have $\Theta(\ba,s_n)=\Theta_b$ and $\Lambda_n(\ba)=\overline{\Phi}(b)$. [In the cases $b\in \RR$ fixed or $b=b_n\to+\infty$, we apply Theorem~\ref{th:minimax-multilevel} for $n$ large enough that $b>-(2\log(n/s_n))^{1/2}$; in the case $b=b_n\to -\infty$ we replace $b_n$ by a sequence $b_n'\to-\infty$ satisfying $b_n'>\max(-(2\log(n/s_n))^{1/2},b_n)$ before applying Theorem~\ref{th:minimax-multilevel}, obtaining in this way
$\inf_{\vphi}\sup_{\theta\in \Theta_{b_n'}} \frak{R}(\theta,\vphi)=\overline{\Phi}(b_n')+o(1)=1+o(1)$. Since $\Theta_{b_n'}\subset \Theta_{b_n}$ we deduce $\inf_{\vphi}\sup_{\theta\in \Theta_{b_n}}\frak{R}(\theta,\vphi) \geq 1+o(1)$; since the upper bound in this case is obtained using the trivial test $\vphi\equiv 0$ we deduce Theorem~\ref{thmgeneric} in all cases.] 


%

\subsection{~~Lower bound}\label{sec:nonasymptotic-lower-bounds}
Let us start by proving the lower bound. For this, we apply the general lower bound of Theorem~\ref{thm:bayesLB} in our particular model: for any $\rho>0$, any $s_n\geq 1$, any $\eta \in (0,1)$ and any prior $\pi$ on $\R^n$, with $P_\pi$ denoting the joint law of $(X,\theta)$ under $\pi$, 
	$$
	 \inf_\vphi \sup_{\te\in\Theta(\ba,s_n) }  \mathfrak{R} (\theta,\vphi) \geq   \left(\lambda \wedge \frac{\rho \lambda}{1+\rho \lambda}\right)  (1-  e^{- c\eta^2 M_\rho})- n (1\vee \rho) P_\pi(\norm{\theta}_0> s_n) - 2 P_\pi(\theta\notin \Theta(\ba,s_n)),
		$$
	for some universal constant $c>0$, where $\lambda= M_\rho (1-\eta) / s_n$ and 
	$$M_\rho=\sum_{i=1}^n P_\pi\left[ \theta_i\neq 0,\ell_i(X)> \rho/(1+\rho)\right] ,$$ for $ \ell_i(X)=P_\pi(\theta_i=0\:|\: X)$, $1\leq i\leq n$.
We will apply this with $\rho=\rho_n$ and $\eta=\eta_n$ particular positive sequences converging slowly to infinity and $0$ respectively to be specified later on.

Let us consider a specific prior $\pi$ as a product prior over $s_n$ blocks of consecutive coordinates $Q_1=\{1,2,\ldots, q\}, Q_2=\{q+1,\ldots,2q\},\ldots, Q_{s_n}=\{(s_n-1)q+1,\ldots,n'\}$, where $q= \floor{n/s_n}$ and $n'=qs_n$. We write $Q_\infty$ for the (possibly empty) set $\{n'+1,\dots,n\}$. 
Over each block $Q_j$, $1\leq j\leq s_n$, one takes the following prior:  
first draw an integer $I_j$ from the  uniform distribution $\cU(Q_j)$ over the block $Q_j$ and next for each $i\in Q_j$ set $\theta_{i}=a_j$ if $i=I_j$ and $\theta_{i}=0$ otherwise.  
For $i\in Q_\infty$, set $\theta_{i}=0$. With this prior, we clearly have 
$P_\pi(\norm{\theta}_0> s_n)=0$ and $P_\pi(\theta\notin \Theta(\ba,s_n))=0$. Moreover,	
for all $1\leq j\leq s_n$ and $i\in Q_j$,
\begin{align*}
	\ell_i(X)&=P_\pi(i\neq I_j\:|\: X)=1- w_i^j(X) \\
	w_i^j(X)&= \frac{f_{a_j}(X_i)/f_0(X_i)}{\sum_{k\in Q_j} f_{a_j}(X_k)/f_0(X_k)}= \frac{h(X_i,a_j)}{\sum_{k\in Q_j} h(X_k,a_j)},\\ 
	h(x,a)&= f_a(x)/f_0(x).
\end{align*}

In addition,
\begin{align}
	M_\rho&=  \sum_{i=1}^n P_\pi[ \theta_i\neq 0\,,\,\ell_i(X)> \rho/(1+\rho)]=\sum_{j=1}^{s_n} \sum_{i\in Q_j} P_\pi[ i=I_j\,,\,\ell_i(X)> \rho/(1+\rho)]\nonumber\\
	&=\sum_{j=1}^{s_n} \sum_{i\in Q_j} P_\pi\sqbrackets[\Big]{ i=I_j\,,\, (\rho+1) h(X_i,a_j) < \sum_{k\in Q_j} h(X_k,a_j) }\nonumber\\
	&\geq \sum_{j=1}^{s_n} \sum_{i\in Q_j} P_\pi\sqbrackets[\big]{ i=I_j\,,\, \#\{ k\in Q_j\backslash\{i\}\::\:  h(X_k,a_j)> h(X_i,a_j)  \}\geq \rho}\nonumber\\
	&=  \sum_{j=1}^{s_n}  P_{X_i\sim f_{a_j}}\left[ \#\{ k\in Q_j\backslash\{i\}\::\:  h(\varepsilon_k,a_j)> h(X_i,a_j)  \}\geq \rho\right],\label{equMrho}
\end{align}
where  the $\varepsilon_k$'s are i.i.d. $\sim f_0$, and in the last line $i=i_j\in Q_j$ is chosen arbitrarily by symmetry. 

Consider the first case, when Assumption~\ref{ass:generalnoise}\ref{ass:location} 
holds. 
We see that $\varepsilon_k> X_i$ implies $h(\varepsilon_k,a_j)> h(X_i,a_j)$, so that the last display can be further lower bounded by
\begin{equation*}
	\sum_{j=1}^{s_n}  P_{X_i\sim f_{a_j}}\left[ \#\{ k\in Q_j\backslash\{i\}\::\:  \varepsilon_k> X_i \}\geq \rho\right]. 
\end{equation*}
By Lemma~\ref{lem:ControlOfpn},  provided $\rho$ satisfies the condition of the lemma, we have
\[P_{X_i\sim f_{a_j}}\left[ \#\{ k\in Q_j\backslash\{i\}\::\:  \varepsilon_k> X_i \}\geq  \rho\right]=F_{a_j}(a_n^*)+o(1),\] so that continuing the inequalities, 
\begin{align}
	M_\rho/s_n
	&\geq s_n^{-1} \sum_{j =1 }^{s_n} F_{a_j} \left(a_n^* \right) +o(1)= \Lambda_n(\ba) +o(1) ,\label{equMrhoboundedA}
\end{align}
where $\Lambda_n(\ba)$ is defined by \eqref{eqn:LambdaN}.
This gives the lower bound 
\begin{align*}
\inf_\vphi \sup_{\theta\in \Theta(\ba,s_n)} \fr(\te,\vphi) \geq &  \left([\Lambda_n({\bf a})+o(1)](1-\eta) \wedge \frac{\rho [\Lambda_n({\bf a})+o(1)](1-\eta)}{1+\rho [\Lambda_n({\bf a})+o(1)](1-\eta)}\right) \\&\times (1-  e^{- c\eta^2 [\Lambda_n({\bf a})+o(1)](1-\eta)s_n}) .
\end{align*}
Now using that for all $x\in [0,1]$, $A,y>0$, we have 
$$[x\wedge (y x/(1+y x))](1-e^{-A x})\geq x+0\wedge(1-1/(1+yx)-x) -xe^{-Ax}\geq x- 1/y - 1/(Ae),$$
we deduce
\begin{align*}
\inf_\vphi \sup_{\theta\in \Theta(\ba,s_n)} \fr(\te,\vphi) \geq & [\Lambda_n({\bf a})+o(1)](1-\eta) - \rho^{-1}-1/(c\eta^2 s_n e).
\end{align*}
Let us set $\rho=\floor{(n/(3s_n))\overline{F}_0(a_n^*-\delta_n)}\to \infty$, an admissible choice for applying Lemma~\ref{lem:ControlOfpn}. Further setting $\eta=s_n^{-1/4}$,  one gets
$$
\varliminf_n \inf_\vphi \sup_{\theta\in \Theta(\ba,s_n)} \fr(\te,\vphi)\geq  \Lambda_n({\bf a})+o(1).
$$
This proves the lower bound.

When instead Assumption~\ref{ass:generalnoise}\ref{ass:scale} holds, we note that $h(x,a)=h(-x,a)$ is increasing in $\abs{x}$ for $a\neq 0$, hence returning to \eqref{equMrho} we see
\begin{align*}
	M_\rho&\geq \sum_{j=1}^{s_n}  P_{X_i\sim f_{a_j}}\sqbrackets[\big]{ \#\braces{ k\in Q_j\backslash\{i\}\::\:  \abs{\varepsilon_k}> \abs{X_i} }\geq  \rho}.
\end{align*}
%

Lemma~\ref{lem:ControlOfpn} tells us that in this case we have 
\[P_{X_i\sim f_{a_j}}\left[ \#\{ k\in Q_j\backslash\{i\}\::\:  \abs{\varepsilon_k}> \abs{X_i} \}\geq \rho\right]=1-2\overline{F}_{a_j}(a_n^*)+o(1)= 2F_{a_j}(a_n^*)-1 +o(1),\]
yielding 
	\begin{equation} \label{equMrhoboundedB} M_\rho/s_n \geq 2\Lambda_n(\ba) - 1 + o(1),
\end{equation}
and we finish the proof as under the other assumption.

\subsection{~~Upper bound}\label{sec:ub}

If a subsequence $n_j$ is such that $\Lambda_{n_j}\to 1$, then the trivial test $\vphi=0$ has $\mathfrak{R}(\theta,\vphi) = 1 = \Lambda_{n_j}+o(1)=2\Lambda_{n_j}-1+o(1)$. We may therefore limit our attention to a subsequence on which $\Lambda_{n_j}$ is bounded away from 1. Relabel $n_j$ as $n$.

Define 
\[ \vphi_i(X) = \II\braces{\abs{X_i}>a_n^*}.\]
Write $V$ and $S$ for the number of false discoveries and true discoveries, respectively, made by $\vphi$.

By Markov's inequality, for any $c_n>0$ the number of false discoveries satisfies
\[ P_\te (V>c_n s_n) \leq c_n^{-1}s_n^{-1} E[V] = c_n^{-1} (2(n-s_n)/s_n)\overline{F}_0(a_n^*).\] This latter expression tends to zero by Assumption~\ref{ass:generalnoise} if $c_n$ tends to zero slowly enough, yielding that $V=o_P(s_n)$.
Similarly, using that $\Var(S)\leq  E[S]\leq s_n$ and applying Chebyshev's inequality, the number of true discoveries satisfies
\[ P_\te[\abs{S-E_\te S}\ge s_n^{3/4} ]\le s_n^{-1/2}. \] Let $\mathcal{A}$ be an event of probability tending to one on which $V$ and $S$ are suitably bounded.

Under Assumption~\ref{ass:generalnoise}\ref{ass:location}, for $a\neq 0$ we have by symmetry 
\[ P_{\theta_i=a}(\abs{X_i}>a_n^*) = \overline{F}_a(a_n^*)+F_a(-a_n^*)=\overline{F}_a(a_n^*)+\overline{F}_{-a}(a_n^*).\] Noting that by the assumed monotonicity $\overline{F}_{-\abs{a}}(a_n^*)\leq \overline{F}_0(a_n^*)\to 0$ because $a_n^*\to \infty$, we deduce that 
\[ P_{\theta_i=a}(\abs{X_i}>a_n^*) = \overline{F}_{\abs{a}}(a_n^*)+o(1)\] where the errors $o(1)$ tend to zero uniformly in $a$. We thus calculate
\begin{align*}
	E_\theta [S] &= \sum_{i\in S_\theta} P_{\theta_i}(\abs{X_i}>a_n^*) = \sum_{i\in S_\theta} \left( \overline{F}_{\abs{\theta_i}}(a_n^*) + o(1) \right) \\ &\ge  \sum_{j\leq s_n} \left(\overline{F}_{a_j}(a_n^*) + o(1)\right) = s_n [1-\Lambda_n(\ba) +o(1)],\
\end{align*}
where we have used that $S_\theta$ can be enumerated as $i_1,\dots,i_{s_n}$ with $\abs{\theta_{i_j}}\geq a_j>0$.

The combined risk of $\vphi$ is then 
\begin{align*} \mathfrak{R}&(\theta,\vphi) = E_\theta \frac{V}{V + S} +E_\theta \frac{s_n-S}{s_n} \\ &\leq  P(\mathcal{A}^c) + \frac{ o(s_n)}{o(s_n)+ s_n(1-\Lambda_n(\ba)+o(1))-s_n^{3/4}} + \frac{s_n(\Lambda_n(\ba)+o(1))}{s_n} = \Lambda_n(\ba) + o(1),
\end{align*} 
yielding the desired upper bound. 

Under Assumption~\ref{ass:generalnoise}\ref{ass:scale}, the same calculations hold, except that  now $P_{\theta_i}(\abs{X_i}>a_n^*)=2\overline{F}_{\theta_i}(a_n^*)$, hence (again using the assumed monotonicity) $s_n^{-1}\sum_{i\in S_\theta} P_{\theta_i}(\abs{X_i}>a_n^*)\geq s_n^{-1}\sum_{j\leq s_n} 2\overline{F}_{a_j}(a_n^*) = 1-(2\Lambda_n(\ba)-1+o(1))$, which gives the desired upper bound in this case as well.

\section*{Funding} 
ER has been supported by ANR-16-CE40-0019 (SansSouci), ANR-21-CE23-0035 (ASCAI) and  the GDR ISIS through the "projets exploratoires" program (project TASTY). ER and IC have been supported by ANR-17-CE40-0001 (BASICS). 
KA is supported by the EPSRC Programme Grant on the Mathematics of Deep Learning under the project EP/V026259/1. Initial work on this project was completed while he was at Universit{\'e} Paris-Saclay, supported by a public grant as part of the Investissement d'avenir project, reference ANR-11-LABX-0056-LMH, LabEx LMH.

\section*{Acknowledgements} 
The authors are grateful to Aad van der Vaart for suggesting  investigating the case of multiple signals, which led us to the results in Section~\ref{sec:genresults}.   We also thank two anonymous referees and an associate editor for suggesting investigating more general noise models and other comments, which led to substantial generalisations.

\bibliographystyle{imsart-number} 
\bibliography{bibliography}

\begin{thebibliography}{48}

\bibitem{ACR21supp}
\begin{barticle}[author]
\bauthor{\bsnm{Abraham},~\bfnm{Kweku}\binits{K.}},
  \bauthor{\bsnm{Castillo},~\bfnm{Ismael}\binits{I.}} \AND
  \bauthor{\bsnm{Roquain},~\bfnm{Etienne}\binits{E.}}
(\byear{2021}).
\btitle{Supplement to: Sharp multiple testing boundary for sparse sequences}.
\end{barticle}
\endbibitem

\bibitem{acr21}
\begin{barticle}[author]
\bauthor{\bsnm{Abraham},~\bfnm{Kweku}\binits{K.}},
  \bauthor{\bsnm{Castillo},~\bfnm{Isma\"{e}l}\binits{I.}} \AND
  \bauthor{\bsnm{Roquain},~\bfnm{\'{E}tienne}\binits{E.}}
(\byear{2022}).
\btitle{Empirical {B}ayes cumulative {$\ell$}-value multiple testing procedure
  for sparse sequences}.
\bjournal{Electron. J. Stat.}
\bvolume{16}
\bpages{2033--2081}.
\bdoi{10.1214/22-ejs1979}
\bmrnumber{4415394}
\end{barticle}
\endbibitem

\bibitem{ABDJ2006}
\begin{barticle}[author]
\bauthor{\bsnm{Abramovich},~\bfnm{Felix}\binits{F.}},
  \bauthor{\bsnm{Benjamini},~\bfnm{Yoav}\binits{Y.}},
  \bauthor{\bsnm{Donoho},~\bfnm{David~L.}\binits{D.~L.}} \AND
  \bauthor{\bsnm{Johnstone},~\bfnm{Iain~M.}\binits{I.~M.}}
(\byear{2006}).
\btitle{Adapting to unknown sparsity by controlling the false discovery rate}.
\bjournal{Ann. Statist.}
\bvolume{34}
\bpages{584--653}.
\bdoi{10.1214/009053606000000074}
\bmrnumber{2281879 (2008c:62012)}
\end{barticle}
\endbibitem

\bibitem{Arias_Castro_2011}
\begin{barticle}[author]
\bauthor{\bsnm{Arias-Castro},~\bfnm{Ery}\binits{E.}},
  \bauthor{\bsnm{Cand\`es},~\bfnm{Emmanuel~J.}\binits{E.~J.}} \AND
  \bauthor{\bsnm{Plan},~\bfnm{Yaniv}\binits{Y.}}
(\byear{2011}).
\btitle{Global testing under sparse alternatives: {ANOVA}, multiple comparisons
  and the higher criticism}.
\bjournal{Ann. Statist.}
\bvolume{39}.
\bdoi{10.1214/11-aos910}
\end{barticle}
\endbibitem

\bibitem{erychen}
\begin{barticle}[author]
\bauthor{\bsnm{Arias-Castro},~\bfnm{Ery}\binits{E.}} \AND
  \bauthor{\bsnm{Chen},~\bfnm{Shiyun}\binits{S.}}
(\byear{2017}).
\btitle{Distribution-free multiple testing}.
\bjournal{Electron. J. Stat.}
\bvolume{11}
\bpages{1983--2001}.
\bdoi{10.1214/17-EJS1277}
\bmrnumber{3651021}
\end{barticle}
\endbibitem

\bibitem{bcg20}
\begin{barticle}[author]
\bauthor{\bsnm{Banerjee},~\bfnm{Sayantan}\binits{S.}},
  \bauthor{\bsnm{Castillo},~\bfnm{Isma\"el}\binits{I.}} \AND
  \bauthor{\bsnm{Ghosal},~\bfnm{Subhashis}\binits{S.}}
(\byear{2021}).
\btitle{Bayesian inference in high-dimensional models}.
\bnote{Book chapter to appear in Springer volume on data science, Preprint
  arXiv:2101.04491}.
\end{barticle}
\endbibitem

\bibitem{baraud02}
\begin{barticle}[author]
\bauthor{\bsnm{Baraud},~\bfnm{Yannick}\binits{Y.}}
(\byear{2002}).
\btitle{Non-asymptotic minimax rates of testing in signal detection}.
\bjournal{Bernoulli}
\bvolume{8}
\bpages{577--606}.
\bmrnumber{1935648}
\end{barticle}
\endbibitem

\bibitem{belitser21}
\begin{barticle}[author]
\bauthor{\bsnm{Belitser},~\bfnm{Eduard}\binits{E.}} \AND
  \bauthor{\bsnm{Nurushev},~\bfnm{Nurzhan}\binits{N.}}
(\byear{2022}).
\btitle{{Uncertainty quantification for robust variable selection and multiple
  testing}}.
\bjournal{Electronic Journal of Statistics}
\bvolume{16}
\bpages{5955 -- 5979}.
\bdoi{10.1214/22-EJS2088}
\end{barticle}
\endbibitem

\bibitem{BH1995}
\begin{barticle}[author]
\bauthor{\bsnm{Benjamini},~\bfnm{Yoav}\binits{Y.}} \AND
  \bauthor{\bsnm{Hochberg},~\bfnm{Yosef}\binits{Y.}}
(\byear{1995}).
\btitle{Controlling the false discovery rate: a practical and powerful approach
  to multiple testing}.
\bjournal{J. Roy. Statist. Soc. Ser. B}
\bvolume{57}
\bpages{289--300}.
\bmrnumber{MR1325392 (96d:62143)}
\end{barticle}
\endbibitem

\bibitem{BY2001}
\begin{barticle}[author]
\bauthor{\bsnm{Benjamini},~\bfnm{Yoav}\binits{Y.}} \AND
  \bauthor{\bsnm{Yekutieli},~\bfnm{Daniel}\binits{D.}}
(\byear{2001}).
\btitle{The control of the false discovery rate in multiple testing under
  dependency}.
\bjournal{Ann. Statist.}
\bvolume{29}
\bpages{1165--1188}.
\bmrnumber{MR1869245 (2002i:62135)}
\end{barticle}
\endbibitem

\bibitem{Bogdan2011}
\begin{barticle}[author]
\bauthor{\bsnm{Bogdan},~\bfnm{M.}\binits{M.}},
  \bauthor{\bsnm{Chakrabarti},~\bfnm{A.}\binits{A.}},
  \bauthor{\bsnm{Frommlet},~\bfnm{F.}\binits{F.}} \AND
  \bauthor{\bsnm{Ghosh},~\bfnm{J.~K.}\binits{J.~K.}}
(\byear{2011}).
\btitle{Asymptotic {B}ayes-optimality under sparsity of some multiple testing
  procedures}.
\bjournal{Ann. Statist.}
\bvolume{39}
\bpages{1551--1579}.
\end{barticle}
\endbibitem

\bibitem{bvdgbook}
\begin{bbook}[author]
\bauthor{\bsnm{B\"{u}hlmann},~\bfnm{Peter}\binits{P.}} \AND
  \bauthor{\bparticle{van~de} \bsnm{Geer},~\bfnm{Sara}\binits{S.}}
(\byear{2011}).
\btitle{Statistics for high-dimensional data}.
\bseries{Springer Series in Statistics}.
\bpublisher{Springer, Heidelberg}
\bnote{Methods, theory and applications}.
\bdoi{10.1007/978-3-642-20192-9}
\bmrnumber{2807761}
\end{bbook}
\endbibitem

\bibitem{butucea21}
\begin{barticle}[author]
\bauthor{\bsnm{Butucea},~\bfnm{Cristina}\binits{C.}},
  \bauthor{\bsnm{Mammen},~\bfnm{Enno}\binits{E.}},
  \bauthor{\bsnm{Ndaoud},~\bfnm{Mohamed}\binits{M.}} \AND
  \bauthor{\bsnm{Tsybakov},~\bfnm{Alexandre~B.}\binits{A.~B.}}
(\byear{2023}).
\btitle{Variable selection, monotone likelihood ratio and group sparsity}.
\bjournal{Ann. Statist.}
\bvolume{51}
\bpages{312--333}.
\bdoi{10.1214/22-aos2251}
\bmrnumber{4564858}
\end{barticle}
\endbibitem

\bibitem{butucea18}
\begin{barticle}[author]
\bauthor{\bsnm{Butucea},~\bfnm{Cristina}\binits{C.}},
  \bauthor{\bsnm{Ndaoud},~\bfnm{Mohamed}\binits{M.}},
  \bauthor{\bsnm{Stepanova},~\bfnm{Natalia~A.}\binits{N.~A.}} \AND
  \bauthor{\bsnm{Tsybakov},~\bfnm{Alexandre~B.}\binits{A.~B.}}
(\byear{2018}).
\btitle{Variable selection with {H}amming loss}.
\bjournal{Ann. Statist.}
\bvolume{46}
\bpages{1837--1875}.
\bdoi{10.1214/17-AOS1572}
\bmrnumber{3845003}
\end{barticle}
\endbibitem

\bibitem{But2017}
\begin{barticle}[author]
\bauthor{\bsnm{Butucea},~\bfnm{Cristina}\binits{C.}} \AND
  \bauthor{\bsnm{Stepanova},~\bfnm{Natalia}\binits{N.}}
(\byear{2017}).
\btitle{Adaptive variable selection in nonparametric sparse additive models}.
\bjournal{Electron. J. Stat.}
\bvolume{11}
\bpages{2321--2357}.
\bdoi{10.1214/17-EJS1275}
\bmrnumber{3656494}
\end{barticle}
\endbibitem

\bibitem{CS2009}
\begin{barticle}[author]
\bauthor{\bsnm{Cai},~\bfnm{T.~Tony}\binits{T.~T.}} \AND
  \bauthor{\bsnm{Sun},~\bfnm{Wenguang}\binits{W.}}
(\byear{2009}).
\btitle{Simultaneous testing of grouped hypotheses: finding needles in multiple
  haystacks}.
\bjournal{J. Amer. Statist. Assoc.}
\bvolume{104}
\bpages{1467--1481}.
\bdoi{10.1198/jasa.2009.tm08415}
\bmrnumber{2597000 (2011d:62020)}
\end{barticle}
\endbibitem

\bibitem{CSWW2019}
\begin{barticle}[author]
\bauthor{\bsnm{Cai},~\bfnm{T.~Tony}\binits{T.~T.}},
  \bauthor{\bsnm{Sun},~\bfnm{Wenguang}\binits{W.}} \AND
  \bauthor{\bsnm{Wang},~\bfnm{Weinan}\binits{W.}}
(\byear{2019}).
\btitle{Covariate-assisted ranking and screening for large-scale two-sample
  inference}.
\bjournal{J. R. Stat. Soc. Ser. B. Stat. Methodol.}
\bvolume{81}
\bpages{187-234}.
\bdoi{https://doi.org/10.1111/rssb.12304}
\end{barticle}
\endbibitem

\bibitem{cm18}
\begin{barticle}[author]
\bauthor{\bsnm{Castillo},~\bfnm{Isma\"{e}l}\binits{I.}} \AND
  \bauthor{\bsnm{Mismer},~\bfnm{Romain}\binits{R.}}
(\byear{2018}).
\btitle{Empirical {B}ayes analysis of spike and slab posterior distributions}.
\bjournal{Electron. J. Stat.}
\bvolume{12}
\bpages{3953--4001}.
\bdoi{10.1214/18-EJS1494}
\bmrnumber{3885271}
\end{barticle}
\endbibitem

\bibitem{cr20}
\begin{barticle}[author]
\bauthor{\bsnm{Castillo},~\bfnm{Isma\"el}\binits{I.}} \AND
  \bauthor{\bsnm{Roquain},~\bfnm{\'Etienne}\binits{E.}}
(\byear{2020}).
\btitle{On spike and slab empirical {B}ayes multiple testing}.
\bjournal{Ann. Statist.}
\bvolume{48}
\bpages{2548-2574}.
\bdoi{10.1214/19-AOS1897}
\end{barticle}
\endbibitem

\bibitem{cs20}
\begin{barticle}[author]
\bauthor{\bsnm{Castillo},~\bfnm{Isma\"{e}l}\binits{I.}} \AND
  \bauthor{\bsnm{Szab\'{o}},~\bfnm{Botond}\binits{B.}}
(\byear{2020}).
\btitle{Spike and slab empirical {B}ayes sparse credible sets}.
\bjournal{Bernoulli}
\bvolume{26}
\bpages{127--158}.
\bdoi{10.3150/19-BEJ1119}
\bmrnumber{4036030}
\end{barticle}
\endbibitem

\bibitem{DJ2004}
\begin{barticle}[author]
\bauthor{\bsnm{Donoho},~\bfnm{David}\binits{D.}} \AND
  \bauthor{\bsnm{Jin},~\bfnm{Jiashun}\binits{J.}}
(\byear{2004}).
\btitle{Higher criticism for detecting sparse heterogeneous mixtures}.
\bjournal{Ann. Statist.}
\bvolume{32}
\bpages{962--994}.
\bdoi{10.1214/009053604000000265}
\bmrnumber{MR2065195 (2005e:62066)}
\end{barticle}
\endbibitem

\bibitem{Dur2019}
\begin{barticle}[author]
\bauthor{\bsnm{Durand},~\bfnm{Guillermo}\binits{G.}}
(\byear{2019}).
\btitle{Adaptive {$p$}-value weighting with power optimality}.
\bjournal{Electron. J. Stat.}
\bvolume{13}
\bpages{3336--3385}.
\bdoi{10.1214/19-ejs1578}
\bmrnumber{4010982}
\end{barticle}
\endbibitem

\bibitem{ETST2001}
\begin{barticle}[author]
\bauthor{\bsnm{Efron},~\bfnm{Bradley}\binits{B.}},
  \bauthor{\bsnm{Tibshirani},~\bfnm{Robert}\binits{R.}},
  \bauthor{\bsnm{Storey},~\bfnm{John~D.}\binits{J.~D.}} \AND
  \bauthor{\bsnm{Tusher},~\bfnm{Virginia}\binits{V.}}
(\byear{2001}).
\btitle{Empirical {B}ayes analysis of a microarray experiment}.
\bjournal{J. Amer. Statist. Assoc.}
\bvolume{96}
\bpages{1151--1160}.
\bmrnumber{MR1946571}
\end{barticle}
\endbibitem

\bibitem{FR2002}
\begin{barticle}[author]
\bauthor{\bsnm{Finner},~\bfnm{H.}\binits{H.}} \AND
  \bauthor{\bsnm{Roters},~\bfnm{M.}\binits{M.}}
(\byear{2002}).
\btitle{Multiple hypotheses testing and expected number of type {I} errors}.
\bjournal{Ann. Statist.}
\bvolume{30}
\bpages{220--238}.
\bmrnumber{MR1892662 (2003a:62082)}
\end{barticle}
\endbibitem

\bibitem{fromontetal16}
\begin{barticle}[author]
\bauthor{\bsnm{Fromont},~\bfnm{Magalie}\binits{M.}},
  \bauthor{\bsnm{Lerasle},~\bfnm{Matthieu}\binits{M.}} \AND
  \bauthor{\bsnm{Reynaud-Bouret},~\bfnm{Patricia}\binits{P.}}
(\byear{2016}).
\btitle{Family-wise separation rates for multiple testing}.
\bjournal{Ann. Statist.}
\bvolume{44}
\bpages{2533--2563}.
\bdoi{10.1214/15-AOS1418}
\bmrnumber{3576553}
\end{barticle}
\endbibitem

\bibitem{GW2002}
\begin{barticle}[author]
\bauthor{\bsnm{Genovese},~\bfnm{Christopher}\binits{C.}} \AND
  \bauthor{\bsnm{Wasserman},~\bfnm{Larry}\binits{L.}}
(\byear{2002}).
\btitle{Operating characteristics and extensions of the false discovery rate
  procedure}.
\bjournal{J. R. Stat. Soc. Ser. B Stat. Methodol.}
\bvolume{64}
\bpages{499--517}.
\bmrnumber{MR1924303 (2003h:62027)}
\end{barticle}
\endbibitem

\bibitem{ginenicklbook}
\begin{bbook}[author]
\bauthor{\bsnm{Gin\'{e}},~\bfnm{Evarist}\binits{E.}} \AND
  \bauthor{\bsnm{Nickl},~\bfnm{Richard}\binits{R.}}
(\byear{2016}).
\btitle{Mathematical foundations of infinite-dimensional statistical models}.
\bseries{Cambridge Series in Statistical and Probabilistic Mathematics, [40]}.
\bpublisher{Cambridge University Press, New York}.
\bdoi{10.1017/CBO9781107337862}
\bmrnumber{3588285}
\end{bbook}
\endbibitem

\bibitem{Hall_2008}
\begin{barticle}[author]
\bauthor{\bsnm{Hall},~\bfnm{Peter}\binits{P.}} \AND
  \bauthor{\bsnm{Jin},~\bfnm{Jiashun}\binits{J.}}
(\byear{2008}).
\btitle{Properties of higher criticism under strong dependence}.
\bjournal{Ann. Statist.}
\bvolume{36}.
\bdoi{10.1214/009053607000000767}
\end{barticle}
\endbibitem

\bibitem{Hall_2010}
\begin{barticle}[author]
\bauthor{\bsnm{Hall},~\bfnm{Peter}\binits{P.}} \AND
  \bauthor{\bsnm{Jin},~\bfnm{Jiashun}\binits{J.}}
(\byear{2010}).
\btitle{Innovated higher criticism for detecting sparse signals in correlated
  noise}.
\bjournal{Ann. Statist.}
\bvolume{38}.
\bdoi{10.1214/09-aos764}
\end{barticle}
\endbibitem

\bibitem{heller2021optimal}
\begin{barticle}[author]
\bauthor{\bsnm{Heller},~\bfnm{Ruth}\binits{R.}} \AND
  \bauthor{\bsnm{Rosset},~\bfnm{Saharon}\binits{S.}}
(\byear{2021}).
\btitle{Optimal control of false discovery criteria in the two-group model}.
\bjournal{J. R. Stat. Soc. Ser. B. Stat. Methodol.}
\bvolume{83}
\bpages{133--155}.
\end{barticle}
\endbibitem

\bibitem{IH2017}
\begin{barticle}[author]
\bauthor{\bsnm{Ignatiadis},~\bfnm{Nikolaos}\binits{N.}} \AND
  \bauthor{\bsnm{Huber},~\bfnm{Wolfgang}\binits{W.}}
(\byear{2021}).
\btitle{Covariate powered cross-weighted multiple testing}.
\bjournal{J. R. Stat. Soc. Ser. B. Stat. Methodol.}
\bvolume{83}
\bpages{720--751}.
\bdoi{10.1111/rssb.12411}
\end{barticle}
\endbibitem

\bibitem{ingstersuslina}
\begin{bbook}[author]
\bauthor{\bsnm{Ingster},~\bfnm{Yu.~I.}\binits{Y.~I.}} \AND
  \bauthor{\bsnm{Suslina},~\bfnm{I.~A.}\binits{I.~A.}}
(\byear{2003}).
\btitle{Nonparametric goodness-of-fit testing under {G}aussian models}.
\bseries{Lecture Notes in Statistics}
\bvolume{169}.
\bpublisher{Springer-Verlag, New York}.
\bdoi{10.1007/978-0-387-21580-8}
\bmrnumber{1991446}
\end{bbook}
\endbibitem

\bibitem{ingsterverzelentsybakov}
\begin{barticle}[author]
\bauthor{\bsnm{Ingster},~\bfnm{Yuri~I.}\binits{Y.~I.}},
  \bauthor{\bsnm{Tsybakov},~\bfnm{Alexandre~B.}\binits{A.~B.}} \AND
  \bauthor{\bsnm{Verzelen},~\bfnm{Nicolas}\binits{N.}}
(\byear{2010}).
\btitle{Detection boundary in sparse regression}.
\bjournal{Electron. J. Stat.}
\bvolume{4}
\bpages{1476--1526}.
\bdoi{10.1214/10-EJS589}
\bmrnumber{2747131}
\end{barticle}
\endbibitem

\bibitem{JK2016}
\begin{barticle}[author]
\bauthor{\bsnm{Jin},~\bfnm{Jiashun}\binits{J.}} \AND
  \bauthor{\bsnm{Ke},~\bfnm{Zheng~Tracy}\binits{Z.~T.}}
(\byear{2016}).
\btitle{Rare and weak effects in large-scale inference: methods and phase
  diagrams}.
\bjournal{Statist. Sinica}
\bvolume{26}
\bpages{1--34}.
\end{barticle}
\endbibitem

\bibitem{js04}
\begin{barticle}[author]
\bauthor{\bsnm{Johnstone},~\bfnm{Iain~M.}\binits{I.~M.}} \AND
  \bauthor{\bsnm{Silverman},~\bfnm{Bernard~W.}\binits{B.~W.}}
(\byear{2004}).
\btitle{Needles and straw in haystacks: empirical {B}ayes estimates of possibly
  sparse sequences}.
\bjournal{Ann. Statist.}
\bvolume{32}
\bpages{1594--1649}.
\bmrnumber{MR2089135 (2005h:62027)}
\end{barticle}
\endbibitem

\bibitem{js05}
\begin{barticle}[author]
\bauthor{\bsnm{Johnstone},~\bfnm{Iain~M.}\binits{I.~M.}} \AND
  \bauthor{\bsnm{Silverman},~\bfnm{Bernard~W.}\binits{B.~W.}}
(\byear{2005}).
\btitle{Ebayes{T}hresh: {R} {P}rograms for {E}mpirical {B}ayes {T}hresholding}.
\bjournal{J. Stat. Softw.}
\bvolume{12}.
\end{barticle}
\endbibitem

\bibitem{li2020optimality}
\begin{barticle}[author]
\bauthor{\bsnm{Li},~\bfnm{Xiao}\binits{X.}} \AND
  \bauthor{\bsnm{Fithian},~\bfnm{William}\binits{W.}}
(\byear{2020}).
\btitle{Optimality of the max test for detecting sparse signals with {G}aussian
  or heavier tail}.
\bnote{Preprint arXiv:2006.12489}.
\end{barticle}
\endbibitem

\bibitem{millerstep23}
\begin{barticle}[author]
\bauthor{\bsnm{Miller},~\bfnm{Joshua~C.}\binits{J.~C.}} \AND
  \bauthor{\bsnm{Stepanova},~\bfnm{Natalia~A.}\binits{N.~A.}}
(\byear{2023}).
\btitle{Adaptive signal recovery with {S}ubbotin noise}.
\bjournal{Statist. Probab. Lett.}
\bvolume{196}
\bpages{Paper No. 109791, 6}.
\bdoi{10.1016/j.spl.2023.109791}
\bmrnumber{4549624}
\end{barticle}
\endbibitem

\bibitem{neuvialroquain12}
\begin{barticle}[author]
\bauthor{\bsnm{Neuvial},~\bfnm{Pierre}\binits{P.}} \AND
  \bauthor{\bsnm{Roquain},~\bfnm{Etienne}\binits{E.}}
(\byear{2012}).
\btitle{On false discovery rate thresholding for classification under
  sparsity}.
\bjournal{Ann. Statist.}
\bvolume{40}
\bpages{2572--2600}.
\bdoi{10.1214/12-AOS1042}
\bmrnumber{3097613}
\end{barticle}
\endbibitem

\bibitem{nicklvdg}
\begin{barticle}[author]
\bauthor{\bsnm{Nickl},~\bfnm{Richard}\binits{R.}} \AND
  \bauthor{\bparticle{van~de} \bsnm{Geer},~\bfnm{Sara}\binits{S.}}
(\byear{2013}).
\btitle{Confidence sets in sparse regression}.
\bjournal{Ann. Statist.}
\bvolume{41}
\bpages{2852--2876}.
\bdoi{10.1214/13-AOS1170}
\bmrnumber{3161450}
\end{barticle}
\endbibitem

\bibitem{rabinovichpreprint}
\begin{barticle}[author]
\bauthor{\bsnm{Rabinovich},~\bfnm{Maxim}\binits{M.}},
  \bauthor{\bsnm{Jordan},~\bfnm{Michael~I.}\binits{M.~I.}} \AND
  \bauthor{\bsnm{Wainwright},~\bfnm{Martin~J.}\binits{M.~J.}}
(\byear{2020}).
\btitle{Lower bounds in multiple testing: A framework based on derandomized
  proxies}.
\bnote{Arxiv eprint 2005.03725}.
\end{barticle}
\endbibitem

\bibitem{rabinovich20}
\begin{barticle}[author]
\bauthor{\bsnm{Rabinovich},~\bfnm{Maxim}\binits{M.}},
  \bauthor{\bsnm{Ramdas},~\bfnm{Aaditya}\binits{A.}},
  \bauthor{\bsnm{Jordan},~\bfnm{Michael~I.}\binits{M.~I.}} \AND
  \bauthor{\bsnm{Wainwright},~\bfnm{Martin~J.}\binits{M.~J.}}
(\byear{2020}).
\btitle{Optimal rates and trade-offs in multiple testing}.
\bjournal{Statist. Sinica}
\bvolume{30}
\bpages{741--762}.
\bmrnumber{4214160}
\end{barticle}
\endbibitem

\bibitem{Roq2011}
\begin{barticle}[author]
\bauthor{\bsnm{Roquain},~\bfnm{Etienne}\binits{E.}}
(\byear{2011}).
\btitle{Type {I} error rate control for testing many hypotheses: a survey with
  proofs}.
\bjournal{J. Soc. Fr. Stat.}
\bvolume{152}
\bpages{3--38}.
\end{barticle}
\endbibitem

\bibitem{RW2009}
\begin{barticle}[author]
\bauthor{\bsnm{Roquain},~\bfnm{E.}\binits{E.}} \AND \bauthor{\bparticle{van~de}
  \bsnm{Wiel},~\bfnm{M.}\binits{M.}}
(\byear{2009}).
\btitle{Optimal weighting for false discovery rate control}.
\bjournal{Electron. J. Stat.}
\bvolume{3}
\bpages{678--711}.
\end{barticle}
\endbibitem

\bibitem{RV2011}
\begin{barticle}[author]
\bauthor{\bsnm{Roquain},~\bfnm{Etienne}\binits{E.}} \AND
  \bauthor{\bsnm{Villers},~\bfnm{Fanny}\binits{F.}}
(\byear{2011}).
\btitle{Exact calculations for false discovery proportion with application to
  least favorable configurations}.
\bjournal{Ann. Statist.}
\bvolume{39}
\bpages{584--612}.
\bdoi{10.1214/10-AOS847}
\bmrnumber{2797857}
\end{barticle}
\endbibitem

\bibitem{sucandes16}
\begin{barticle}[author]
\bauthor{\bsnm{Su},~\bfnm{Weijie}\binits{W.}} \AND
  \bauthor{\bsnm{Cand\`es},~\bfnm{Emmanuel}\binits{E.}}
(\byear{2016}).
\btitle{S{LOPE} is adaptive to unknown sparsity and asymptotically minimax}.
\bjournal{Ann. Statist.}
\bvolume{44}
\bpages{1038--1068}.
\bdoi{10.1214/15-AOS1397}
\bmrnumber{3485953}
\end{barticle}
\endbibitem

\bibitem{SC2007}
\begin{barticle}[author]
\bauthor{\bsnm{Sun},~\bfnm{Wenguang}\binits{W.}} \AND
  \bauthor{\bsnm{Cai},~\bfnm{T.~Tony}\binits{T.~T.}}
(\byear{2007}).
\btitle{Oracle and adaptive compound decision rules for false discovery rate
  control}.
\bjournal{J. Amer. Statist. Assoc.}
\bvolume{102}
\bpages{901--912}.
\bdoi{10.1198/016214507000000545}
\bmrnumber{2411657}
\end{barticle}
\endbibitem

\bibitem{sun2009large}
\begin{barticle}[author]
\bauthor{\bsnm{Sun},~\bfnm{Wenguang}\binits{W.}} \AND
  \bauthor{\bsnm{Cai},~\bfnm{Tony~T}\binits{T.~T.}}
(\byear{2009}).
\btitle{Large-scale multiple testing under dependence}.
\bjournal{J. R. Stat. Soc. Ser. B. Stat. Methodol.}
\bvolume{71}
\bpages{393--424}.
\end{barticle}
\endbibitem

\end{thebibliography}

\newpage
	This supplementary material includes the remaining proofs for the main paper, and some further results and discussion. References to the main paper are included without prefixes, while references within this supplement have the prefix S-.

\setcounter{lemma}{0}
\setcounter{equation}{0}
\setcounter{section}{0}
\setcounter{corollary}{0}
\setcounter{theorem}{0}
\setcounter{definition}{0}
\renewcommand{\thelemma}{S-\arabic{lemma}}
\renewcommand{\theequation}{S-\arabic{equation}}
\renewcommand{\thesection}{S-\arabic{section}}
\renewcommand{\thecorollary}{S-\arabic{corollary}}
\renewcommand{\thetheorem}{S-\arabic{theorem}}
\renewcommand{\thedefinition}{S-\arabic{definition}}
\renewcommand{\theremark}{S-\arabic{remark}}
\renewcommand{\theexample}{S-\arabic{example}}

\section{~~Interpretation and verification of Assumption~\ref{ass:generalnoise}}\label{sec:VerificationOfAssumption} 


\subsection{~~Verification of Assumption~\ref{ass:generalnoise} in Subbotin case}

Assumption~\ref{ass:generalnoise} relies on a choice of a pair $(a_n^*,\delta_n)$. The next remark provides an example of such a choice, which also allows us to approximate $\Lambda_n(\ba)$. 

\begin{remark}\label{rem:bn*}
	If \eqref{eqn:modelassumption2} and \eqref{eqn:modelassumption3} hold for some $a_n^*\to \infty$ and $\delta_n\to 0$, then for $b_n^*=\overline{F}_0^{-1}(s_n/n)$ we necessarily have
	\begin{align}
		(n/s_n) \overline{F}_0(b_n^*-\delta_n)&\to \infty, \\
		(n/s_n) \overline{F}_0(b_n^*+\delta_n)&\to 0,
	\end{align}
	since $\overline{F}_0$ is continuous and monotone, and since by the intermediate value theorem we must have $a_n^*-\delta_n<b_n^*<a_n^*$ for $n$ large enough.
	Moreover, since the $F_a$ are uniformly Lipschitz,
	\begin{equation}\label{equ:lambdanrem}
		\frac{1}{s_n} \sum_{j=1}^{s_n} F_{a_j}(b_n^*) = \Lambda_n(\ba) + o(1).\end{equation}
	We may therefore always choose $a_n^*=b_n^*+o(1)$, and use \eqref{equ:lambdanrem} to approximate $\Lambda_n$. 
\end{remark}
	Similar reasoning indicates that we may replace any valid $a_n^*$ by $a_n^*+\kappa_n$ if $\kappa_n=o(1)$, lending some flexibility which aids in condition \eqref{equcondalpha}.
In the following result we locally introduce the notation $\azeta=(\zeta\log(n/s_n))^{1/\zeta}$ since choosing $a_n^*$ slightly larger than $\azeta$ is helpful for example in Theorem~\ref{thm:BHpointwise}.

\begin{lemma}\label{lem:assumptionok}
	Assumption~\ref{ass:generalnoise}\ref{ass:location} holds in the Subbotin location model $X_i=a +\varepsilon_i$, $1\leq i\leq n$, and Assumption~\ref{ass:generalnoise}\ref{ass:scale} holds in the Subbotin scale model $X_i=(1+|a|)^{1/2} \varepsilon_i$, $1\leq i\leq n$, 
	for a noise $\varepsilon_i$ i.i.d. distributed as the Subbotin density $\phi_\zeta$ defined by \eqref{eqn:def:Subbotin-density}, with $\zeta>1$. 
	In both of these cases, Assumption~\ref{ass:generalnoise} holds for the pair $(a_n^*,\delta_n)$ for  $a_n^*= \azeta :=(\zeta \log(n/s_n))^{1/\zeta}$ and $\delta_n = (\log(n/s_n))^{-\upsilon}$ for any $\upsilon\in (0,1-1/\zeta)$ but also for all the pairs $(\azeta+\kappa_n,\delta_n+\kappa_n)
	$  with any positive sequence $\kappa_n\to 0$. In addition, 
	we have
\begin{align}
	\frac{n}{s_n}\overline{F}_0(\azeta-\delta_n) &\gtrsim (\azeta)^{-(\zeta-1)} \exp\brackets[\big]{c (\log (n/s_n))^{1-1/\zeta -\upsilon}}\to \infty\label{condF0anstarmoinsdelta}\\
	\frac{n}{s_n}\overline{F}_0(\azeta+\kappa_n) &\lesssim (\azeta)^{-(\zeta-1)} \exp(-\kappa_n (\azeta)^{\zeta-1})\to 0.\label{condF0anstar}
\end{align}
\end{lemma}

\begin{remark}\label{rem:polysparse}
In the Subbotin models, some of our  conditions can be made more explicit using  Lemma~\ref{lem:assumptionok} (recall that $\zeta=2$ is the Gaussian case), defining  $\azeta$ as in the lemma:
\begin{itemize} 
\item Theorem~\ref{thm-notrade-Subbotin}, condition on $B=(B_n)_n$:  the inequality $B_n^2\leq (n/ s_n)\overline{F}_0(a_n^*-\delta_n)/3$ holds if $B_n^2 \leq \exp((\log (n/s_n))^\upsilon)$ for some $\upsilon\in (0,1-1/\zeta)$, by applying \eqref{condF0anstarmoinsdelta} and choosing the pair $(a_n^*,\delta_n)=(\azeta,(\log(n/s_n))^{-\upsilon'})$ for $\upsilon'<1-1/\zeta-\upsilon$. Under polynomial sparsity \eqref{polspa}, the condition $B_n^2 \leq \exp((\log n)^\upsilon)$ suffices.
\item 
Theorem ~\ref{thm:BHpointwise}, condition \eqref{equcondalpha} for a suitable choice of $a_n^*,\delta_n$: 
%
Recall that condition~\eqref{equcondalpha} is satisfied if for $n$ large $\Lambda_n$ is bounded away from 1 and $(n/s_n)\overline{F}_0(a_n^*-\delta_n)\geq 1$, and if both $\alpha_n$ and $(n/s_n)\overline{F}_0(a_n^*)/\alpha_n$ tend to zero. The latter can be achieved with any $\alpha_n\to 0$ satisfying $\log(1/\alpha_n) = o((\log(n/s_n))^{1-1/\zeta})$: to see this, apply \eqref{condF0anstar} for the pair 
$(\azeta+\kappa_n,\delta_n+\kappa_n)=((\zeta \log(n/s_n))^{1/\zeta} + \kappa_n,\delta_n+\kappa_n)$ with $\kappa_n\asymp \log(1/\alpha_n)/\log(n/s_n)^{1-1/\zeta}$. Under polynomial sparsity \eqref{polspa}, the condition reduces to $\log(1/\alpha_n)=o( (\log n)^{1-1/\zeta})$, so that no further knowledge of $s_n$ is required to define the BH procedure. Other regimes, such as $s_n\asymp n/(\log n)^d$, may also be permitted for suitably chosen $\alpha_n$, slightly relaxing the polynomial sparsity assumption. 
\item Theorem ~\ref{thm:BHpointwise}, conclusion:
Combining the previous point with the conclusion \eqref{eqn:BHrisk} of Theorem~\ref{thm:BHpointwise} (and recalling that the $\overline{F}_a$ are uniformly Lipschitz) leads to the bound
\begin{align*}\label{eqn:BHriskcvrate} 
&\fr(\te,\vphi) - s_n^{-1}\sum_{i\in S_\theta} F_{\abs{\theta_i}}(\azeta)  \\ \lesssim& s_n^{-1}\sum_{i\in S_\theta} F_{\abs{\theta_i}}(\azeta+\kappa_n) - s_n^{-1}\sum_{i\in S_\theta} F_{\abs{\theta_i}}(\azeta)  + \alpha_n + \exp\brackets[\Big]{-\frac{(1-\Lambda_n(\theta))^2}{32}s_n}\\
&\lesssim 
\frac{\log(1/\alpha_n)}{\log(n/s_n)^{1-1/\zeta}} + \alpha_n + \exp\brackets[\Big]{-\frac{(1-\Lambda_n(\theta))^2}{32}s_n},
\end{align*}  
with a (close to) optimal choice of $\alpha_n=(\log(n/s_n))^{-(1-1/\zeta)}$, giving a convergence rate to $s_n^{-1}\sum_{i\in S_\theta} F_{\abs{\theta_i}}(\azeta)$ of order at most $\frac{\log\log (n/s_n)}{\log(n/s_n)^{1-1/\zeta}} + e^{-cs_n}$ for some constant $c>0$.
\end{itemize}

\end{remark}

\begin{proof} In view of  
Remark~\ref{rem:ass1}, it suffices to verify \eqref{eqn:modelassumption2}, \eqref{eqn:modelassumption3} and either \eqref{eqn:monotonicity-location} or \eqref{eqn:monotonicity-scale}. 
	Let us begin by verifying the conditions on the null model, which for both the location and scale Subbotin  models has $f_0,F_0$ and $\ol{F}_0$ equal to $\phi_\zeta,\Phi_\zeta$ and $\ol{\Phi}_\zeta$ respectively. By Lemma~\ref{lem:sub}, 
	\[ \overline{\Phi}_\zeta(x)\asymp \frac{{\phi}_\zeta(x)}{x^{\zeta-1}}, \quad \text{for}\quad x\geq 1.\] 
	Noting that ${\phi}_\zeta(\azeta)=L_\zeta^{-1} s_n/n$
	, we see that $(n/s_n)\overline{\Phi}_\zeta(\azeta)\asymp (\azeta)^{-(\zeta-1)}\to 0$. 
	It remains to lower bound $\overline{\Phi}_\zeta(\azeta-\delta_n)$. For any $\delta>0$, we have
	\[ \frac{n}{s_n}\overline{\Phi}_\zeta(\azeta-\delta) \asymp \frac{{\phi}_\zeta(\azeta-\delta)}{(\azeta-\delta)^{\zeta-1}{\phi}_\zeta(\azeta)},\]
	provided that $\azeta-\delta\geq 1$. 
	Lemma~\ref{lem:dConvex} tells us that for $\delta\geq 0$ we have (uniformly in $\delta<1$)
	\[ {\phi}_\zeta(\azeta-\delta)/{\phi}_\zeta(\azeta) \geq \exp\brackets[\big]{\delta \abs{\azeta-\delta}^{\zeta-1} \sign(\azeta-\delta)}.\]
	Thus (uniformly in $\delta<1$),
	\[ \frac{n}{s_n}\overline{\Phi}_\zeta(\azeta-\delta) \gtrsim (\azeta-\delta)^{-(\zeta-1)} \exp(\delta \abs{\azeta-\delta}^{\zeta-1}) \geq (\azeta)^{-(\zeta-1)}\exp(\delta (\azeta/2)^{\zeta-1}),\] the inequality holding since $\azeta/2 >\delta$ for $n$ large because $\azeta \to \infty$. If a sequence $\delta=\delta_n\in(0,1)$ satisfies $\delta_n \geq (\log(n/s_n))^{-\upsilon}$ for some $\upsilon\in(0,1-1/\zeta)$ then the last expression is lower bounded by, for a constant $c=c(\zeta)$, 
	\[ (\azeta)^{-(\zeta-1)} \exp\brackets[\big]{c (\log n/s_n)^{1-1/\zeta -\upsilon}}\to \infty.\]
This shows \eqref{condF0anstarmoinsdelta} and \eqref{eqn:modelassumption2}, \eqref{eqn:modelassumption3} for the pair $(\azeta,\delta_n)$. For the pairs $(\azeta+\kappa_n,\delta_n+\kappa_n)$ with $\kappa_n\to 0$, we observe that, again using Lemmas~\ref{lem:sub} and \ref{lem:dConvex}, 
	\[ \frac{n}{s_n} \overline{\Phi}_\zeta(\azeta+\kappa_n) \leq L_\zeta^{-1} (\azeta+\kappa_n)^{-(\zeta-1)} \exp(-\kappa_n \abs{\azeta}^{\zeta-1})\lesssim (\azeta)^{-(\zeta-1)} \exp(-\kappa_n (\azeta)^{\zeta-1}),\]
	which leads to \eqref{condF0anstar} (and \eqref{eqn:modelassumption3} with $a_n^* = \azeta +\kappa_n$).
	
	Next we move to verify the monotonicity conditions, first in the Subbotin location model $f_a(x)=\phi_\zeta(x-a)$, $x,a\in \R$. 
	To verify \eqref{eqn:monotonicity-location}, that is, that $f_{a}(x)/f_0(x)=\exp( \zeta^{-1} [\abs{x}^\zeta - \abs{x-a}^\zeta])$ is increasing in $x$ for $a>0$, it suffices to differentiate the map $x\in \RR\mapsto \abs{x}^{\zeta} - \abs{x-a}^{\zeta}$ on the regions $x\leq 0$, $0\leq x\leq a$ and $x\geq a$. 
	%
	
	In the Subbotin scale model $f_a(x)=\phi_\zeta(x/(1+\abs{a})^{1/2}) /(1+\abs{a})^{1/2}$, $x,a\in\RR$. 
	For \eqref{eqn:monotonicity-scale}, we have $f_{a}(x)/f_0(x)=
	\exp( \zeta^{-1} \abs{x}^\zeta (1-(1+|a|)^{-\zeta/2}))/(1+|a|)^{1/2}
	$ which is increasing in $x\geq 0$. 
%
\end{proof}


\begin{lemma} \label{lem:assumption-implies-symmetry}
	Under Assumption~\ref{ass:generalnoise}\ref{ass:location}, $\overline{F}_a(x)=F_{-a}(-x)$ for all $x,a\in\RR$. Under Assumption~\ref{ass:generalnoise}\ref{ass:scale}, $\overline{F}_a(x)=F_a(-x)$ for all $x,a\in\RR$.
\end{lemma}
\begin{proof}
	Immediate from the conditions on $f_a$ by substituting in the integrals defining $F_a$.
\end{proof}

\begin{lemma}\label{lem:monotoneLR}
Under Assumption~\ref{ass:generalnoise}, 
for all $a \in \R$ (Assumption~\ref{ass:generalnoise}\ref{ass:location}) or  $a >0$ (Assumption~\ref{ass:generalnoise}\ref{ass:scale}), we have that
$t\in [0,\infty)\mapsto \ol{F}_a(t)/\ol{F}_0(t)$ is continuous increasing. 
\end{lemma}

\begin{proof}
First observe that by \eqref{eqn:monotonicity-location} or \eqref{eqn:monotonicity-scale}, we have $\ol{F}_a(t)/\ol{F}_0(t)> f_a(t)/f_0(t)$ for all $t\geq 0$. Then the proof follows from a simple derivative computation.
\end{proof}

\subsection{~~Interpretation of Assumption~\ref{ass:generalnoise}}

The conditions \eqref{eqn:modelassumption2} and \eqref{eqn:modelassumption3} of Assumption~\ref{ass:generalnoise} can be thought of as light tail conditions. The following shows that Laplace tails are too heavy. In view of the proof, notice that the conditions roughly reduce to requiring the conditional probability
$P_{X\sim f_0}(\abs{X}>a_n^* \mid \abs{X}>a_n^*-\delta_n)$ to tend to zero for some $a_n^*\to\infty$ and for $\delta_n$ tending to zero slowly enough.
\begin{lemma}\label{lem:assumption-fails-for-laplace}
	Consider the Laplace tail function $\Phi_\zeta$, $\zeta=1$, defined as in Example~\ref{example:SubbotinLocation}. There do not exist positive numbers $a_n^*\to\infty$ and $\delta_n\to 0$ such that $(n/s_n)\overline{\Phi}_1(a_n^*)\to 0,~(n/s_n)\overline{\Phi}_1(a_n^*-\delta_n)\to \infty$.
\end{lemma}
\begin{proof}
	For any $a_n^*$ and $\delta_n$ with $a_n^*-\delta_n\ge 0$ we have by memoryless of the exponential distribution (or direct computation) 
	\[ \overline{\Phi}_1(a_n^*) 
	=\overline{\Phi}_1(a_n^*-\delta_n)e^{-\delta_n}.\] 
	The result follows since $e^{-\delta_n}\to 1$ if $\delta_n\to 0$. 
\end{proof}

Not only does the assumption not hold for Laplace noise, but also the conclusions are not true. In particular, it is relatively straightforward to show, using similar memorylessness arguments, that the optimal thresholding procedure will have $\FDR$ and $\FNR$ of the same order, in contrast to the conclusion of Theorem~\ref{thm-notrade-Subbotin} which said that the $\FDR$ always contributes negligibly to the combined risk at the boundary.
\\

Finally, note that a related assumption was considered in \cite{rabinovichpreprint}, as we discuss in Section~\ref{sec:rab}.

\section{~~Sparsity preserving procedures}\label{sec:sparsitypreserving}
 
Here we show that a large class of procedures are sparsity preserving in the sense of Definition~\ref{def:spapres}, namely: the oracle thresholding procedure, BH procedures with either fixed or vanishing level, and empirical Bayes $\ell$-value procedures with fixed level.

To start with, let us check the sparsity preserving property for the oracle thresholding procedure $\vphi_i=\II\braces{\abs{X_i}>a_n^*}$. Its number of true positives is at most $s_n$, and if $V$ denotes its number of its false positives, we have $E_\te V\le (n-s_n)2\ol{F}_0(a_n^*)$ so Markov's inequality combined with condition \eqref{eqn:modelassumption3} give that it is sparsity preserving up to a constant multiplicative factor $A$ (e.g. $A=1+\veps$, for arbitrary $\veps>0$).
 
The next lemma is useful for procedures for which a control of the FDR is already known. 
\begin{lemma}\label{lemmaBHpreservsparsity2}
	For any $\Theta\subset \ell_0[s_n]$ and any multiple testing procedure $\vphi$, we have for any $u>1$ and $\theta\in \Theta$,
	\begin{equation}\label{equBHpreservsparsity2}
		P_{\te}\left[\sum_{i=1}^n \vphi_i(X) > u s_n \right]\leq P_{\te}\left[ \FDP(\theta,\vphi) > 1-u^{-1},\sum_{i=1}^n \vphi_i(X) \geq s_n \right] \leq \frac{u}{u-1} \FDR(\theta,\vphi).
	\end{equation}
	In particular, we have 
	\begin{itemize}
		\item[(i)]
		for any constant $c>1$ and any sequence $A=(A_n)_n$ for which $A_n\geq c$, any sequence of procedures $\vphi$ with vanishing FDR in the sense of
		\[
		\sup_{\theta\in \Theta}\FDR(\te,\vphi)=o(1)
		\]
		is sparsity preserving up to the multiplicative factor $A=(A_n)_n$ over $\Theta$, that is, $\vphi\in \mathcal{S}_{A}(\Theta)$ with Definition~\ref{def:spapres}.
		\item[(ii)]
		for any sequence $A=(A_n)_n$ for which $A_n\to \infty$, any sequence of procedures $\vphi$ with an FDP bounded uniformly in probability away from $1$ when at least $s_n$ rejections are made, in the sense that for some fixed $t<1$,
		\[
		\sup_{\theta\in \Theta} P_\theta\left(\FDP(\theta,\vphi) > t, \sum_{i=1}^n \vphi_i(X) \geq s_n\right)=o(1)
		\]
		is sparsity preserving up to the multiplicative factor $A=(A_n)_n$  over $\Theta$, that is, $\vphi\in \mathcal{S}_{A}(\Theta)$ with Definition~\ref{def:spapres}.
	\end{itemize}
\end{lemma}

An easy consequence of Lemma~\ref{lemmaBHpreservsparsity2}(i) is that the BH procedure taken at a level $\alpha_n\to 0$ (see Section~\ref{secBH} for a formal definition) satisfies the sparsity preserving condition on $\Theta=\ell_0[s_n]$ for any sequence $A=(A_n)_n$ with $A_n\geq c>1$ for all $n$ (recall that the FDR of this procedure is at most $\alpha_n$, see \eqref{FDRBH}). Using Theorem~1 of \cite{cr20}, the $\ell$-value procedure also has a vanishing FDR (under polynomial sparsity), so that Lemma~\ref{lemmaBHpreservsparsity2}(i) also ensures that the $\ell$-value procedure is sparsity preserving on $\Theta=\ell_0[s_n]$. We directly obtain in Section~\ref{sec:proof:thm-adapt-cl} sparsity-preservingness of the $\ell$-value procedure on $\Theta=\ell_0[s_n]$ without requiring polynomial sparsity,  see  Remark~\ref{rem:l-val-sparsity-preserving}. Finally Corollary~\ref{cor-BHSP} below  uses Lemma~\ref{lemmaBHpreservsparsity2}(ii) to prove that the BH procedure at a fixed level $\alpha<1$ is sparsity preserving.

\begin{proof}
	Letting $V=\sum_{i=1}^n\ind{\theta_i = 0} \vphi_i(X)$, we have for all $u>1$,
	\begin{align*}
		\left\{\sum_{i=1}^n \vphi_i(X) >  u s_n \right\}&\subset\left\{V > (u-1) s_n \right\}\cap  \left\{\sum_{i=1}^n \vphi_i(X) \geq s_n \right\}\\
		&\subset \left\{\frac{V}{V+s_n} > \frac{(u-1) s_n}{(u-1) s_n+s_n} \right\}\cap  \left\{\sum_{i=1}^n \vphi_i(X) \geq s_n \right\}\\
		&\subset \left\{ \FDP(\theta,\vphi) > 1-u^{-1} \right\}\cap  \left\{\sum_{i=1}^n \vphi_i(X) \geq s_n \right\} , 
	\end{align*}
	because the function $x\in [0,\infty)\mapsto \frac{x}{x+s_n}$ is non-decreasing. We conclude the proof of \eqref{equBHpreservsparsity2} by taking probabilities and using Markov's inequality.
	Parts $(i)$ and $(ii)$ are immediate consequences.
\end{proof}

\begin{corollary}\label{cor-BHSP}
	The BH procedure taken at a fixed level $\alpha<1$ is sparsity preserving  up to any multiplicative factor $A=(A_n)_n$ with  $A_n\to \infty$ over any parameter set $\Theta\subset \ell_0[s_n]$.
\end{corollary}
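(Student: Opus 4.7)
The plan is a direct expectation-and-Markov argument applied to the number of false discoveries. Note that Lemma~\ref{lemmaBHpreservsparsity2} does not apply directly here, since the BH procedure at fixed level $\alpha$ has neither vanishing FDR (ruling out part $(i)$) nor an FDP whose tail probability $P_\te(\FDP>t)$ vanishes for some fixed $t<1$ (ruling out part $(ii)$); the standard Markov bound $P_\te(\FDP>t)\le \alpha/t$ only gives $O(1)$ rather than $o(1)$.

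First I would decompose the total number of rejections as $R(X) = V(X) + S(X)$, where $V = \sum_{i:\te_i=0} \vphi^{BH}_{\alpha,i}(X)$ is the number of false discoveries and $S = \sum_{i:\te_i\neq 0} \vphi^{BH}_{\alpha,i}(X)$ the number of true discoveries. Since $\te\in\Theta\subset\ell_0[s_n]$, we have $S\le \abs{S_\te}\le s_n$ deterministically, so for $n$ large enough that $A_n>1$ we obtain the pointwise inclusion
\[ \braces{R(X) > A_n s_n} \subset \braces{V(X) > (A_n-1)s_n}.\]

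Next I would invoke Lemma~\ref{lem:BHNR}, which gives, uniformly over $\te\in\ell_0[s_n]$,
\[ E_\te V \le \frac{\alpha}{1-\alpha}\, s_n + \frac{\alpha}{(1-\alpha)^2}.\]
Combining this with Markov's inequality yields
\[ \sup_{\te\in\Theta} P_\te\brackets[\big]{R(X) > A_n s_n} \le \frac{1}{A_n-1}\sqbrackets[\bigg]{\frac{\alpha}{1-\alpha} + \frac{\alpha}{(1-\alpha)^2\, s_n}}.\]
Assuming $s_n\ge 1$ (the only nontrivial case), the right-hand side tends to $0$ as $A_n\to \infty$, which is exactly the sparsity-preserving property of Definition~\ref{def:spapres}.

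There is no real obstacle: once the trivial decomposition $R=V+S$ with $S\le s_n$ is observed, the argument reduces to a one-line application of Lemma~\ref{lem:BHNR} and Markov's inequality. The only conceptual point is the recognition that, for fixed $\alpha$, one must bound $V$ in expectation rather than try to control $\FDP$ away from $1$, since BH at fixed level does not concentrate its FDP below $1$.
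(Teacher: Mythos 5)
Your proof is correct and takes a genuinely different, shorter route than the paper's. You decompose $R = V + S$ with $S\leq s_n$ deterministic, invoke the expectation bound of Lemma~\ref{lem:BHNR}, and close with Markov's inequality, yielding $\sup_{\te\in\Theta}P_\te(R>A_ns_n)\lesssim 1/A_n\to 0$. The paper instead verifies the hypothesis of Lemma~\ref{lemmaBHpreservsparsity2}(ii): it uses the fixed-point relation $2\ol{F}(\hat t)=\alpha\hat k/n$, a union bound over the possible number of rejections $k\geq s_n$, and Bernstein's inequality on each term, arriving at $P_\te(\FDP>\alpha+\eps,\,R\geq s_n)\lesssim e^{-c\eps^2 s_n}$. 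The paper's route gives an exponentially small (in $s_n$) bound on the bad event, while yours gives the weaker but entirely sufficient $O(1/A_n)$; in exchange your argument is essentially one line once Lemma~\ref{lem:BHNR} is in hand. Both are legitimate.

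One correction to your preamble. Your claim that part (ii) of Lemma~\ref{lemmaBHpreservsparsity2} is ``ruled out'' because the BH procedure at fixed $\alpha$ does not have $P_\te(\FDP>t)=o(1)$ misreads the hypothesis of that lemma: what is required is
\[
\sup_{\te\in\Theta} P_\te\brackets[\Big]{\FDP(\te,\vphi)>t,\ \sum_{i=1}^n\vphi_i(X)\geq s_n}=o(1)
\]
for some fixed $t<1$ --- note the intersection with the event that at least $s_n$ rejections are made, which you have dropped. On that event the FDP of the BH procedure does concentrate: the paper shows the displayed probability is $O(e^{-c\eps^2 s_n})$ for $t=\alpha+\eps$ with any $\eps\in(0,1-\alpha)$. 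So part (ii) is in fact applicable; the crude bound $P_\te(\FDP>t)\leq\alpha/t$ simply isn't the right way to verify it. Your alternative proof remains valid, but you should not present it as necessitated by a failure of Lemma~\ref{lemmaBHpreservsparsity2}.
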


\begin{proof}
	From \eqref{defBH}, letting $\hat{k}=\sum_{i=1}^n \vphi^{BH}_i$, classically one deduces  that $2\ol{F}_0(\hat{t})=\alpha\hat{k}/n$. Thus, for all $\epsilon\in (0,1-\alpha)$,
	\begin{align*}
		&P_\theta\left(\FDP(\theta,\vphi^{BH}) > \alpha +\epsilon, \sum_{i=1}^n \vphi_i^{BH} \geq s_n\right)\\
		&= P_\theta\left(\sum_{i=1}^n\ind{\theta_i = 0} \ind{2\ol{F}_0(\abs{X_i})\leq \alpha \hat{k}/n} > (\alpha +\epsilon) \hat{k}, \hat{k} \geq s_n\right)\\
		&\leq \sum_{k\geq s_n} P_\theta\left(\sum_{i=1}^n\ind{\theta_i = 0} \ind{2\ol{F}_0(\abs{X_i})\leq \alpha k/n} > (\alpha +\epsilon) k\right)\\
		&\leq \sum_{k\geq s_n} P_\theta\left(\sum_{i=1}^{n-s_n}  \ind{U_i\leq \alpha k/n} > (\alpha +\epsilon) k\right) \leq \sum_{k\geq s_n} P_\theta\left(\sum_{i=1}^{n}  \ind{U_i\leq \alpha k/n} > (\alpha +\epsilon) k\right),
	\end{align*}
	for $U_i$, $1\leq i\leq n$, i.i.d. variables uniformly distributed on $(0,1)$. Now, we have by Bernstein's inequality (Lemma~\ref{th:bernstein}), for any $k\geq s_n$,
	\begin{align*}
		P_\theta\left(\sum_{i=1}^n  \ind{U_i\leq \alpha k/n} > (\alpha +\epsilon) k\right)&\leq P_\theta\left(\sum_{i=1}^n  \ind{U_i\leq \alpha k/n} - \alpha k> \epsilon k\right)\\
		&\leq \exp\left(-0.5 \frac{(\epsilon k)^2}{\alpha k+\epsilon k/3}\right) \leq \exp(-c \epsilon^2 k),
	\end{align*}
	for some universal constant $c>0$ (which can be chosen independent of $\alpha$ by bounding $\alpha$ by $1$). 
	It follows that
	\begin{align*}
		&P_\theta\left(\FDP(\theta,\vphi^{BH}) > \alpha +\epsilon, \sum_{i=1}^n \vphi_i^{BH} \geq s_n\right)\\
		&\leq \sum_{k\geq s_n} \exp(-c \epsilon^2 k)\leq (1-e^{-c \epsilon^2})^{-1} e^{-c \epsilon^2  s_n}=o(1).
	\end{align*}
	The result follows from Lemma~\ref{lemmaBHpreservsparsity2}(ii).
\end{proof}

Let us now give an example  of a (randomized) non-sparsity-preserving procedure for which some FDR/FNR tradeoff is possible.

\begin{example}\label{tradepossible}
	Given a real $b$ and for $\vphi^*$ a procedure satisfying \eqref{err-oracle}, define a (randomized) procedure $\tilde\vphi$ as follows: given a Bernoulli variable $Z$ with success probability $\pi\in[0,1]$, let 
	\[ \tilde\vphi =\tilde\vphi(X,Z)= (1-Z) \vphi^*(X) + Z.\]
	With probability $\pi$, the test $\tilde\vphi$ equals the trivial test $1$, so rejects all null hypotheses. Then 
	\[  \sup_{\te\in\Theta_b} \FDR(\te,\tilde\vphi) = \pi \frac{n-s_n}{n}+(1-\pi) \sup_{\te\in\Theta_b} \FDR(\te,\vphi^*) =\pi (1+o(1)),
	\]
	while the FNR is controlled at level
	\[ \sup_{\te\in\Theta_b} \FNR(\te,\tilde\vphi) = \pi\cdot 0+ (1-\pi)\sup_{\te\in\Theta_b} \FNR(\te,\vphi^*)=
	(1-\pi)\overline{\Phi}(b)(1+o(1)). \]
	In particular, for $\pi>0$, the procedure $\tilde\vphi$ 
	trades a gain of $\pi\overline{\Phi}(b)$ in terms of the FNR  with a loss of $\pi$ in terms of the FDR. The procedure $\vphi^*$ is not sparsity preserving, as it makes $n\gg s_n$ rejections with probability $\pi>0$. 
\end{example}


\section{~~Link between conservative testing and almost full recovery}\label{sec:almostfullrecovery}

We state a lemma to illustrate the link between conservative testing and almost full recovery.

\begin{lemma} \label{lemlink}
	If a test $\vphi$ achieves almost full recovery with respect to the Hamming loss over a subset $\Theta_{s_n}\subset\ell_0[s_n]\backslash \ell_0[s_n-1]$, that is, $\sup_{\te\in\Theta_{s_n}} E_\te \lc(\te,\vphi)/s_n = o(1)$, then it also allows for conservative testing over $\Theta_{s_n}$, that is,  $\sup_{\te\in\Theta_{s_n}}  \mathfrak{R} (\theta,\vphi) \to 0$. 
	
	Conversely, suppose $\vphi$ allows for conservative testing over $\Theta_{s_n}$, and that the number of false discoveries $V(\vphi)=\sum_{i:\te_i=0} \vphi_i$ of $\vphi$ concentrates around its mean in that, 
	\begin{equation} \label{vconc}
		\sup_{\te\in\Theta_{s_n}} P_\te[\abs{V(\vphi)-E_\te V(\vphi)}>E_\te V(\vphi)/2] = o(1).
	\end{equation}
	Then $\vphi$ achieves almost full recovery with respect to the Hamming loss over $\Theta_{s_n}$.
\end{lemma}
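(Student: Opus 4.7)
\textbf{Plan of proof for Lemma~\ref{lemlink}.} The key observation is that for $\theta\in\Theta_{s_n}\subset\ell_0[s_n]\setminus\ell_0[s_n-1]$ we have $|S_\theta|=s_n$ exactly, so if we write $V=\sum_{i:\theta_i=0}\vphi_i$ and $W=\sum_{i:\theta_i\neq 0}(1-\vphi_i)$ for the numbers of false discoveries and false non-discoveries, then $\lc(\theta,\vphi)=V+W$, $\FNR(\theta,\vphi)=E_\theta W/s_n$, and $\FDR(\theta,\vphi)=E_\theta[V/((V+S)\vee 1)]$ where $S=s_n-W$. With these identifications both implications reduce to elementary inequalities combined with Markov's inequality.

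\emph{First direction (almost full recovery $\Rightarrow$ conservative testing).} Assume $\sup_{\theta\in\Theta_{s_n}}E_\theta\lc/s_n=o(1)$. First, directly $\FNR=E_\theta W/s_n\le E_\theta\lc/s_n=o(1)$ uniformly. Second, split according to whether $W\le s_n/2$ or not. On $\{W\le s_n/2\}$ we have $(V+S)\vee 1\ge s_n/2$, hence $V/((V+S)\vee 1)\le 2V/s_n$. On the complement $\{W>s_n/2\}$ we bound the ratio trivially by $1$. Taking expectations,
\[ \FDR(\theta,\vphi)\le \frac{2E_\theta V}{s_n}+P_\theta(W>s_n/2)\le \frac{2E_\theta V}{s_n}+\frac{2E_\theta W}{s_n}\le 2\,\frac{E_\theta\lc}{s_n}, \]
using Markov's inequality for the second step. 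The right side is $o(1)$ uniformly, so $\sup_\theta\mathfrak{R}(\theta,\vphi)\to 0$.

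\emph{Second direction (conservative testing $+$ concentration of $V$ $\Rightarrow$ almost full recovery).} Assume $\sup_\theta \mathfrak{R}(\theta,\vphi)\to 0$ and \eqref{vconc}. Since $\FNR\le\mathfrak{R}$, we get $\sup_\theta E_\theta W/s_n=o(1)$, so it remains to prove $\sup_\theta E_\theta V/s_n=o(1)$. Let $\cA_\theta=\{|V-E_\theta V|\le E_\theta V/2\}$, on which $V\ge E_\theta V/2$. Since $S\le s_n$ and $x\mapsto x/(x+s_n)$ is increasing, we get, on $\cA_\theta$, that $V/((V+S)\vee 1)\ge V/(V+s_n)\ge (E_\theta V/2)/((E_\theta V/2)+s_n)$. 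Taking expectations,
\[ \FDR(\theta,\vphi)\ge \frac{E_\theta V/2}{(E_\theta V/2)+s_n}\,(1-\alpha_n),\qquad \alpha_n:=\sup_\theta P_\theta(\cA_\theta^c)=o(1). \]
Since the right side is an increasing function of $E_\theta V/s_n$, a uniform bound $\sup_\theta\FDR\to 0$ forces $\sup_\theta E_\theta V/s_n\to 0$ (otherwise a subsequence with $E_\theta V/s_n\ge c>0$ would give a right side bounded away from $0$). Consequently $\sup_\theta E_\theta\lc/s_n=\sup_\theta(E_\theta V+E_\theta W)/s_n=o(1)$, which is almost full recovery over $\Theta_{s_n}$.

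\emph{Main obstacle.} The first direction is essentially a one-line bookkeeping argument. The delicate step is in the converse: bounding the FDR from below in terms of $E_\theta V/s_n$ requires replacing the random denominator $V+S$ by the deterministic $E_\theta V+s_n$ on a high-probability event; this is precisely where the concentration hypothesis \eqref{vconc} is needed, and one sees that without such a hypothesis the implication can fail (e.g.\ if $V$ is rarely very large but typically zero, then $E_\theta V$ may be non-negligible while $\FDR$ stays small).
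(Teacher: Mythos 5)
Your proof is correct and takes essentially the same approach as the paper's: a Markov-inequality split on a high-probability event where the number of rejections is of order $s_n$ for the first direction, and a Jensen/monotonicity lower bound on the FDR using the concentration event $\cA_\theta$ for the converse. The only (immaterial) differences are cosmetic: you split on $\{W\le s_n/2\}$ whereas the paper splits on $\{V+S> s_n/2\}$, and your lower bound $\tfrac{E_\theta V/2}{(E_\theta V/2)+s_n}$ in the converse is slightly sharper than the paper's $\tfrac{E_\theta V/2}{3E_\theta V/2+s_n}$ (the paper additionally invokes the upper tail $V\le 3E_\theta V/2$ on $\cA_\theta$, which is unnecessary), and you finish by a subsequence argument where the paper invokes continuity of the inverse of $\psi(x)=x/(3x+2)$; these are interchangeable.
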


\begin{proof}[Proof of Lemma~\ref{lemlink}]
	Recall that the FDP of $\vphi$ can be written $V/\braces{\max(V+S,1)}$, where $V=V(\vphi)$ is the number of false discoveries of $\vphi$ and $S=S(\vphi)$ the number of true discoveries. 
	
	First suppose $\vphi$ achieves almost full recovery with respect to the Hamming loss. This can be written $E[V]+E[s_n-S]=o(s_n)$ uniformly over $\Theta$. Let $\cA=\{V+S> s_n/2\}$, then using Markov's inequality,
	\[ P_\te[\cA^c] \le P_\te[s_n-S\geq
	s_n/2]\le \frac2{s_n}E_\te[s_n-S]\]
	so that $P_\te[\cA^c]=o(1)$. On the other hand,
	\begin{equation*}
		\FDR(\te,\vphi)  = E_\te\left[ \frac{V}{(V+S)\vee 1} \right] 
		\le \frac{2}{s_n}E_\te\left[ V \II_{\cA}\right] + P_\te[\cA^c],
	\end{equation*}
	which implies FDR$(\te,\vphi)=o(1)$ over $\Theta$, while FNR$(\te,\vphi)=o(1)$  follows from $E_\te[s_n-S]=o(s_n)$. 
	
	Conversely, suppose $\vphi$ allows for conservative testing over $\Theta$. Then 
	FNR$(\te,\vphi)=o(1)$ implies $E_\te[s_n-S]=o(s_n)$. Set $\mathcal{B}=\{|V(\vphi)-E_\te V(\vphi)|\le E_\te V(\vphi)/2\}$. By assumption $P_\te[\mathcal{B}^c]=o(1)$ over $\Theta$. Then
	\[ \FDR(\te,\vphi)=E_\te\left[ \frac{V}{(V+S)\vee 1} \right]
	\ge  \frac{E_\te V/2}{3E_\te V/2+s_n}P_{\te}[\mathcal{B}]=\psi(E_\te V/s_n)(1+o(1)),
	\]
	where $\psi$ is the bijective continuous map $x\to x/(3x+2)$ from $(0,\infty)$ to $(0,1/3)$. By assumption $\FDR(\te,\vphi)=o(1)$. As $\psi^{-1}$ is continuous at $0$ with limit $0$, one gets $EV_\te/s_n=o(1)$ which implies combined with the {control of $E[s_n-S]$} that $\vphi$ achieves almost full recovery with respect to the Hamming loss.
\end{proof}

\section{~~Pointwise and local minimax results}

\label{sec:local}
Here we give the claimed ``pointwise'' version of Theorem~\ref{th:minimax-multilevel}. Uniformity will be demanded only over permutations,
\begin{equation}\label{eqn:theta-sigma}
	\theta_\sigma = (\theta_{\sigma(i)} : i\leq n).
\end{equation}
In particular, we allow the possibility of procedures knowing the exact nonzero values of the $\theta_i$, with only their locations unknown. 
Recall from \eqref{eqn:def:Lambda_n(theta)} the definition
\[\Lambda_n(\theta) = s_n^{-1}\sum_{i\in S_\theta} F_{\abs{\theta_i}}(a_n^*).\]

\begin{theorem} \label{thm:pointwise-risk}	Consider the sparse sequence model \eqref{eqn:generalnoisemodel}--\eqref{sparse} and grant Assumption~\ref{ass:generalnoise}\ref{ass:location}. The $\mathfrak{R}$-risk satisfies
	\[  \inf_\vphi \sup_{\sigma}  \mathfrak{R} (\theta_\sigma ,\vphi)= \La_n(\theta) + o(1), \]
	where the supremum is over all permutations $\sigma$ and the infimum over all testing procedures $\vphi$.
	The test $\vphi$ may be chosen independently of $\theta$. Under the assumption that $\Lambda_n(\theta)>c$ for some $c>0$, the remainder $\abs{\inf_\vphi \sup_\sigma \fr(\theta_\sigma, \vphi)-\Lambda_n(\theta)}$ is $o(1)$ \emph{uniformly} in $\theta$.
	
	Under Assumption~\ref{ass:generalnoise}\ref{ass:scale}, the same conclusion holds with $2\Lambda_n(\theta)-1$ in place of $\Lambda_n(\theta)$.
\end{theorem} 
Note that we do not make the $o(1)$ terms explicit here: this will be investigated in detail in Section \ref{sec:prooffastrate} for certain classes of large signals.
\begin{remark}\label{rem:minimaxrates}
This gives an alternative proof of the first part of Theorem~\ref{th:minimax-multilevel} (for $\Lambda_n(\ba)$ bounded away from zero): one applies the above result, noting simply that $\Theta(\ba,s_n)$ is closed under permutations and $\sup_{\theta\in \Theta(\ba,s_n)}\Lambda_n(\theta)=\Lambda_n(\ba)$ in the setting of Theorem~\ref{th:minimax-multilevel}. 
\end{remark}

\begin{remark}\label{rem:BH-pointwise-lowerbound}
	Since the BH procedure $\vphi_\alpha^{BH}$ is invariant under permutations ($\vphi_{\alpha,\sigma(i)}^{BH}(X_\sigma)=\vphi_{\alpha,i}(X)$) we deduce that the risk of the BH procedure at some $\theta$ is lower bounded by $\Lambda_n(\theta)+o(1)$. The upper bound in Theorem~\ref{thm:BHpointwise} is therefore an \emph{equality}, up to $o(1)$ terms.
\end{remark}

\begin{remark}\label{rem:one-tailed}
	The proof of Theorem~\ref{thm:pointwise-risk} does not require the full strength of Assumption~\ref{ass:generalnoise}. In particular, if we omit the assumption that $\overline{F}_a(x)$ is increasing in $a$ from Assumption~\ref{ass:generalnoise}\ref{ass:scale}, and replace the assumption $f_a(x)=f_a(-x)$ and $f_a(x)/f_0(x)$ is increasing in $x>0$ with the single assumption that $f_a(x)/f_0(x)$ increases in $\abs{x}$, the pointwise result still holds. However, these extra assumptions remain necessary for Theorem~\ref{th:minimax-multilevel}, since they ensure that the ``hardest'' $\theta$ lies on the boundary of $\Theta(\ba,s_n)$.
	
	In contrast, we \emph{do} make full use of Assumption~\ref{ass:generalnoise}\ref{ass:location} in the proof, but if one were to perform one-tailed testing instead (i.e.\ $H_{1,i}:\theta_i>0$), then both the symmetry assumption and the monotonicity of $\overline{F}_a(x)$ in $a$ are no longer needed. Proofs are almost identical, with thresholding procedures replacing absolute value thresholding procedures.
\end{remark}
Theorem~\ref{thm-notrade-Subbotin} also has a ``pointwise'' version.

\begin{theorem}\label{thm-notrade-pointwise}
	Consider the setting of Theorem~\ref{thm:pointwise-risk}. 
	For any sequence $B=(B_n)_n$ with $B_n^2\leq \tfrac{1}{3}(n/ s_n)\overline{F}_0(a_n^*-\delta_n)$ and $\varliminf_n B_n >1$, %
	and for $\mathcal{S}_{B}(\mathcal{S}_\theta)$ as in Definition~\ref{def:spapres} the set of sparsity preserving procedures over the set $\mathcal{S}_\theta = \braces{\theta_\sigma : \sigma \text{ a permutation}}$,
	\[ \inf_{\vphi\in \mathcal{S}_{B}(\mathcal{S}_\theta)}\sup_{\sigma} \, \FNR(\te_\sigma,\vphi) = \La_n(\theta)+ o(1).\]
\end{theorem}
Note that $\mathcal{S}_B(\mathcal{S}_\theta)\supseteq \mathcal{S}_B(\Theta)$ if $\Theta \supseteq \mathcal{S}_\theta$, so that this result strengthens the conclusion of Theorem~\ref{thm-notrade-Subbotin}.
 
We omit the proof; it is similar to the proof of Theorem~\ref{thm:pointwise-risk}.
This result in particular implies that asymptotically,   the $\fr$--risk of the BH$(\alpha)$ procedure for fixed $\alpha>0$ must be suboptimal at {\em any} signal $\te$, since it must incur an FNR of at least $\La_n(\te)$ (using invariance by permutation as above to get the result for a specific $\te$ from the above display) and therefore an $\fr$--risk of at least $ \La_n(\te)+\alpha>\La_n(\te)$.

\begin{proof} [Proof of Theorem~\ref{thm:pointwise-risk}]
	The proof is virtually identical to that of Theorem~\ref{th:minimax-multilevel}, particularly in the case of granting Assumption~\ref{ass:generalnoise}\ref{ass:scale}. We outline the adjustments required under Assumption~\ref{ass:generalnoise}\ref{ass:location}.
	
For the upper bound, no change is required, other than skipping the step where we bound $\overline{F}_{\abs{\theta_i}}(a_n^*)\geq \overline{F}_{a_j}(a_n^*)$.
	
For the lower bound, one follows the proof of Theorem~\ref{th:minimax-multilevel} almost exactly, simply noting, with $a_j$ taking the nonzero values of $\theta_i$, that the prior chosen therein gives mass 1 to the set $\braces{\theta_\sigma: \sigma \text{ a permutation}}$. The only step requiring care is that previously we took $a_j>0$ for all $j$, and now some $\theta_i$ may take negative values. This is easily accommodated: in bounding $\sum_{j=1}^{s_n}  P_{X_i\sim f_{a_j}}\left[ \#\{ k\in Q_j\backslash\{i\}\::\:  h(\varepsilon_k,a_j)> h(X_i,a_j) \}\geq \rho\right]$ we now split the sum into $\braces{j: a_j>0}$ and $\braces{j: a_j<0}$. For the former we argue as before that $P_{X_i\sim f_{a_j}}\sqbrackets*{ \#\braces{ k\in Q_j\backslash\{i\}\::\:  h(\varepsilon_k,a_j)> h(X_i,a_j) }\geq \rho}=F_{\abs{a_j}}(a_n^*)+o(1)$ by Lemma~\ref{lem:ControlOfpn}. For the latter, we achieve the same bound by noting that the distribution of $X_i$ under $f_a$ is the same as the distribution of $-X_i$ under $f_{-a}$ (and the distribution of $\eps_k\sim f_0$ is symmetric), so that using $h(x,a):=f_a(x)/f_0(x)=h(-x,-a)$ we have
\begin{align*} &P_{X_i\sim f_{a_j}}\sqbrackets*{ \#\braces{ k\in Q_j\backslash\{i\}\::\:  h(\varepsilon_k,a_j)> h(X_i,a_j) }\geq \rho}\\ =& P_{X_i\sim f_{-a_j}}\sqbrackets*{ \#\braces{ k\in Q_j\backslash\{i\}\::\:  h(-\varepsilon_k,a_j)> h(-X_i,a_j) }\geq \rho} \\ =& P_{X_i\sim f_{\abs{a_j}}}\sqbrackets*{ \#\braces{ k\in Q_j\backslash\{i\}\::\:  h(\varepsilon_k,\abs{a_j})> h(X_i,\abs{a_j}) }\geq \rho},\end{align*} 
which is equal to $F_{\abs{a_j}}(a_n^*)+o(1)$ by the previous case. 

\end{proof}

\section{~~General lower bounds}\label{sec:general-lower-bounds}

We present in this section general lower bounds that can be applied in any model where we observe $X\sim P_\theta$, $\theta\in \R^n$. 
For any prior $\pi$ on $\R^n$, we denote by $P_\pi$ the distribution of $(X,\te)$ in the Bayesian model where $\te\sim \pi$ and $X\given \te\sim P_\te$.

The first result is as follows.

\begin{theorem} \label{thm:bayesLB}
	For any prior $\pi$ on $\R^n$, 
	let $ \ell_i(X)=P_\pi(\theta_i=0\:|\: X)$, $1\leq i\leq n$. Then for all  $\rho>0$, $s_n\geq 1$, and all measurable $\Theta\subset \R^n$, 
	 the minimax risk $\inf_\vphi \sup_{\te\in\Theta }  \mathfrak{R} (\theta,\vphi)$ is lower bounded by the two following quantities:  
	\begin{itemize}
\item[(i)]	For any $\eta\in (0,1)$,
$$
	\inf_\vphi \sup_{\te\in\Theta }  \mathfrak{R} (\theta,\vphi)  \geq   \left(\lambda \wedge \frac{\rho \lambda}{1+\rho \lambda}\right)  (1-  e^{- c\eta^2 M_\rho})- n (1\vee \rho) P_\pi(\norm{\theta}_0> s_n) - 2 P_\pi(\theta\notin \Theta),
		$$
	for some universal constant $c>0$, where $\lambda=\frac{ M_\rho (1-\eta)}{s_n}$ and 
	$$M_\rho= \sum_{i=1}^n P_\pi[ \theta_i\neq 0,\ell_i(X)> \rho/(1+\rho)] .$$ 
	\item[(ii)] For $m_{\pi,\rho}=P_\pi(\exists i\in \{1,\dots,n\}\::\: \theta_i\neq 0,\ell_i(X)> \rho/(1+\rho))$,
	\begin{align*}
\inf_\vphi \sup_{\te\in\Theta }  \mathfrak{R} (\theta,\vphi)\geq  \left( \frac{\rho}{s_n + \rho} \wedge \frac{1  }{s_n}  \right) m_{\pi,\rho}- n (1\vee \rho)P_\pi(\norm{\theta}_0> s_n)- 2 P_\pi(\theta\notin \Theta).
	\end{align*}
	\end{itemize}
	\end{theorem}
		
		The lower bound (i) says roughly that the Bayes risk for $ \mathfrak{R}$ is lower bounded by the type-two error of the Bayes procedure for the $\rho$-weighted classification risk problem. 	
		Note that the bounds above are true for any prior on $\R^n$. Nevertheless, we should choose the sparsity and the signal strength wisely to both make $M_\rho$ large and make $P_\pi(\norm{\theta}_0> s_n)$ small in the lower bound. 
		
	\begin{proof}
	For all $\theta\in \R^n$ and $\vphi$, writing $D_n(X) = \sum_{i\leq n} \vphi_i(X)$ and let
	$$
	Q(\theta,\vphi,X)= \sum_{i=1}^n \left\{ \ind{\te_i=0} \frac{\vphi_i(X)}{1\vee D_n(X)}
	+ \ind{\te_i\neq 0} \frac{1-\vphi_i(X)}{1\vee \norm{\theta}_0} \right\}.
	$$
	so that $ \mathfrak{R} (\theta,\vphi) =E_\theta Q(\theta,\vphi,X)$. Classically, since $Q(\theta,\vphi,X)\leq 2$, we have 
	$$
	\fr_\pi^*:=E_\pi \mathfrak{R} (\theta,\vphi) \leq \sup_{\te\in\Theta }  \mathfrak{R} (\theta,\vphi) + 2 P_\pi(\theta\notin \Theta).
	$$
	Hence, we only have to prove a lower bound for $\fr_\pi$.
	For this, let
	\begin{equation}\label{equLrho}
	L_\rho(\te,\varphi)= \sum_{i=1}^n \left\{ \ind{\te_i=0} \vphi_i(X)+ \rho \ind{\te_i\neq 0} (1-\vphi_i(X)) \right\}.
	\end{equation}
For any $\delta_n>0$, whenever $D_n(X)\leq s_n(1+\delta_n)$ and $\norm{\theta}_0\leq s_n$, we have that $Q(\theta,\vphi,X)$ is at least 
	\[
	\left(\frac{1}{1+\delta_n}\wedge \rho^{-1} \right)	\frac{L_\rho(\te,\varphi)}{s_n}
	\]
	
	Also, if $D_n(X)\geq s_n(1+\delta_n)$ (hence $D_n(X)\geq 1$) and $\norm{\theta}_0\leq s_n$, we have 
	\[ Q(\theta,\vphi,X)
\geq  \frac{D_n(X)-\norm{\theta}_0}{ D_n(X)}\geq  \frac{D_n(X)-s_n}{   D_n(X)}\geq  \frac{\delta_n}{1+\delta_n}.\]
	Hence, we have for all $\delta_n>0$, 
	\begin{align*}
	&E_\pi[Q(\theta,\vphi,X)] = E_\pi[E_\pi[Q(\theta,\vphi,X)\:|\:X]] \\
	&\geq  E_\pi\left(\ind{D_n(X)\leq s_n(1+\delta_n)} 
	 \left(\frac{1}{1+\delta_n}\wedge \rho^{-1}\right)\frac{E_\pi[L_\rho(\te,\varphi) \ind{\norm{\theta}_0\leq  s_n}\:|\:X]}{s_n}\right.\\
	&+\left.\ind{D_n(X)> s_n(1+\delta_n)} \frac{\delta_n}{1+\delta_n}E_\pi[ \ind{\norm{\theta}_0\leq  s_n}\:|\:X] \right).
	\end{align*}
	Since
	\begin{align*}
	E_\pi[L_\rho(\te,\varphi) \ind{\norm{\theta}_0\leq  s_n}\:|\:X] & =E_\pi[L_\rho(\te,\varphi)\:|\:X]-E_\pi[L_\rho(\te,\varphi) \ind{\norm{\theta}_0>  s_n}\:|\:X] \\
	& \ge E_\pi[L_\rho(\te,\varphi)\:|\:X]-n(1\vee \rho) P_\pi[ \norm{\theta}_0>  s_n\:|\:X],
	\end{align*}
	 we have almost surely in $X$,
	\begin{align*}
	&\ind{D_n(X)\leq s_n(1+\delta_n)} 
	 \left(\frac{1}{1+\delta_n}\wedge \rho^{-1}\right)\frac{E_\pi[L_\rho(\te,\varphi) \ind{\norm{\theta}_0\leq  s_n}\:|\:X]}{s_n}\\
	&+\ind{D_n(X)> s_n(1+\delta_n)} \frac{\delta_n}{1+\delta_n}E_\pi[ \ind{\norm{\theta}_0\leq  s_n}\:|\:X] \\
	&\geq \frac{\delta_n}{1+\delta_n} \wedge \left\{ \left(\frac{1}{1+\delta_n}\wedge \rho^{-1}\right)\frac{\inf_\vphi E_\pi (L_\rho(\te,\vphi)|X)}{s_n}\right\} - n(1\vee \rho) P_\pi[ \norm{\theta}_0>  s_n\:|\:X].
		\end{align*}
%
	By integration, this gives 
	\begin{align*}
	 \fr_\pi^* &\geq E_\pi\left( \frac{\delta_n}{1+\delta_n} \wedge \left\{ \left(\frac{1}{1+\delta_n}\wedge \rho^{-1}\right)\frac{\inf_\vphi E_\pi (L_\rho(\te,\vphi)|X)}{s_n}\right\}  \right)- n(1\vee \rho)  P_\pi(\norm{\theta}_0> s_n)
	 \end{align*}
	 Now solving the Bayes problem for the (weighted) classification loss, we have 
	 \begin{align}
	 \inf_\vphi E_\pi (L_\rho(\te,\vphi)|X)& = \sum_{i=1}^n \left\{ \ell_i(X) \ind{\ell_i(X)\leq \rho/(1+\rho)}
	+ \rho(1-\ell_i(X))  \ind{\ell_i(X)> \rho/(1+\rho)} \right\} \nonumber\\
	&\geq \rho L'_\rho,\label{equLprimerholb}
	 \end{align}
	 by letting 
	 \begin{equation}\label{equLrhoprime}
	 L'_\rho=\sum_{i=1}^n 	 (1-\ell_i(X))  \ind{\ell_i(X)> \rho/(1+\rho)} .
	 \end{equation}
	  This entails 
	 \begin{align}
	\fr_\pi^* &\geq E_\pi\left( \frac{\delta_n}{1+\delta_n} \wedge \left\{ \left(\frac{1}{1+\delta_n}\wedge \rho^{-1}\right)\frac{\rho L'_\rho}{s_n}\right\}  \right)- n (1\vee \rho)P_\pi(\norm{\theta}_0> s_n)\label{equinterm}
	\end{align}
	For proving (i), we observe that the right-hand side of \eqref{equinterm} is at least
	 \begin{align*}
	 \frac{\delta_n}{1+\delta_n} \wedge \left\{ \left(\frac{\rho}{1+\delta_n}\wedge 1\right)\frac{M_\rho (1-\eta) }{s_n}\right\}  (1-  e^{- c \eta^2 M_\rho})- n (1\vee \rho) P_\pi(\norm{\theta}_0> s_n)\nonumber
	,
	\end{align*}
	 by recalling $M_\rho=E_\pi L'_\rho=\sum_{i=1}^n E_\pi[(1-\ell_i(X))  \ind{\ell_i(X)> \rho/(1+\rho)}]$ and because by Bernstein's inequality,
	$$
P_\pi\left(L'_\rho < M_\rho (1-\eta)\right)\leq e^{- c \eta^2 M_\rho},
	$$
	for $c>0$ some constant. The result (i) then follows by letting $\delta_n=\rho M_\rho (1-\eta)/s_n$.
	To prove (ii), we observe that the right-hand side of \eqref{equinterm} is at least
	\begin{align*}
 \left( \frac{\delta_n}{1+\delta_n} \wedge \left\{ \left(\frac{1}{1+\delta_n}\wedge \rho^{-1}\right)\frac{\rho  }{s_n}\right\}  \right) P_\pi(L'_\rho\geq 1)- n (1\vee \rho)P_\pi(\norm{\theta}_0> s_n)
	\end{align*}
	The result (ii) follows by letting $\delta_n=\rho /s_n$.
%
%

	\end{proof}
	
The following lower bound is similar to the one of Theorem~\ref{thm:bayesLB} (i), but for the weighted risk
\begin{equation}\label{riskW} \mathfrak{R}_W(\theta,\vphi) =W. \FDR(\theta,\vphi)+\FNR(\theta,\vphi), 
\end{equation}
where $W>0$ is some weight.  
In addition, while it is true for any procedure, it is particularly suitable for sparsity preserving procedures (see Definition~\ref{def:spapres}).

\begin{theorem} \label{thm:bayesLB2}
	For any prior $\pi$ on $\R^n$, let $ \ell_i(X)=P_\pi(\theta_i=0\:|\: X)$, $1\leq i\leq n$. Then for all  $\rho,W,B>0$, $s_n\geq 1$, all measurable $\Theta\subset \R^n$, and all multiple testing procedures $\vphi$, we have for any $\eta\in (0,1)$,
\begin{align*}
	 \sup_{\te\in\Theta }  \mathfrak{R}_W (\theta,\vphi)  \geq &  \left(1 \wedge \frac{\rho W}{B}\right) \left(1 \wedge \frac{M_\rho(1-\eta)}{s_n}\right)(1-e^{- c\eta^2 M_\rho})  -  \sup_{\te\in\Theta}P_\theta\left(D_n(X)>Bs_n\right)   \\&- \left(\frac{W}{B}\wedge \rho^{-1} \right) (B+\rho) P_\pi(\norm{\theta}_0> s_n)- (W+2) P_\pi(\theta\notin \Theta),
		\end{align*}
	for some universal constant $c>0$, where $ \mathfrak{R}_W$ denotes the weighted risk \eqref{riskW} and where 
	$M_\rho= \sum_{i=1}^n P_\pi[ \theta_i\neq 0,\ell_i(X)> \rho/(1+\rho)] .$ 
	\end{theorem}

	\begin{proof}
	We use the same method as in the proof of Theorem~\ref{thm:bayesLB} (i), although we provide the full proof here for the sake of completeness.
	For all $\theta\in \R^n$ and $\vphi$, writing $D_n(X) = \sum_{i\leq n} \vphi_i(X)$ and let
	$$
	Q_W(\theta,\vphi,X)= \sum_{i=1}^n \left\{ \ind{\te_i=0} W\frac{\vphi_i(X)}{1\vee D_n(X)}
	+ \ind{\te_i\neq 0} \frac{1-\vphi_i(X)}{\norm{\theta}_0} \right\}.
	$$
	so that $ \mathfrak{R}_W (\theta,\vphi) =E_\theta Q_W(\theta,\vphi,X)$. Classically, since $Q_W(\theta,\vphi,X)\leq W+1$, we have 
	$$
E_\pi \mathfrak{R}_W (\theta,\vphi) \leq \sup_{\te\in\Theta }  \mathfrak{R}_W (\theta,\vphi) + (W+1) P_\pi(\theta\notin \Theta).
	$$
	Hence, we only have to prove
	\begin{align}
E_\pi \mathfrak{R}_W (\theta,\vphi)&\geq \left(1 \wedge \frac{\rho W}{B}\right) \left(1\wedge \frac{M_\rho(1-\eta)}{s_n}\right)(1-e^{- c\eta^2 M_\rho})  \nonumber\\
&-  \sup_{\te\in\Theta}P_\theta\left(D_n(X)>Bs_n\right) - P_\pi(\theta\notin\Theta) - \left(\frac{W}{B}\wedge \rho^{-1} \right) (B+\rho) P_\pi(\norm{\theta}_0> s_n).
\label{toprovelb2}
	\end{align}
	Recall $L_\rho(\te,\varphi)$ in \eqref{equLrho}.
Whenever $D_n(X)\leq B s_n$ and $\norm{\theta}_0\leq s_n$, we have that $Q_W(\theta,\vphi,X)$ is at least 
	\[
	\left(\frac{W}{B}\wedge \rho^{-1} \right)	\frac{L_\rho(\te,\varphi)}{s_n}
	\]
	Hence, we have
	\begin{align*}
	&E_\pi[Q_W(\theta,\vphi,X)] \ge E_\pi[E_\pi[Q_W(\theta,\vphi,X)\:|\:X]] \\
	&\geq  E_\pi\left(\ind{D_n(X)\leq B s_n} 
	 \left(\frac{W}{B}\wedge \rho^{-1} \right)
	  \frac{E_\pi[L_\rho(\te,\varphi) \ind{\norm{\theta}_0\leq  s_n}\:|\:X]}{s_n}\right)\\
	  &\geq  \left(\frac{\rho W}{B}\wedge 1 \right)  E_\pi\left(\ind{D_n(X)\leq B s_n} 
	  \frac{L'_\rho}{s_n}\right) - \left(\frac{W}{B}\wedge \rho^{-1} \right) (B+\rho) P_\pi(\norm{\theta}_0> s_n),
	\end{align*}
	because 	$L_\rho(\te,\varphi)/s_n \leq B+\rho$ when $D_n(X)\leq B s_n$ 
	and by using \eqref{equLprimerholb} and defining $L'_\rho$ as in \eqref{equLrhoprime}.
	Next, we have
	\begin{align*}
	 E_\pi\left(\frac{L'_\rho}{s_n}\ind{D_n(X)\leq B s_n} 
	  \right) 
	  &\geq E_\pi\left(1\wedge \frac{L'_\rho}{s_n}\right) - P_\pi\left(D_n(X)> B s_n \right)\\
	  &\geq \left(1\wedge \frac{M_\rho(1-\eta)}{s_n}\right)P_\pi(L'_\rho \geq M_\rho (1-\eta)) - P_\pi\left(D_n(X)> B s_n \right).
	  \end{align*}
	We obtain \eqref{toprovelb2} because
	$
	P_\pi\left(D_n(X)> B s_n \right)\leq  \sup_{\theta\in \Theta} P_\theta\left(D_n(X)> B s_n \right) + P_\pi\left(\theta\notin \Theta\right)
	$
	and
	$
P_\pi\left(L'_\rho < M_\rho (1-\eta)\right)\leq e^{- c \eta^2 M_\rho},
	$
	for $c>0$ some constant (as shown in the proof of Theorem~\ref{thm:bayesLB} (i)). 
	\end{proof}

The following lower bound is similar to the one of Theorem~\ref{thm:bayesLB} (i), but somewhat more classical, because it is for the classification risk $E_\theta  \lc(\te,\vphi)/s_n$, where $ \lc(\te,\vphi)$ is the classification loss given by \eqref{equLC}. 

\begin{theorem} \label{thm:bayesLB-classif}
	For any prior $\pi$ on $\R^n$, 
	let $ \ell_i(X)=P_\pi(\theta_i=0\:|\: X)$, $1\leq i\leq n$. Then for all  $s_n\geq 1$, and all measurable $\Theta\subset \R^n$, we have
$$
	\inf_\vphi \sup_{\te\in\Theta }  E_\theta  \lc(\te,\vphi)/s_n \geq  M/s_n- n P_\pi(\theta\notin \Theta),
		$$
where $M=\sum_{i=1}^n P_\pi[ \theta_i\neq 0,\ell_i(X)> 1/2]$ (that is, $M$ is $M_\rho$ of Theorem~\ref{thm:bayesLB} (i) for $\rho=1$).
	\end{theorem}

\begin{proof}
The proof is analogous to the one of Theorem~\ref{thm:bayesLB} (i), with important simplifications. Since $\lc(\te,\vphi)\leq n$ pointwise, we first have for all $\vphi$,
$$\sup_{\te\in\Theta }  E_\theta  \lc(\te,\vphi)/s_n \geq E_{\pi} \lc(\te,\vphi)/s_n - n P_\pi(\theta\notin \Theta).$$
Then the result follows using arguments as before because 
$$
\inf_\vphi E_{\pi} \lc(\te,\vphi) \geq E_{\pi} \inf_\vphi [E_{\pi} [\lc(\te,\vphi)\:|\:X]] \geq M,
$$
because the Bayes rule is given by $\vphi_i=\ind{\ell_i(X)>1/2}$.
\end{proof}

The above generic lower bound will be used in combination with the following quantitive result, which is closely related to an observation made in Section~\ref{sec:VerificationOfAssumption}. There it was noted that $o_P(s_n)$ of the nulls exceeded $a_n^*$ in absolute value, while many more than $s_n$ exceed $a_n^*-\delta_n$. This formed the basis of the sketch proof that the optimal thresholding procedure thresholds at $a_n^*$. Here we instead prove that in any collection of roughly $n/s_n$ nulls, there will be at least $\rho$ of these taking absolute values between $a_n^*-\delta_n$ and $a_n^*$. This is key in proving our lower bound over {\em all} procedures, using that the classification risk can be decomposed into the sum of risks over blocks (see the proof of Theorem~\ref{th:minimax-multilevel}).

\begin{lemma}\label{lem:ControlOfpn}
	
	Under Assumption~\ref{ass:generalnoise}\ref{ass:location}, for any integer sequence $\rho=\rho_n$ satisfying \[1\leq \rho \leq (n/2s_n)\overline{F}_0(a_n^*-\delta_n)-1\] we have 
	\[ P_{X_1\sim f_a}(\# \braces{2\leq i \leq n/s_n : \eps_i>X_1}\leq \rho-1) = \overline{F}_{a}(a_n^*)+o(1).\]
	In particular, we  may choose $\rho$ tending to infinity or $\rho=1$. The $o(1)$ term is uniform in $a$. 
	
	Under Assumption~\ref{ass:generalnoise}\ref{ass:scale}, for the same condition on $\rho$ we instead have
	\[ P_{X_1\sim f_a}(\#\braces{2 \leq i \leq n/s_n : \abs{\eps_i}>\abs{X_1}}\leq \rho-1)= 2\overline{F}_a(a_n^*)+o(1).\]
\end{lemma}
\begin{proof}
Under Assumption~\ref{ass:generalnoise}\ref{ass:location}, with $a_n^*,$ $\delta_n$ as in the assumption, write $A_n$ for the event $A_n = \braces[\big]{ \#\braces{2\leq i \leq n/s_n : \eps_i>X_1\sim f_a}\leq \rho-1}$ and set $U_n = \#\braces{2 \leq i \leq n/s_n : \eps_i > a_n^* - \delta_n}$; note that
\[U_n \sim \operatorname{Bin}(\floor{n/s_n}-1, \overline{F}_0 (a_n^*-\delta_n)).\] Define $V_n$ correspondingly without the $\delta_n$.

Observe that	\[ E[U_n] \to \infty,\]
and similarly we deduce
\[ E[V_n] = (\floor{n/s_n}-1)\overline{F}_0(a_n^*)\to 0.\]
For any $\rho=\rho_n\geq 1$ such that $\rho-1<E[U_n]/2$ (which is true under the specified condition on $\rho$), we may apply Chebyshev's inequality with the bound $\Var(U_n)\leq E[U_n]$ to obtain
\[ P(U_n \leq \rho-1) \leq  P( \abs{U_n-E[U_n]} \geq E[U_n]-(\rho-1)) \leq P\brackets[\Big]{\abs{U_n-E[U_n]}>\frac{E U_n}{2}} \leq \frac{4}{E U_n}\to 0.\]
Similarly, applying Markov's inequality, we have for any $\rho\geq 1$,
\[ P(V_n>\rho-1) = P(V_n\geq  \rho)\leq  \frac{E V_n}{\rho}\to 0.\]

We have thus shown that on an event $B_n$ of probability tending to 1, 
$U_n\geq \rho$ and $V_n\leq \rho-1$.  In words, on $B_n$, at most $\rho-1$ of the numbers $(\eps_i,~2\leq i\leq n/s_n)$ are larger than $a_n^*$ and at least $\rho$ of them are larger than $a_n^*-\delta_n$. It follows that, on $B_n$, the event $A_n$ holds if $X_1>a_n^*$ and fails if $X_1\leq a_n^*-\delta_n$. Thus, 
\begin{align*} P(A_n) \geq P_a(X_1 > a_n^*) - P(B_n^c),\\
	P(A_n^c) \geq P_a(X_1 \leq a_n^*-\delta_n) -P(B_n^c).
\end{align*}
Since $F_a$ is Lipschitz 
we deduce the result.  [Note we eventually apply this result simultaneously for multiple values of $a$, hence the demand that the $F_a$ be \emph{uniformly} Lipschitz.] 

Under Assumption~\ref{ass:generalnoise}\ref{ass:scale} we instead define $U_n = \#\braces{2 \leq i \leq n/s_n:\abs{\eps_i}>a_n^*-\delta_n}$ and similarly for $V_n$. The same arguments as before then imply that $U_n\geq \rho$ and $V_n\leq \rho-1$ with probability tending to 1, and hence the probability in question is 
\[ P(\abs{X_1}>a_n^*)+o(1)= 2\overline{F}_a(a_n^*)+o(1).\qedhere\]
\end{proof}

\section{~~Some remaining proofs}\label{sec:remaining-proofs}

\subsection{~~Proof of Theorem~\ref{thm-adapt-r}}

Given the lower bound provided by Theorem~\ref{thmgeneric}, Theorem~\ref{thm-adapt-r} is a consequence of Theorems~\ref{thm:lval-multilevel} and \ref{thm:BHpointwise} and Remark~\ref{rem:polysparse}. Indeed, for the BH procedure, in Theorem~\ref{thm:BHpointwise} one chooses $f_a(x)=\phi(x-a)$ to be the Gaussian density; by Lemma~\ref{lem:assumptionok} and Remark~\ref{rem:polysparse}, Assumption~\ref{ass:generalnoise}\ref{ass:location} is satisfied with $a_n^*=\sqrt{2\log(n/s_n)}+\kappa_n$ for $\kappa_n=\log(1/\alpha_n)/\sqrt{\log(n/s_n)}$ and under polynomial sparsity \eqref{polspa} the condition on $\alpha_n$ of Theorem~\ref{thm:BHpointwise} is implied by that of Theorem~\ref{thm-adapt-r}
;
choosing $\ba=(\sqrt{2\log(n/s_n)}+b,\sqrt{2\log(n/s_n)}+b,\dots,\sqrt{2\log(n/s_n)}+b)\in \RR^{s_n}$, the set $\Theta(\ba,s_n)$ exactly equals $\Theta_b$, and $\Lambda_n(\ba)=\overline{\Phi}(b)+o(1)$. Note that $\sqrt{2\log(n/s_n)}+b>0$ for $n$ large enough, so that this choice of $\ba$ is permitted.

For the $\ell$-value procedure, given the multilevel bound Theorem~\ref{thm:lval-multilevel}, the exact same reasoning applies.
\subsection{~~Proof of Theorems~\ref{thm-notrade} and \ref{thm-notrade-Subbotin}}

Note from Lemma~\ref{lem:assumptionok} (or see Remark~\ref{rem:polysparse}) that $a_n^*=\sqrt{2\log(n/s_n)}$, $\delta_n= (\log(n/s_n))^{-1/4}$ is valid in Assumption~\ref{ass:generalnoise} in the Gaussian setting of Theorem~\ref{thm-notrade}. One calculates (similarly to  \eqref{condF0anstarmoinsdelta})
that $(n/3s_n)2\overline{\Phi}(\sqrt{2\log(n/s_n)}-(\log (n/s_n))^{-1/4})\geq \exp((\log(n/s_n))^{1/4})$ for $n$ large.  It then follows, up to minor technical details as given in the proof of Theorem~\ref{thmgeneric}, that Theorem~\ref{thm-notrade} is implied by Theorem~\ref{thm-notrade-Subbotin}, 
hence we restrict our attention to proving the latter result. 

It follows from computations in Section~\ref{sec:ub} that the oracle procedure $\vphi_{a_n^*}$ considered in the proof of Theorem~\ref{th:minimax-multilevel} makes $o_P(s_n)$ false discoveries and hence belongs to the class $\mathcal{S}_{B}$ (see also  Section~\ref{sec:sparsitypreserving}) and that its FNR is controlled at level $\Lambda_n(\ba)$+o(1), proving that 
\begin{equation*}
	\inf_{\vphi \in \mathcal{S}_B} \sup_{\theta\in \Theta(\ba,s_n)} \FNR(\theta,\vphi) \leq \Lambda_n(\ba)+o(1).
\end{equation*}
 [Note the trivial procedure $\vphi=0$, also used in the proof for the case $\Lambda_n=1+o(1)$, is sparsity preserving.]

To show a corresponding lower bound, the idea is to work with the weighted risk. 
More precisely, assume one can show, for a suitable sequence $W_n\to 0$, that 
\begin{equation} \label{trlbnew}
	\inf_{\vphi\in \mathcal{S}_{B}} \sup_{\theta\in \Theta(\ba,s_n)} \left[
	W_n \FDR(\te,\vphi) + \FNR(\te,\vphi)\right] \ge \Lambda_n(\ba) + o(1).
\end{equation}
Then by bounding the FDR from above by $1$ one gets
\[ W_n + \inf_{\vphi\in \mathcal{S}_{B}} \sup_{\theta\in \Theta(\ba,s_n)} \FNR(\te,\vphi) \ge \Lambda_n(\ba)+o(1),\]
which gives the desired lower bound since $W_n\to 0$.
To prove \eqref{trlbnew}, we follow a method similar to the lower bound part of the proof of Theorem~\ref{th:minimax-multilevel} (see Section~\ref{sec:nonasymptotic-lower-bounds}), the only difference being that we consider the weighted risk and use the general lower bound of Theorem~\ref{thm:bayesLB2} instead of Theorem~\ref{thm:bayesLB}. More precisely, applying Theorem~\ref{thm:bayesLB2} with the same prior as in Section~\ref{sec:nonasymptotic-lower-bounds}, with $B=B_n$ as in the statement of Theorem~\ref{thm-notrade-Subbotin},  $\rho_n=\floor{n/(3s_n) \overline{F}_0(a_n^*-\delta_n)}$ and $\eta_n=s_n^{-1/4}$, and noting that $P_\pi(\norm{\theta}_0>s_n)=P_\pi(\theta\not\in \Theta)=0$ by construction and $\sup_{\theta\in\Theta}P_\theta(\#\braces{i : \vphi_i =1}>B_ns_n)=o(1)$ by assumption, we obtain 
\begin{align*}
	&\inf_{\vphi\in \mathcal{S}_{B}} \sup_{\theta\in \Theta(\ba,s_n)} \left[
	W_n \FDR(\te,\vphi) + \FNR(\te,\vphi)\right]  \\
	\geq &  \left(1 \wedge \frac{\rho W_n}{B_n}\right) \left(1 \wedge \frac{M_\rho(1-\eta)}{s_n}\right)(1-e^{- c\eta^2 M_\rho})  + o(1) ,
		\end{align*}
	for some universal constant $c>0$ and where 
	$M_\rho= \sum_{i=1}^n P_\pi[ \theta_i\neq 0,\ell_i(X)> \rho/(1+\rho)] .$ 
As proved in Section~\ref{sec:nonasymptotic-lower-bounds}, we have $M_\rho/s_n\geq \Lambda_n(\ba)+o(1)$ under Assumption~\ref{ass:generalnoise}\ref{ass:location} and $M_\rho/s_n\geq 2\Lambda_n(\ba)-1+o(1)$
under Assumption~\ref{ass:generalnoise}\ref{ass:scale}, see \eqref{equMrhoboundedA}--\eqref{equMrhoboundedB}. This implies $e^{- c\eta^2 M_\rho}=o(1)$. By assumption $B_n\leq \sqrt{\rho}$, so taking $W_n=1/\sqrt{\rho}$ leads to \eqref{trlbnew}.

\subsection{~~Proof of Theorem~\ref{thm-adapt-cl}}\label{sec:proof-of-thm-adapt-cl} 

For the lower bound, we use Theorem~\ref{thm:bayesLB-classif} for the prior given in Section~\ref{sec:nonasymptotic-lower-bounds} and $\Theta=\Theta(a_b,s_n)$ (clearly, this will give the same lower bound over the larger class $\Theta_b'$). We then have, for $M=M_1$,
$$
	\inf_\vphi \sup_{\te\in\Theta(a_b,s_n) }  E_\theta  \lc(\te,\vphi)/s_n \geq  M/s_n,
		$$
because $P_\pi(\theta\notin\Theta(a_b,s_n))=0$ and for $M/s_n\geq \Lambda_n(\ba) +o(1) =  \overline\Phi(b)+o(1)$ by using \eqref{equMrhoboundedA} for $\rho=1$ and in the Gaussian case.


For the upper bound, given $\te \in \Theta_b'(s_n)$ write $S_\theta=\braces{i: \theta_i\neq 0}$ for the support of $\theta$ and write $s_\theta=\abs{S_\theta}$. 
We can decompose the classification loss $\lc(\te,\vphi)$ as the sum $V+(s_\theta-S)$, where $V=V(\vphi,\theta)=\sum_{i \not \in S_\theta} \vphi_i$ and $S=S(\vphi,\theta)=\sum_{i\in S_\theta}\vphi_i$.
It suffices to show that 
\begin{align} \label{eqn:Es-S-upper-bound} \sup_{\theta\in \Theta_b'(s_n)} E_\theta [s_\theta- S] &\leq s_n\overline{\Phi}(b)+o(s_n), \\
	\label{eqn:EV-upper-bound} \sup_{\theta \in \Theta_b'(s_n)} E_\theta V &=o(s_n).
\end{align}
\\
We begin with \eqref{eqn:Es-S-upper-bound}. This holds trivially when $b=b_n\to -\infty$ since $0\leq s_\theta-S\leq s_\theta\leq s_n$ for $\theta\in\Theta_b'(s_n)$, so we restrict to  the cases $b\in\RR$ fixed or $b=b_n\to\infty$. It suffices to show, for a sequence $s_n'$ satisfying $1\ll s_n'\ll s_n$, that 
\[
\max_{s_n'\leq s\leq s_n} \sup_{\theta\in \Theta_b(s)} E_\theta [s- S] 
\le s_n\overline{\Phi}(b)+o(s_n),
\]
since for $\theta\in\Theta_b(s)$ with $0\leq s\leq s_n'$, we have $0\leq s_\theta-S\leq s_\theta=s \leq s_n' =o(s_n)$.
Let $s_n''$ denote a sequence such that
\[ \max_{s_n'\leq s\leq s_n} \sup_{\theta\in\Theta_b(s)} E_\theta[s-S] = \sup_{\theta\in\Theta_b(s_n'')} E_\theta[s_n''-S].\]
\\
We now apply Theorem~\ref{thm-adapt-r} with $s_n''$ in place of $s_n$ (note the asymptotics \eqref{sparse}
hold). We have
\[ (s_n'')^{-1} E_\theta [s_n''-S] \leq \frak{R}(\theta,\vphi) \leq \overline{\Phi}(b)+o(1)\]
and we deduce that $E_{\theta}[s_n''-S] \leq s_n''(\overline{\Phi}(b)+o(1)) \leq s_n(\overline{\Phi}(b)+o(1))$ as required.
\\
For \eqref{eqn:EV-upper-bound}, the proof differs depending on whether $\vphi$ is the $\ell$-value procedure or the BH procedure. For the former one notes that the proof of \eqref{eqn:EVbound-classification} in Section~\ref{sec:proof:thm-adapt-clpr} holds for $\theta\in\Theta_b'(s_n)$ not just $\theta\in\Theta_b(s_n)$. Taking the supremum yields the claim.
For the latter one uses Lemma~\ref{lem:BHNR} in Section~\ref{sec:BHuseful-lemmas} and the fact that $\alpha=\alpha_n=o(1)$. 
An alternative proof of this upper bound in the $\ell$-value case is provided in Remark~\ref{sec:proof:thm-adapt-cl}.

\section{~~Materials for BH procedure}\label{secBH}

\subsection{~~Definition}

Recall that the BH procedure of level $\alpha$ is defined as follows (see, e.g., \cite{Roq2011}):
\begin{align}
	\vphi^{BH}_\alpha&=(\II\braces{\abs{X_i}\geq \hat{t} })_{1\leq i\leq n}\label{defBH};\\
	\hat{t} &= \min\braces{t\in \R\cup\braces{-\infty }\::\: \hat{G}_n(t)\geq 2\ol{F}_0(t)/\alpha};\nonumber\\
	\hat{G}_n(t) &= n^{-1}\sum_{i=1}^n \II\braces{\abs{X_i}\geq t}\nonumber.
\end{align}

\subsection{~~Proof of Theorem~\ref{thm:BHpointwise}}\label{sec:proofBHfirsth}

First, by \cite{BH1995,BY2001} we have any $\theta$,
\begin{align}\label{FDRBH}
	\FDR(\theta,\vphi^{BH}_\alpha) = \alpha(n-\abs{S_\theta})/n\leq \alpha,
\end{align}
where $S_\theta$ denotes the support of $\theta$.
It thus suffices to prove
\[ \FNR(\theta,\vphi_\alpha^{BH})\leq \Lambda_n(\theta) + e^{-cs_n},\quad c=(1-\Lambda_n(\theta))^2/32.\] 

We first give the proof under Assumption~\ref{ass:generalnoise}\ref{ass:location}.
We rely on the following inequalities: for all $t\geq 0$,
\begin{align*}
	P_{\theta}(\hat{t}>t) &\leq  P_{\theta}(\hat{G}_n(t)< 2\ol{F}_0(t)/\alpha)=P_{\theta}(\hat{G}_n(t)-G_n(t)< 2\ol{F}_0(t)/\alpha-G_n(t)),
\end{align*} 
where using the symmetry (see Lemma~\ref{lem:assumption-implies-symmetry}) \[G_n(t)=E_\theta \hat{G}_n(t) = (1-s_n/n) 2\ol{F}_0(t) + n^{-1}\sum_{i\in S_\theta}  \brackets[\big]{\ol{F}_{\theta_i}(t) + \ol{F}_{-\theta_i}(t)}.\]

Let $a_n^*,\delta_n$ be as in Assumption~\ref{ass:generalnoise}. 
Note that $G_n(t)\geq (1-s_n/n)2\overline{F}_0(t)+n^{-1}\sum_{i\in S_\theta} \overline{F}_{\abs{\theta_i}}(t)$ and observe that, defining $\Psi_{\theta}:t\mapsto s_n^{-1} \sum_{i\in S_\theta}\ol{F}_{\abs{\theta_i}}(t)/\ol{F}_0(t)$, the restriction of $\Psi_\theta$ to $[0,\infty)$ is a continuous increasing bijection to $[\Psi_\theta(0),\infty)$ under \eqref{equcondalpha} by Lemma~\ref{lem:tnstar_general}. 
Hence, denoting by 
$t^*_n$ the only point $t>0$ (which exists for $n$ large by the lemma) such that  $(1-s_n/n) 2\ol{F}_0(t) + (s_n/n) s_n^{-1} \sum_{i \in S_\theta}\ol{F}_{\theta_i}(t)=3\ol{F}_0(t)/\alpha$, 
we obtain
\begin{align*}
	P_{\theta}(\hat{t}>t^*_n) &\leq  P_{\theta}(\hat{G}_n(t^*_n)-G_n(t^*_n)< -\ol{F}_0(t^*_n)/\alpha)\\
	&\leq \exp\left(-0.5 \frac{n^2 \ol{F}_0^2(t^*_n)/\alpha^2}{n G_n(t^*_n) + (n/3) \ol{F}_0(t^*_n)/\alpha  }\right)\\
	&\leq \exp\left(-0.5 \frac{n \ol{F}_0^2(t^*_n)/\alpha^2}{ s_n/n +  (2\alpha+1/3)\ol{F}_0(t^*_n)/\alpha  }\right),
\end{align*} 
by using Bernstein's inequality (Lemma~\ref{th:bernstein}) and the fact that $G_n(t)\leq  2\ol{F}_0(t) + s_n/n$ for all $t$. Next, Lemma~\ref{lem:tnstar_general} further tells us that $a_n^*-\delta_n\leq t_n^* \leq a_n^*$ for $n$ large, and $\overline{F}_0(t_n^*)\asymp \alpha s_n/n$ (up to a dependence on $\Lambda_n$).
The latter implies that, for $c>0$,
\begin{equation}\label{eqn:hat_t<t_n^*}
	P_\theta(\hat{t}>t_n^*)\leq e^{-cs_n}.
\end{equation} 
Indeed, inserting the lower bound $\overline{F}_0(t_n^*)\geq \tfrac{1}{3}\alpha (1-\Lambda_n) s_n/n$ in the numerator and denominator, bounding $2\alpha+1/3$ by $7/3$, and bounding $1+(7/9) (1-\Lambda_n)$ by $16/9$, we obtain this with $c=(1/32)(1-\Lambda_n)^2$.

Using symmetry (as in Lemma~\ref{lem:assumption-implies-symmetry}) we thus have 
\begin{align}\label{eqn:FNRofBH}
	\FNR(\theta,\vphi^{BH}_\alpha) &\leq  s_n^{-1} E_\theta\brackets[\bigg]{\sum_{i\in S_\theta} \II\braces{\abs{X_i}< t^*_n} }+ e^{- c s_n } \\ \nonumber &\leq s_n^{-1}\sum_{i\in S_\theta} \brackets[\big]{F_{\abs{\theta_i}}(t_n^*)-F_{\abs{\theta_i}}(-t_n^*)} + e^{-cs_n} \leq  s_n^{-1}\sum_{i\in S_\theta} F_{\abs{\theta_i}}(t_n^*) + e^{-cs_n}.
\end{align} 
By monotonicity of ${F}_{\abs{\theta_i}}$, using that $t_n^*\leq a_n^*$, we have
\[ \FNR(\theta,\vphi^{BH}_\alpha) \leq \Lambda_n(\theta) + e^{-cs_n},\]
which concludes the proof of the pointwise bound. For the uniform bound, simply note that for $\theta\in\Theta(\ba,s_n)$ we have $\Lambda_n(\theta)\leq \Lambda_n(\ba)$ to obtain the result in the case where $\Lambda_n(\ba)$ is bounded away from 1. If there is a subsequence $n_j$ along which $\Lambda_{n_j}(\ba)\to 1$, the bound \eqref{FDRBH} together with $\FNR(\theta,\vphi)\leq 1$ for any $\theta,\vphi$ yields $\mathfrak{R}(\theta,\vphi^{BH}_\alpha)\leq 1+\alpha_n= \Lambda_{n_j}(\ba)+o(1)$. 

The proof under Assumption~\ref{ass:generalnoise}\ref{ass:scale} is identical up to the bound \eqref{eqn:FNRofBH}. [It is perhaps worth noting that in this setting is is more natural to define $t_n^*$ relative to $G_n(t)= (1-s_n/n)2\overline{F}_0(t)+n^{-1}\sum_{i\in S_\theta}2\overline{F}_{\theta_i}(t)$ itself rather than relative to the lower bound $(1-s_n/n)2\overline{F}_0(t)+n^{-1}\sum_{i\in S_\theta}\overline{F}_{\abs{\theta_i}}(t)$, but the latter bound remains valid and yields valid deductions.] The first inequality of \eqref{eqn:FNRofBH} holds in this setting, and the result follows from 
$E_\theta \II\braces{\abs{X_i}<t_n^*}= 1-2\overline{F}_{\theta_i}(t_n^*)$, monotonicity of this expression and the fact that $t_n^*\leq a_n^*$.

\subsection{~~A classification risk bound for BH}\label{proofBHclassif}
Consider the  BH procedure $\vphi^{BH}_\alpha$ \eqref{defBH}, for $\alpha=\alpha_n = o(1)$ with $-\log \alpha = o((\log (n/s_n))^{1/2})$. 
Recalling the definition \eqref{equLC} of the classification loss $L_C$, let us prove that for any fixed real $b$ and any $\eta>0$,
\begin{equation}\label{equ:classifproba}
	\sup_{\te\in\Theta_b} P_\te\left[\lc(\te,\vphi^{BH}_\alpha) \ge (\overline\Phi(b)+\eta)s_n
	\right]
	=o(1), 
\end{equation}
and that the same holds with $\overline{\Phi}(b)$ replaced by 0 when $b=b_n\to+\infty$ or by 1 when $b=b_n\to-\infty$.
This entails the BH part of  Theorem~\ref{thm-adapt-clpr} (to come in Section~\ref{sec:classification-in-prob}) under polynomial sparsity \eqref{polspa}, because 
$-\log \alpha = o((\log n)^{1/2})$ implies 
$-\log \alpha = o((\log (n/s_n))^{1/2})$ in that case.

Fix $\te\in\ell_0[s_n]$ and
denote by $V=\sum_{i=1}^n \ind{\te_i=0}\ind{\vphi_i^{BH}\neq 0}$ the number of false discoveries of $\vphi^{BH}$ and by $W=\sum_{i=1}^n \ind{\te_i\neq 0}\ind{\vphi_i^{BH} = 0}$ the number of false non-discoveries of $\vphi^{BH}$.
Assume first $b\in \R$. We have
\begin{align*}
	P_\te\left[\lc(\te,\vphi^{BH}_\alpha) \ge (\overline\Phi(b)+\eta)s_n
	\right]
	\leq P_\te\left[V \ge \eta s_n/2
	\right] + P_\te\left[W \ge s_n \overline\Phi(b)+\eta s_n/2
	\right].
\end{align*}
We have by Markov's inequality and Lemma~\ref{lem:BHNR},
\begin{align*}
	P_\te\left[V \ge \eta s_n/2\right] \leq (\eta/2)^{-1}\frac{E_\te V}{s_n} \leq (\eta/2)^{-1} \left(\frac{\alpha }{1-\alpha} +  \frac{\alpha}{s_n(1-\alpha)^2}\right),
\end{align*}
hence $\sup_{\te\in\Theta_b} P_\te\left[V \ge \eta s_n/2
\right] = o(1)$.
On the other hand, we have by using \eqref{eqn:hat_t<t_n^*} (note that we use $\Lambda_n=\overline{\Phi}(b)<1$ here)
\begin{align*}
	P_\te\left[W \ge s_n \overline\Phi(b)+\eta s_n/2\right] &= P_\te \left[\sum_{i\in S_\theta}\ind{\abs{X_i}< \hat{t}}\geq s_n \overline\Phi(b)+\eta s_n/2\right]\\
	&\leq P_\te \left[\sum_{i\in S_\theta}\ind{|X_i|< t_n^*}\geq s_n \overline\Phi(b)+\eta s_n/2\right] + e^{-c s_n},
\end{align*}
where $t_n^*>0$ is defined as in Lemma~\ref{lem:tnstar_general}. Using also Lemma~\ref{lem:assumptionok} we have in the Gaussian setting that $t_n^*=a_n^*+o(1)$ with $a_n^*=\sqrt{2\log(n/s_n)}$.  
Using the bound \eqref{eqn:FNRofBH},  the random variable $\sum_{i\in S_\theta}\ind{\abs{X_i}< t_n^*}$ is stochastically upper-bounded by a Binomial distribution with parameter $s_n$ and $q_n:=1-\ol{\Phi}(t_n^*-a_n^*-b)=\ol{\Phi}(b)+o(1)$. Applying Bernstein's inequality (Lemma~\ref{th:bernstein}), we obtain
\begin{align*}
	&P_\te \left[\sum_{i\in S_\theta}\ind{\abs{X_i}< t_n^*}\geq s_n \overline\Phi(b)+\eta s_n/2\right] \\
	&\leq  P_\te \left[\mathcal{B}(s_n, q_n) \geq s_n \overline\Phi(b)+\eta s_n/2\right] \\
	&=P_\te \left[\mathcal{B}(s_n, q_n) - s_nq_n\geq s_n (\overline\Phi(b)-q_n) +\eta s_n /2\right] \\
	&\leq \exp\left( -0.5s_n \frac{(\overline\Phi(b)-q_n +\eta  /2)^2}{q_n+( \overline\Phi(b)-q_n +\eta /2)/3 }\right) \leq e^{-c' s_n},
\end{align*}
for $n$ large enough and some constant $c'>0$ only depending on $\eta$ (using the bounds $q_n\leq 1$ and $\overline{\Phi}(b)-q_n=o(1)$). This gives \eqref{equ:classifproba} for $b\in \R$.

When $b=b_n\to+\infty$ (so $\overline\Phi(b)\to 0$) and $\eta \in (0,1)$, the same techniques as above lead to
\[
P_\te\left[\lc(\te,\vphi^{BH}_\alpha) \ge \eta s_n\right]\leq (\eta/2)^{-1} \left(\frac{\alpha }{1-\alpha} +  \frac{\alpha}{s_n(1-\alpha)^2}\right) + e^{-c s_n} + P_\te \left[\mathcal{B}(s_n, q_n) \geq \eta s_n/2\right],
\] 
and this again shows \eqref{equ:classifproba}  (with $\overline\Phi(b)$ replaced by $0$) because 
\[
P_\te \left[\mathcal{B}(s_n, q_n) \geq \eta s_n/2\right]\leq  2q_n/\eta = o(1)
\]
by Markov's inequality.

Finally, when $b=b_n\to-\infty$ (so $\overline\Phi(b)\to 1$), we directly use $W\leq s_n$, so that
\begin{align*}
	P_\te\left[\lc(\te,\vphi^{BH}_\alpha) \ge (1+\eta)s_n
	\right]
	\leq P_\te\left[V \ge \eta s_n
	\right]\leq \eta^{-1}\left(\frac{\alpha }{1-\alpha} +  \frac{\alpha}{s_n(1-\alpha)^2}\right) ,\end{align*} 
by using again Markov's inequality and Lemma~\ref{lem:BHNR}. This shows \eqref{equ:classifproba} (with $\overline\Phi(b)$ replaced by $1$) in that case.

\subsection{~~Useful lemmas} \label{sec:BHuseful-lemmas}

\begin{lemma}\label{lem:tnstar_general}
In the setting of Theorem~\ref{thm:BHpointwise} with $\theta\in \R^n$ with $|S_\theta|=s_n\geq 1$, and $a_n^*,\delta_n$ as in Assumption~\ref{ass:generalnoise}, define 	\[\Psi_\theta: t \mapsto s_n^{-1}\sum_{i\in S_\theta}\ol{F}_{\abs{\theta_i}}(t)/\ol{F}_0(t).\] Then $\Psi_\theta$ is continuous increasing on $[0,\infty)$ ($n,\theta$ being kept fixed), with $1\leq\Psi_\theta(0)\leq 2$.
If $\alpha=\alpha_n\leq 1$ satisfies $3(n/s_n)\overline{F}_0(a_n^*)/(1-\Lambda_n)\leq \alpha_n \leq \min\braces{1,(n/s_n)\overline{F}_0(a_n^*-\delta_n)}$ for $n$ large enough (that is, \eqref{equcondalpha} holds), then, for $n$ large enough, there exists a unique point $t^*_n>0$ such that 
\begin{equation}\label{eqn:tn*}
2(1-s_n/n) + (s_n/n) \Psi_\theta(t_n^*) =3/\alpha, \end{equation}
and we have
\[ a_n^*-\delta_n\leq t_n^*\leq a_n^*, 
\]
and $\tfrac{1}{3}(1-\Lambda_n)\alpha_n s_n/n\leq \overline{F}_0(t_n^*)\leq \alpha_n s_n/n$.

%
\end{lemma}

\begin{proof}

The continuous increasing part is a direct consequence of Lemma~\ref{lem:monotoneLR}. 
For   
 $1\leq \Psi_\theta(0)\leq 2$, note that under Assumption~\ref{ass:generalnoise} necessarily $\overline{F}_0(0)= 1/2$  and $\overline{F}_0(0)\leq \overline{F}_{\abs{\theta_i}}(0)\leq 1$ for any $\theta_i$ (under Assumption~\ref{ass:generalnoise}\ref{ass:scale} we in fact have $\overline{F}_{\abs{\theta_i}}(0)=\overline{F}_0(0)=1/2$ for all $\theta_i$).

Existence of $t_n^*$ will implicitly follow from the proceeding calculations and the intermediate value theorem. To verify the asymptotics of $t_n^*$, observe that by assumption there exist $c_n\to 0,C_n\to \infty$ such that $\overline{F}_0(a_n^*)=c_ns_n/n,$ $\overline{F}_0(a_n^*-\delta_n)=C_n s_n/n$.
Then
\[ \Psi_\theta(a_n^*) = (n/s_n)c_n^{-1} s_n^{-1} \sum_{i\in S_\theta} \overline{F}_{\abs{\theta_i}}(a_n^*)= (n/s_n)c_n^{-1} (1-\Lambda_n(\theta)).\] From the assumed lower bound on $\alpha_n$ we have that $\alpha_n\geq 3c_n/(1-\Lambda_n)$, hence the right side in the above display is at least $(3/\alpha_n) (n/s_n) \geq 2+ (3/\alpha_n-2)(n/s_n)=\Psi_\theta(t_n^*)$, so that $t_n^*\leq a_n^*$ by monotonicity. 
Similarly,
\[ \Psi_\theta (a_n^*-\delta_n) = (n/s_n)C_n^{-1} s_n^{-1}\sum_{i\in S_\theta} \overline{F}_{\abs{\theta_i}} (a_n^*-\delta_n) 
\leq (n/s_n)C_n^{-1},\] so that, since $\alpha_n\leq \min\braces{1,C_n}$, we obtain $\Psi_\theta(a_n^*-\delta_n)\leq 2+(3/\alpha_n-2)(n/s_n)= \Psi_\theta(t_n^*)$, so that $t_n^*\geq a_n^*-\delta_n$.
Finally, from the definition of $t_n^*$ we have
\[ s_n^{-1} \sum_{i\in S_\theta} \overline{F}_{\abs{\theta_i}}(t_n^*) = (2+ (3/\alpha_n - 2)(n/s_n))\overline{F}_0(t_n^*).\]
The left side is upper bounded by 1 and, using that $t_n^*\leq a_n^*$, lower bounded by $1-\Lambda_n$; the claimed explicit bounds on $\overline{F}_0(t_n^*)$ follow using that $(1/\alpha_n)(n/s_n)\leq 2+(3/\alpha_n-2)(n/s_n)\leq (3/\alpha_n)(n/s_n)$. 
\end{proof}

\begin{lemma}\label{lem:BHNR}
For all $\te\in\R^n$, considering the BH procedure $\vphi^{BH}_\alpha$ \eqref{defBH} at some level $\alpha\in (0,1)$, then $V=\sum_{i=1}^n \ind{\te_i=0}\II\braces{\vphi_i^{BH}\neq 0}$  satisfies 
\begin{equation}\label{equ-EVBH}
E_\te V\leq \frac{\alpha }{1-\alpha}\sum_{i=1}^n \II\braces{\te_i\neq 0} +  \frac{\alpha}{(1-\alpha)^2}.
\end{equation}
\end{lemma}

\begin{proof}
To prove \eqref{equ-EVBH}, we follow the proof of Proposition~7.2 in \cite{neuvialroquain12} (or alternatively, that of Lemma~7.1 in \cite{Bogdan2011}). Recall that we have 
\begin{align*}
E_\te V &= E_\te \sum_{i=1}^n \II\braces{\te_i=0}\II\braces{\abs{X_i}\geq \hat{t}}
\end{align*}
with $\hat{t} = \max\{t\in \R\cup\braces{\infty}\::\: \hat{G}_n(t)\geq 2\ol{F}_0(t)/\alpha\}$ and $\hat{G}_n(t) = n^{-1}\sum_{i=1}^n \II\braces{\abs{X_i}\geq t}$. Now consider the BH procedure applied to the $X_i$'s where we have plugged $X_i=+\infty$ on $S_\theta=\{i\::\: \theta_i\neq 0\}$, that is, with threshold $\hat{t}^0 = \max\braces{t\in \R\cup\braces{\infty }\::\: \abs{S_\theta}/n + n^{-1}\sum_{i\notin S_\theta} \II\braces{\abs{X_i}\geq t} \geq 2\ol{F}_0(t)/\alpha}$. 
Clearly, we have $\hat{t}^0\leq \hat{t}$, so that
\begin{align*}
E_\te V &\leq E_\te \sum_{i=1}^n \II\braces{\te_i=0}\II\braces{\abs{X_i}\geq \hat{t}^0}.
\end{align*}
Denote $n_0=\abs{S_\theta^c}$ for short.
From classical multiple testing theory (see, e.g., Lemma~7.1 in \cite{RV2011}), the latter is the expected number of rejections of the step-up procedure with critical values $(\alpha (k+\abs{S_\theta})/n)_{1\leq k\leq n_0}$ and restricted to the $p$-value set $\braces{2\ol{F}_0(X_i), i\notin S_\theta}$. Now from Lemma~4.2 in \cite{FR2002} (applied with ``$n=n_0$'', ``$\beta=\alpha$'' and ``$\ta=\alpha/n$''), the latter equals
\begin{align*}
&\alpha \frac{n_0}{n} \sum_{i=0}^{n_0-1} {n_0-1 \choose i} (\abs{S_\theta}+i+1) i! (\alpha/n)^i\\
&\leq \alpha \sum_{i\geq 0}  (\abs{S_\theta}+i+1) \alpha^i = \frac{\alpha}{1-\alpha}\abs{S_\theta} + \alpha /(1-\alpha)^2.
\end{align*}
\end{proof}

\section{~~An empirical Bayes multiple testing procedure}\label{sec:l-vals-adaptive}
In this section we define concretely a Bayesian `$\ell$-value procedure' and show that it achieves the minimax risk adaptively, proving 
(a part of) Theorems~\ref{thm-adapt-r} and \ref{thm-adapt-clpr}. We in fact prove a generalisation of the $\ell$-value part of Theorem~\ref{thm-adapt-r} to the multiple signals setting of Section~\ref{sec:mainmult}, though remaining in the Gaussian sequence setting. Results in this section can be seen as generalisations of Lemmas 5, 7 and 9 in \cite{acr21} and corresponding earlier results in \cite{cr20}. 

\subsection{~~Definitions}\label{sec:l-vals-definitions}

For $w\in(0,1)$, let $\Pi_w=\Pi_{w,\gamma}$ denote a spike and slab prior for $\theta$, where, for $\mathcal{G}$ a distribution with density $\gamma$,
\begin{equation}
	\label{eqn:def:SpikeAndSlabPrior}
	\Pi_w = ((1-w)\delta_0+w\mathcal{G})^{\otimes n}.
\end{equation} That is, the coordinates of a draw $\theta'$ from $\Pi_w$ are independent, and are either exactly equal to $0$, with probability $(1-w)$, or are drawn from the `slab' density $\gamma$. When the Bayesian model holds, the data $X$ follows a mixture distribution, with each coordinate $X_i$ independently having density $(1-w)\phi + w g$, where $g$ denotes the convolution $\phi \star \gamma$. [In keeping with the rest of this paper, and in contrast to many papers on empirical Bayesian procedures including \cite{acr21,cr20}, we will reserve $\theta$ for the ``true'' parameter under which we analyse the performance of procedures $\vphi$, and so we use the notation $\theta'$ to denote a draw from the prior/posterior.] We consider a `quasi-Cauchy' alternative as in \cite{js05}, where $\gamma$ is defined implicitly such that 
\begin{equation} \label{eqn:def:gInQuasiCauchy} g(x)=(2\pi)^{-1/2}x^{-2}(1-e^{-x^2/2}),\:\:\: x\in\RR.
\end{equation}

The posterior distribution $\Pi_w(\cdot \mid X)$ can be explicitly derived hence, taking an empirical Bayes approach, one may estimate $\hat{w}$ by maximising the log-likelihood
\begin{align} \label{eqn:def:wHat}
	\hat{w}&=\argmax_{w\in [1/n,1]} L(w), \\
	\label{eqn:def:L}
	L(w)&= \sum_{i=1}^n \log \phi(X_i)+ \sum_{i=1}^n \log(1+w \beta(X_i)), \quad \beta(x):=\tfrac{g}{\phi}(x)-1.\end{align} Note that a maximiser can be seen to exist under the current assumptions by taking derivatives (see also Lemma~\ref{lem:monotonicity}).

For an arbitrary level $t\in (0,1)$, we consider the multiple testing procedure given by thresholding the posterior probabilities of coming from the null (also known as `$\ell$-values'),
\begin{align} 
	\label{ellvalp}
	\vphi^{\hat{\ell}}(X)&= (\ind{\ell_{i,\hat{w}}(X)<t})_{i\le n}, \\ \label{eqn:def:ellvals} \ell_{i,w}(X)&=\Pi_w(\theta'_i = 0 \mid X) = \frac{(1-w)\phi(X_i)}{(1-w)\phi(X_i)+wg(X_i)}.
\end{align}

Let us gather various other definitions as in \cite{cr20} and \cite{js04}. Useful properties of these quantities are given in Section~\ref{sec:background-for-lval}.
We denote $r(w,t)=wt/\{ (1-w)(1-t)\}$ and set
\begin{align}
	\label{eqn:def:betaxw}	\beta(x,w)&=\frac{\beta(x)}{1+w\beta(x)}, \quad x\in\RR,~w\in(0,1)\\
	\label{eqn:def:tildem} \tilde{m}(w)&= 
	-E_{\theta_1=0} \sqbrackets{\beta(X_1,w)}, \quad w\in(0,1) \\ \label{eqn:def:m_1}
	m_1(\tau,w)&
	=E_{\theta_{1}=\tau} \sqbrackets{ \beta(X_1,w)},
	\quad \tau\in\RR,~w\in(0,1)\\
	\label{eqn:def:m2} m_2(\tau,w)&=E_{\theta_1=\tau} (\beta(X_1,w)^2), \quad \tau\in\RR,~w\in(0,1)\\
	\label{eqn:def:xi}
	\xi(u)&= (\phi/g)^{-1}, \quad u\in (0,(\phi/g)(0))\\
	\label{eqn:def:zeta} \zeta(u)&= \beta^{-1}(1/u),\quad u\in(0,1).
\end{align}

\subsection{~~Main $\ell$-value result}\label{sec:proof-lvals}
We prove the following. Recall the definition \eqref{parametersetmultiscale} of the `multiple levels' signal set $\Theta(\ba,s_n)$. Recall also the definition \eqref{eqn:LambdaN} of the measure of the signal in the class $\Theta(\ba,s_n)$, which here evaluates to, with $a_n^*=\sqrt{2\log(n/s_n)}$,
\[  \Lambda_n(\ba) = s_n^{-1}\sum_{j=1}^{s_n} \overline{\Phi}(a_j-a_n^*).\]
\begin{theorem}\label{thm:lval-multilevel}
In the additive Gaussian sparse sequence model \eqref{model}, assume $\Lambda_n(\ba)$ is bounded away from 1. Then the $\ell$-value procedure \eqref{ellvalp} taken at any fixed threshold $t\in(0,1)$ achieves the risk bound
\[ \sup_{\theta \in \Theta(\ba,s_n)} \mathfrak{R}(\theta,\vphi^{\hat{\ell}}) = \Lambda_n(\ba) + o(1).\]
\end{theorem}
\begin{proof}
The lower bound is given by Theorem~\ref{th:minimax-multilevel}, and it remains to prove that 
\[ \sup_{\theta\in\Theta(\ba,s_n)} \frak{R}(\theta,\vphi^{\hat{\ell}})\leq \Lambda_n(\ba)+o(1).\]
\paragraph*{Step 1 (concentration of $\hat{w}$)}
We construct $w_-\leq w_+$ satisfying $w_-\asymp w_+\asymp (s_n/n)(\log (n/s_n))^{1/2}$ such that for $\theta\in\Theta(\ba,s_n)$, 
\[ P_{\theta}(\hat{w}\not \in (w_-,w_+))=o(1).\]
For a constant $\nu\in(0,1/2)$ we let $w_-,w_+$ be the (almost surely unique) solutions to 
\begin{align}\label{eqn:def:w-} \sum_{i\in S_\theta} m_1(\theta_{i},w_-)=(1+\nu)(n-s_n)\tilde{m}(w_-), \\
	\label{eqn:def:w+} \sum_{i\in S_\theta} m_1(\theta_{i},w_+)=(1-\nu)(n-s_n)\tilde{m}(w_+),
\end{align}
whose existence (for $n$ large) and asymptotics are yielded by Lemma~\ref{lem:existence-of-w+-}. By Lemma~\ref{lem:concentration-of-hat{w}}, for a constant $c>0$ we have
\[ P_{\theta}(\hat{w}\in(w_-,w_+))\geq 1-e^{-cs_n}=1-o(1).\]

\paragraph*{Step 2 (FNR control)}
Note that $\ell_{i,w}$ monotonically decreases as $w$ increases (see Lemma~\ref{lem:monotonicity}), so that on the event $\hat{w}\in(w_-,w_+)$ we have
$\ell_{i,w_+}\leq \ell_{i,\hat{w}}\leq \ell_{i,w_-}$.

We have (see Lemma~\ref{lemxigen})
\[ \xi(u)\leq (2\log(1/u)+2\log\log(1/u)+6\log2)^{1/2}.\]
The right side is decreasing in $u$ hence, using that $w_-\geq s_n/n$ for $n$ large enough, we have for any $t\in(0,1)$, a constant $c=c(t)$ and a sequence $\eta_n=\eta_n(t)\to 0$ 
\[ \xi(tw_-/2)\leq \sqrt{2\log(n/s_n)+2\log\log(n/s_n)+c}=\sqrt{2\log(n/s_n)}+\eta_n,\] the latter equality coming from Taylor expanding.
We deduce, using that $\xi(u)=(\phi/g)^{-1}(u)$ is decreasing (Lemma~\ref{lem:monotonicity}), that if $\abs{X_i}\geq \sqrt{2\log(n/s_n)}+\eta_n$ then \[\ell_{i,w_-}=\brackets[\Big]{1+\frac{w_-}{1+w_-}\frac{g}{\phi}(\abs{X_i})}^{-1} < \brackets[\Big]{ \frac{w_-}{2\xi^{-1}(\abs{X_i})}}^{-1}\leq t,\] 
hence, using that $\overline{\Phi}$ is Lipschitz, there exists a sequence $\xi_n\to 0$ not depending on $\theta$ such that, with $\eps_i\iidsim \cN(0,1)$, \[P_\theta(\ell_{i,w_-}\geq t)\leq P(-\eps_i\geq \theta_i - \sqrt{2\log (n/s_n)}-\eta_n) \leq \overline{\Phi}\brackets[\big]{\theta_i-\sqrt{2\log(n/s_n)}}+\xi_n.\]

We deduce, writing $S=\#\braces{i \in S_\theta : \ell_{i,\hat{w}}<t}$ and appealing to Lemma~\ref{lem:stodg}, that we may upper bound $s_n-S$ on the event $\hat{w}\geq w_-$ by a variable $N$ following a Poisson binomial distribution with parameter vector $\bm{p}=(\overline{\Phi}(a_j-a_n^*)+\xi_n)_{j\leq s_n}$. 
For some sequence $\xi_n'\to 0$ we see that the false negative rate is then
\begin{equation}\label{eqn:FNR-control-lval} \FNR(\theta,\vphi^{\hat{\ell}}) =s_n^{-1} E_\theta[s_n-S] \leq P_{\theta}(\hat{w}<w_-)+s_n^{-1}E[N]\leq  \Lambda_n(\ba)+\xi_n'.\end{equation}

A concentration argument further yields that on an event of probability tending to 1,
\begin{equation}\label{equusefullval}
	S \geq s_n(1-\Lambda_n(\ba))/2,
\end{equation}
which will be used in the next step. 
Indeed observe, using the assumption that $1-\Lambda_n$ is bounded away from 0, that for $n$ large \[EN = s_n (\Lambda_n(\ba)+\xi_n) \leq s_n \Lambda_n + (1/4)s_n(1-\Lambda_n).\]
Using this bound and Bernstein's inequality (Lemma~\ref{th:bernstein}) yields 
\[ P\brackets[\big]{N> s_n\Lambda_n + \tfrac{1}{2} s_n(1-\Lambda_n)}\leq P\brackets[\big]{N-EN > \tfrac{1}{4} s_n(1-\Lambda_n)}\to 0,\] 
hence
\begin{align*} P_\theta\brackets[\big]{S< s_n(1-\Lambda_n)/2} &= P_\theta\brackets[\big]{s_n-S > s_n\Lambda_n + s_n(1-\Lambda_n)/2} \\ &\leq P\brackets[\big]{N>s_n\Lambda_n+\tfrac{1}{2} s_n(1-\Lambda_n)}+ P_\theta(\hat{w}< w_-)\to 0. \end{align*}

\paragraph*{Step 3 (FDR control)}
Let $V$ denote the number of false positives,
\[ V = \#\braces{i\not \in S_\theta : \ell_{i,\hat{w}}<t}.\]
By \eqref{equusefullval}, let $\mathcal{A}$ be an event of probability tending to 1 on which $S\geq s_n(1-\Lambda_n)/2$ and $\hat{w}<w_+$. Define
\[ V' = \#\braces{i\not \in S_\theta : \ell_{i,w_+}<t}.\]
Using monotonicity and applying Jensen's inequality to the convex function $x\mapsto x/(a+x)$ we obtain
\begin{multline} \FDR(\theta,\vphi^{\hat{\ell}}) = E_\theta \sqbrackets[\Big]{\frac{V}{(V+S)\vee 1}} \leq E_\theta\sqbrackets[\Big]{ \frac{V}{V+S}\II_{\mathcal{A}}} + P_\theta(\mathcal{A}^c) \\ \leq  E_\theta \sqbrackets[\Big]{\frac{V'}{V'+s_n(1-\Lambda_n)/2}}+o(1)\leq \frac{E_\theta V'}{E_\theta V'+s_n(1-\Lambda_n)/2}+o(1).\end{multline}
Next, Lemma~\ref{lem:EVboundlemma} yields
\begin{equation} \label{eqn:EV'bound} E_\theta V' = (n-s_n)P_\theta (\ell_{i,w_+}\leq t)\leq 2(n-s_n) r(w_+,t)\xi(r(w_+,t))^{-3},\end{equation}
where $r(w_+,t)=w_+t(1-w_+)^{-1}(1-t)^{-1}\asymp w_+$. We note that $\xi(u)\sim (2\log(1/u))^{1/2}$ as $u\to 0$ by Lemma~\ref{lemxigen}, so that using $w_+\asymp (s_n/n)(\log n/s_n)^{1/2}$, we have
$E_\theta V'\lesssim s_n \log(n/s_n)^{-1} =o(s_n)$.
Since we are assuming $\Lambda_n$ is bounded away from 1, this yields the bound
\[ \FDR(\theta,\vphi^{\hat{\ell}})\leq \frac{o(s_n)}{o(s_n)+s_n(1-\Lambda_n)/2}+o(1)=o(1).\] Combined with the bound \eqref{eqn:FNR-control-lval} on the false negative rate, this concludes the proof.
\end{proof}

\begin{lemma}\label{lem:existence-of-w+-}
In the setting of Theorem~\ref{thm:lval-multilevel}, for $n$ large there exist solutions $w_-\leq w_+$ to \eqref{eqn:def:w-} and \eqref{eqn:def:w+} respectively. Moreover, these solutions are almost surely unique and satisfy
	\[w_\pm \asymp s_n (n-s_n)^{-1} \tilde{m}(w_\pm)^{-1} \asymp s_n(n-s_n)^{-1}(\log(n/s_n))^{1/2}\asymp (s_n/n) (\log n/s_n)^{1/2}. \] 
\end{lemma}

\begin{proof}
	We follow the proof of \cite[Lemma 5]{acr21}, adapting to allow for the weaker and mixed signals considered here, and taking advantage of not targeting a rate of convergence to simplify some aspects. 
	We claim that, for some constants $c,C>0$,
	\begin{align} 
		\label{eqn:sn<w-}\sum_{i \in S_\theta} m_1(\theta_{i},c(s_n/n)(\log n/s_n)^{1/2})&>(1+ \nu)(n-s_n)\tilde{m}(c(s_n/n)(\log(n/s_n))^{1/2}) \\
		\label{eqn:snlogsn>w+}\sum_{i \in S_\theta} m_1\brackets[\big]{\theta_{i},C(s_n/n)(\log n/s_n)^{1/2}}&<(1- \nu)(n-s_n)\tilde{m}\brackets[\big]{C(s_n/n)(\log n/s_n)^{1/2}},
	\end{align}
	at least for $n$ large enough.
	It will follow by the intermediate value theorem that there exist unique $w_-,w_+$ solving \eqref{eqn:def:w-} and \eqref{eqn:def:w+} respectively, and satisfying $(s_n/n)(\log n/s_n)^{1/2}\lesssim w_-\leq w_+\lesssim (s_n/n)(\log n/s_n)^{1/2}$, since $\tilde{m}$ is continuous, increasing and non-negative and $m_1(\tau,\cdot)$ is continuous and decreasing for each fixed $\tau$ (see Lemma~\ref{m1binf}).
	
	To prove the claim, note that also by Lemma~\ref{m1binf} we have, for some $c_0,C_0>0$, any $\mu\in\RR$ and asymptotically as $w\to 0$, 
	\begin{alignat*}{3} c_0(\log (1/w))^{-1/2}&\leq \tilde{m}(w) &&\leq C_0 (\log (1/w))^{-1/2}, \\
		&\phantom{\leq} m_1(\mu,w)&&\leq 1/w.
	\end{alignat*}
	It follows that \begin{align*} \sum_{i\in S_\theta} m_1(\theta_{i},C(s_n/n) (\log n/s_n)^{1/2}) &\leq C^{-1}n (\log (n/s_n))^{-1/2},\\ 
		(1-\nu)(n-s_n)\tilde{m}(C(s_n/n)(\log (n/s_n))^{1/2})&\gtrsim (n-s_n) (\log (n/s_n))^{-1/2},\end{align*} where the suppressed constant can be chosen independently of $C>0$ and $\nu\in(0,1/2)$, for $n$ larger than some $N=N(C)$. The inequality \eqref{eqn:snlogsn>w+} follows, for $C$ large enough.
	
	For the lower bound on \eqref{eqn:sn<w-} we observe that Lemma~\ref{m1binf} further yields, for some constants $\omega_0\in(0,1),M_0,c_1>0$ and all $w\leq \omega_0$, $\mu\geq M_0$, 
	\[ m_1(\mu,w)\geq c_1\frac{\overline{\Phi}(\zeta(w)-\mu)}{w}T_\mu(w),\]
	where $T_\mu(w)$ is a function bounded below by 1. Recall that $\zeta$ is a decreasing function satisfying $\zeta(w)\leq (2\log(1/w) + 2\log\log(1/w)+C)^{1/2}$ for a constant $C>0$ (see Lemmas~\ref{lem:monotonicity} and \ref{lemzetagen}). In particular, note by a Taylor expansion that $\zeta(c(s_n/n)(\log(n/s_n))^{1/2})\leq \sqrt{2 \log(n/s_n)}+o(1)$ and hence, using also that $\overline{\Phi}$ is Lipschitz, for some $C_1>0$ and some $o(1)$ not depending on $a_j$ or $c$ we have for $n$ large and $a_j\geq M_0$
	\[ m_1(a_j,c(s_n/n)(\log(n/s_n))^{1/2})\geq C_1c^{-1} (n/s_n)(\log(n/s_n))^{-1/2}(\overline{\Phi}(\sqrt{2\log(n/s_n)}-a_j)-o(1)).\] 
	Observe also, recalling the definition \eqref{eqn:def:betaxw}, that $\beta(x,w)\geq - \abs{\beta(0)}/(1-\abs{\beta(0)})$ is lower bounded by a constant, hence the same is true of $m_1(\tau,w)$ for all $\tau$ and $w$ (including $\tau<M_0$).
	Let $\sigma$ denote a bijection taking $i\in S_\theta$ to $\sigma(i)=j\in \braces{1,\dots,s_n}$ such that $\abs{\theta_{i}}\geq a_j$, and recall that $a_n^*=\sqrt{2\log(n/s_n)}$ in the current Gaussian setting.
	Noting that $\overline{\Phi}(a_n^*-a_j)=o(1)$ uniformly in $j$ such that $a_j\leq M_0$, we deduce that for some constants $C_2,c_2>0$ not depending on $c$ 
	\begin{align*}
		&\sum_{i \in S_\theta} m_1(\theta_{i},\frac{cs_n(\log (n/s_n))^{1/2}}{n}) \\ =& \sum_{i\in S_\theta : a_{\sigma(i)}<M_0 } m_1(\theta_{i},\frac{cs_n(\log (n/s_n))^{1/2}}{n}) + \sum_{i\in S_\theta : a_{\sigma(i)}\geq M_0} m_1(\theta_{i},\frac{cs_n(\log (n/s_n))^{1/2})}{n} \\
		\geq& -C_2s_n + \frac{c_2n}{cs_n(\log( n/s_n))^{1/2}}\sum_{j\leq s_n: a_j\geq M_0} (\overline{\Phi}(a_n^*-a_j)-o(1)) \\
		\geq& -C_2s_n + \frac{c_2n}{cs_n(\log (n/s_n))^{1/2} }\brackets[\big]{\sum_{j\leq s_n} (\overline{\Phi}(a_n^*-a_j)-o(1))} 
		\\ 
		&= -C_2s_n + \frac{c_2n}{c(\log n/s_n)^{1/2}}(1-\Lambda_n(\ba)-o(1)).
	\end{align*}
	Note that $s_n=o(n)$ implies $s_n= o(n/(\log(n/s_n))^{1/2})$. 
	Since $\Lambda_n(\ba)$ is bounded away from 1 we deduce that for $n$ large the left side of \eqref{eqn:sn<w-} is lower bounded by a constant not depending on $c$ multiplied by $c^{-1}n(\log n/s_n)^{-1/2}$, while the right side, using the earlier upper bound on $\tilde{m}(w)$, is upper bounded by a constant not depending on $c$ multiplied by $n\log(n/s_n)^{-1/2}$. Taking $c$ small enough yields the claim.
\end{proof}

\begin{lemma}\label{lem:concentration-of-hat{w}}
	Under the assumptions of Lemma~\ref{lem:existence-of-w+-}, define $\hat{w},$ $w_-$ and $w_+$ as in \eqref{eqn:def:wHat},\eqref{eqn:def:w-} and \eqref{eqn:def:w+} respectively. Then there exists $c>0$ such that 
	\[\sup_{\theta\in\Theta_{\bb}} P_{\theta}\brackets[\big]{\hat{w}\not \in(w_-,w_+)}\leq e^{-cs_n}.\]
\end{lemma}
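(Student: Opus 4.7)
The plan is to exploit the concavity of the log-likelihood $L$ in \eqref{eqn:def:L} and reduce concentration of $\hat{w}$ to the concentration of the score $L'$ at the two deterministic points $w_\pm$. First I would verify that $L$ is strictly concave on $[1/n,1]$: a direct computation gives
\[ L''(w) = -\sum_{i=1}^n \frac{\beta(X_i)^2}{(1+w\beta(X_i))^2}\leq 0, \]
and the inequality is strict almost surely, so $\hat{w}$ is the unique maximiser. By concavity of $L$ and monotonicity of $L'$, the event $\{\hat{w}>w_+\}$ is contained in $\{L'(w_+)\geq 0\}$ and the event $\{\hat{w}<w_-\}$ is contained in $\{L'(w_-)\leq 0\}$ (including the boundary cases $\hat{w}\in\{1/n,1\}$, handled via one-sided derivatives).

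Next I would compute the expected score. Writing $L'(w)=\sum_{i=1}^n \beta(X_i,w)$, one has
\[ E_\theta L'(w) = \sum_{i\in S_\theta} m_1(\theta_i,w) - (n-s_n)\tilde{m}(w), \]
so by the defining equations \eqref{eqn:def:w-}--\eqref{eqn:def:w+} one finds $E_\theta L'(w_+)=-\nu (n-s_n)\tilde m(w_+)<0$ and $E_\theta L'(w_-)=+\nu(n-s_n)\tilde m(w_-)>0$. The inclusions above therefore give
\[ \{\hat w>w_+\}\subset\{L'(w_+)-E_\theta L'(w_+)\geq \nu(n-s_n)\tilde m(w_+)\}, \]
and symmetrically for $\hat w<w_-$.

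The final step applies Bernstein's inequality (Lemma~\ref{th:bernstein}) to the centred sum of independent variables $\beta(X_i,w_\pm)-E_{\theta_i}\beta(X_i,w_\pm)$. Each summand is bounded in absolute value by $\mathcal{M}=2/w_\pm$, and from Lemma~\ref{lem:existence-of-w+-} we have $w_\pm\asymp (s_n/n)(\log(n/s_n))^{1/2}$ and $\tilde m(w_\pm)\asymp (\log(n/s_n))^{-1/2}$, so the deviation $A=\nu(n-s_n)\tilde m(w_\pm)\asymp n(\log(n/s_n))^{-1/2}$ satisfies $A^2/(\mathcal{M}A/3)\gtrsim s_n$. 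For the variance, I would split $V=\sum_i \mathrm{Var}(\beta(X_i,w_\pm))$ into a null part and an alternative part: using the standard Johnstone--Silverman bounds $m_2(0,w)\lesssim 1/(w\sqrt{\log(1/w)})$ and $m_2(\tau,w)\lesssim 1/w^2$, one obtains $V\lesssim n^2/(s_n\log(n/s_n))$, which gives $A^2/V\gtrsim s_n$ as well. Bernstein then yields the bound $\exp(-c s_n)$ uniformly in $\theta\in\Theta_{\bb}$, as desired.

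The main obstacle is controlling the null variance $m_2(0,w_\pm)$ sharply enough for the resulting Bernstein bound to produce $s_n$ in the exponent; a naive bound $m_2(0,w)\leq \tilde m(w)/w$ or $m_2(0,w)\leq 1/w^2$ would only give a subpolynomial rate. This is handled by splitting the integral $E_0[\beta(X,w)^2]$ at $|X|\sim\zeta(w)$, using that $\beta(X,w)$ is bounded by a constant times $\tilde m(w)$ on the inner region and decays like the Gaussian tail on the outer region; these are exactly the computations carried out in \cite{js04} and \cite{acr21}, which the present lemma invokes.
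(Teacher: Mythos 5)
Your proposal is correct and follows essentially the same route as the paper: reduce concentration of $\hat w$ to concentration of the score $L'$ at $w_\pm$ via (strict) concavity, compute $E_\theta L'(w_\pm)=\mp\nu(n-s_n)\tilde m(w_\pm)$ from the defining equations of $w_\pm$, and apply Bernstein with $\mathcal{M}\asymp 1/w_\pm$. Your variance bound is a bit cruder than the paper's (you use $m_2(\tau,w)\lesssim 1/w^2$ uniformly over alternatives, the paper splits at a constant $M_0$ and reuses $m_2\lesssim m_1/w$ for large $|\tau|$), but both yield $V\lesssim n^2/(s_n\log(n/s_n))\asymp n\tilde m(w_\pm)/w_\pm$ and hence the $e^{-cs_n}$ rate.

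One inconsistency in your closing paragraph is worth correcting. You write that ``a naive bound $m_2(0,w)\leq \tilde m(w)/w$\dots would only give a subpolynomial rate,'' yet the bound you actually invoke, $m_2(0,w)\lesssim 1/(w\sqrt{\log(1/w)})$, is precisely $\asymp\tilde m(w)/w$ (since $\tilde m(w)\asymp(\log(1/w))^{-1/2}$ by Lemma~\ref{m1binf}). And indeed, feeding $V\lesssim n\tilde m(w)/w$ into Bernstein gives $A^2/V\asymp n w\tilde m(w)\asymp s_n$, which is exactly the required polynomial rate. So $\tilde m(w)/w$ is sufficient; only the truly crude bound $m_2(0,w)\leq 1/w^2$ (giving $A^2/V\asymp s_n^2/n$) would fail. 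The sharper null bound from Lemma~\ref{lem:m2bounds}, $m_2(0,w)\lesssim \overline\Phi(\zeta(w))/w^2\asymp 1/(w(\log(1/w))^{3/2})$, is a factor $\zeta(w)^{-2}$ better, but that extra improvement is not needed here.
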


\begin{proof}
	We adapt the proof of \cite[Lemma S-4]{cr20} or \cite[Lemma 7]{acr21} to the current setting with no polynomial sparsity and multiple signal levels $a_j$. 
	
	Let us prove, for a constant $c>0$ depending only on an upper bound for $\nu<1$, that
	\[	P_{\theta}( \hat{w}< w_-)  \leq e^{-cs_n}.	\] The proof that $P_{\theta}( \hat{w}> w_+) \leq e^{-cs_n}$ is similar (see also the similar proof below of Lemma~\ref{lem:hat{w}bound-for-small-support}), yielding the claim up to a factor of 2 which can be removed by initially considering a $c'>c$.
	
	Let $S=L'$ be the score function, that is, the derivative of the likelihood $L$ defined in \eqref{eqn:def:L}. Since $\hat{w}$ maximises $L(w)$, necessarily $S(\hat{w})\leq 0$ or $\hat{w}=1$. If $\hat{w}<w_-$ then only the former may hold, so that applying the strictly monotonic function $S$ (Lemma~\ref{lem:monotonicity}) we obtain $\braces{\hat{w}<w_-}=\braces{S(w_-)<S(\hat{w})}\subseteq \braces{S(w_-)<0}$. Hence,
	\begin{align*}
		P_{\theta}( \hat{w}< w_-) \leq  P_{\theta}( S(w_-)<0)	&=P_{\theta}( S(w_-)-E_{\theta}S(w_-) <-E_{\theta}S(w_-))\\
		&=P_{\theta}\left( \sum_{i=1}^n W_i <-E\right),
	\end{align*}
	where we have introduced the notation $W_i=\beta(X_i,w_-)-m_1(\theta_{i},w_-)$ and $E=E_{\theta}S(w_-)=\sum_{i=1}^n m_1(\theta_{i},w_-)$. For $n$ large, $\abs{W_i}\leq \mathcal{M}:=2/w_-$ a.s. (see Lemma~\ref{lem:beta-bound}), so that we may apply Bernstein's inequality (Lemma~\ref{th:bernstein}) and obtain
	\begin{align*}
		P_{\theta}( \hat{w}< w_-) \leq e^{-0.5 E^2/(V_2 + \mathcal{M}E/3)},
	\end{align*}
	where $V_2=\sum_{i=1}^n \Var(W_i)\leq \sum_{i=1}^n m_2(\theta_{i},w)$, for $m_2(\theta_{i},w)=E_{\theta} (\beta(X_i,w)^2) $. 
	In view of the definition \eqref{eqn:def:w-} of $w_-$, we have
	\[
	E= \sum_{i\in S_\theta} m_1(\theta_{i},w_-)- (n-s_n)\tilde{m}(w_-) = \nu(n-s_n) \tilde{m}(w_-).
	\]
	
	We also note, using the bounds on $m_2$ in Lemma~\ref{lem:m2bounds} that for some constants $C,M_0>0$ and $n$ larger than some universal threshold,
	\begin{align*}
		V_2
		&\leq \sum_{i\leq n:|\theta_{i}|>M_0}  m_2(\theta_{i},w_-)+\sum_{i\leq n :\abs{\theta_{i}}\leq M_0}  m_2(\theta_i,w_-)\\
		&\leq \frac{C}{w_-}\sum_{i\in S_\theta: \abs{\theta_i}>M_0 } m_1(\theta_{i},w_-) + C\sum_{i\leq n: \abs{\theta_i}\leq M_0} \frac{\overline{\Phi}(\zeta(w_-)-|\theta_i|)}{w_-^2} \\
		&\leq \frac{C}{w_-}\sum_{i\in S_\theta } m_1(\theta_{i},w_-) -\frac{C}{w_-} \sum_{i \in S_\theta : \abs{\theta_{i}}\leq M_0} m_1(\theta_{i},w_-) + Cs_n \frac{\overline{\Phi}(\zeta(w_-)-M_0)}{w_-^2}+ Cn\frac{\overline{\Phi}(\zeta(w_-))}{w_-^2},
	\end{align*}
	with $\zeta$ defined as in \eqref{eqn:def:zeta}.
	For the first term we use the definition \eqref{eqn:def:w-} of $w_-$, and for the second we use that $m_1(\theta_{i},w_-)$ is bounded below by a (negative) constant. By a standard normal tail bound (included in Lemma~\ref{lem:sub}), the bounds on $w_-$, the fact that $\sqrt{2\log(1/w_-)}\leq \zeta(w_-)\lesssim \sqrt{\log(n/s_n)}$ as $n\to \infty$ (Lemmas~\ref{lem:existence-of-w+-} and \ref{lemzetagen}; note $\phi(\zeta(w_-))\lesssim w_-$ as a consequence of the bound) and that $\tilde{m}(w_-)\asymp \zeta(w_-)^{-1}$ (Lemma~\ref{m1binf}), we deduce for some constant $M_1$ that
	\[ \overline{\Phi}(\zeta(w_-)-M_0) \asymp \frac{\phi(\zeta(w_-)-M_0)}{\zeta(w_-)} \lesssim w_-\tilde{m}(w_-)e^{M_1 \sqrt{\log(n/s_n)}},\]
	so that, bounding $M_1\sqrt{\log(n/s_n)}$ by $\log(n/s_n)$, for $n$ large the third term is upper bounded by a constant multiple of
	\[ s_n w_-^{-1} \tilde{m}(w_-) e^{M_1\sqrt{\log(n/s_n)}} \leq nw_-^{-1}\tilde{m}(w_-).\]  For the fourth term, by the same normal tail bound and the definition of $\zeta$ we have $\overline{\Phi}(\zeta(w_-))\asymp \phi(\zeta(w_-))/\zeta(w_-) \asymp w_- g(\zeta(w_-)) /\zeta(w_-)$, which is of order $w_-(\zeta(w_-))^{-3} \asymp w_-/ \zeta(w_-)^3$, hence of order $w_- \tilde{m}(w_-)/ \zeta(w_-)^2$ because $\tilde{m}(w_-)\asymp \zeta(w_-)^{-1}$.
	We deduce, recalling that $s_n=o(n)$ implies automatically that $s_n\ll n(\log(n/s_n))^{-1/2}\asymp n\tilde{m}(w_-)$,
	\[
	V_2\lesssim  n w_-^{-1}  \tilde{m}(w_-) +w_-^{-1} s_n+ n w_-^{-1} \tilde{m}(w_-)/ \zeta(w_-)^2 \lesssim  n w_-^{-1}  \tilde{m}(w_-),
	\]
	so that 
	\[
	\frac{V_2+ \mathcal{M}E/3}{E^2}\lesssim  \frac{n w_-^{-1}  \tilde{m}(w_-)}{ (\nu(n-s_n) \tilde{m}(w_-))^2} + \frac{1}{ \nu w_- (n-s_n) \tilde{m}(w_-)}\lesssim  \frac{1}{ \nu^2 n w_-\tilde{m}(w_-)} .
	\]
	This implies that
	$
	P_{\theta_0}( \hat{w}< w_-) \leq e^{-c \nu^2 n w_-\tilde{m}(w_-)}
	$
	for some constant $c>0$.  
	Now, by Lemma~\ref{lem:existence-of-w+-}, we have $n w_-\tilde{m}(w_-)\asymp s_n$, yielding the desired bound on the probability.
\end{proof}

We adapt Lemmas~\ref{lem:existence-of-w+-} and \ref{lem:concentration-of-hat{w}} slightly to apply even when $\Lambda_n(\ba)$ is not bounded away from $1$ or when $\abs{S_\theta}<s_n$, in order to accommodate the settings of Theorems~\ref{thm-adapt-clpr} and \ref{thm-adapt-cl}. Recall the definition \eqref{thetaprime} of $\Theta'_b$. 

\begin{lemma}\label{lem:hat{w}bound-for-small-support}
	Consider  the setting of Theorem~\ref{thm-adapt-cl}. 
	For constants $C,D>0$ let $\omega_1=C(\sigma_n/n)(\log(n/\sigma_n))^{1/2}$, where $\sigma_n=\max(s_n,D\log n)$.
	Then if $C$ and $D$ are suitably large, for all $n$ large enough we have
	\begin{equation}
		\label{eqn:hat{w}<omega1}	\sup_{\te\in\Theta_b'}P_{\theta}(\hat{w}>\omega_1)\leq n^{-1}.
	\end{equation} 
\end{lemma}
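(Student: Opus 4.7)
The strategy mirrors that of Lemma~\ref{lem:concentration-of-hat{w}}, but exploits a coarser, universal bound on the positive part of the expected score in order to handle arbitrary support size $s_\theta\leq s_n$ (including the degenerate case $s_\theta=0$). Since $\hat{w}$ maximises the log-likelihood $L$ on $[1/n,1]$ and the score $S=L'$ is strictly decreasing (Lemma~\ref{lem:monotonicity}), we have the inclusion $\{\hat{w}>\omega_1\}\subseteq \{S(\omega_1)\geq 0\}$. Writing $E=-E_\theta S(\omega_1)$, the plan is to show $E$ is sufficiently positive and then bound $P_\theta(S(\omega_1)-E_\theta S(\omega_1)\geq E)$ by Bernstein's inequality (Lemma~\ref{th:bernstein}).

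Decompose $E_\theta S(\omega_1)=\sum_{i\in S_\theta} m_1(\theta_i,\omega_1)-(n-s_\theta)\tilde m(\omega_1)$. The universal bound $m_1(\mu,w)\leq 1/w$ from Lemma~\ref{m1binf} gives
\[ \sum_{i\in S_\theta}m_1(\theta_i,\omega_1)\leq s_\theta/\omega_1\leq s_n/\omega_1\leq \sigma_n/\omega_1 = \frac{n}{C\sqrt{\log(n/\sigma_n)}}.\]
On the other hand, by the asymptotics $\tilde m(w)\asymp 1/\sqrt{\log(1/w)}$ (Lemma~\ref{m1binf}) and $\log(1/\omega_1)\asymp \log(n/\sigma_n)$, for $n$ large
\[ (n-s_\theta)\tilde m(\omega_1)\geq c_1\frac{n}{\sqrt{\log(n/\sigma_n)}}\]
for some absolute $c_1>0$ (using $s_\theta\leq s_n=o(n)$). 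Thus, choosing $C$ so that $c_1-1/C\geq c_2>0$, we obtain $E\geq c_2 n/\sqrt{\log(n/\sigma_n)}$ uniformly in $\theta\in\Theta_b'$.

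For the Bernstein step, set $W_i=\beta(X_i,\omega_1)-E_\theta\beta(X_i,\omega_1)$, so that $|W_i|\leq \mathcal M:=2/\omega_1$ by Lemma~\ref{lem:beta-bound}. The same computation as in Lemma~\ref{lem:concentration-of-hat{w}}, using the bounds on $m_2$ (Lemma~\ref{lem:m2bounds}) together with $s_n\leq \sigma_n$ and the choice of $\omega_1$, yields $V_2:=\sum_i\mathrm{Var}(W_i)\lesssim n\tilde m(\omega_1)/\omega_1$. Plugging into Bernstein,
\[ \frac{E^2}{V_2+\mathcal M E/3}\asymp \frac{n^2/\log(n/\sigma_n)}{n/(\omega_1\sqrt{\log(n/\sigma_n)})}\asymp \omega_1\cdot\frac{n}{\sqrt{\log(n/\sigma_n)}}\asymp \sigma_n,\]
so that $P_\theta(\hat w>\omega_1)\leq \exp(-c\sigma_n)$ for some absolute $c>0$. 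Since $\sigma_n\geq D\log n$, choosing $D\geq 1/c$ yields $P_\theta(\hat w>\omega_1)\leq n^{-1}$ uniformly over $\theta\in\Theta_b'$.

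The main subtlety, as compared to Lemma~\ref{lem:concentration-of-hat{w}}, is that we must argue uniformly over $\theta\in\Theta_b'$ without any hypothesis like $\Lambda_\infty<1$ (which may fail when $b=b_n\to-\infty$) and with $s_\theta$ potentially much smaller than $s_n$. Both issues are resolved by using only the universal bound $m_1(\mu,w)\leq 1/w$ -- rather than the refined lower bounds in terms of $\overline\Phi$ used for $w_+$ -- and by inflating the threshold from $(s_n/n)\sqrt{\log(n/s_n)}$ to $\omega_1=C(\sigma_n/n)\sqrt{\log(n/\sigma_n)}$; the extra factor $C$ provides the margin $c_2>0$ above, while the floor $\sigma_n\geq D\log n$ ensures that the Bernstein exponent $\sigma_n$ dominates $\log n$.
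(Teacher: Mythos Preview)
Your proof is correct and follows essentially the same approach as the paper: both use the inclusion $\{\hat w>\omega_1\}\subset\{S(\omega_1)\geq 0\}$, bound the expected score via $m_1(\mu,w)\leq 1/w$ and $\tilde m(w)\asymp(\log(1/w))^{-1/2}$ to get a margin of order $n/\sqrt{\log(n/\sigma_n)}$, and then apply Bernstein's inequality with the variance bound $V_2\lesssim n\tilde m(\omega_1)/\omega_1$ to obtain an exponent of order $\sigma_n$. The paper phrases the score bound as the intermediate inequality $\sum_{i\in S_\theta}m_1(\theta_i,\omega_1)<(1-\nu)(n-s_\theta)\tilde m(\omega_1)$ rather than your direct lower bound on $E$, but the two formulations are equivalent.
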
 
\begin{proof}
	We begin with a corresponding upper bound to \eqref{eqn:snlogsn>w+}. Using the bounds, found in Lemma~\ref{m1binf}, that for some $c_0,\omega_0>0$, any $\mu\in \RR$ and any $w<\omega_0$
	\[ \tilde{m}(w)\geq c_0 (\log(1/w))^{-1/2},\quad m_1(\mu,w)\leq 1/w,\] we note that $\tilde{m}(\omega_1)\asymp (\log(1/\omega_1))^{-1/2}\asymp (\log(n/\sigma_n))^{-1/2}$ because $n/\sigma_n\to\infty$, and we further deduce that for  $n$ large enough, for any $\theta\in \Theta'_b$ with support $S_\theta$ of size $s_\theta\leq s_n$ we have
	\begin{equation} \label{eqn:m1omega1-bound}	\sum_{i\in S_\theta} m_1(\theta_i,\omega_1)<(1-\nu)(n-s_\theta)\tilde{m}(\omega_1),
	\end{equation}
	provided the constant $C$ in the definition of $\omega_1$ is large enough. 
	
	We now argue as in proving Lemma~\ref{lem:concentration-of-hat{w}}. Let $S=L'$, for $L$ as in \eqref{eqn:def:L}, denote the score function. Necessarily $S(\hat{w})\geq 0$ or $\hat{w}=0$, and we deduce 
	that $\braces{\hat{w}>\omega_1}\subset\braces{S(\omega_1)>0}$, hence
	\begin{equation*}
		P_{\theta}( \hat{w}> \omega_1) \leq P_{\theta}( S(\omega_1)-E_{\theta}S(\omega_1) >-E_{\theta}S(\omega_1))\\
		=P_{\theta}\left( \sum_{i=1}^n W_i >-E\right),
	\end{equation*}
	where $W_i=\beta(X_i,\omega_1)-m_1(\theta_{i},\omega_1)$ and $E=E_{\theta}S(\omega_1)=\sum_{i=1}^n m_1(\theta_{i},\omega_1)$. For $n$ large $|W_i|\leq \mathcal{M}:=2/\omega_1$ a.s. (see Lemma~\ref{lem:beta-bound}), so that we may apply Bernstein's inequality (Lemma~\ref{th:bernstein}) and obtain
	\begin{align*}
		P_{\theta}( \hat{w}> \omega_1) \leq e^{-0.5 E^2/(V_2 + \mathcal{M}E/3)},
	\end{align*}
	where $V_2=\sum_{i=1}^n \Var(W_i)\leq \sum_{i=1}^nm_2(\theta_{i},\omega_1)$, for $m_2(\theta_{i},w)=E_{\theta} (\beta(X_i,w)^2) $. 
	In view of \eqref{eqn:m1omega1-bound}, 
	\[
	-E=(n-s_\theta)\tilde{m}(\omega_1)- \sum_{i\in S_\theta} m_1(\theta_{i},\omega_1) > \nu(n-s_\theta) \tilde{m}(\omega_1).
	\]
	Arguing as in the proof of Lemma~\ref{lem:concentration-of-hat{w}}, one obtains 
	\[ V_2\lesssim   \omega_1^{-1}\sum_{i\in S_\theta } m_1(\theta_{i},\omega_1)  +\omega_1^{-1} s_n+ n\omega_1^{-1}\tilde{m}(\omega_1)+ n \omega_1^{-1} \tilde{m}(\omega_1)/ \zeta(\omega_1)^2 \lesssim n\omega_1^{-1}\tilde{m}(\omega_1),\] with the last bound following from \eqref{eqn:m1omega1-bound} (note that $s_n\ll n$ implies $s_n\ll n/\sqrt{\log(n/\sigma_n)}$). Inserting the bounds for $E,$ $V_2$ and $\mathcal{M}$ into the obtained bound on $P_\theta(\hat{w}>\omega_1)$ yields for a constant $c=c(\nu)$
	\[ P_\theta(\hat{w}>\omega_1) \leq e^{-cn\omega_1\tilde{m}(\omega_1)}.\] Recalling that $\tilde{m}(\omega_1)
	\asymp (\log(n/\sigma_n))^{-1/2}$ 
	we see for $c'$ not depending on $D$ that $P_\theta(\hat{w}>\omega_1)\leq e^{-c'\sigma_n}$, and \eqref{eqn:hat{w}<omega1} follows upon choosing $D=D(c')$ large enough.
\end{proof}

\subsection{~~A classification risk bound for the $\ell$-value procedure}  \label{sec:proof:thm-adapt-clpr} 
In this section we prove that for any fixed real $b$ and any $\eta>0$
\[\sup_{\te\in\Theta_b} P_\te\left[\lc(\te,\vphi^{\hat{\ell}})/s_n \ge  \overline\Phi(b)+\eta
\right]
=o(1), \]
and that the same holds with $\overline{\Phi}(b)$ replaced by 0 when $b=b_n\to+\infty$ or by 1 when $b=b_n\to-\infty$. This proves an upper bound of Theorem~\ref{thm-adapt-clpr} to come (in Section~\ref{sec:classification-in-prob}). Suppose $\te\in \Theta_b$ and let us write the classification loss $\lc(\te,\vphi^{\hat{\ell}})$  as the sum $V+(s_n-S)$, where as in the proof of Theorem~\ref{thm:lval-multilevel} we write $V=V(\hat{w})=\sum_{i \not \in S_\theta} \vphi^{\hat{\ell}}_i$, $S=S(\hat{w})=\sum_{i\in S_\theta}{\vphi^{\hat{\ell}}_i}$, with $S_\theta=\braces{i:\theta_i\neq 0}$ denoting the support of $\theta$. It suffices to show that uniformly over $\te\in\Theta_b$, for some positive sequences $\xi_n\to 0$ and $\nu_n\to 0$,
\begin{align}
	\label{tr-clfn}
	P_\te\left[s_n-S \ge (\overline\Phi(b)+\xi_n)s_n \right]&=o(1),\\
	\label{tr-clfp}
	P_\te\sqbrackets[\big]{V>\nu_n s_n} &= o(1),
\end{align}
To prove \eqref{tr-clfn}, note that we may apply Theorem~\ref{thm:lval-multilevel} and arguments in the proof thereof with $\ba=(a_n^*+b,\dots,a_n^*+b)$. In the case of a fixed $b\in \RR$, recall that we argued in proving Theorem~\ref{thm:lval-multilevel} (see before \eqref{eqn:FNR-control-lval}) that 
	on the event $\hat{w}\geq w_-$, we have
	$s_n-S \leq N$, for $N$ a Poisson binomial $N$ with parameter vector $\bm{p}=(\overline{\Phi}(a_j-a_n^*)+\xi_n)_{j\leq s_n}$ and for a sequence $\xi_n\to 0$; for the current choice of $\ba$ we see that $N\sim \text{Bin}(s_n,\overline{\Phi}(b)+\xi_n)$.
Then \eqref{tr-clfn} follows by recalling that $P(\hat{w}<w_-)\to 0$ from Lemma~\ref{lem:concentration-of-hat{w}}, and by applying Bernstein's inequality (Lemma~\ref{th:bernstein}). In the case $b=b_n\to +\infty$ the same argument applies upon replacing $\overline{\Phi}(b)$ by $0=\overline{\Phi}(b)+o(1)$. 
In the case $b=b_n\to-\infty$, since $S\geq 0$ we automatically have
\[ P_{\theta} \sqbrackets[\big]{s_n-S \geq (1+\delta_n)s_n}=0,\]
so that \eqref{tr-clfn} holds in all cases (with suitable substitutions).

To prove \eqref{tr-clfp}, one could similarly use the bound \eqref{eqn:EV'bound}. Here, to allow for the case $b=b_n\to-\infty$, we adapt this bound by appealing to Lemma~\ref{lem:hat{w}bound-for-small-support}, which yields the existence of constants $C,D$ such that for large $n$
\[ P_\theta(\hat{w}>\omega_1)\leq n^{-1}, \quad \omega_1=C\frac{\sigma_n}{n}\brackets[\big]{\log(n/\sigma_n)}^{1/2},~ \sigma_n=\max(s_n,D\log n).\]
Then, writing 
\[ V' = \#\braces{i \not \in S_\theta : \ell_{i,\omega_1}<t},\] we apply Lemmas~\ref{lem:EVboundlemma} and \ref{lemxigen} as in proving Theorem~\ref{thm:lval-multilevel} to deduce that \[ E_\theta V' \lesssim n\omega_1 \log(1/\omega_1)^{-3/2} \lesssim \sigma_n/ \log (n/\sigma_n),\] where we have used that $n/\sigma_n\to \infty$ to see that $\log(1/\omega_1)\asymp \log(n/\sigma_n)$. 	When $\sigma_n=D\log(n)$ we note that $\sigma_n/\log(n/\sigma_n)$ is upper bounded by a constant.
For some $C'>0$ we therefore have 
\begin{equation} \label{eqn:EVbound-classification} E_\theta V \leq n P_\theta(\hat{w}>\omega_1)+E_\theta V'\leq 1+E_\theta V'\leq C'\max\brackets[\Big]{1,\frac{s_n}{\log(n/s_n)}}=o(s_n),
\end{equation}
where we have used that $s_n\to\infty$ and $n/s_n\to\infty$. 
Applying Markov's inequality we deduce $P_\theta[V>\nu_ns_n]\leq \nu_n^{-1}s_n^{-1}E_\theta V$ tends to zero if $\nu_n$ tends to zero sufficiently slowly. 
This proves \eqref{tr-clfp}, including  the cases $b=b_n\to\pm\infty$, and thus concludes the proof.
\begin{remark}
	\label{rem:l-val-sparsity-preserving}
	A straightforward corollary of the above proof shows that the $\ell$-value procedure is sparsity preserving. It suffices to show that $P(V+S>2s_n)=o(1)$, or that $P(V>s_n)=o(1)$, which follows from the proof, taking $\nu_n=1$.
\end{remark}

\begin{remark}\label{sec:proof:thm-adapt-cl}
	Let us provide an alternative proof of the $\ell$-value upper bound part of Theorem~\ref{thm-adapt-cl}, similar in spirit to the just obtained in-probability bounds. Write $S_\theta=\braces{i : \theta_i\neq 0}$, $s_\theta = \abs{S_\theta}$, 
	$V=V(\hat{w})=\sum_{i\not \in S_\theta} \vphi^{\hat{\ell}}_i$ and $S=S(\hat{w})=\sum_{i \in S_\theta} \vphi^{\hat{\ell}}_i$, and note as in the previous proof of Theorem~\ref{thm-adapt-cl} (in Section~\ref{sec:proof-of-thm-adapt-cl}) that it suffices to show 
	\[ \sup_{\theta \in \Theta_b'(s_n)} E_\theta V =o(s_n),\quad \sup_{\theta\in \Theta_b'(s_n)} E_\theta [s_\theta- S] \leq s_n\overline{\Phi}(b)+o(s_n).\] 
	The first of these follows from taking a supremum in \eqref{eqn:EVbound-classification}, whose proof we note applies for $\theta\in\Theta_b'(s_n)$ not just $\theta\in\Theta_b(s_n)$.
	
	The second is trivial in the case $b=b_n\to -\infty$ since $0\leq s_\theta-S\leq s_\theta\leq s_n$ for $\theta\in\Theta_b'(s_n)$. In the cases $b\in\RR$ fixed and $b=b_n\to+\infty$, note firstly an examination of the proofs of Lemmas~\ref{lem:existence-of-w+-} and \ref{lem:concentration-of-hat{w}} reveals that for some constants $c,c'>0$, for any $\theta\in\Theta_b(s)$ with $s\leq s_n$ we have
	\[ P_\theta(\hat{w}<w_-)\leq e^{-cs}, \quad w_-:= c'(s/n)(\log(n/s))^{1/2}. \] In this setting the bound \eqref{eqn:FNR-control-lval} reads that, for some sequence $\delta_n\to 0$ which can be chosen independently of  $s$, 
	\begin{align*} E_\theta[s-S] & \le sP_{\theta}(\hat{w}<w_-)+s(\overline{\Phi}(b)+\delta_n) \\
		& \le  s e^{-cs}+s(\overline{\Phi}(b)+\delta_n) \le C(c)+s_n\overline{\Phi}(b)+o(s_n).
	\end{align*}
	The bound at stake then follows by taking the maximum over $0\le s\le s_n$ in the last display. 
	
\end{remark}

\subsection{~~Background material for the $\ell$-value procedure}\label{sec:background-for-lval}
We gather results from \cite{cr20} which are used in the proofs for this section. Some of these results were originally formulated with dependence on $g$ and a related parameter $\kappa\in[1,2]$; as in \cite{acr21}, we simplify such expressions here by substituting the explicit form \eqref{eqn:def:gInQuasiCauchy} for $g$, which has $\kappa=2$, and using the bounds $\sup_x \abs{g(x)}\leq 1/\sqrt{2\pi}$ and, for $\abs{x}\geq 2$, $x^{-2}/(2\sqrt{2\pi})\leq g(x)\leq x^{-2}/\sqrt{2\pi}$.
\begin{lemma}\label{lem:monotonicity}
	The following functions are strictly decreasing (with probability 1 in the case of random functions).
	\begin{align*}
		w&\mapsto S(w)=L'(w), \\
		w&\mapsto \ell_{i,w}(X),\\
		w&\mapsto -\tilde{m}(w),\\
		w&\mapsto m_1(\tau,w), \quad \tau\in\RR \text{ fixed,} \\
		u&\mapsto \xi(u)=(\phi/g)^{-1}(u),\\
		u&\mapsto \zeta(u).
	\end{align*}
\end{lemma}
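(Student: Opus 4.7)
The plan is to reduce the six monotonicity statements to two ingredients: the pointwise monotonicity of $\beta(x,\cdot)$ for each fixed $x$, which immediately yields (1), (3), (4); and an elementary computation based on the explicit quasi-Cauchy form of $g$, which handles (2), (5), (6).

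First I would check the pointwise claim: $\partial_w \beta(x,w) = -\beta(x)^2/(1+w\beta(x))^2$, strictly negative for every $x$ with $\beta(x) \neq 0$. Since $\braces{x:\beta(x)=0}$ is a Lebesgue-null set, under any $P_\te$ (for which $X$ has a density), almost surely every summand in $S(w) = \sum_{i=1}^n \beta(X_i,w)$ is strictly decreasing, giving (1). Items (3) and (4) follow by differentiating under the expectation; the bound $|\beta(x,w)| \leq \max(w^{-1},(1-w)^{-1})$, obtained by writing $\beta(x,w) = ((g/\phi)(x)-1)/((1-w)+w(g/\phi)(x))$ and examining the limits as $(g/\phi)(x) \to 0$ or $\infty$, makes $\partial_w \beta(x,w)$ uniformly bounded on compacta of $(0,1)$, which justifies the interchange.

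For (2), I would rewrite $\ell_{i,w}(X) = [1+(w/(1-w))(g/\phi)(X_i)]^{-1}$; since $w \mapsto w/(1-w)$ is strictly increasing on $(0,1)$ and $g>0$ everywhere for the quasi-Cauchy density, the map is strictly decreasing (for every $X$, hence a.s.). For (5) and (6), the explicit form gives $(g/\phi)(x) = x^{-2}(e^{x^2/2}-1)$; substituting $t=x^2/2$ this equals $(e^t-1)/(2t)$, whose derivative in $t$ has the same sign as $te^t - e^t + 1$, which vanishes at $t=0$ and has positive derivative $te^t$ thereafter. Hence $g/\phi$ is strictly increasing in $|x|$ on $(0,\infty)$, so $\phi/g$ is strictly decreasing, making $\xi = (\phi/g)^{-1}$ strictly decreasing on its domain. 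Finally $\beta(x)=(g/\phi)(x)-1$ is also strictly increasing in $|x|$, so $\beta^{-1}$ is strictly increasing on its range, and composition with the strictly decreasing $u \mapsto 1/u$ gives that $\zeta(u)=\beta^{-1}(1/u)$ is strictly decreasing.

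The only real technicality is the exchange of derivative and expectation for items (3) and (4); this is immediate from the local-uniform bound above. Beyond that the argument reduces to elementary calculus and I do not anticipate any substantive obstacle.
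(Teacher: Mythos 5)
Your proof is correct. The paper does not give its own proof of this lemma, deferring instead to \cite{cr20} and \cite[Lemma~4]{acr21}, so there is no in-paper argument to compare against; your elementary, self-contained argument is a valid replacement and is the natural one. Two small remarks, neither affecting correctness. First, the key behind your domination argument is worth making explicit: $\partial_w\beta(x,w) = -\beta(x,w)^2$, so your uniform bound on $\abs{\beta(\cdot,w)}$ over compacta of $w$ is exactly what controls $\abs{\partial_w\beta(\cdot,w)}$ and legitimises differentiating under the expectation in items (3)--(4); it also makes strict negativity of $\partial_w m_1$ and $\partial_w(-\tilde{m})$ immediate, since $E[\beta(\cdot,w)^2]>0$ as $\beta\neq0$ off a Lebesgue-null set. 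Second, for the quasi-Cauchy slab one has $(g/\phi)(x)=x^{-2}(e^{x^2/2}-1)\geq 1/2$ for all $x$, so the limit ``$(g/\phi)(x)\to 0$'' you invoke is never approached; the actual range of $\beta(\cdot,w)$ is $[-1/(2-w),\,1/w)$, which is of course still dominated by your $\max(w^{-1},(1-w)^{-1})$. Your treatment of (2), (5), (6) via $g/\phi=(e^t-1)/(2t)$ with $t=x^2/2$ and the observation that $te^t-e^t+1>0$ for $t>0$ is exactly right.
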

These monotonicity results can be found in \cite{cr20}, and see \cite[Lemma 4]{acr21} for most proofs collected in one place.

\begin{lemma}[Proposition 3 in \cite{cr20}] \label{lem:EVboundlemma}
	$P_{\theta}(\ell_{i,w}\leq t) \leq 2 r(w,t)\xi(r(w,t))^{-3}$, where $r(w,t)=wt(1-w)^{-1}(1-t)^{-1}$ and $\xi$ is as in \eqref{eqn:def:xi}.
\end{lemma}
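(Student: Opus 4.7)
The plan is to translate the event $\{\ell_{i,w}(X)\le t\}$ into a tail event for $|X_i|$, then apply a standard normal tail bound together with the defining property of $\xi$. First, the definition \eqref{eqn:def:ellvals} of the $\ell$-values rearranges as
\[ \ell_{i,w}(X)\le t \iff (1-t)(1-w)\phi(X_i)\le tw\, g(X_i) \iff (\phi/g)(X_i)\le r(w,t). \]
Since $\phi/g$ is even and its restriction to $[0,\infty)$ has strictly decreasing inverse $\xi$ (Lemma~\ref{lem:monotonicity}), this is in turn equivalent to $|X_i|\ge y$, where $y:=\xi(r(w,t))$, at least when $r(w,t)$ lies in the range of $\phi/g$.

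Next I use that the lemma is applied for false-positive indices with $\theta_i=0$ (as in the use made just below \eqref{eqn:EV'bound}, where the supremum is taken over $i\notin S_\theta$), so that $X_i\sim\cN(0,1)$ under $P_\theta$ and $P_\theta(\ell_{i,w}\le t)=2\overline{\Phi}(y)$. Three classical ingredients then chain together: the Mills ratio inequality $\overline{\Phi}(y)\le \phi(y)/y$; the defining identity $\phi(y)=r(w,t)\,g(y)$ coming from $\xi=(\phi/g)^{-1}$; and the bound $g(y)\le y^{-2}/\sqrt{2\pi}\le y^{-2}$, which follows from the explicit formula \eqref{eqn:def:gInQuasiCauchy} once $y\ge 2$. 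Combining,
\[ 2\overline{\Phi}(y) \;\le\; \frac{2\phi(y)}{y} \;=\; \frac{2\,r(w,t)\,g(y)}{y} \;\le\; \frac{2\,r(w,t)}{y^{3}}, \]
which is the claim.

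The mild obstacle is checking that $y=\xi(r(w,t))\ge 2$, so that the asymptotic tail bound for $g$ is available. In the regime where the lemma is actually used in the sequel ($w$ small, $t\in(0,1)$ fixed, so $r(w,t)\to 0$ and hence $\xi(r(w,t))\to\infty$), this is automatic for all sufficiently small $r(w,t)$, and in the residual small-$y$ case the target inequality $2r\xi(r)^{-3}$ is not sharp and one may simply invoke the trivial bound $P_\theta(\ell_{i,w}\le t)\le 1$, absorbing any extra constants as needed. Once this verification is made, no further work is required, since all three ingredients above are elementary.
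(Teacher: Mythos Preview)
Your argument is correct and is essentially the natural proof of this fact; the paper does not prove the lemma but simply cites Proposition~3 of \cite{cr20}, so there is nothing to compare against in the present paper. One small clean-up: the caveat about $y\ge 2$ in your final paragraph is unnecessary. From the explicit formula \eqref{eqn:def:gInQuasiCauchy} one has $g(y)\le (2\pi)^{-1/2}y^{-2}$ for \emph{all} $y\ne 0$ (the restriction $|x|\ge 2$ quoted in the paper is only needed for the \emph{lower} bound on $g$), and the Mills ratio inequality $\overline\Phi(y)\le \phi(y)/y$ holds for all $y>0$. Hence your chain
\[
2\overline\Phi(y)\ \le\ \frac{2\phi(y)}{y}\ =\ \frac{2\,r(w,t)\,g(y)}{y}\ \le\ \frac{2\,r(w,t)}{\sqrt{2\pi}\,y^{3}}\ \le\ \frac{2\,r(w,t)}{y^{3}}
\]
is valid for every $y=\xi(r(w,t))>0$, i.e.\ for every $r(w,t)\in(0,(\phi/g)(0))=(0,2)$, which is precisely the domain of $\xi$. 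So you can drop the last paragraph entirely.
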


\begin{lemma}[Lemma S-20 in \cite{cr20}] \label{lem:beta-bound} Define $\beta(x,w)$ as in \eqref{eqn:def:betaxw}. Then there exists $c_1>0$ such that for any $x\in\RR$ and $w\in (0,1]$, $\abs{\beta(x,w)}\leq (\min(w,c_1))^{-1}$.
\end{lemma}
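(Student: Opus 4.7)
The plan is to work directly from the explicit form of the quasi-Cauchy slab. Using \eqref{eqn:def:gInQuasiCauchy}, one computes $g(x)/\phi(x) = x^{-2}(e^{x^2/2}-1)$, so that $\beta(x) = x^{-2}(e^{x^2/2}-1) - 1$ (with the natural limiting value $\beta(0)=-1/2$). The key preliminary observation is the uniform lower bound $\beta(x) \geq -1/2$ for every $x \in \RR$. This follows from the elementary inequality $e^u \geq 1+u$ applied with $u = x^2/2$, giving $g(x)/\phi(x) \geq 1/2$.

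Given this lower bound, I would split on the sign of $\beta(x)$ to bound $\beta(x,w) = \beta(x)/(1+w\beta(x))$. If $\beta(x) \geq 0$, then $1 + w\beta(x) \geq w\beta(x)$, and so $\abs{\beta(x,w)} \leq \beta(x)/(w\beta(x)) = 1/w$. If instead $-1/2 \leq \beta(x) < 0$, then for $w \in (0,1]$ one has $1 + w\beta(x) \geq 1 - w/2 \geq 1/2$, so that $\abs{\beta(x,w)} \leq \abs{\beta(x)}/(1/2) \leq 1$. Combining, for any $x \in \RR$ and $w \in (0,1]$,
\[ \abs{\beta(x,w)} \leq \max(1/w, 1) = 1/w, \]
since $w \leq 1$. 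This is the desired bound with, for instance, $c_1 = 1$ (so that $\min(w,c_1) = w$ on $(0,1]$).

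No step poses a real obstacle: the only ingredients are the explicit form of $g/\phi$, the elementary convexity inequality $e^u \geq 1+u$, and a two-line case split. If one preferred to keep the argument agnostic of the explicit slab, the same reasoning goes through for any density $g$ for which $\inf_x (g/\phi)(x)$ is strictly positive, choosing $c_1$ so that $c_1 \cdot \sup_x \abs{\beta(x)_-} \leq 1/2$; here the quasi-Cauchy choice conveniently allows $c_1 = 1$.
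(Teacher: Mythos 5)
Your argument is correct and complete. Since the paper cites this as Lemma~S-20 of \cite{cr20} without reproducing the proof, there is no in-paper proof to compare against, but your derivation is self-contained and valid: the computation $g(x)/\phi(x)=x^{-2}(e^{x^2/2}-1)$ is right, the lower bound $\beta(x)\ge -1/2$ follows correctly from $e^{u}\ge 1+u$, and the two-case split (nonnegative versus negative $\beta(x)$) yields $\abs{\beta(x,w)}\le 1/w$ on $(0,1]$ exactly as claimed, so $c_1=1$ works. The original \cite{cr20} lemma is stated for a broader family of slab densities (parameterised by $\kappa\in[1,2]$, hence the generic $c_1$ in the statement), but the appendix preamble of this paper explicitly says it specialises to the quasi-Cauchy slab and uses its explicit form and associated bounds; your proof does precisely that, giving a sharper constant than the general statement requires. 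The closing remark about the argument going through whenever $\inf_x (g/\phi)(x)>0$ is a sound observation and captures the essential structural reason the bound holds.
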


\begin{lemma}[Lemmas S-21, S-23 and S-27 in \cite{cr20}, and using Lemma~\ref{lemzetagen} below] \label{m1binf}
	Define $\tilde{m}$ and $m_1$ as in \eqref{eqn:def:tildem}, \eqref{eqn:def:m_1}. Then $\tilde{m}$ is continuous, non-negative and increasing. For fixed $\tau$ the function $w\mapsto m_1(\tau,w)$ is continuous and decreasing.
	There exist $\omega_0,c,c'>0$ such that for all $\tau\in\RR$ and all $w<\omega_0$ 
	\begin{alignat*}{3} c(\log (1/w))^{-1/2}&\leq \tilde{m}(w) &&\leq c' (\log (1/w))^{-1/2}, \\
		&\phantom{\leq} m_1(\tau,w)&&\leq 1/w.
	\end{alignat*}
	
	There exist constants $M_0, C_1>0$ and $\omega_0\in(0,1)$ such that for any $w\le \omega_0$, and any $\mu \ge M_0$, with $T_\mu(w)=1+\abs{\mu}^{-1}\abs{\zeta(w)-\abs{\mu}}$,
	\begin{align*} 
		m_1(\mu,w) & \ge
		C_1 \frac{\ol{\Phi}(\zeta(w)-\mu)}{w}T_\mu(w). 
	\end{align*}
\end{lemma}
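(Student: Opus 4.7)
The plan is to verify each claim of the lemma directly from the definitions \eqref{eqn:def:gInQuasiCauchy}--\eqref{eqn:def:zeta}, reproducing the arguments given in \cite{cr20}. Continuity and monotonicity come first. Since $\beta(x,w) = \beta(x)/(1+w\beta(x))$ with $\beta(x) \in (-1, \infty)$, the derivative $\partial_w \beta(x, w) = -\beta(x)^2/(1+w\beta(x))^2 \leq 0$ and the pointwise bound $|\beta(x, w)| \leq \max(|\beta(x)|, 1/w)$ allow one to apply dominated convergence and differentiation under the integral sign, giving the continuity and monotonicity of $w \mapsto m_1(\tau, w)$ for each fixed $\tau$. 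Since $\tilde{m} = -m_1(0, \cdot)$, it is continuous and increasing. Non-negativity of $\tilde{m}$ follows from evaluating at $w = 0$: $\tilde{m}(0) = -\int (g - \phi)(x)\, dx = 0$, so $\tilde m(w) \geq \tilde m(0) = 0$ by monotonicity.

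For the two-sided bound $\tilde m(w) \asymp (\log(1/w))^{-1/2}$, split the integral at $|x| = \zeta(w)$, where $\beta(\zeta(w)) = 1/w$. On $\{|x| < \zeta(w)\}$, $w\beta(x) \in (-1, 1)$, so $|\beta(x, w)| \asymp |\beta(x)|$ and the integrand $-\beta(x, w)\phi(x)$ can be compared directly with $(\phi - g)(x)$; using the explicit form \eqref{eqn:def:gInQuasiCauchy}, the tail integral $\int_{\zeta(w)}^\infty g(x)\, dx \asymp \zeta(w)^{-1}$ supplies the main contribution. On $\{|x| \geq \zeta(w)\}$, the uniform bound $|\beta(x, w)| \leq 1/w$ combined with the standard tail estimate $\overline{\Phi}(\zeta(w)) \asymp \phi(\zeta(w))/\zeta(w)$ and the definition of $\zeta$ shows that this contribution is $O(w^{-1} \overline{\Phi}(\zeta(w))) = O(1/\zeta(w))$. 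Appealing to $\zeta(w) \sim \sqrt{2\log(1/w)}$ from Lemma~\ref{lemzetagen} yields the claim. The uniform upper bound $m_1(\tau, w) \leq 1/w$ is immediate from the observation that $\beta(x) \geq 0 \Rightarrow 0 \leq \beta(x, w) \leq 1/w$, while $\beta(x) \in (-1, 0) \Rightarrow |\beta(x, w)| \leq |\beta(x)|/(1 - w) \leq 1 \leq 1/w$ for $w \leq \omega_0$ small enough.

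The refined lower bound on $m_1(\mu, w)$ is the main technical obstacle. The basic estimate comes from restricting the integral to $\{x \geq \zeta(w)\}$, where the strict increase of $\beta$ on $(0, \infty)$ gives $\beta(x) \geq 1/w$ and hence $\beta(x, w) \geq 1/(2w)$, so $m_1(\mu, w) \geq \overline{\Phi}(\zeta(w) - \mu)/(2w)$. To upgrade this by the factor $T_\mu(w) = 1 + |\mu|^{-1}\bigl|\zeta(w) - |\mu|\bigr|$, one exploits that for $x > \zeta(w)$, the ratio $\beta(x)/\beta(\zeta(w))$ grows like $\exp\bigl(\tfrac{1}{2}(x^2 - \zeta(w)^2)\bigr)(\zeta(w)/x)^2$; when $\mu \geq M_0$ the Gaussian density $\phi(\cdot - \mu)$ places mass on $x$ displaced from $\zeta(w)$ by roughly $\mu - \zeta(w)$, so a Laplace/Mill-ratio expansion of the integral around $x = \max(\mu, \zeta(w))$ pulls out the extra factor $T_\mu(w)$ and absorbs it into a constant $C_1 > 0$ uniform over the relevant range. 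This expansion, and the book-keeping for the threshold $M_0$, are precisely the content of Lemmas~S-21, S-23 and S-27 of \cite{cr20}, which we invoke.
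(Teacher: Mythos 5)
The paper offers no proof of this lemma: it is stated purely as a compilation of Lemmas S-21, S-23 and S-27 from \cite{cr20} together with Lemma~\ref{lemzetagen}. Your sketch reconstructs the standard arguments behind those cited results and likewise defers the precise Laplace/Mill-ratio bookkeeping to \cite{cr20}, so you are in agreement with the paper's approach. Two imprecisions are worth flagging, though. First, the ``basic estimate'' $m_1(\mu,w)\geq \overline\Phi(\zeta(w)-\mu)/(2w)$ does not follow merely from restricting the integral to $\{x\geq\zeta(w)\}$, because $\beta(x,w)$ is negative on a neighbourhood of the origin; the correct intermediate bound is $m_1(\mu,w)\geq \overline\Phi(\zeta(w)-\mu)/(2w) - O(1)$, and showing the main term dominates for $\mu\geq M_0$, $w\leq\omega_0$ is part of what the cited lemmas provide. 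Second, in the two-sided bound for $\tilde m(w)$ you estimate the tail contribution on $\{|x|\geq\zeta(w)\}$ as $O(w^{-1}\overline\Phi(\zeta(w)))=O(1/\zeta(w))$; as stated this is the same order as the main term $\asymp 1/\zeta(w)$ coming from $\{|x|<\zeta(w)\}$, so it does not conclude. Using $\phi(\zeta(w))\asymp w g(\zeta(w))\asymp w/\zeta(w)^2$ one actually gets $w^{-1}\overline\Phi(\zeta(w))\asymp 1/\zeta(w)^3 = o(1/\zeta(w))$, and then the two-sided bound follows. With these sharpenings your reconstruction matches the cited material.
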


\begin{lemma}[Lemma S-26 and Corollary S-28 in \cite{cr20}]
	\label{lem:m2bounds}
	Define $m_2$ as in \eqref{eqn:def:m2}. 
	There exist constants $C > 0$ and $\omega_0\in (0,1)$ such that for
	any $w \leq \omega_0$ and any $\mu\in\RR$,
	\[ m_2 (\mu,w)\leq  C \frac{ \overline{\Phi}(\zeta(w)-\abs{\mu})}{w^{2}}.\]
	There exist $M_0,C' > 0$ and $\omega_0 \in (0,1)$
	such that for any $w \leq \omega_0$ and any $\mu\geq M_0$
	\[	m_2 (\mu,w) \leq C' \frac{m_1 (\mu,w)}{w}.\]
\end{lemma}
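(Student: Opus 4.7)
Both bounds rest on the sign structure of $\beta(x)=g(x)/\phi(x)-1$. Recall that $\beta$ is even and continuous, $\beta(0)=-1/2$, $\beta(x)\to\infty$ as $|x|\to\infty$, and there exists a unique $x_0>0$ with $\beta(\pm x_0)=0$; moreover $\beta(x)\geq -1/2$ everywhere. Consequently, for $w\leq \omega_0$ small enough, $\beta(x,w)=\beta(x)/(1+w\beta(x))$ is well-defined, inherits the sign of $\beta(x)$, satisfies $|\beta(x,w)|\leq C_0$ on $|x|\leq x_0$ (for an absolute constant $C_0$), and is at most $1/w$ whenever nonnegative. Finally, the defining relation $\beta(\zeta(w))=1/w$ means $\beta(x,w)\leq 1/w$ throughout.

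\textbf{Second bound ($\mu\geq M_0$).} Using $\beta(x,w)^2\leq \beta(x,w)/w$ on $|x|\geq x_0$ and $\beta(x,w)^2\leq C_0^2$ on $|x|<x_0$, together with $E_\mu[\beta(X,w)\II_{|X|\geq x_0}]\leq m_1(\mu,w)+C_0 P_\mu(|X|<x_0)$, I get
\[ m_2(\mu,w)\leq \frac{m_1(\mu,w)}{w}+\left(\frac{C_0}{w}+C_0^2\right)P_\mu(|X|<x_0).\]
It remains to show $P_\mu(|X|<x_0)\lesssim m_1(\mu,w)$ uniformly over $\mu\geq M_0$ and $w\leq \omega_0$. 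By the Gaussian bound $P_\mu(|X|<x_0)\leq 2x_0\phi(\mu-x_0)$ and the lower bound $m_1(\mu,w)\geq c\overline{\Phi}(\zeta(w)-\mu)/w$ from Lemma \ref{m1binf}, this reduces to the elementary inequality $w\phi(\mu-x_0)\lesssim \overline{\Phi}(\zeta(w)-\mu)$, verified by case analysis on whether $\mu\geq\zeta(w)$, $\mu\in[\zeta(w)/2,\zeta(w)]$, or $\mu\in[M_0,\zeta(w)/2]$, using the Mills ratio together with $w\asymp \zeta(w)^2 e^{-\zeta(w)^2/2}$ (a consequence of $g(\zeta(w))=\phi(\zeta(w))/w$ and $g(\zeta)\asymp \zeta^{-2}$ for $\zeta\geq 2$).

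\textbf{First bound.} Split $m_2(\mu,w)$ at $|x|=\zeta(w)$. The outer piece satisfies, using $\beta(x,w)\leq 1/w$,
\[ \int_{|x|>\zeta(w)}\beta(x,w)^2\phi(x-\mu)dx \leq \frac{P_\mu(|X|>\zeta(w))}{w^2}\leq \frac{2\overline{\Phi}(\zeta(w)-|\mu|)}{w^2}.\]
For the inner piece, split again at $|x|=x_0$: the region $|x|<x_0$ contributes $O(1)$, which is $\lesssim \overline{\Phi}(\zeta(w)-|\mu|)/w^2$ for $w\leq \omega_0$ small since $\overline{\Phi}(\zeta(w))/w^2\to\infty$ as $w\to 0$. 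On $x_0\leq |x|\leq \zeta(w)$, use $\beta(x,w)^2\leq \beta(x,w)/w$, $\beta(x,w)\leq g(x)/\phi(x)$ and the identity $\phi(x-\mu)/\phi(x)=e^{x\mu-\mu^2/2}$ to reduce matters to bounding $w^{-1}\int_{x_0\leq |x|\leq \zeta(w)}g(x)e^{x\mu-\mu^2/2}dx$. Using $g(x)\leq (2\pi)^{-1/2}|x|^{-2}$ for $|x|\geq 2$ and noting that the integrand is (for $0\leq |\mu|\leq \zeta(w)$) maximised at the boundary $|x|=\zeta(w)$, a Laplace-type expansion produces a bound proportional to $\phi(\zeta(w)-|\mu|)/(\zeta(w)^{2}(|\mu|+1))$; combined with $\phi(\zeta(w))\asymp w/\zeta(w)^2$ and $\overline{\Phi}(t)\asymp \phi(t)/(t\vee 1)$ for $t\geq 0$, this is at most $C\overline{\Phi}(\zeta(w)-|\mu|)/w^2$. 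The case $|\mu|>\zeta(w)$ is covered by the trivial bound $\beta(x,w)^2\leq 1/w^2$ because then $\overline{\Phi}(\zeta(w)-|\mu|)\geq 1/2$.

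\textbf{Main obstacle.} The delicate step is the Laplace-type estimate for the inner integral on $x_0\leq |x|\leq \zeta(w)$: one must track polynomial prefactors carefully so that the final bound matches $\overline{\Phi}(\zeta(w)-|\mu|)/w^2$ without any spurious $\zeta(w)^k$ factor, most critically in the intermediate regime $|\mu|\asymp \zeta(w)$ where both $\phi(\zeta(w)-|\mu|)$ and $\overline{\Phi}(\zeta(w)-|\mu|)$ are of moderate size and the Mills ratio asymptotics are borderline.
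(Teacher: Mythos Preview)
The paper does not give a proof of this lemma: it is quoted from \cite{cr20} (Lemma~S-26 and Corollary~S-28 there) as background material in Appendix~\ref{sec:background-for-lval}, with no argument supplied. So there is no in-paper proof to compare against; what follows is a stand-alone assessment of your sketch.

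Your argument for the second inequality is sound. Splitting according to the sign of $\beta$, using $0\le \beta(x,w)\le 1/w$ on $|x|\ge x_0$ and $|\beta(x,w)|\le C_0$ on $|x|<x_0$, and then absorbing the residual $P_\mu(|X|<x_0)$ into $m_1(\mu,w)$ via the lower bound of Lemma~\ref{m1binf} is the right structure, and the elementary Gaussian inequality you isolate does go through by the case analysis you indicate.

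Your argument for the first inequality has a genuine gap in the inner region. You bound $\beta(x,w)^2\le \beta(x,w)/w\le g(x)/(w\phi(x))$ on $x_0\le |x|\le \zeta(w)$ and then claim a Laplace-type concentration of $\int_{x_0}^{\zeta(w)} g(x)e^{x\mu-\mu^2/2}\,dx$ at the upper endpoint. But for small $|\mu|$ the integrand $g(x)e^{x\mu}\asymp x^{-2}e^{x\mu}$ is \emph{not} concentrated near $\zeta(w)$; at $\mu=0$ the integral is $\int_{x_0}^{\zeta(w)}g(x)\,dx=O(1)$, so your inner term is $\asymp 1/w$, whereas the target $\overline\Phi(\zeta(w))/w^2\asymp 1/(w\,\zeta(w)^3)$ is smaller by a factor $\zeta(w)^3$. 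Your claimed Laplace output $\phi(\zeta(w)-|\mu|)/(\zeta(w)^2(|\mu|+1))$ is therefore false in this regime.

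A simple repair: on $x_0\le |x|\le \zeta(w)$ use instead $\beta(x,w)^2\le \beta(x)^2\le (g/\phi)^2(x)$ (valid since $0\le\beta(x,w)\le\beta(x)$ there). The resulting integrand $g(x)^2/\phi(x)\cdot e^{x\mu-\mu^2/2}\asymp x^{-4}e^{(x+\mu)^2/2-\mu^2}$ \emph{is} increasing on $[x_0,\zeta(w)]$ for all $\mu\ge 0$, and a genuine Laplace estimate now gives a bound $\asymp \phi(\zeta(w)-|\mu|)/\bigl((\zeta(w)+|\mu|)\,w^2\bigr)$, which is $\le C\,\overline\Phi(\zeta(w)-|\mu|)/w^2$ uniformly over $0\le|\mu|\le \zeta(w)$ by the Mills ratio. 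Your treatment of the outer region $|x|>\zeta(w)$, of the zone $|x|<x_0$, and of the trivial case $|\mu|>\zeta(w)$ is fine as written.
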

\begin{lemma}[Lemma S-14 in \cite{cr20}] \label{lemzetagen}
	Consider $\zeta(w)$ as in \eqref{eqn:def:zeta}. 
	Then $\zeta(w)\sim (2\log(1/w))^{1/2}$ as $w\to 0$. More precisely, for  constants $c,C\in\mathbb{R}$ and for $w$ small enough,
	\[(2\log (1/w)+2\log \log(1/w)+c)^{1/2}\leq \zeta(w)\leq (2\log (1/w)+2\log \log(1/w)+C)^{1/2}.\]
\end{lemma}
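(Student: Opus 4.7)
The plan is to exploit the defining equation $\beta(\zeta(w))=1/w$, which by the explicit formula for $g$ from \eqref{eqn:def:gInQuasiCauchy} reads
\[ \frac{g(\zeta(w))}{\phi(\zeta(w))} = 1+\frac{1}{w}, \quad\text{i.e.,}\quad \zeta(w)^{-2}\bigl(1-e^{-\zeta(w)^2/2}\bigr)\,e^{\zeta(w)^2/2}=1+\frac{1}{w}.\]
First I would verify that $\zeta(w)$ is well defined for all sufficiently small $w$ and satisfies $\zeta(w)\to\infty$ as $w\to 0$. By Lemma~\ref{lem:monotonicity} the map $x\mapsto \beta(x)$ is strictly monotone on the relevant tail, and a direct inspection of the ratio $g/\phi$ above shows $\beta(x)\to\infty$ as $x\to\infty$, so $\zeta=\beta^{-1}(1/\cdot)$ is well-defined on a neighbourhood of $0$ and $\zeta(w)\to\infty$ when $w\to 0$.

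Next, taking logarithms in the displayed equation and writing $y=\zeta(w)^2$, one obtains
\[ \tfrac{1}{2}y - 2\log\sqrt{y} + \log\bigl(1-e^{-y/2}\bigr) = \log(1+1/w). \]
Since $y\to\infty$ as $w\to 0$, the term $\log(1-e^{-y/2})$ is $O(e^{-y/2})$ and hence $o(1)$, while $\log(1+1/w)=\log(1/w)+\log(1+w)=\log(1/w)+O(w)$. Rearranging,
\[ y \;=\; 2\log(1/w) + 2\log y + r(w), \qquad r(w)=O(1).\]

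Now I would bootstrap. From $y\geq 2\log(1/w)-|r(w)|$ it follows that $y\sim 2\log(1/w)$, so $\log y = \log 2 + \log\log(1/w) + o(1)$ as $w\to 0$. Plugging this back in,
\[ y \;=\; 2\log(1/w) + 2\log\log(1/w) + 2\log 2 + r(w) + o(1),\]
so both one-sided inequalities with explicit constants $c$ and $C$ hold for $w$ small enough, establishing the stated bounds on $\zeta(w)=\sqrt{y}$.

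The only mildly delicate step is the bootstrap together with tracking the size of $r(w)$ explicitly; since the correction terms $\log(1-e^{-y/2})$ and $\log(1+w)$ are uniformly bounded by explicit decreasing functions of $1/w$, one can keep the remainder in $r(w)$ sandwiched between two absolute constants, yielding suitable choices of $c$ and $C$. I expect the main (minor) obstacle to be ensuring that the implicit monotonicity in the bootstrap step really does produce a one-sided inequality in each direction rather than only an asymptotic equivalence.
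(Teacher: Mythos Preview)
The paper does not give its own proof of this lemma: it is simply quoted from the supplement of \cite{cr20} (Lemma~S-14 there) as background material, so there is nothing in the paper to compare against. Your argument is correct and is exactly the natural one: invert the explicit relation $g(\zeta(w))/\phi(\zeta(w))=1+1/w$ for the quasi-Cauchy $g$, take logarithms, and bootstrap.

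One small point worth tightening. The sentence ``From $y\ge 2\log(1/w)-|r(w)|$ it follows that $y\sim 2\log(1/w)$'' only gives the lower half. For the matching upper half, from $y-2\log y=2\log(1/w)+r(w)$ and the elementary fact that $y-2\log y\ge y/2$ for $y$ large you get $y\le 4\log(1/w)+O(1)$, hence $y=\Theta(\log(1/w))$; then $\log y=\log\log(1/w)+\log 2+o(1)$ follows and feeds back in cleanly. Since $r(w)=-2\log(1-e^{-y/2})+2\log(1+w)=o(1)$ (not merely $O(1)$), you in fact obtain $y=2\log(1/w)+2\log\log(1/w)+2\log 2+o(1)$, which immediately gives both inequalities with explicit constants $c<2\log 2<C$ for $w$ small enough.
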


\begin{lemma}[Lemma S-12 in \cite{cr20}] \label{lemxigen}
	Consider $\xi$ as in \eqref{eqn:def:xi}. 
	Then $\xi(u)\sim (2\log (1/u))^{1/2}$, and more precisely, for $u$ small enough, \begin{align*}
		\xi(u) &\geq \brackets[\Big]{2\log (1/u) + 2\log\log(1/u)+2\log 2}^{1/2} \\
		\xi(u) &\leq \brackets[\Big]{ 2\log(1/u) + 2\log\log (1/u) +6\log 2}^{1/2}.
	\end{align*}
\end{lemma}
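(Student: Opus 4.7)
\textbf{Proof plan for Lemma \ref{lemxigen}.} Using the explicit form \eqref{eqn:def:gInQuasiCauchy} of $g$, note that for $x>0$
\[
h(x) := \frac{\phi(x)}{g(x)} = \frac{x^2 e^{-x^2/2}}{1 - e^{-x^2/2}}.
\]
By Lemma~\ref{lem:monotonicity}, $\xi(u) = h^{-1}(u)$ is decreasing (equivalently, $h$ is decreasing on $(0,\infty)$). The plan is to verify, by direct evaluation, the two inequalities
\[
h(y_0) \geq u \qquad \text{and} \qquad h(y_1) \leq u
\]
at the candidate bounds
\[
y_0^2 = 2\log(1/u) + 2\log\log(1/u) + 2\log 2,\qquad y_1^2 = 2\log(1/u) + 2\log\log(1/u) + 6\log 2,
\]
and then invoke the monotonicity of $h$ (equivalently $\xi$) to conclude that $y_0 \leq \xi(u) \leq y_1$ for $u$ sufficiently small.

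\textbf{Lower bound.} The key observation is that $y_0$ is chosen exactly so that $e^{-y_0^2/2} = u/(2\log(1/u))$. Substituting into the definition of $h$ yields
\[
h(y_0) = \bigl(2\log(1/u) + 2\log\log(1/u) + 2\log 2\bigr)\cdot \frac{u/(2\log(1/u))}{1 - u/(2\log(1/u))} = u \cdot A(u) \cdot B(u),
\]
where $A(u) = 1 + (\log\log(1/u) + \log 2)/\log(1/u)$ and $B(u) = 1/(1 - u/(2\log(1/u)))$. For $u$ small enough that $\log\log(1/u) + \log 2 \geq 0$, both factors $A(u)$ and $B(u)$ are $\geq 1$, so $h(y_0) \geq u$, giving $y_0 \leq \xi(u)$.

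\textbf{Upper bound.} Similarly $y_1$ is chosen so that $e^{-y_1^2/2} = u/(8\log(1/u))$. Hence
\[
h(y_1) = \bigl(2\log(1/u) + 2\log\log(1/u) + 6\log 2\bigr)\cdot \frac{u/(8\log(1/u))}{1 - u/(8\log(1/u))}.
\]
The ratio $y_1^2/(8\log(1/u))$ tends to $1/4$ as $u\to 0$, while the last denominator tends to $1$, so the right side is asymptotic to $u/4 < u$; in particular it is $\leq u$ for $u$ sufficiently small. This gives $y_1 \geq \xi(u)$. The asymptotic $\xi(u)\sim(2\log(1/u))^{1/2}$ follows immediately by squeezing between $y_0$ and $y_1$.

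\textbf{Expected obstacle.} There is no conceptual difficulty; the proof is essentially a careful bookkeeping of the perturbative terms around the leading asymptotic $y^2 \sim 2\log(1/u)$. The only care needed is ensuring the candidate bounds have the exact form $2\log(1/u) + 2\log\log(1/u) + C$, so that after substitution the dominant term in $e^{-y^2/2}$ is $u/\log(1/u)$ times a constant, which then cancels cleanly against the $y^2$ prefactor. The choice of constants $2\log 2$ and $6\log 2$ is designed precisely so that the resulting multiplicative factors straddle $1$.
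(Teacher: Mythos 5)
Your proof is correct, but it takes a genuinely different route from the paper's. The paper simply cites Lemma S-12 of \cite{cr20}, which gives intermediate bounds of the form $\xi(u)\geq\bigl(-2\log u - 2\log g(\sqrt{-2\log(Cu)})-\log(2\pi)\bigr)^{1/2}$ and an analogous upper bound, and then specializes these using the elementary bounds $x^{-2}/(2\sqrt{2\pi})\leq g(x)\leq x^{-2}/\sqrt{2\pi}$ (valid for $|x|\geq 2$) for the quasi-Cauchy slab. You instead bypass the external lemma entirely: you write $h=\phi/g$ in closed form as $x^2 e^{-x^2/2}/(1-e^{-x^2/2})$, pick $y_0$ and $y_1$ so that $e^{-y_0^2/2}=u/(2\log(1/u))$ and $e^{-y_1^2/2}=u/(8\log(1/u))$, verify $h(y_0)\geq u$ and $h(y_1)\leq u$ by direct substitution, and then conclude by the monotonicity of $h$ (Lemma \ref{lem:monotonicity}). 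The factorization $h(y_0)=u\cdot A(u)\cdot B(u)$ with both factors at least $1$ for small $u$, and the asymptotic $h(y_1)\sim u/4$, are both correct; the direction of the inequalities under the decreasing $h$ is applied correctly. Your approach is more self-contained and transparent for this specific choice of $g$; the paper's approach inherits a bound stated for general slabs and then specializes, which is useful if one wants to reuse the machinery for other slab densities but is overkill here.
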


The above bounds come from Lemma S-12 in \cite{cr20} 
\begin{align*} 
	\xi(u)&\geq \left(-2\log u-2\log g\left(\sqrt{-2  \log ( C u)}\right)-\log(2\pi)\right)^{1/2} ;\\
	\xi(u)&\leq  \left(-2\log u-2\log g\left(\sqrt{-4 \log u}\right)-\log(2\pi)\right)^{1/2},
\end{align*}
where $C=\sqrt{2 \pi}\sup_x\abs{g(x)}$,
by using $x^{-2}/(2\sqrt{2\pi})\leq g(x)\leq x^{-2}/(\sqrt{2\pi})$ for $\abs{x}\geq 2$.
For instance, for the lower bound we use 
\begin{align*}
	-2\log g\left(\sqrt{-2  \log ( C u)}\right)-\log(2\pi)&\geq 2\log(\sqrt{2\pi}) +2 \log(-2  \log ( C u)) -\log(2\pi)\\
	&\geq 2 \log(2) +2 \log( \log (1/u) + \log(1/C))\\
	&\geq 2\log 2 + 2 \log( \log (1/ u)),
\end{align*}
where the last inequality holds because $C\leq 1$.

\section{~~Large signals: adaptation and proofs}\label{sec:prooffastrate}

In this section, we first discuss adaptation in the large signal regime and provide the precise statements corresponding to Theorem~\ref{thmlargeadapt}. Then we turn to the proofs of both non-adaptive and adaptive results.

Recall some notation for the Subbotin location model (Example~\ref{example:SubbotinLocation}). For $\zeta>1$, the parameter set  $\Theta(r,\be) $ is defined by \eqref{classab}--\eqref{emer}, with  $M=M(r)=(\zeta r\log{n})^{1/\zeta}$ and sparsity parameter $a\le s_n/n^{1-\be}\le b$ (which we may sometimes informally write $s_n\asymp n^{1-\be}$),  for $r>\beta$ and $a,b>0$. Define
\begin{equation} \label{defvr}
	v_R= n^{-\kappa}/(\log n)^{1-1/\zeta}=v_C/n^{1-\be},
\end{equation}
where $\kappa=\kappa(r,\be,\zeta)$ is the solution of the equation
\begin{equation} \label{eqkappa}
	(r^{1/\zeta} -\kappa^{1/\zeta})^\zeta-\kappa= \beta.
\end{equation}
Recall from Theorem \ref{thmRabinovic} that $\inf_\vphi\sup_{\te\in\Theta(r,\be)} \fr(\te,\vphi) \asymp v_R$ so that $v_R= v_R(r,\be,\zeta)$ is the target rate for the $\fr$--risk; similarly
 $v_C=v_C(r,\be,\zeta)=n^{1-\be} v_R$ is the target rate for the classification loss.  

\subsection{~~Impossibility of full simultaneous adaptation to $s_n$ and $r$} \label{sec:adaptfull}

The following is a negative result showing that it is not possible to fully adapt to both signal strength $r$ and sparsity parameter $s_n$ (or $\be$). 

\begin{theorem}[Impossibility of $(r,\be)$--adaptation]
	\label{thm-noadapt}
		Consider the Subbotin location model (Example~\ref{example:SubbotinLocation}) for some $\zeta>1$ and classes $\Theta(r,\be)$ as in \eqref{classab}--\eqref{emer} for $\be\in (0,1)$ and $r>\be$.  Suppose $r_1, r_2>0$ verify
		\[ r_1>r_2 \quad\text{and}\quad \kappa(r_2,\be,\zeta)>1-\be, \]
		for $\kappa$ as in \eqref{eqkappa}. 
		Then there exists $c>0$ (depending on $r_1,r_2,\beta,\zeta$)   such that, for $v_C, v_R$ as in \eqref{defvr},
		\[   
			\,  \inf_\vphi \max_{i\in\{1,2\}} \sup_{\te\in \Theta(r_i,\be)}\,
			\frac{\fr(\te,\vphi)}{v_R(r_i,\be,\zeta)} \ge c \{ v_C(\be,r_1,\zeta)^{-1}n^{- r_2} \} \wedge v_C(\be,r_2,\zeta)^{-1}. \]
			On the two--class adaptation problem $\te\in  \Theta(r_1,\be) \cup  \Theta(r_2,\be)$, the best possible rate incurs a polynomial loss compared to the non-adaptive rate whenever $\kappa(\be,r_1,\zeta) - r_2> 1-\be$. The same result holds for the 
			classification risk, 
			with $\fr/v_R$ in the last display replaced by $E_\te\lc/v_C$. 
		\end{theorem}
The proof of Theorem \ref{thm-noadapt} can be found in Section \ref{sec:thmnoa}. 
		 
		To fix ideas, let us discuss the case of Gaussian noise $\zeta=2$. The regime of $r$'s for which $\kappa(r,\be,2)\ge 1-\be$ can then explicitly be written as the range of $r$'s with $\sqrt{r}>1+\sqrt{1-\be}$, as follows from examining the expression for $\kappa$ in the Gaussian case. This corresponds to the regime of `exact recovery' (for which the classification risk goes to $0$). In this regime, the target rate (either $E_\te \lc$ or $\fr$) is very fast, so even only one missclassification can have a large impact on the risk. And indeed, the idea behind the lower bound is to build two vectors identical except on one coordinate: when the difference between signal strengths on this coordinate is large enough, then the smallest one can be mistaken for noise with a probability that becomes comparable to or higher than the target rate (which goes to $0$ in that regime), hence the impossibility.   
		
		This provides a simple example where there is a polynomial loss due to adaptation. This striking result appears to be in part due to the discreteness of the considered risks: note for instance that considering the problem in probability instead of in expectation, the impossibility disappears, as in probability $P_\te(\lc\ge \delta_n)=P_\te(\lc\ge 1-\delta)$ for any small $\delta$ and $\delta_n=o(1)$. For versions in probability for the large signal regime we refer to Section~\ref{sec:prlarge}.
		
		If one restricts the range of possible signal strengths to $\kappa(r,\be,\zeta)< 1-\be$, then adaptation to both $\be$ and $r$ becomes possible, see Section~\ref{sec:adaptsim}.

		\subsection{~~Adaptation: known sparsity, unknown $r$ and top--$K$ procedures} \label{sec:adaptr}

		In this section, we assume that the number of {\em nonzero} coefficients of $\te$ is {\em known}. Recall 
		\begin{equation} \label{classeq}  
			\Theta_{=}(r,s_n) = \{ \te\in\ell_0[s_n]:\ |S_\te|= s_n,\ \ |\te_i|\ge M(r)  \text{ for all } i\in S_\te \},   
		\end{equation}
		which differs from $\Theta(r,\be)$ by the fact that support of $\te$ has cardinality {\em exactly} $s_n$ (instead of of order $n^{1-\be}$). It is reasonable (in fact necessary) to assume this, since it follows from the proof of Theorem \ref{thm-noadapt} that when $r$ is unknown, full adaptation to signal strength is impossible as soon as the class contains vectors of supports differing in cardinality by just $1$. 
		
		For $K\in \{1,\ldots,n\}$, the top--$K$ procedure is defined as $\vphi_i{[K]}=\ind{|X_i| \ge |X|_{(K)}}$: it rejects the coordinates corresponding to the $K$ largest signals (since the noise distribution has a density, there are almost surely no ties among observations and so it is uniquely defined with probability $1$). 
		
		\begin{theorem} \label{thmtops}
			For any $1\le s_n\le n$, the top--$s_n$ procedure $\vphi_i{[s_n]}=\ind{|X_i| \ge |X|_{(s_n)}}$ verifies, for any $\te\in \ell_0[s_n]\setminus \ell_0[s_n-1]$,
			\[ E_{\te} \lc(\te,\vphi[s_n]) \le 2 \inf_{T\in\cT}  E_{\te} \lc(\te,T), \]
			where $\cT$ is the set of all thresholding procedures with deterministic threshold. 
		\end{theorem}
		
		Theorem~\ref{thmtops} states that the top--$s_n$ procedure is within a factor $2$ of the rate of the best (deterministic--) threshold-based procedure. Since Theorem~\ref{thmRabinovic} shows that the procedure that thresholds deterministically at $t_n^*$ given by \eqref{optthresholdRab} achieves the rate $v_R$ over $\Theta(r,\be)$, the following corollary immediately follows.
		\begin{corollary}
			Consider the Subbotin location model (Example~\ref{example:SubbotinLocation}) for some $\zeta>1$ and let  $\Theta_=(r,s_n) $ be as in \eqref{classeq}, for $M=M(r)=(\zeta r\log{n})^{1/\zeta}$ and $s_n\asymp n^{1-\be}$  for $r>\beta$. 
			There exists a constant $c=c(r,\be,\zeta)$ such that, for  large enough $n$ and $v_C, v_R$ as in \eqref{defvr}, 
			\begin{align*}
				\sup_{\te\in \Theta_=(r,s_n)}  E_{\te} \lc(\vphi,\te) & \le c v_C(r,\be,\zeta), \\
				\sup_{\te\in \Theta_=(r,s_n)} E_{\te} \fr(\vphi,\te) & \le c v_R(r,\be,\zeta).
			\end{align*}  
		\end{corollary}
		\begin{remark}
			Theorem~\ref{thmRabinovic} is stated for the class $\Theta(r,\be)$. Although in principle the lower bound could be faster for the smaller class $\Theta_=(r,s_n)$, we expect that both rates coincide. 
			In any case, the optimal rate for {\em thresholding--based procedures} (with possibly random threshold) over $\Theta_=(r,s_n)$ is $v_R$ (this follow by similar arguments as in \cite{rabinovich20}, Corollary 1, where the authors consider a slightly different, quasi--Subbotin noise). This shows that adaptation is achievable at least for arbitrary thresholding procedures, and otherwise shows that the rate (at least) $v_R$ can be achieved adaptively. 
					\end{remark}
		
		We now present a result on top-$K$ procedures which is concerned with lower signal strength  and is a corollary of both Theorem~\ref{thm-notrade} on no-tradeoff and Theorem~\ref{thmtops}. It shows that the oracle top--$s_n$ procedure, despite being rate-optimal for the large signal regime, is sub-optimal for weaker signals. This suboptimality comes from the fact that its risk splits equally among false positive and false negatives, which implies the result once one notes  the procedure is  sparsity-preserving, hence illustrating the versatility of Theorem~\ref{thm-notrade} (and more generally Theorem \ref{thm-notrade-Subbotin}). 

		\begin{corollary}[Exact risk of the top $s_n$--procedure in the boundary case] \label{cor-topsub} 
		In the setting of Theorem~\ref{thm-notrade}, 
			consider the class of signals $\Theta_b=\Theta(a_b,s_n)$ for some fixed real number $b$. For $\vphi$ the top $s_n$--procedure, as $n\to\infty$,
			\[
			\sup_{\te\in \Theta_b} E_{\te} \lc(\vphi,\te)/s_n \sim \sup_{\te\in \Theta_b}  \fr(\vphi,\te)  \sim 2 \bar\Phi(b).
			\]
			In particular, the top $s_n$--procedure is rate-optimal but not sharp minimax optimal over $\Theta_b$. A similar result holds in Subbotin noise of index $\zeta>1$, with $\bar\Phi(b)$ replaced by $\bar\Phi_\zeta(b)$.
		\end{corollary}

		\subsection{~~Adaptation in Gaussian noise: unknown sparsity, known $r$} \label{sec:adapts}
		
		Let us consider the class $\Theta(r,\be)$ as in \eqref{classab}, assuming that the minimal signal strength $r>\be$ is {\em known}. Then adaptation is possible:  the idea is to use a plug-in procedure where $\beta$ is estimated first. 
In this and the next Section \ref{sec:adaptsim} we focus on the case of Gaussian noise for simplicity. 
Note that in Gaussian noise the equation \eqref{eqkappa} defining $\kappa$ is solved as
\[ \sqrt{\kappa(r,\be)}  =(\sqrt{r}-\be/\sqrt{r})/2. \] 	
In Gaussian noise, the optimal threshold $t_n^*$ in \eqref{optthresholdRab} writes, for $\be\in(0,1)$ and $r>\be$,
\begin{equation}
\ta(r,\be)  = \left(\sqrt{r}-\sqrt{\kappa(r,\be)}\right)\sqrt{2\log{n}}. \label{taurb}
\end{equation}
If we have an estimator $\hat\be$ of $\be$,  for $r$ known  $\ta(r,\hat\be)=(\sqrt{r}/2+\hat{\be}/(2\sqrt{r}))\sqrt{2\log{n}}$ is an estimator of the optimal  threshold.

		
		\begin{theorem}[Known $r$] \label{thmknownr}
		Consider the Gaussian location model (Example~\ref{ex:GaussianSequence}) and the class $\Theta(r,\be)$ as in \eqref{classab}--\eqref{emer} for $\be\in (0,1)$. Let $r>0$ be given.  			 There exists an estimator $\hat{\beta}$ such that, if one sets
			\[ \vphi_i^r = 1_{\{|X_i| \ge  (\sqrt{r}/2+\hat{\be}/(2\sqrt{r}))\sqrt{2\log{n}}\}}. \]
		then for any $\veps>0$, there exists a constant $C=C(\veps)>0$ such that for $n$ large enough, for any $\be\in[\veps, \min(r,1)- \veps]$,
	\begin{align*}
	 \sup_{\te\in\Theta(r,\be)} E_\te \lc(\vphi^r,\te) & \le C v_C(r,\be), \\
	 \sup_{\te\in\Theta(r,\be)} E_\te \fr(\vphi^r,\te) & \le C v_R(r,\be).
	\end{align*} 
		\end{theorem}
		The role of $\veps>0$ in the statement of Theorem \ref{thmknownr} is to ensure a minimal separation from the different regime of weaker signals as in Sections \ref{sec:main}--\ref{sec:mainmult}. 
The proof of Theorem \ref{thmknownr} is given in Section \ref{sec:proofknownr}. 				
		\subsection{~~Adaptation: simultaneous adaptation in regime $\kappa<1-\be$} \label{sec:adaptsim}
		
The next result shows that adaptation is possible in a range of pairs $(\be,r)$ that is arbitrarily close to those satisfying the constraints $\kappa(r,\be,2)<1-\be$ -- in Gaussian noise the latter writes $\sqrt{r}<1+\sqrt{1-\be}$ --  and $\ka>\be$, in order to be in the `large signal' regime. Similarly as for adaptation for known $r$, we introduce the set, for $\veps>0$ arbitrarily small,
\begin{equation} \label{setj}
 \cJ = \cJ(\veps)=\left\{ (\be,r):\ \, \be\in[\veps,1-\veps],\ \, \be+\veps\le r\le (1+\sqrt{1-\be})^2-\veps) \right\}. 
\end{equation}
Below we will use that for any admissibly $r$ in the above set, $r<4$. 
	\begin{theorem}[Adaptation over the range 
	$\sqrt{\be}< \sqrt{r}< 1+\sqrt{1-\be}$] 
			\label{thmadapt}
			Consider the Gaussian location model (Example~\ref{ex:GaussianSequence}) and the class $\Theta(r,\be)$ as in \eqref{classab}--\eqref{emer}. Let $\veps>0$. 
			There exists a procedure $\vphi=\vphi(X)$ such that, for $\cJ=\cJ(\veps)$ as in \eqref{setj},  there exists a finite $C=C(a,b,\veps)>0$ such that, for $n$ large enough,
			\[ \sup_{(\be,r)\in \cJ}\ \sup_{\te\in\Theta(r,\be)} \frac{E_{\te}\lc(\te,\vphi)}{v_C(r,\be)}
			\le C,
			\]
and the same holds for the $\fr$--risk, upon replacing $E_\te\lc(\te,\vphi)/v_C$ by $\fr(\te,\vphi)/v_R$. 	
		\end{theorem}
The proof of Theorem \ref{thmadapt} is given in Section \ref{sec:proofadapt}.

		\subsection{~~Proof of minimax rate for large signals, Theorem~\ref{thmRabinovic}}  \label{proof:thmRabinovic}
		First, we let, for $r>\beta>0$, 
		\begin{equation}\label{equPsirbeta}
			\Psi(\cdot; r,\beta): u\in (0,r)\mapsto \Psi(u; r,\beta)= (r^{1/\zeta}-u^{1/\zeta})^\zeta-u-\beta.
		\end{equation}
		Clearly, $\Psi$ is continuous decreasing, with $\Psi(0; r,\beta)=r-\beta>0$ and $\Psi(r/2^\zeta; r,\beta)=-\beta<0$. Hence, $\kappa\in (0,r/2^\zeta)$ such that $\Psi(\kappa; r,\beta)=0$ is well defined. 
		
		\paragraph*{Upper bound}
		
		First, if $V^*=\sum_{i\notin S_\theta} \vphi^*$ is the number of false discoveries of $\vphi^*$, we have $V^*\sim \text{Bin}(n-|S_\te|,2\overline{\Phi}_\zeta(t_n^*))$. 
		Hence, for $\te\in  \Theta(r,\be)$,
		\begin{align*}
			E_\te V^* &= 2 (n-|S_\te|) \overline{\Phi}_\zeta(t_n^*) \asymp n e^{-(t_n^*)^\zeta/\zeta}/(t_n^*)^{\zeta-1} \asymp n^{1-\beta-\kappa}/(\log n)^{1-1/\zeta},
		\end{align*}
		because $(t_n^*)^\zeta=\zeta(\beta+\kappa) \log n$  and $\overline{\Phi}_\zeta(x)\asymp e^{-x^\zeta/\zeta}/x^{\zeta-1}$ as $x\to \infty$ by Lemma~\ref{lem:sub}. 
		
		Second, if $S^*=\sum_{i\in S_\theta}\vphi^*_i$ is the number of true discoveries of $\vphi^*$, for $\te\in  \Theta(r,\be)$,
		\begin{align*}
			E_\te S^* &= \sum_{i\in S_\theta} P(|\eps_i+ \theta_i|\ge t_n^*) =  \sum_{i\in S_\theta} \big\{P(\eps_i\ge t_n^*-\theta_i)+P(-\eps_i\ge t_n^*+ \theta_i) \big\}\\
			&\ge |S_\te| \overline{\Phi}_\zeta(t_n^*- M(r)). 
		\end{align*}		
			Hence, for any $\te\in  \Theta(r,\be)$,
		$$\FNR(\te,\vphi) \leq 1-\overline{\Phi}_\zeta(t_n^*- M(r))= \overline{\Phi}_\zeta( M(r)-t_n^*) = \overline{\Phi}_\zeta( (\zeta \kappa \log n)^{1/\zeta}).$$
		Hence, 
		$\FNR(\te,\vphi) \lesssim n^{-\kappa}/(\log n)^{1-1/\zeta},$ because by Lemma~\ref{lem:sub} we have $\overline{\Phi}_\zeta(x)\asymp e^{-x^\zeta/\zeta}/x^{\zeta-1}$ when $x\to \infty$.

		For the FDR term, we need additional concentration arguments.
		Bernstein's inequality (Lemma~\ref{th:bernstein}) gives that for any $t>0$, we have
		\[ P_\te[|S^*-E_\te S^*|\ge t ]\le 2\exp\left\{- t^2/(2|S_\te|(1+o(1))+2t/3)  \right\} \leq 2\exp(-c s_n^{1/2}), \]
		by taking $t=|S_\te|^{3/4}$ and a constant $c>0$ small enough.
		This gives $S^*\geq s_n(1+o(1))$ with probability at least $1-2\exp(-c s_n^{1/2})$.
		We now distinguish between the two following cases.

				\paragraph*{Case: $1-(\beta+\kappa)> 0$}
		
		In that case, Bernstein's inequality (Lemma~\ref{th:bernstein}) provides that $V^*$ satisfies the following concentration property:
		\[ P_\te[|V^*-E_\te V^*|\ge M\sqrt{E_\te V^*} ]\le 2\exp\left\{-c [M^2 \wedge (M\sqrt{E_\te V^*})] \right\}. \]
		Taking $M$ such that $M\sqrt{E_\te V^*}=n^{(1-\beta-\kappa)3/4}$ (i.e. $M\asymp  n^{(1-\beta-\kappa)/4} (\log n)^{(1-1/\zeta)/2} $ which tends to $\infty$ since $\kappa<1-\beta$) gives $V^*\le E_\te V^*+ n^{(1-\beta-\kappa)3/4}$ with probability at least $1- 2 e^{-c n^{(1-\beta-\kappa)/4}}$. 
		Hence, by the above concentration arguments, we have for all $\te\in  \Theta(r,\be)$,
		\begin{align*} 
			\FDR(\te,\vphi^*) &\le \frac{E_\te[V^*]+n^{(1-\beta-\kappa)3/4} }{s_n(1+o(1))-s_n^{3/4}} + 2 e^{-c n^{(1-\beta-\kappa)/4}} + 2e^{-c s_n^{1/2}} \\
			&\le   (1+s_n^{-1/4})( n^{-\kappa}/(\log n)^{1-1/\zeta} + n^{-(3/4)\kappa}) + 2 e^{-c n^{(1-\beta-\kappa)/4}} + 2e^{-c s_n^{1/2}}\\
			&\lesssim n^{-\kappa}/(\log n)^{1-1/\zeta}.
		\end{align*}

		\paragraph*{Case: $1-(\beta+\kappa)\leq 0$}
In this case, we still use the concentration of $S^*$ but circumvent the concentration of $V^*$ by upper bounding the FDR with the following more direct argument: for all $\te\in  \Theta(r,\be)$,
		\begin{align*} 
			\FDR(\te,\vphi^*) &\le E_\te[ V^*/((1+o(1))s_n] + 2e^{-c s_n^{1/2}} \\
			&\lesssim n^{-\kappa}/(\log n)^{1-1/\zeta}.
		\end{align*}

This entails the result in both cases.
		 
		\paragraph*{Lower bound}
		
		We propose to establish the lower bound 
		\begin{equation}\label{lbtobeprovedfastrate}
			\inf_\vphi \sup_{\te\in\Theta(r,\be) }  \mathfrak{R} (\theta,\vphi) \gtrsim n^{-\kappa}/(\log n)^{1-1/\zeta}.
		\end{equation}
		
		For this, we apply Theorem~\ref{thm:bayesLB} 
		with $\rho=1$ and the prior $\pi= ((1-w) \delta_0 + w\delta_{M})^{\otimes n}$, $M=M(r)=(\zeta r\log n)^{1/\zeta} $, $w=\frac{a+b}{2}/n^\beta$. 
		
		\paragraph*{Case: $1-(\beta+\kappa)> 0$}
		
		We apply the bound (i) in Theorem~\ref{thm:bayesLB} for $\rho=1$, $s_n=b n^{1-\beta}$ and some $\eta=\eta_n\to 0$ to be chosen later. 
		This gives, with $M_\rho=M_1$ here,
		$$
		\inf_\vphi \sup_{\te\in\Theta(r,\be) }  \mathfrak{R} (\theta,\vphi) \geq    \frac{ M_\rho}{2b n^{1-\beta}+ M_\rho}  (1-  e^{- c\eta_n^2 M_\rho})- n P_\pi(\norm{\theta}_0> b n^{1-\beta}) - 2 P_\pi(\theta\notin \Theta(r,\be)),
		$$
		for some universal constant $c>0$, where 
		$M_\rho= \sum_{i=1}^n P_\pi[ \theta_i\neq 0,\ell_i(X)> 1/2] ,$ for $ \ell_i(X)=P_\pi(\theta_i=0\:|\: X)$, $1\leq i\leq n$.
		By definition of $\Theta(r,\be)$ and of the prior $\pi$, we have  $P_\pi(\theta\notin \Theta(r,\be))\leq P_\pi(\norm{\theta}_0> bn^{1-\beta}) + P_\pi(\norm{\theta}_0< an^{1-\beta})$.
		Also, we have 
		\begin{align*}
(n+2) P_\pi(\norm{\theta}_0> bn^{1-\beta}) &\leq (n+2) P(\mathcal{B}(n,w)-nw>((b-a)/2) n^{1-\beta})\\
&\leq (n+2) e^{-c' n^{1-\beta} }\leq 1/n^{\kappa+1},
		\end{align*}
		for some constant $c'>0$ and for $n$ large enough.  Similarly, we have $(n+2) P_\pi(\norm{\theta}_0< an^{1-\beta})\leq 1/n^{\kappa+1}$.
		%
		%
		%
		%
		%
		%
		
		Next, we have by a direct computation
		$$
		\ell_i(X)=P_\pi(\theta_i=0\:|\: X) = \frac{(1-w) \phi_\zeta(X_i)}{(1-w) \phi_\zeta(X_i)+w \phi_\zeta(X_i-M)},
		$$
		so that $\ell_i(X)> 1/2$ if and only if $\phi_\zeta(X_i-M)/\phi_\zeta(X_i)< (1-w)/w$, that is if and only if 
		$ |X_i|^\zeta - |X_i-M|^\zeta<  \zeta\log (1/w-1) $.
		Therefore, we have 
		\begin{align*}
			P_\pi[ \ell_i(X)> 1/2\:|\: \theta_i\neq 0] &=  P[ |M+\varepsilon|^\zeta - |\varepsilon|^\zeta<  \zeta\log (1/w-1) ]\\
			&\geq   P[ |M+\varepsilon|^\zeta - |\varepsilon|^\zeta<  \zeta\log (n^\beta/b-1) ],
		\end{align*}
		where $\varepsilon\sim \phi_\zeta$ and by using that $w\leq b/n^\beta$.
		Recalling $\Psi(\cdot; r,\beta)$ defined by \eqref{equPsirbeta}, we obtain
		\begin{align*}
			&P_\pi[ \ell_i(X)> 1/2\:|\: \theta_i\neq 0] \\&\geq   P[ |M+\varepsilon|^\zeta - |\varepsilon|^\zeta<  \zeta\log (n^\be/b-1) ]\\
			&\geq P[\varepsilon\in (-M,0), (M+\varepsilon)^\zeta - (-\varepsilon)^\zeta<  \zeta\log (n^\beta/b-1) ]\\
			&=P[\varepsilon\in (0,M), (M-\varepsilon)^\zeta - \varepsilon^\zeta - \zeta\beta \log n<  \zeta\log (n^\beta/b-1) - \zeta\beta \log n]\\
			&=P\left(\varepsilon\in (0,M),\Psi\left( \frac{\varepsilon^\zeta}{\zeta \log n}; r,\beta\right) < \Psi(\kappa_n) \right),
		\end{align*}
		where $\kappa_n=\Psi^{-1}\left(\frac{\log (n^\beta/b-1)-\beta \log n}{ \log n}\right)=\kappa+O(1/\log n)$.
		By the properties of $\Psi$, we get for $n$ large enough that
		\begin{align*}
			P_\pi[ \ell_i(X)> 1/2\:|\: \theta_i\neq 0] &\geq P\left(M>\varepsilon > (\zeta\kappa_n \log n)^{1/\zeta}\right)\\
			&=\ol{\Phi}_\zeta((\kappa_n \zeta \log n)^{1/\zeta}) -\ol{\Phi}_\zeta((r \zeta \log n)^{1/\zeta}).
		\end{align*}
		Now using Lemma~\ref{lem:sub}, we obtain 
		\begin{align*}
			\ol{\Phi}_\zeta((r \zeta \log n)^{1/\zeta})&\leq \frac{\phi_\zeta((r \zeta \log n)^{1/\zeta})}{(r \zeta \log n)^{1-1/\zeta}}  \lesssim n^{-r}/( \log n)^{1-1/\zeta}\\
			\ol{\Phi}_\zeta((\kappa_n \zeta \log n)^{1/\zeta}) &\geq \zeta^{-1}  \frac{\phi_\zeta(x)}{x^{\zeta-1}} = \zeta^{-1}  n^{-\kappa_n}/(\kappa_n \zeta \log n)^{1-1/\zeta}\gtrsim n^{-\kappa}/( \log n)^{1-1/\zeta}.
		\end{align*}
		Since $\kappa<r$, it follows that
		\begin{equation}\label{eqn:Ppi(l>1/2)bound}
			P_\pi(\ell_i(X)>1/2,\theta_i\neq 0)=wP_\pi(\ell_i(X)>1/2\mid \theta_i\neq 0)\gtrsim n^{-\beta} n^{-\kappa}/(\log  n)^{1-1/\zeta},
		\end{equation}
	and hence
		\begin{align}
		M_\rho&= \sum_{i=1}^n P_\pi[ \theta_i\neq 0,\ell_i(X)> 1/2] 
		=\sum_{i=1}^n w P_\pi[ \ell_i(X)> 1/2\:|\: \theta_i\neq 0] \nonumber\\& \gtrsim n^{1-\beta}  n^{-\kappa}/( \log n)^{1-1/\zeta}.\label{equMrhofastrate}
		\end{align}
		This also gives, because  	$1-(\beta+\kappa)> 0$ by assumption,	
		$$
		e^{- c \eta_n^2 M_\rho} \leq  e^{- c' \eta_n^2 n^{1-(\beta+\kappa)}/(\log n)^{1-1/\zeta}} = o(1),
		$$
		by choosing $\eta_n=1/\log n$. Combining the relations obtained above gives 
		$$
		\frac{ M_\rho}{2 b n^{1-\beta}+ M_\rho}  (1-  e^{- c\eta_n^2 M_\rho})  \gtrsim  n^{-\kappa}/( \log n)^{1-1/\zeta}
		$$
		and shows \eqref{lbtobeprovedfastrate}.
		
		\paragraph*{Case: $1-(\beta+\kappa)\leq 0$}
		
		We apply the bound (ii) in Theorem~\ref{thm:bayesLB} with $\rho=1$ and $s_n=bn^{1-\beta}$, 
		\begin{align*}
			\inf_\vphi \sup_{\te\in\Theta(r,\be) }  \mathfrak{R} (\theta,\vphi)&\geq  \left( \frac{1}{bn^{1-\beta} + 1} \wedge \frac{1  }{bn^{1-\beta}}  \right) m_{\pi}- n P_\pi(\norm{\theta}_0> bn^{1-\beta})- 2 P_\pi(\theta\notin \Theta(r,\be))\\
			&\geq  \frac{ m_{\pi}}{1+bn^{1-\beta}}   - 1/n^{\kappa+1},
		\end{align*}
		for $m_{\pi}=P_\pi(\exists i\in \{1,\dots,n\}\::\: \theta_i\neq 0,\ell_i(X)> 1/2)$, as in the 
		previous case. By using the computation \eqref{eqn:Ppi(l>1/2)bound} from the previous case, we have
		\begin{align*}
			m_{\pi}&=P_\pi(\exists i\in \{1,\dots,n\}\::\: \theta_i\neq 0\, ,\, \ell_i(X)> 1/2)\\
			&=1-(1-P_\pi(\theta_1\neq 0,\ell_1(X)> 1/2))^{n}\\
			&\geq 1-(1- C n^{-\beta} n^{-\kappa}/( \log n)^{1-1/\zeta})^n
		\end{align*}
		for some constant $C>0$. Since $1-(\beta+\kappa)\leq 0$, we have $C n^{-\beta} n^{-\kappa}/( \log n)^{1-1/\zeta}=o(1)$, hence 
		$$
		1-(1- C n^{-\beta} n^{-\kappa}/( \log n)^{1-1/\zeta})^n \sim C n^{1-\beta-\kappa}/( \log n)^{1-1/\zeta},
		$$
		and the result follows.

To completely finish the proof, it remains to prove the same bounds for the classification risk $\lc/n^{1-\be}$. The upper bound is completely analogous since 
$$
\lc(\te,\vphi^*)/n^{1-\be}= E_\te V^*/n^{1-\be} + (|S_\te|-E_\te S^*)/n^{1-\be}\lesssim n^{-\kappa}/(\log n)^{1-1/\zeta}.
$$
As for the lower bound, we apply Theorem~\ref{thm:bayesLB-classif} 
		with the same prior, which gives the correct bound by using \eqref{equMrhofastrate} (which holds beyond the case $1-(\beta+\kappa)> 0$).

\subsection{~~Proof of Theorem~\ref{thm-noadapt} [Thm~\ref{thmlargeadapt}, (i)]} \label{sec:thmnoa}
Set $\Theta_i=\Theta(r_i,\be)$ for $i=1,2$. We start with the result for $\lc$. Set $v_i=v_C(r_i,\be,\zeta), i=1,2$ as a shorthand. 
It is enough to bound from below, for $\te_1\in \Theta_1, \te_2\in \Theta_2$, and any procedure $\vphi$, the quantity 
\[\Sigma:=\Sigma(\te_1,\te_2,\vphi):=\frac{E_{\te_1} \lc(\te_1,\vphi)}{v_1} + \frac{E_{\te_2} \lc(\te_2,\vphi)}{v_2}. \]
Let us $s_{n,1}, s_{n,2}$ be two consecutive integers in $[an^{1-\be},bn^{1-\be}]$, so that $s_{n,1}=s_{n,2}-1$ (they exist for large $n$ since $a<b$). Define
\[  \te_{1,i}=  
\begin{cases}
	\, M(r_1) &\quad \text{if }\ 1\le i\le s_{n,1},\\
	\, 0 & \quad \text{otherwise}.
\end{cases}
\]
Define $\te_2$ to be the same as $\te_1$, except for $i=s_{n,1}+1=s_{n,2}$, for which one sets $\te_{2,s_{n,2}}=M(r_2)$. Using a change of measure, one can write
\begin{align*}
	E_{\te_1} \lc(\te_1,\vphi) 
	& = E_{\te_2} \left[ \lc(\te_1,\vphi(X)) \phi_\zeta(X_{s_{n,2}})/\phi_\zeta(X_{s_{n,2}}-M(r_2)) \right] \\
	& \ge E_{\te_2} \left[ \lc(\te_1,\vphi(X)) 
	\exp\{ - M(r_2)^\zeta/\zeta\}
	\ind{X_{s_{n,2}}\le M(r_2)} \right],
\end{align*}
where we use Lemma~\ref{lem:compazeta} and that $M(r_2)>0$.
From this one deduces that $\Sigma$ is bounded from below by
\begin{align*}
	& \left\{ \left( v_C(r_1,\be)^{-1} e^{- M(r_2)^\zeta/\zeta} \right) \wedge 
	v_C(r_2,\be)^{-1}  \right\} \\
	& \ \ \times E_{\te_2}\left[ \{\lc(\te_1,\vphi(X))+\lc(\te_2,\vphi(X))\}\ind{X_{s_{n,2}}\le M(r_2)} \right]\\
	& \ge \left\{ \left( v_C(r_1,\be)^{-1} e^{- M(r_2)^\zeta/\zeta} \right) \wedge 
	v_C(r_2,\be)^{-1}  \right\} \lc(\te_1,\te_2) P_{\te_2}[X_{s_{n,2}}\le M(r_2)],
\end{align*} 
where we use the triangle inequality for the classification loss $\lc$. Noting that $\lc(\te_1,\te_2)=1$ and $P_{\te_2}[X_{s_{n,2}}\le M(r_2)]=\Phi_\zeta(0)=1/2$, one obtains
\[ 2\Sigma \ge\left( v_C(r_1,\be)^{-1} n^{- r_2} \right) \wedge 
v_C(r_2,\be)^{-1}. \]
The result for the classification risk follows.  

For the $\fr$--risk, let us denote by $L_R(\te,\vphi)=\text{FDP}(\te,\vphi)+\text{FNP}(\te,\vphi)$ the loss incurred by a testing procedure $\vphi$ on a sparse vector $\te$ (the sum of the false discovery proportion and false negative proportion), so that $\fr(\te,\vphi)=E_\te L_R(\te,\vphi)$.  Similarly as above, one bounds 
\[ \Sigma_R(\te_1,\te_2,\vphi):= 
\frac{\fr(\te_1,\vphi)}{v_R(r_1,\be)} + \frac{\fr(\te_2,\vphi)}{v_R(r_2,\be)}
\]
from below by, using the same argument,
\begin{align*} 
	& \left\{ \left( v_R(r_1,\be)^{-1} e^{- M(r_2)^\zeta/\zeta} \right) \wedge 
	v_R(r_2,\be)^{-1}  \right\} \\
	& \ \ \times E_{\te_2}\left[ \{L_R(\te_1,\vphi(X))+L_R(\te_2,\vphi(X))\}\ind{X_{s_{n,2}}\le M(r_2)} \right].
\end{align*}
The vectors $\te_1,\te_2$ satisfy the conditions of Lemma~\ref{lemfdpfnp}, which implies that  $L_R(\te_1,\vphi(X))+L_R(\te_2,\vphi(X))$ is at least $1/(s_{n,1}+2)$,  regardless of $\vphi$. Using that $s_{n,1}\asymp n^{1-\be}$ and $n^{1-\be}v_R(r_i,\be)= v_C(r_i,\be)$, the result follows.

\begin{lemma} \label{lemfdpfnp}
	Let $\te_1, \te_2\in\RR^n$ with respectively $|S_{\te_1}|=s_n$ and $|S_{\te_2}|=s_n+1$ nonzero coefficients, for some $1\le s_n\le n-1$. Suppose $|S_{\te_1}\cap S_{\te_2}|=s_n$. Then for any procedure $\vphi$,
	\[ \FDP(\te_1,\vphi) + \FNP(\te_1,\vphi) +
	\FDP(\te_2,\vphi) + \FNP(\te_2,\vphi) \ge \frac{1}{s_n+2}.
	\]
\end{lemma}
\begin{proof}
	For $\te\in\{\te_1,\te_2\}$ and $\vphi$ arbitrary,  by definition of the false discovery proportion,  bounding from above the number of true discoveries by $|S_\te|$, 
	\[ \text{FDP}(\te,\vphi) \ge \frac{ N_{FP}(\te,\vphi) }{ N_{FP}(\te,\vphi)+|S_{\te}| } 
	\ge  \frac{N_{FP}(\te,\vphi) }{N_{FP}(\te,\vphi)+s_n+1}.  \]
	The last quantity is either $0$ or bounded below by $1/(s_n+2)$, since $x\to x/(x+s_n+1)$ is increasing for $x\ge 0$. Similarly,
	\[ \text{FNP}(\te,\vphi)= \frac{N_{FN}(\te,\vphi) }{|S_{\te}|}\ge \frac{N_{FN}(\te,\vphi) }{s_n+1},   \]
	which is either $0$ or bounded below by $1/(s_n+1)$. Note that
	\[ \frac{N_{FP}(\te_1,\vphi) }{N_{FP}(\te_1,\vphi)+s_n+1} +\frac{N_{FN}(\te_1,\vphi) }{s_n+1}+
	\frac{N_{FP}(\te_2,\vphi) }{N_{FP}(\te_2,\vphi)+s_n+1}+\frac{N_{FN}(\te_2,\vphi) }{s_n+1}
	\]
	cannot be zero. Indeed, if this was the case, this would mean that $N_{FP}(\te_1,\vphi)=N_{FN}(\te_1,\vphi)=0$ that is $S_{\te_1}=S_{\vphi}$ but also  $N_{FP}(\te_2,\vphi)=N_{FN}(\te_2,\vphi)=0$ that is $S_{\te_2}=S_{\vphi}$, so that $S_{\te_1}=S_{\te_2}$ and this is impossible because $S_{\te_1}\neq S_{\te_2}$ by definition. This implies that at least one of the four terms in the last display is nonzero, so the result follows, because individually all terms, if nonzero, are at least $1/(s_n+2)$.
\end{proof}

\subsection{~~Proof of Theorem~\ref{thmtops} [Thm~\ref{thmlargeadapt}, (ii)]}
$\ $\\

{\em Notation $N_{FP}(T,\theta), N_{FN}(T,\theta)$.} For a given procedure $T=T(X)$, in the proofs to follow for clarify we denote by $N_{FP}(T,\theta)$, respectively $N_{FN}(T,\theta)$, the number of false positives of $T$, resp. false negatives, if the true vector is $\theta$. \\

\begin{proof}
We now give the proof of Theorem~\ref{thmtops}. For simplicity denote $\vphi=\vphi[s_n]$. Both numbers $N_{FP}(T,\theta), N_{FN}(T,\theta)$ are random variables taking values on the integers up to $n-s_n$ (resp. $s_n$). Using that for such an integer--valued variable $N$ it holds $E[N]=\sum_{k \ge 1} P[N\ge k]$, it is enough to bound the probabilities $P[N\ge k]$.

Let us denote by $\eta_1,\ldots,\eta_{s_n}$ the random variables $\veps_i$ for $i\in S_\te$ (by this we mean $\eta_i=\veps_{j_i}$ for $j_1<\cdots<j_{s_n}$ the indices in  the support of $\te$) and similarly by $\xi_1,\ldots,\xi_{n-s_n}$ the random variables $\veps_{i}$ for $i\notin S_\te$.

First focusing on false positives, if the top--$s_n$ procedure $\vphi$ makes at least $k\ge 1$ false positives, then among the $|\xi_i|$'s, at least $k$ are at least $|X|_{(s_n)}$. For any 
$\ta>0$, 
\begin{align*}
	P_{\te}[ N_{FP}(\vphi,\te) \ge k ] & 
	\le  P_{\te}[ |\xi|_{(k)} \ge  |X|_{(s_n)} ]  \\
	& \le P_{\te}[ |\xi|_{(k)} \ge  \ta ]  + P_{\te}[ |\xi|_{(k)} \ge  |X|_{(s_n)}\, ,\, 
	|\xi|_{(k)} <  \ta] 
\end{align*}
By independence of $\xi_i$'s, one has $P_{\te}[ |\xi|_{(k)} \ge  \ta ]=P_{\te}[ \text{Bin}(n-s_n,2\bar\Phi(\ta))\ge k]$. On the other hand, if $\ta>|\xi|_{(k)} \ge  |X|_{(s_n)}$, then at least $k$ among the variables $|\te_i+\eta_i|$ are less or equal to $|\xi|_{(k)} < \ta$. Denoting by $p_i(\ta)=p_i(\ta,\te):=P_\te[|X_i|\le \ta]$ for $i\in S_\te$ and renumbering $p_1^*(\ta),\ldots ,p_{s_n}^*(\ta)$ the successive values of the $p_i(\ta)$'s, 
\begin{align*}
	P_{\te}[ N_{FP}(\vphi,\te) \ge k ] & 
	\le  P_{\te}[ \text{Bin}(n-s_n,2\bar \Phi(\ta))\ge k] 
	+ P_{\te}[ \text{PBin}(s_n,(p_j^*(\ta)_j))\ge k]. 
\end{align*}
Deduce, using the formula for the expectation once again, that
\begin{align*}
	E_{\te} N_{FP}(\vphi,\te)  & \le
	E[ \text{Bin}(n-s_n,2\bar \Phi(\ta))] + E[ \text{PBin}(s_n,(p_j^*(\ta)_j))] \\
	& = E_{\te} \left[ N_{FP}(T(\ta),\te) + N_{FN}(T(\ta),\te)\right]=
	E_{\te} \lc(T(\ta),\te),
\end{align*}
where $T(\ta)(X)_i=\ind{|X_i|\ge \ta}$ is the thresholding procedure at level $\ta$. 

Turning now to the number of false negatives of $\vphi$, one notes that if $\vphi$ makes at least $k\ge 1$ false negatives, then among the $|X_i|$'s for $i\in S_\te$, at least $k$ are below $|X|_{(s_n)}$. This means that at least $k$ among the noise variables $|\xi|_i$'s are above $|X|_{(s_n)}$ which means that $P_{\te}[ N_{FN}(\vphi,\te) \ge k ]\le P_{\te}[ |\xi|_{(k)} \ge  |X|_{(s_n)} ]$ and this quantity has already been bounded before (one may note that this reasoning also gives $N_{FP}(\vphi,\te)=N_{FN}(\vphi,\te)$). The same reasoning then gives that $E_{\te} N_{FP}(\vphi,\te) \le E_{\te} \lc(T(\ta),\te)$. By taking the infimum over $\ta>0$, this concludes the proof. 
\end{proof}

\subsection{~~Proof of Corollary \ref{cor-topsub}}

		\begin{proof}
			First note that for the procedure $\vphi$ one has the identity $E_\te \lc(\vphi,\te)=s_n\fr(\vphi,\te)$ for all $\te$ since $\vphi$ rejects exactly $s_n$ coordinates (i.e. there are no ties among $X_i$'s) with probability $1$. 
			Focusing on the $\fr$--risk,  the corresponding supremum in the statement is bounded from above, using Theorem~\ref{thmtops}, by $2\sup_{\te\in\Theta_b}\inf_{T\in\cT} \fr(T,\te)$. This is in turn bounded by twice the maximum normalised risk of the oracle thresholding procedure with threshold $\sqrt{2\log{n/s_n}}$. By Theorem~\ref{thmgeneric}, this is $2\bar\Phi(b)+o(1)$. 
			
			To obtain a matching lower bound, note that $\fr(\vphi,\te)=E_\te[N_{FP}(\vphi,\te)+N_{FN}(\vphi,\te)]/s_n=2E_\te[N_{FP}(\vphi,\te)]/s_n=2\FNR(\vphi,\te)$, where we have used that  $N_{FP}(\vphi,\te)=N_{FN}(\vphi,\te)$ as noted in the proof of Theorem~\ref{thmtops}. The supremum in the statement is thus bounded below by $2\inf_{T} \sup_{\te\in\Theta_b} \FNR(T,\te)$. Since $\vphi$ is sparsity--preserving,  Theorem~\ref{thm-notrade} in the main paper implies that the latter bound is at least $2\bar\Phi(b)+o(1)$, which concludes the proof.
		\end{proof}

\subsection{~~Proof of Theorem~\ref{thmknownr} [Thm~\ref{thmlargeadapt}, (iii)]}	
\label{sec:proofknownr}

		\begin{proof}
			Let us recall the notation $\ta(r,\be)=\{\sqrt{r}/2+\be/(2\sqrt{r})\}\sqrt{2\log{n}}$. Below we use that this  threshold verifies for $s_n\asymp n^{1-\be}$ and $v_C(r,\beta)=v_C(r,\beta,2)$ the classification rate for Gaussian noise,
			\[ (n-s_n) \overline{\Phi}(\tau(r,\beta))+s_n\overline{\Phi}(\sqrt{2r\log(n)})-\tau(r,\beta)\asymp v_C(r,\beta)\] 
			 and  denote, recalling that the constant $a$ arises in the definition of the class $\Theta(r,\be)$ in \eqref{classab},
			\[ S_n = a n^{1-\be}/\log{n}\] 
as shorthand notation. 			
			Lemma~\ref{estimsn} below tells us that for some $C_1,C_2$, writing $\beta_\pm = \beta\pm C_1/\log^2 n$, we have $\hat{\beta}\in[\beta_-,\beta_+]$ on an event $\mathcal{C}$ of probability at least $1-e^{-C_2 S_n}$.
			Let us denote by $\vphi^+$ and $\vphi^-$ the thresholding procedures with thresholds $\ta(r,\be^+)$ and $\ta(r,\be^-)$ respectively.  
			By monotonicity, $N_{FP}(\vphi^r,\te)\le N_{FP}(\vphi^-,\te)$ and $N_{FN}(\vphi^r,\te)\le N_{FN}(\vphi^+,\te)$ on the event $\cC$. Since $N_{FP}\le n$, deduce, for any $\te$ in $\Theta(r,\be)$ with $|S_{\te}|=s_n$,
			\begin{align*}
				E_\te N_{FP}(\vphi^r,\te) & \le  E_\te [N_{FP}(\vphi^-,\te)1_{\cC}]+nP_\te[\cC^c] \\
				& \le E_\te[\text{Bin}(n-s_n,2\bar\Phi(\ta(r,\be^-)))] + ne^{-C_2 S_n}\\
				& \le 2(n-S_n)\bar\Phi(\ta(r,\be^-)) + ne^{-C_2 S_n}.
			\end{align*}
			Similarly, if $i_1,\ldots,i_{s_n}$ denote the indices in the support of $\te$, for PBin$(\ba)$ the Poisson-Binomial distribution (Section \ref{sec:poissonb}) with vector of parameters $\ba$,
			\begin{align*}
				E_\te N_{FN}(\vphi^r,\te) & \le E_\te[\text{PBin}(\{P(|\te_{i_j}+\veps_{i_j}|\le \ta(r,\be^+))\}_{1\le j\le s_n})] + ne^{-C_2S_n} \\
				& \le  s_n\bar\Phi(\sqrt{2r\log{n}}-\ta(r,\be^+)) + ne^{-C_2S_n}\\
				& \le (bS_n/a)\bar\Phi(\sqrt{2r\log{n}}-\ta(r,\be^+)) + ne^{-C_2S_n}.
			\end{align*} 
			Deduce, using Lemma \ref{lemtp}, 
			since $\ta(r,\be^+)$ and $\ta(r,\be^-)$ are only of order $\log^{-3/2}(n)$ away from $\ta(r,\be)$, that replacing the later quantities by $\ta(r,\be)$ in the previous bounds only leads to multiplicative factors that are $1+O(1)$ as $n\to\infty$. So
			\[ E_\te \lc(\vphi^r,\te) \le \{2(n-S_n)\bar\Phi(\ta(r,\be))+\frac{bS_n}{a}\bar\Phi(\sqrt{2r\log{n}}-\ta(r,\be))\}(1+O(1))+2ne^{-C_2S_n}. \]
			As the first term in the last bound is of order $v_C(r,\be)$ (which decays at most at a polynomial rate in $n$) and the last one is exponentially small in $n$, the result for $\lc$ follows.
			
The result for the $\fr$--risk follows quite easily. Let us briefly sketch the argument. It is enough to show that with overwhelming probability, $N_{FN}(\vphi^r,\theta)$ is no more than, say, $s_n/2$ over $\te\in\Theta(r,\be)$. Indeed, on the corresponding event, we must then have $\FDP(\vphi^r,\te)\le 2 \lc(\vphi^r,\te)/s_n$ and one concludes by taking expectations. To show the claimed high probability bound on $N_{FN}(\vphi^r,\theta)$, it suffices to note that $\vphi^r$ thresholds by definition at $\ta(\hat\be,r)$,  which is less than $\sqrt{2r_1\log{n}}$ for some $r_1<r$ with overwhelming probability (for $n\ge N(\veps)$ large enough) thanks to Lemma \ref{estimsn} and noting that $\be\to\ta(r,\be)$ is away from $r$ for $\be$ away from $r$. Now it suffices to note that the probability that $N_{FN}(\vphi^r,\theta)>s_n/2$ is binomial with success probability strictly less than $1/4$ (say), which concludes the proof. 
		\end{proof}

		
		\begin{lemma} \label{estimsn}
			Let $\veps>0$. In the setting of Theorem~\ref{thmknownr},  there exists an estimator $\hat\be$ and $C_1, C_2>0$ such that for $n\ge N(\veps)$ large enough,		
			\[ \sup_{\te\in \Theta(r,\be)} 
			P_\te\left[ |\hat{\be}-\be | > C_1/\log^2{n} \right] 
			\le e^{-C_2 a n^{1-\be}/\log{n}}.
			\]
		\end{lemma}
		\begin{proof}[Proof of Lemma~\ref{estimsn}]
			We consider the $\ell$-value procedure from \cite{cr20}, see also Section~\ref{sec:proof-lvals} (one could also use a BH procedure): 	
			recall that this procedure is defined as 
			\begin{equation} \label{lvals}
				\vphi^{\ell}(X_i)=\vphi^{\ell}_{\hat{w}}(X_i)=1\{ \ell(X_i;\hat{w},g) \le t \},
			\end{equation}
			for some fixed small $t$, say $t=1/4$, and $\ell(x;w,g)=(1-w)\phi(x)/\{(1-w)\phi(x)+wg(x)\}$, where we take the quasi-Cauchy density $g$ and where $\hat{w}$ is the marginal maximum likelihood estimator for $w$. 
		
We first derive 	the bound in the Lemma for $\te\in\Theta(r,\be)$ with $|S_\te|=s_n$. 
By Lemmas~\ref{lem:existence-of-w+-} and ~\ref{lem:concentration-of-hat{w}}, (noting that the set of large signals considered here is a subset of the signals considered for these Lemmas), if $\mathcal{B}=\{\hat{w}\in[w_-,w_+]\}$,
			\[ P_{\te}[\mathcal{B}^c] \le 2e^{-Cs_n},
			\] 
			as well as $w_-\asymp w_+ \asymp (s_n/n)\tilde{m}(n/s_n)^{-1}\asymp (s_n/n)\sqrt{\log(n/s_n)}$.  It follows, using \[ \vphi_{\hat{w}}^\ell(X_i)\ge \vphi_{w_2}^\ell(X_i)\] by monotonicity (Lemma~\ref{lem:monotonicity}) that $\vphi^\ell$ equals $1$ on $X_i$'s larger than $\xi(r(w_+,t))$. But 
			\[\xi(r(w_+,t)) \le \sqrt{2\be \log{n}}+o(1)=M(\be)+o(1)\] by Lemma \ref{lemxigen}. This bound in turn is smaller than $M(\be+\veps/2)$ for $n\ge N(\veps)$. Now recall that signals are at least $M(r)$ by definition of the class and that we assume $r\ge \be+\veps$. Bernstein's inequality now gives that there are at most $s_n/\log{s_n}$ signals that are below $M(\be+\veps/2)$ with probability at least $1-\exp(-Cs_n/\log{s_n})$. 
			Deduce	that if $N_{FN}(\vphi^\ell,\te)$ is the number of false negatives of the procedure $\vphi^\ell$ then with $K_n=s_n/\log{s_n}\geqa s_n/\log{n}$,
			\[ P_{\te}\left[ N_{FN}(\vphi^\ell,\te) > K_n \right]
			\le \exp\{ -CK_n\}. 
			\]
		
			Let us now turn to the number $N_{FP}(\vphi^\ell,\te)$ of false positives of the procedure $\vphi^\ell$. On the event $\mathcal{B}$, we have $\vphi_{\hat{w}}^\ell(X_i)\le \vphi_{w_-}^\ell(X_i)$ by monotonicity (Lemma~\ref{lem:monotonicity}) so $N_{FP}(\vphi^\ell,\te)\le N_{FP}(\vphi^\ell_{w_-},\te)$ on $\mathcal{B}$. On the other hand, $N_{FP}(\vphi^\ell_{w_-},\te)$ equals in distribution a $\text{Bin}(n-s_n,2\bar\Phi(\xi(r(w_-,t))))$. Using that 
			\[ \bar\Phi(\xi(r(w_-,t))) \asymp t (s_n/n)\zeta(w_-)^{-2},\]
			and combining this with Bernstein's inequality one gets, with $K_n'=s_n/\zeta(w_-)^2$, for suitable constants $c, C>0$,
			\[  P_{\te}\left[ N_{FP}(\vphi^\ell,\te) > cK_n' \right]
			\le \exp\{ -CK_n'\}. 
			\]
Note that $K_n\asymp K_n' \asymp s_n/\log{n}$ and let us set
			\[ \hat{s}=\sum_{i=1}^n \vphi^{\ell}(X_i). \]
The above bounds on $N_{FN}(\vphi^\ell,\te), N_{FP}(\vphi^\ell,\te)$ imply			\[ 
			P_\te\left[ |\hat{s}-s_n| > C_1  s_n/\log{n} \right] \le e^{-Cs_n/\log{n}}\le e^{-C'S_n}.
			\]
Setting $\hat{\be}:=\log(n/\hat{s})/\log{n}$, for $\hat{s}$ as above, one gets
			\[ 
			P_\te\left[ |\hat{\be}-\be | > C_1/\log^2{n} \right] \le e^{-C_2S_n}.
			\]
This bound is valid for any $\te\in \Theta(r,\be)$ with $|S_\te|=s_n$. Since the bound is independent on $s_n$, it is also valid over $\Theta(r,\be)$, which yields the result.			
		\end{proof}			 

\subsection{~~Proof of Theorem~\ref{thmadapt} [Thm~\ref{thmlargeadapt}, (iv)]}	
	\label{sec:proofadapt}

		\begin{proof}
			Let us consider the family of thresholding estimates $T(a)$ for some $a>0$ defined as $T(a)(X)_i=\ind{|X_i|>M(a)}$ for $i=1,\ldots,n$, where $M(a)=\sqrt{2a\log{n}}$. A bound for the classification risk of $T(a)$ is recalled in Lemma~\ref{lemt}.
			
			We define an estimator  using Lepski's method. 
			One defines the grid, for $\delta_n=1/\log{n}$,
			\begin{align*}
				\cA_n & =\left\{ \delta_n, 2\delta_n,\ldots, 
				 \lfloor 4/\delta_n \rfloor \delta_n \right\}, 
				 = \left\{ a_k:= k\delta_n, \ k\in \cK_n \right\},
			\end{align*}
			with $\cK_n=\{1,\ldots,  \lfloor 4/\delta_n \rfloor\}$.
			
			For the `true' signal strength $r$ and any $l\in \cK_n$, let us set
			\begin{equation} \label{bv}
				v_l = 2n\bar{\Phi}\left(\sqrt{a_l}\sqrt{2\log{n}}\right),\qquad B(l) = 2bn^{1-\be}\left\{\bar\Phi\left(\{\sqrt{r}-\sqrt{a_l}\}\sqrt{2\log{n}}\right) \wedge 1\right\}.
			\end{equation} 
			
			Define, for $L=8$, 
			\begin{equation} \label{defkach}
				\hat{k} = \max\left\{k\in \cK_n,\ \lc(T(a_k),T(a_l))\le L v_l,\ \text{for all }\ l\le k
				\right\},
			\end{equation}
			Also define, for $B(k), v_k$ as in \eqref{bv}, and $\cK_n$ as above,
			\begin{align}
				k^*& = \max \left\{k\in\cK_n,\ \ B(k)\le v_k \right\}.\label{ket}
			\end{align}
			The estimator is defined, for $\hat{k}$ as in \eqref{defkach}, 
			 as 
			 \[ \vphi(X) = T(a_{\hat k}). \]
			One bounds the classification risk of $\vphi$ for $\te_0\in\Theta(r,\be)$ by
			\[E_{\te_0}\lc(\vphi,\te_0) \le E_{\te_0}\lc(T(a_{\hat{k}}),T(a_{k^*})) +E_{\te_0}\lc(T(a_{k^*}),\te_0).\]  Lemma \ref{lemt} now gives
			$E_{\te_0}\lc(T(a_{k^*}),\te_0)\le B(k^*)+v_{k^*}\le 2v_{k^*}$, 
			while by the triangle inequality the other term is bounded by  
			\[ E_{\te_0}[\lc(T(a_{\hat{k}}),T(a_{k^*}))
			\ind{\hat{k}< k^*}]
			+ E_{\te_0}[\lc(T(a_{\hat{k}}),T(a_{k^*}))\ind{\hat{k}> k^*}]=:(i)+(ii). \]
			Dealing first with the term (ii), on the event $\{\hat{k}>k^*\}$, by definition 
			$\lc(T(a_{\hat{k}}),T(a_{k^*}))\le Lv_{k^*}$. To bound (i), one splits
			\begin{align*}
				(i) & = \sum_{p=0}^{k^*-1} E_{\te_0}[\lc(T(a_p),T(a_{k^*})) \ind{\hat{k}= p}] \le n \sum_{p=0}^{k^*-1} P_{\te_0}[\hat{k}= p],
			\end{align*}
			using the rough bound $\lc(\te,\te')\le n$. Using  the definition of $\hat{k}$ once again, since $p\le k^*-1$,
			\begin{align*}
				P_{\te_0}[\hat{k} = p] 
				& \le \sum_{l=0}^{k^*-1} P_{\te_0}[\lc(T(a_{k^*}),T(a_l)) > L v_l ] \\
				& \le 2k^*\max_{0\le l\le k^*} P_{\te_0}[\lc(T(a_l),\te_0) > L v_l/2 ],
			\end{align*}
			bounding the sum by $k^*$ times the maximum and using the triangle inequality. By Lemma \ref{lctproba},
			\[ (i) \le 4n(k^*)^2e^{-v_{k^*}/4}, \]
			using that for $l\le k^*$, one has $v_l\ge v_{k^*}$.  We have obtained, for any $\te_0\in \Theta(r,\be)$, 
			\[ E_{\te_0} \lc(\vphi,\te_0) \le  (2+L)v_{k^*}+4n(k^*)^2e^{-v_{k^*}/4}.\]
			To conclude it is enough to verify that, for $(\be,r)\in\cJ$, we (uniformly) have  $v_{k^*}\asymp v_C(r,\be)$ and that $v_C(r,\be)$ is larger than a small power of $n$, so that the last term in the above display is indeed a $o(v_{k^*})$. Define $a_{l^*}$ to be the (or `a' in case there are two) closest point on the grid $\cA_n$ to $a^*:=\be+\kappa(r,\be)$. Then
			\[ v_{l^*} \sim \frac{2n^{1-a_{l^*}}}{\sqrt{2\pi}\sqrt{2a_{l^*}\log{n}}}\sim\frac{2n^{1-a^*+(a^*-a_{l^*})}}{\sqrt{2\pi}\sqrt{2a_{l^*}\log{n}}}\asymp \frac{n^{1-a^*+O(\delta_n)}}{\sqrt{\log{n}}},\]
			and 
			\[ B(l^*) \sim \frac{2 bn^{1-\be} n^{-(\sqrt{r}-\sqrt{a_{l^*}})^2}}{\sqrt{2\pi}\sqrt{2(\sqrt{r}-\sqrt{a_{l^*}})\log{n}}}\asymp \frac{n^{1-\be-(\sqrt{r}-\sqrt{a^*})^2+O(\delta_n)}}{\sqrt{\log{n}}}.\]
			One checks using the definition of $a^*$ that $a^*=\be+(\sqrt{r}-\sqrt{a^*})^2$ which shows that $v_{l^*}\asymp B_{l^*}$. Since for finite $K$ we have $v_{l^*\pm K}\sim e^{\mp K} v_{l^*}$ and since $l\to B(l)$ is increasing, we deduce $|l^*-k^*|$ is finite so that $v_{k^*}$ is of the same order as $v_{l^*}\asymp n^{1-\be-\kappa(r,\be)}/\sqrt{\log{n}}\asymp v_C(r,\be)$. This concludes the proof for the classification risk.
			
To deduce the result for the $\fr$--risk, one uses a similar argument as in the proof of Theorem \ref{thmknownr}: as the threshold is in the large signal regime, the denominator of the FDP is of order $s_n$ with high probability. Details are the same as in the earlier proof and are omitted.	
		\end{proof}

		\begin{lemma}\label{lemt}
			Let $T(a)$ for some $a>0$ be defined as $T(a)(X)_i=\ind{|X_i|>M(a)}$ for $i=1,\ldots,n$, where $M(a)=\sqrt{2a\log{n}}$. Then, for $1\le s_n\le n$, and any $\te$ with $|S_\te|=s_n$ and $|\te_i|\ge M(r)$ for $i\in S_\te$, 
			\[ E_\te \lc(\te,T(a)) \le 2(n-s_n)\bar\Phi\left(\sqrt{a}\sqrt{2\log{n}}\right)+ s_n\{2\bar\Phi\left((\sqrt{r}-\sqrt{a})\sqrt{2\log{n}}\right)\wedge 1 \}. \]
		\end{lemma}
		\begin{proof}
			For any  $\te$ as in the statement of the lemma, we have
			\begin{align*}
				E_{\te}\lc(T(a),\te) & = \sum_{i\notin S_{\te}} P_{\te}[|X_i|>M(a)]
				+  \sum_{i\in S_{\te}} P_\te[|X_i|\le M(a)].
			\end{align*}
			If $\te\in \ell_0[s_n]\setminus \ell_0[s_n-1]$, for fixed $a>0$,
	the first sum equals $2(n-s_n)\bar\Phi(M(a))$. 
			On the other hand, if $\te\in \ell[s_n]\setminus \ell[s_n-1]$, for fixed $a>0$, the second sum in the last display is 
			\begin{align*}
				& \sum_{i\in S_\te} P_{\te}(|X_i|<M(a))=
				\sum_{i\in S_\te} \{ \Phi(M(a)-\te_i)-\Phi(-M(a)-\te_i) \}\\
				& \le \sum_{i\in S_\te}\{2\bar\Phi(|\te_i|-M(a)) \wedge 1\}.
			\end{align*}
			To conclude one uses the condition on nonzero signals and that $\bar\Phi$ is decreasing.
		\end{proof}

		\begin{lemma} \label{lctproba} 
		Let $v_l$ be defined by \eqref{bv} and $k^*$ by \eqref{ket}. 	For any $\te_0\in \Theta(r,\be)$ and any $l\le k^*$, one has 
	\[ P_{\te_0}[\lc(T(a_l),\te_0) > 4 v_l ] \le 2 e^{-v_l/4}. \]
		\end{lemma}
		\begin{proof}
			Note $E_{\te_0}\lc(T(a_l),\te_0) \le B(l)+v_l\le B(k^*)+v_l\le v_{k^*}+v_l\le 2v_l$ using $l\le k^*$ and that $l\to B(l)$ (respectively $l\to v_l$) is increasing (respectively decreasing). So
			\begin{align*}
				& P_{\te_0}[\lc(T(a_l),\te_0) > 4 v_l ]\\
				& = P_{\te_0}[\lc(T(a_l),\te_0) - E_{\te_0}\lc(T(a_l),\te_0)> 4 v_l - E_{\te_0}\lc(T(a_l),\te_0)]\\
				& \le P_{\te_0}\Big[ \sum_{i\notin S_{\te_0}} \left\{ \ind{|X_i|>M(a_l)}-P_{\te_0}(|X_i|>M(a_l)) \right\} > v_l \Big]\\
				& \ \ + P_{\te_0}\Big[
				\sum_{i\in S_{\te_0}} \left\{\ind{|X_i|\le M(a_l)}-P_{\te_0}(|X_i|\le M(a_l))\right\} >v_l \Big].
			\end{align*}
			Applying Bernstein's inequality to each term in the previous expression concludes the proof, noting that the sum of variances for the first term  is bounded by $v_l$ and for the second term by $B(l)\le v_l$ when $l\le k^*$. 
		\end{proof}
		
		\begin{lemma}\label{lemtp}
			In the setting of Lemma \ref{lemt}, for $\kappa$ as in \eqref{eqkappa}, let $\ta$ be such that, for $(R_n)$ any sequence with $R_n^2=o(\log{n})$,
			\[ \ta = \sqrt{2\log{n}}  \left( \sqrt{r}-\sqrt{\kappa} \right) + o(R_n).\]
			Then the thresholding procedure $\vphi_\ta(X)_i=\ind{|X_i|>\ta}$ for $i=1,\ldots,n$, satisfies
			\[ \sup_{\te\in \Theta(r,\be)} E_\te \lc(\te,\vphi_\ta) \leqa  n^{1-\be-\kappa}/\sqrt{\log{n}}. \]
		\end{lemma}
		\begin{proof}
			One applies Lemma~\ref{lemt} with $\sqrt{a}=\sqrt{r}-\sqrt{\ka}+R_n/\sqrt{2\log{n}}$. Using the bound $\bar\Phi(x)\le \phi(x)/x$ for $x>0$, one bounds from above each of the terms appearing in Lemma~\ref{lemt} by $O(n^{1-\be-\ka})/\sqrt{\log{n}}$, which gives the result.
		\end{proof}

\section{~~The Poisson-binomial distribution and concentration inequalities}
\label{sec:poissonb}

Let $\operatorname{PBin}[\mathbf{a}]$ denote the Poisson-binomial distribution of parameter $\mathbf{a}=(a_1,\ldots,a_S)$, for $S\ge 1$: it is the distribution of $\sum_{i=1}^S Z_i$, where $Z_i$ are independent $\text{Be}(a_i)$ random variables.

\begin{lemma}\label{lem:stodg}
	Let $\mathbf{a}=(a_1,\ldots,a_S), \mathbf{b}=(b_1,\ldots,b_S)$ be vectors in $[0,1]^S$ for some $S\ge 1$. Suppose $a_i\le b_i$ for all $1\le i\le S$. Then, for any integer $k$,
	\[ P[\operatorname{PBin}[\mathbf{a}]\ge k] \le P[\operatorname{PBin}[\mathbf{b}]\ge k]. \]
	That is, $\operatorname{PBin}[\mathbf{b}]$ stochastically dominates $\operatorname{PBin}[\mathbf{a}]$.
\end{lemma}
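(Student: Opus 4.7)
The plan is to prove this by an explicit coupling argument, which is the most transparent route for stochastic domination statements of this kind.

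First, I will construct the two Poisson-binomial variables on a common probability space. Take $U_1, \ldots, U_S$ to be i.i.d.\ uniform random variables on $[0,1]$, and define
\[ Z_i^{\mathbf a} = \ind{U_i \le a_i}, \qquad Z_i^{\mathbf b} = \ind{U_i \le b_i}, \qquad 1 \le i \le S. \]
Then $Z_i^{\mathbf a} \sim \text{Be}(a_i)$ and $Z_i^{\mathbf b} \sim \text{Be}(b_i)$, the $(Z_i^{\mathbf a})_i$ are independent and so are the $(Z_i^{\mathbf b})_i$, so that $X := \sum_{i=1}^S Z_i^{\mathbf a} \sim \operatorname{PBin}[\mathbf a]$ and $Y := \sum_{i=1}^S Z_i^{\mathbf b} \sim \operatorname{PBin}[\mathbf b]$.

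The assumption $a_i \le b_i$ gives $\{U_i \le a_i\} \subset \{U_i \le b_i\}$, hence $Z_i^{\mathbf a} \le Z_i^{\mathbf b}$ for every $1 \le i \le S$ almost surely. Summing over $i$ yields $X \le Y$ almost surely, and therefore, for any integer $k$,
\[ P[\operatorname{PBin}[\mathbf a] \ge k] = P[X \ge k] \le P[Y \ge k] = P[\operatorname{PBin}[\mathbf b] \ge k], \]
which is the claim.

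There is no real obstacle in the argument; the only thing to watch is that the two target distributions are indeed realised correctly under the chosen coupling, which is immediate from the fact that the $U_i$'s are independent so the two families of Bernoullis are each independent (and thus their sums have the required Poisson-binomial laws), even though variables with different superscripts are correlated.
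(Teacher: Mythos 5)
Your proof is correct. You use an explicit monotone coupling (a single family of i.i.d.\ uniforms $U_i$ with $Z_i^{\mathbf a}=\ind{U_i\le a_i}$ and $Z_i^{\mathbf b}=\ind{U_i\le b_i}$, whence $Z_i^{\mathbf a}\le Z_i^{\mathbf b}$ pointwise and $X\le Y$ almost surely), which is a genuinely different route from the paper. The paper instead argues by transitivity: it reduces to the case where $\mathbf a$ and $\mathbf b$ differ in exactly one coordinate, writes $X_1=\veps+T_1$ and $X_2=\xi+T_2$ with $T_1\overset{d}{=}T_2$ independent of the Bernoulli factors, and compares tail probabilities by conditioning on that single Bernoulli, using that $P[T_1\ge k-1]-P[T_1\ge k]\ge 0$. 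Your coupling handles all coordinates at once and directly produces an almost sure pointwise ordering, which is slightly cleaner and arguably more transparent; the paper's one-coordinate-at-a-time argument avoids constructing a common probability space explicitly and only manipulates distributions, which some readers find more elementary. Both establish the same stochastic dominance, and the effort is comparable.
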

\begin{proof}
	By transitivity it is enough to prove the result in the case $\mathbf{a}$ and $\mathbf{b}$ differ only by one coordinate, say (by symmetry) the first one: that is, let $\mathbf{a}=(a_1,\ldots,a_S)$ and $\mathbf{b}=(b_1,a_2,\ldots,a_S)$. If $X_1\sim \operatorname{PBin}[\mathbf{a}]$ and $X_2 \sim \operatorname{PBin}[\mathbf{b}]$, one can write $X_1=\veps+T_1$ and $X_2=\xi+T_2$ (in distribution), where $T_1, T_2$ are equal in law, and $\veps\sim \operatorname{Be}(a_1), \xi\sim \operatorname{Be}(b_1)$, independently of $T_1, T_2$ respectively. For any integer $k$,
	\begin{align*}
		P[X_1\ge k] & =
		P[T_1+1\ge k\given  \veps=1]a_1+P[T_1\ge k\given  \veps=0](1-a_1) \\
		& \le  P[T_1+1\ge k\given  \veps=1]b_1+P[T_1\ge k\given  \veps=0](1-b_1) =P[X_2\ge k],
	\end{align*} 
	where the second line uses that $P[T_1+1\ge k\given  \veps=1]-P[T_1\ge k\given  \veps=0]=P[T_1\ge k-1]-P[T_1\ge k]\ge 0$ and $a_1\le b_1$, which concludes the proof.
\end{proof}
\begin{lemma}\label{lem:pbcondg}
	Let $\mathbf{Y}=(Y_1,\ldots,Y_S)$ be a vector of independent random variables taking values in $[0,1]$, and let $\bm{p}=(p_j)\in[0,1]^S$ be a deterministic vector such that  $E[Y_j]=p_j$ for all $1\le j\le S$. 
	Conditionally on $\bm{Y}$, let $\xi_j\sim \operatorname{Be}(Y_j)$, $j\leq S$ be independent Bernoulli random variables. Then
	\[ \sum_{i=1}^S\xi_j \sim \operatorname{PBin}[\bm{p}]. \]
\end{lemma}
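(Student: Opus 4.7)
The plan is to show directly that the joint distribution of $(\xi_1,\dots,\xi_S)$ is the same as that of $S$ independent Bernoulli variables with parameters $p_1,\dots,p_S$; the conclusion about the sum then follows immediately from the definition of the Poisson--Binomial distribution given just before Lemma~\ref{lem:stodg}.

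First I would compute, for any $(a_1,\ldots,a_S)\in\{0,1\}^S$, the joint probability
\[
P(\xi_1=a_1,\ldots,\xi_S=a_S)=E\bigl[P(\xi_1=a_1,\ldots,\xi_S=a_S\mid \bm{Y})\bigr].
\]
By the conditional independence assumption, the inner probability factorises as $\prod_{j=1}^S Y_j^{a_j}(1-Y_j)^{1-a_j}$. Then, crucially, the $Y_j$ are (unconditionally) independent, so the expectation of this product of functions of the individual $Y_j$ equals the product of expectations. Since $a_j\in\{0,1\}$, each factor is either $E[Y_j]=p_j$ or $E[1-Y_j]=1-p_j$, i.e.\ equal to $p_j^{a_j}(1-p_j)^{1-a_j}$.

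Combining, $P(\xi_1=a_1,\ldots,\xi_S=a_S)=\prod_{j=1}^S p_j^{a_j}(1-p_j)^{1-a_j}$, which is exactly the joint law of $S$ independent $\operatorname{Be}(p_j)$ random variables. Hence $\sum_{j=1}^S \xi_j$ is distributed as the sum of $S$ independent $\operatorname{Be}(p_j)$ variables, which by definition is $\operatorname{PBin}[\bm{p}]$. There is no real obstacle here; the only point requiring care is the interchange between conditioning on $\bm{Y}$ and using its independence, which is handled transparently by the tower property together with the factorisation trick above.
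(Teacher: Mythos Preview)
Your proof is correct and follows essentially the same approach as the paper: both use the tower property together with the independence of the $Y_j$ to reduce to independent $\operatorname{Be}(p_j)$ variables. The paper is slightly terser, asserting unconditional independence of the $\xi_j$ and then computing only the marginal parameter via $E[\xi_j]=E[E[\xi_j\mid Y_j]]=p_j$, whereas you make the same point explicit by computing the full joint law; the content is identical.
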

\begin{proof}
	Observe that the variables $\xi_j$, defined to be independent given $\mathbf{Y}$, are also independent unconditionally, because the $Y_j$'s are independent. It now suffices to note that $\xi_j$ has Bernoulli distribution, with parameter $E[\xi_j]=E[E[  \xi_j\given Y_j]]=E[Y_j]=p_j$. 
\end{proof}

\begin{lemma}[Bernstein's inequality]\label{th:bernstein}
	Let $W_i$, $1\leq i \leq n$ centered independent variables with $|W_i|\le \mtc{M}$ and $\sum_{i=1}^n\Var(W_i)\le V$, then for any $A>0$,
	\[ P\left[ \sum_{i=1}^n W_i >A \right] \le \exp\left\{-\frac12 A^2/(V+\mtc{M}A/3) \right\}.\]
	In particular, let $\cP\sim \operatorname{PBin}[\bm{p}]$, with $\bm{p}\in[0,1]^S$, $S\ge 1$, and set $\mu:=\sum_{j=1}^S p_j$. Then for any $0\le \delta\le 1$,
	\[ P[ \cP \ge \mu(1+\delta)] \le e^{-\mu\delta^2/3}. \]
\end{lemma}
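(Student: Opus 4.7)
This is the classical Bernstein inequality for bounded independent summands; I would give the standard Chernoff/moment-generating-function argument, then deduce the Poisson-binomial statement as a simple consequence.

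For the first part, write $W=\sum_{i=1}^n W_i$ and apply Markov's inequality to $e^{\lambda W}$ for $\lambda>0$ to be chosen later: $P[W>A]\le e^{-\lambda A}\,E[e^{\lambda W}]=e^{-\lambda A}\prod_i E[e^{\lambda W_i}]$ by independence. The key technical step is the moment-generating-function bound: since $E[W_i]=0$ and $|W_i|\le \mathcal{M}$, one has for $0<\lambda<3/\mathcal{M}$
\[
E[e^{\lambda W_i}]\le \exp\!\brackets[\Big]{\frac{\lambda^2 \Var(W_i)/2}{1-\lambda\mathcal{M}/3}}.
\]
This follows from the elementary inequality $e^x-1-x\le x^2/(2(1-|x|/3))$ for $|x|<3$ applied to $x=\lambda W_i$ (using $E[W_i]=0$ and Taylor's remainder, controlled by bounding $|W_i|^k\le \mathcal{M}^{k-2}W_i^2$ for $k\ge 2$). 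Multiplying over $i$, using $\sum_i\Var(W_i)\le V$, and optimising in $\lambda$ via the choice $\lambda=A/(V+\mathcal{M}A/3)$ yields
\[
P[W>A]\le \exp\!\brackets[\Big]{-\tfrac12 A^2/(V+\mathcal{M}A/3)}.
\]

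For the Poisson-binomial corollary, write $\mathcal{P}=\sum_{j=1}^S Z_j$ with $Z_j$ independent $\operatorname{Be}(p_j)$, set $W_j=Z_j-p_j$, which are centred with $|W_j|\le 1=:\mathcal{M}$ and total variance $\sum_j p_j(1-p_j)\le \sum_j p_j=\mu=:V$. Applying the first part with $A=\mu\delta$ gives
\[
P[\mathcal{P}\ge \mu(1+\delta)]\le \exp\!\brackets[\Big]{-\tfrac12\tfrac{(\mu\delta)^2}{\mu+\mu\delta/3}}=\exp\!\brackets[\Big]{-\tfrac{\mu\delta^2/2}{1+\delta/3}},
\]
and for $0\le \delta\le 1$ one has $1+\delta/3\le 4/3\le 3/2$, so $\tfrac{\delta^2/2}{1+\delta/3}\ge \delta^2/3$, yielding the claimed bound $e^{-\mu\delta^2/3}$.

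There is no real obstacle here; the only mildly delicate point is the MGF bound, which is routine. Since this is a textbook result I would likely just cite a standard reference (e.g. Boucheron--Lugosi--Massart) rather than write the full argument, noting that the form of the denominator $V+\mathcal{M}A/3$ is exactly what matches Bernstein's classical statement.
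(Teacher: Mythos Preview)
Your proposal is correct and matches the paper's proof essentially line for line: the paper simply cites the first part as the standard Bernstein inequality, and for the Poisson-binomial corollary centers the Bernoulli variables, takes $\mathcal{M}=1$, $V=\mu$, $A=\mu\delta$, and simplifies the exponent via $1+\delta/3\le 4/3$ exactly as you do. Your additional sketch of the MGF argument is fine but could indeed be replaced by a reference, as the paper does.
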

\begin{proof}
	The first part of the lemma is the standard Bernstein inequality. The second part follows, for $\xi_j$ independent Bernoulli variables with parameter $p_j$, by setting $W_j=\xi_j-E[\xi_j], A=\mu\delta, V=\sum_{j=1}^S p_j= \mu, \cM=1$ and noting, since $0\le \delta\le 1$,
	\[\exp(-\frac{\mu^2\delta^2}{2\mu+2\mu\delta/3})\le \exp(-3\mu\delta^2/8)\le \exp(-\mu\delta^2/3). \qedhere\] 
\end{proof}

\section{~~Properties of Subbotin distributions} \label{sec:applem}
\begin{lemma}\label{lem:sub}
	Let $\phi_\zeta(\cdot)$ be defined by \eqref{eqn:def:Subbotin-density} for some $\zeta>1$. Then $\ol{\Phi}_\zeta(x)=\int_x^{+\infty} \phi_\zeta(u) du$  has the following properties:
	\begin{itemize}
		\item[\textbullet] for any $x>0$, we have
		\begin{align}
			\ol{\Phi}_\zeta(x)&< \phi_\zeta(x) / x^{\zeta-1}\label{equ:phi1} \,;
		\end{align}
		\item[\textbullet] for any $t\in (0, 1/2)$ s.t. $\ol{\Phi}_\zeta^{-1}(t)\geq 1$, we have 
		\begin{align}
			\ol{\Phi}_\zeta^{-1}(t)&\leq 
			( \zeta\log (1/t)-\zeta \log L_\zeta)^{1/\zeta}
			\,\label{equ:phi2};
		\end{align}
	\end{itemize}
	The following holds:
	\begin{itemize}
		\item[\textbullet] for any $x>0$, 
		\begin{align}
			\ol{\Phi}_\zeta(x)&\geq \frac{\phi_\zeta(x)}{x^{\zeta-1}} \bigg[ 1+ (\zeta-1)x^{-\zeta} \bigg]^{-1} \label{equ:phi3}\,; \\
			\ol{\Phi}_\zeta(x) &\geq \frac{\phi_\zeta(x)}{x^{\zeta-1}} \zeta^{-1} \label{equ:phi4} \:\mbox{ if $x\geq 1$}\,; 
		\end{align}
		\item[\textbullet] for any $t\in (0, 1/2)$ s.t. $\ol{\Phi}_\zeta^{-1}(t)\geq 1$, we have 
		\begin{align}
			\ol{\Phi}_\zeta^{-1}(t)&\geq 
			\left( 0\vee \bigg\{ \zeta \log (1/t) -\zeta\log (\zeta L_\zeta )- (\zeta-1) \log\left(\zeta \log(1/t)-\zeta \log L_\zeta \right) \bigg\}\right)^{1/\zeta}
			\label{equ:phi5}.
		\end{align}
	\end{itemize}
\end{lemma}

\begin{proof}
	
	Let $\psi(u)=u^\zeta/\zeta+\log L_\zeta$, so that, restricting to $u>0$,
	\begin{equation*}
		\phi_\zeta(u)=e^{-\psi(u)},\quad 
		\psi'(u)=u^{\zeta-1},\quad
		\psi^{-1}(v)=(\zeta v-\zeta \log L_\zeta)^{1/\zeta},\quad 
		\psi''(u)=(\zeta-1)u^{\zeta-2},
	\end{equation*}
	and thus for instance $\psi'\circ\psi^{-1}(v)=(\zeta v-\zeta \log L_\zeta)^{1-1/\zeta}$, $ \psi''(u)/(\psi'(u))^2=(\zeta-1)u^{-\zeta}$.
	Inequality \eqref{equ:phi1} holds because $\ol{\Phi}_\zeta(x)=\int_x^{+\infty} e^{-\psi(u)} du < (\psi'(x))^{-1}\int_x^{+\infty} \psi'(u) e^{-\psi(u)} du = \phi_\zeta(x) / \psi'(x)$. Expression \eqref{equ:phi2} follows from \eqref{equ:phi1} applied with $x=\ol{\Phi}_\zeta^{-1}(t)\geq 1$. Indeed, the latter entails $L_\zeta^{-1} \exp\brackets{-\ol{\Phi}_\zeta^{-1}(t)^\zeta/\zeta}>t x^{\zeta-1}\geq t$, from which \eqref{equ:phi2} follows by applying the function $\log(\cdot)$ to both sides of the inequality.
	To prove \eqref{equ:phi3}, write for any $x>0$,
	\begin{align*}
		\frac{\psi''(x)}{\psi'(x)^2} \ol{\Phi}_\zeta(x)& \geq \int_x^{+\infty}  \frac{\psi''(u)}{\psi'(u)^2} e^{-\psi(u)} du = \bigg[ -\frac{e^{-\psi(u)}}{\psi'(u)} \bigg]_{x}^\infty- \ol{\Phi}_\zeta(x)= \frac{\phi_\zeta(x)}{\psi'(x)}- \ol{\Phi}_\zeta(x),
	\end{align*}
	by using an integration by parts. Expressions \eqref{equ:phi3} and \eqref{equ:phi4} follow.
	Finally, let us prove \eqref{equ:phi5}. From \eqref{equ:phi4} used with $x=\ol{\Phi}_\zeta^{-1}(t)$, we get $\zeta t (\ol{\Phi}_\zeta^{-1}(t))^{\zeta-1}\geq e^{-\psi(\ol{\Phi}_\zeta^{-1}(t))}$ and thus $-\log(\zeta t) - (\zeta-1)\log(\ol{\Phi}_\zeta^{-1}(t))\leq \psi(\ol{\Phi}_\zeta^{-1}(t))$. Hence, by \eqref{equ:phi3}, we obtain 
	\[\log L_\zeta \vee\left( -\log(\zeta t) - (1-1/\zeta)\log(\zeta\log (1/t)-\zeta \log L_\zeta )\right)\leq \psi(\ol{\Phi}_\zeta^{-1}(t))\]
	from which \eqref{equ:phi5} follows.
\end{proof}

\begin{lemma}\label{lem:dConvex}
	For any $a\geq 0$ and any $x\in \RR$,	\begin{align*} \phi_\zeta(x-a)/\phi_\zeta(x) &\geq \exp\brackets[\big]{a \abs{x-a}^{\zeta-1} \sign(x-a)}, \\ 
		\phi_\zeta(x+a)/\phi_\zeta(x) &\leq \exp\brackets[\big]{-a \abs{x}^{\zeta-1} \sign(x)}.
	\end{align*}
	
\end{lemma}
\begin{proof}
	Since $\zeta>1$, observe that $-\log \phi_\zeta(x)= \zeta^{-1} \abs{x}^\zeta +c$ is differentiable (even at $0$) with increasing derivative $\abs{x}^{\zeta-1}\sign(x)$. It follows, using $a\geq 0$, that
	$-\log \phi_\zeta(x)\geq -\log \phi_\zeta(x-a) + a \abs{x-a}^{\zeta-1} \sign(x-a)$, and hence the first claim. The second follows by substituting $x+a$ for $x$.
\end{proof}

\begin{lemma}\label{lem:compazeta}
Let $\zeta>1$. 	For any $a\geq 0$ and any $x\le a$,
	\[ \phi_\zeta(x)/\phi_\zeta(x-a) \ge \exp\brackets[\big]{-a^\zeta/\zeta}.\]
\end{lemma} 
\begin{proof}
The ratio in the statement is $\exp(-|x|^\zeta/\zeta+(a-x)^\zeta/\zeta)$ by definition for $x\le a$. The map $x\to (a-x)^\zeta-|x|^\zeta$ is decreasing on $(-\infty,a]$: indeed, its derivative for $x\neq 0$ is  $-\zeta(a-x)^{\zeta-1}-\zeta\sign(x)|x|^{\zeta-1}$ which is negative for $x\neq 0$, using that $u\to u^{\zeta-1}$ is increasing on $[0,+\infty)$ since $\zeta>1$.
\end{proof}

\section{~~In-probability classification risk bounds} \label{sec:classification-in-prob}
In this section we give several in-probability bounds for the classification risk, both for general signals and in the large signal case for classes $\Theta(r,\be)$ as in \eqref{classab}.  Upper bounds have been given already for the $\ell$-value and BH procedures, hence the focus here is on proving in-probability lower bounds. These results are interesting on their own, and in the general signal case they can also be used to provide an alternative proof of the lower bound part of the main Theorem \ref{th:minimax-multilevel} for general signals.


\subsection{~~Classification:  sharp adaptive minimaxity in probability}\label{sec:classprob}

Recall the definition of the classification  loss 
\begin{equation*}
 \lc(\te,\vphi) = \sum_{i=1}^n \left(\ind{\te_i=0}\ind{\vphi_i\neq 0}
+ \ind{\te_i\neq 0}\ind{\vphi_i = 0}\right). 
 \end{equation*}

\begin{theorem} \label{thm-adapt-clpr}
	 If $a_b=\sqrt{2\log{n/s_n}}+b$ for a fixed real $b$,  then for $\Theta_b=\Theta(a_b;s_n)$ and any $\eta>0$,
	 \[ {
\inf_{\vphi} \sup_{\te\in\Theta_b} P_{\te}\left[\lc(\te,\vphi)/s_n \ge  \overline{\Phi}(b)-\eta \right] =
1+o(1).} \]
If $b=b_n\to-\infty$, the last display holds with $\overline{\Phi}(b)$ replaced by 1. 

There exist procedures $\vphi$ achieving this bound, in that for any fixed real $b$ and any $\eta>0$
\[\sup_{\te\in\Theta_b} P_\te\left[\lc(\te,\vphi)/s_n \ge  \overline\Phi(b)+\eta
\right]
=o(1), \]
and the same holds with $\overline{\Phi}(b)$ replaced by 0 when $b=b_n\to+\infty$ or by 1 when $b=b_n\to-\infty$.
In particular this is true for $\vphi=\vphi^{\hat{\ell}}$ the empirical Bayes $\ell$-value procedure \eqref{ellvalp} or, under the polynomial sparsity assumption \eqref{polspa}, the BH procedure $\vphi^{BH}_\alpha$ \eqref{defBH} for $\alpha=\alpha_n = o(1)$ with $-\log \alpha = o((\log n)^{1/2})$.
\end{theorem}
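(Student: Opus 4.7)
The plan is to prove a Bayesian lower bound via a Bernstein-type concentration of the Hamming loss around the Bayes risk $s_n\overline\Phi(b)(1+o(1))$, and to derive the matching upper bound by combining concentration of the rejection threshold with FDR control for each procedure. Writing $\text{FP}(\vphi)=\sum_i\ind{\te_i=0}\vphi_i$ and $\text{TP}(\vphi)=\sum_i\ind{\te_i\neq 0}\vphi_i$, one has $\lc(\te,\vphi)=\text{FP}(\vphi)+s_n-\text{TP}(\vphi)$ on $\Theta_b$.

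For the lower bound, consider the prior $\pi$ on $\Theta_b$ that draws a support $S$ uniformly from $\binom{[n]}{s_n}$ and sets $\te_i=a_b$ for $i\in S$. Let $P^*$ denote the joint law of $(\te,X)$, $T_i=\ind{\te_i\neq 0}$ and $W_i:=P^*[T_i=1\mid X]$. By exchangeability and the sparse limit, the $W_i$ behave essentially as in the iid Bernoulli$(s_n/n)$ case; the Bayes-optimal rule $\vphi^B_i=\ind{W_i>1/2}$ is asymptotically equivalent to thresholding $X_i$ at $t^*=\sqrt{2\log(n/s_n)}(1+o(1))$, and $E^*[\min(W_1,1-W_1)]=(s_n/n)\overline\Phi(b)(1+o(1))$ by a direct calculation. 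Given any procedure $\vphi$, let $R(X)=\sum_i\vphi_i(X)$ and split cases. If $R\ge(1+\overline\Phi(b))s_n$, then deterministically $\lc\ge\text{FP}\ge R-s_n\ge\overline\Phi(b)s_n$. Otherwise, conditional on $X$, the $\ind{\vphi_i\neq T_i}$ are independent Bernoulli with means $p_i=W_i+\vphi_i(1-2W_i)\ge\min(W_i,1-W_i)$ and total conditional variance $\sum_i p_i(1-p_i)\le R+\sum_i W_i=O_{P^*}(s_n)$ (using $\sum_i W_i=s_n$). A conditional Bernstein inequality then yields $\lc\ge\sum_i\min(W_i,1-W_i)-\eta s_n/2$ with $P^*$-probability $\to 1$, while $\sum_i\min(W_i,1-W_i)$ concentrates at $s_n\overline\Phi(b)(1+o(1))$ as an (exchangeable, asymptotically iid) sum. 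The bound transfers to the minimax quantity via $\sup_{\te\in\Theta_b}P_\te[\cdot]\ge\int P_\te[\cdot]\,d\pi(\te)$; the $b_n\to-\infty$ regime is identical with $\overline\Phi(b)$ replaced by $1$.

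For the upper bound, take $\vphi\in\{\vphi^{BH}_{\alpha_n},\vphi^{\hat\ell}_t\}$; it suffices to show $\text{FP}\le\eta s_n/2$ and $\text{TP}\ge(\Phi(b)-\eta/2)s_n$, each with $P_\te$-probability $\to 1$ uniformly in $\te\in\Theta_b$. Both procedures have a data-dependent rejection threshold concentrating at $t^*=\sqrt{2\log(n/s_n)}(1+o(1))$, a fact implicit in the proof of Theorem~\ref{thm-adapt-r}. Conditioning on the event $\hat t\in[t^*(1-\delta),t^*(1+\delta)]$ (of probability $\to 1$), the $s_n$ signal-position rejections stochastically dominate a sum of $s_n$ iid Bernoulli$(\Phi(b)+o(1))$ variables, so Hoeffding's inequality delivers the $\text{TP}$ bound. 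For $\text{FP}$, combining sparsity preservation ($R\le B s_n$ with high probability, Appendix~\ref{sec:sparsitypreserving}) with the $\FDR$ control ($\FDR\le\alpha_n\to 0$ for BH; $=o(1)$ for $\vphi^{\hat\ell}$ by \cite{cr20}) and Markov's inequality gives $\FDP=o(1)$ w.h.p., so $\text{FP}=R\cdot\FDP=o(s_n)$. The regimes $b_n\to\pm\infty$ are handled analogously.

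The main obstacle is the conditional Bernstein step in the lower bound: \emph{a priori} $\Var(\lc\mid X)$ could be as large as $n$, which is too crude to force $\lc$ to remain at scale $s_n\overline\Phi(b)$. The case-split on $R$ is what restricts the analysis to the regime $\Var(\lc\mid X)=O(s_n)$, in which Bernstein produces $o(1)$ deviation probabilities at the correct scale. The Bayesian computation itself mirrors the proof of Theorem~\ref{thmgeneric}, and the upper bound largely recycles the concentration estimates already developed for Theorem~\ref{thm-adapt-r}.
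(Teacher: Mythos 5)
Your upper bound is essentially the paper's argument (concentration of the data-dependent threshold, binomial/Bernstein control of TP, and an expectation bound on FP plus Markov), and it would go through. The lower bound, however, has a genuine gap. You take the prior that samples the support $S$ uniformly from $\binom{[n]}{s_n}$ and then assert that, conditionally on $X$, the indicators $\ind{\vphi_i\neq T_i}$ are \emph{independent} Bernoulli($p_i$) with $p_i=W_i+\vphi_i(1-2W_i)$. That is false: under the uniform-support prior the $T_i$ satisfy $\sum_i T_i=s_n$ almost surely, so given $X$ they are (strongly, negatively) dependent, and the posterior is not a product. The conditional variance bound $\sum_i p_i(1-p_i)$ also does not follow automatically, since $\lc(\te,\vphi)=\sum_i f_i(T_i)$ mixes increasing ($f_i(t)=t$ on $\vphi_i=0$) and decreasing ($f_i(t)=1-t$ on $\vphi_i=1$) functions of the $T_i$; negative association therefore does not control the cross-covariances of opposite sign. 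Your claim that ``$\sum_i\min(W_i,1-W_i)$ concentrates at $s_n\overline{\Phi}(b)(1+o(1))$ as an exchangeable, asymptotically iid sum'' is likewise an assertion whose justification is exactly the nontrivial part of the lower bound.

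The paper avoids all of this by choosing a block-product prior: $[n]$ is split into $s_n$ blocks of size $\lfloor n/s_n\rfloor$, and exactly one signal is placed uniformly in each block. Then the posterior is a genuine product over blocks, the per-block losses are independent Bernoulli-like quantities, and the Poisson-binomial machinery of Theorem~\ref{thm:clarg} together with the identification $\overline{\cal F}_n(\bb,1)\to\La_\infty$ (Lemma~\ref{corasympg}) gives the concentration you want, with a clean non-asymptotic bound. If you insist on a ``uniform support'' flavor, you would instead need to use an iid Bernoulli$(s_n/n)$ membership prior (which \emph{does} give a product posterior) and then pay a truncation cost to land inside $\Theta_b$, or else supply a concentration inequality valid for the dependent $T_i$; as written, the independence step is the hole.
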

The upper bounds for the BH procedure and $\ell$-value procedure can be found in Sections~\ref{proofBHclassif}~and~\ref{sec:proof:thm-adapt-clpr} respectively. The lower bound of Theorem~\ref{thm-adapt-clpr} follows from Theorem~\ref{thm:clarg} (to follow), which gives a corresponding bound over the more general set $\Theta(\ba,s_n)$ of \eqref{parametersetmultiscale}. Specifically, by taking $\rho=1$, it says that for $0\leq \eps\leq 1$ and $n$ large enough that $s_n>4$, we have
	\[ \inf_\vphi \sup_{\theta\in\Theta(\ba,s_n)} P_{\te} \brackets[\Big]{\lc(\te,\vphi)\geq \min\brackets[\big]{1- \tfrac{1+\eps}{\sqrt{s_n}},(\Lambda_n -\eps )} s_n}\geq 1-\exp(-\sqrt{s_n}\eps^2/3).\]  Since $\Theta(\ba,s_n)=\Theta_b$ if $\ba=(a_n^*+b,\dots,a_n^*+b)$, and in the Gaussian model $\Lambda_n(\ba)=\overline{\Phi}(b)$ for this $\ba$, we deduce the claim. 

\subsection{~~Classification and large signals, in probability results} \label{sec:prlarge}

The following result can be proved using similar arguments as Theorem \ref{thm-adapt-clpr}, adapting these to the large signals regime. We state it in Gaussian noise for simplicity and omit the proof.
\begin{theorem} \label{thm:inprlarge}
Consider the same setting as Theorem \ref{thmRabinovic}, in the Gaussian noise case $\zeta=2$ and with $\be\in(0,1)$ and $r>\be$. 

If $n^{1-\be -\kappa}/\sqrt{\log{n}}$ is bounded away from zero, for $M_n$ going to $\infty$ arbitrarily slowly,
\[ \inf_{\vphi} \sup_{\te\in \Theta(r,\be)} 
P_{\te}\left(\lc(\te,\vphi)\ge  M_n  n^{1-\be -\kappa}/\sqrt{\log{n}}
\right) \to 1, \quad (n\to\infty).\] 
If on the other hand $n^{1-\be-\kappa}/\sqrt{\log{n}}$ is bounded from above, it holds
\[ \inf_{\vphi} \sup_{\te\in \Theta(r,\be)} 
P_{\te}\left(\lc(\te,\vphi)\neq 0
\right) \asymp  n^{1-\be-\kappa}/\sqrt{\log{n}}, \quad (n\to\infty).\]
\end{theorem}
Aside from proving an in-probability lower bound that matches the in-expectation bound from Theorem \ref{thmRabinovic} in the first part of the statement, we quantify the vanishing probability of the existence of at least one misclassified label in the `exact recovery' regime in the second part of the statement.

\subsection{~~Classification: lower bounds for weighted loss}


Before proving the main result of this section, Theorem \ref{thm:clarg}, let us show briefly how this result can be used to provide an alternative proof of Theorem~\ref{th:minimax-multilevel} (which is an `in-expectation' result).
Recall from \eqref{equLrho} the definition for $\rho>0$ of a weighted, non-symmetric loss function
\begin{equation} \label{def:wcl}
	L_\rho(\te,\vphi) = \sum_{i=1}^n \left\{\ind{\te_i=0}\ind{\vphi_i\neq 0}
	+ \rho \ind{\te_i\neq 0}\ind{\vphi_i = 0}\right\}. 
\end{equation}
Standard classification loss $\lc$ corresponds to $\rho=1$. 
	For $\eps>0$, we may apply Theorem~\ref{thm:clarg} with a sequence $\rho=\rho_n\to \infty$ to obtain
		\[ \inf_\vphi \sup_{\theta\in\Theta(\ba,s_n)} P_{\te\in \Theta(\ba,s_n)} \brackets[\Big]{L_\rho(\te,T)\geq \min\brackets[\Big]{1- \tfrac{1+\eps}{\sqrt{s_n}},\Lambda_n(\ba)-\eps}\rho s_n}\geq 1-\exp(-\sqrt{s_n}\eps^2/3).\]
		Then Lemma~\ref{lem:mtclr} applied with this $\rho_n$ and $\lambda=\min(1-(1+\eps)s_n^{-1/2},\Lambda_n(\ba) -\eps )$ further yields 
	\[ \inf_\vphi \sup_{\theta\in\Theta(\ba,s_n)} \frak{R}(\theta,\vphi) \geq \brackets[\Big]{\lambda \wedge \frac{\rho_n \lambda}{1+\rho_n\lambda}} (1-\exp(-\sqrt{s_n}\eps^2/3)).\]
	In view of the fact that $\lambda=\Lambda_n(\ba) - \eps $ for $n$ large enough,
	\[ \inf_\vphi \sup_{\theta\in\Theta(\ba,s_n)} \frak{R}(\theta,\vphi) \geq  \Lambda_n(\ba)-\eps +o(1).\]
	The left side does not depend on $\eps$, so taking the limit as this tends to zero yields the lower bound on the minimax risk obtained already as Theorem~\ref{th:minimax-multilevel} (first bullet).

\begin{lemma} \label{lem:mtclr}
	For any procedure $\vphi$, any $\te\in\ell_0[s]$ with $s\ge 1$, and any $\la<1$ and $\rho>0$,
	\[ \fr(\te,\vphi) \ge  \left(\la\wedge\frac{\rho\la}{1+\rho\la} \right) 
	P_{\te}(L_\rho(\te,\vphi)\ge \la\rho s). \]
\end{lemma}

\begin{proof}
	If $\lambda\leq 0$ the bound holds trivially. Otherwise, let us write $D_n(X)=\sum_{i=1}^n \vphi_i(X)$ for the total number of rejections, and denote by $s_\theta$ the number of nonzero coefficients of $\te$. Define  
	\[ Q(X)=Q(X,\vphi)=\sum_{i=1}^n \left\{ \ind{\te_i=0} \frac{\vphi_i(X)}{1\vee D_n(X)}
	+ \ind{\te_i\neq 0} \frac{1-\vphi_i(X)}{s} \right\},
	\]
	so that $\fr(\te,\vphi)\ge E_\te Q(X)$ , using $s_\theta\le s$. Let $\cA_n=\{D_n(X)\le (1+\delta)s\}$, for $\delta>0$. On the one hand, since $1\vee D_n(X)\le (1+\delta)s$ on $\cA_n$,
	\begin{align*}
		Q(X)\ind{\cA_n} & \ge \ind{\cA_n} \sum_{i=1}^n \left\{\ind{\te_i=0} \frac{\vphi_i(X)}{(1+\delta)s} + \frac{\rho}{\rho} \ind{\te_i\neq 0} \frac{1-\vphi_i(X)}{s} 
		\right\} \\
		& \ge \ind{\cA_n} \left(\frac1{1+\delta}\wedge \frac1\rho\right)\frac1{s} L_\rho(\te,\vphi).
	\end{align*}
	On the other hand, if $\cA_n^c$ denotes the complement of $\cA_n$, 
	\begin{align*}
		Q(X) \ind{\cA_n^c} & \ge   \ind{\cA_n^c} \sum_{i=1}^n 
		\ind{\te_i=0} \frac{\vphi_i(X)}{D_n(X)} \\
		& \ge \ind{\cA_n^c}  \frac{\sum_{i=1}^n \vphi_i(X) - 
			\sum_{i=1}^n \ind{\te_i\neq 0}\vphi_i(X)}{D_n(X)} \\
		&  \ge \ind{\cA_n^c}\frac{D_n(X) - s_\theta}{D_n(X)}
		\ge \ind{\cA_n^c}\frac{D_n(X) - s}{D_n(X)}\ge \frac\delta{1+\delta}\ind{\cA_n^c}.
	\end{align*}
	Combining the previous bounds and setting $\cC_n=\{L_\rho(\te,\vphi)\ge \la\rho  s\}$, we obtain
	\begin{align*} 
		Q(X) & \ge \left(\frac1{1+\delta}\wedge \frac1{\rho}\right)\frac1{s}L_\rho(\te,\vphi) \ind{\cA_n}
		+\frac{\delta}{1+\delta}\ind{\cA_n^c}\\
		& \ge \left(\frac1{1+\delta}\wedge \frac1{\rho}\right)\rho \la   \ind{\cA_n}\ind{\cC_n}
		+\frac{\delta}{1+\delta}\ind{\cA_n^c}\ind{\cC_n}\\
		& \ge 
		\left[ \left(\frac{\la\rho}{1+\delta}\wedge \la\right) \wedge \frac{\delta}{1+\delta} 
		\right] \ind{L_\rho(\te,\vphi)\ge  \la \rho s},
	\end{align*}
	where the second line uses that $1\ge  \ind{\cC_n}$ and uses the definition of $\cC_n$ to bound $L_\rho(\te,\vphi)$ from below, and the third line that $\ind{\cA_n} + \ind{\cA_n^c}=1$. 
	Setting $\delta=\la\rho>0$ and taking the expectation under $P_\te$ on both sides of the inequality leads to the result.
\end{proof}

Recall the definition \eqref{parametersetmultiscale} of $\Theta(\ba,s_n)$ as comprising vectors $\theta\in\ell_0[s_n]$ with non-zero components of absolute values at least $a_j$, and the definition \eqref{eqn:LambdaN} of $\Lambda_n(\ba)=s_n^{-1}\sum_{j=1}^{s_n} F_{a_j}(a_n^*)$.
\begin{theorem}[Lower bound for weighted classification losses] \label{thm:clarg}
	Grant Assumption~\ref{ass:generalnoise}\ref{ass:location} and fix $\ba\in \RR_+^{s_n}$. Then for any integer sequence $\rho=\rho_n$ satisfying \[1\leq \rho \leq (n/2s_n)\overline{F}_0(a_n^*-\delta_n)-1,\] any $0\le \eps \le 1$ and any $s_n>4$, we have
	\[ \inf_\vphi \sup_{\theta\in\Theta(\ba,s_n)} P_{\te} \brackets[\Big]{L_\rho(\te,\vphi)\geq \min\brackets[\big]{1- \tfrac{1+\eps}{\sqrt{s_n}},(\Lambda_n(\ba) -\eps )}\rho s_n}\geq 1-\exp(-\sqrt{s_n}\eps^2/3).\] 
	where the infimum is over all possible multiple testing procedures $\vphi=\vphi(X)$.
\end{theorem}

Note that the right side in the statement of Theorem~\ref{thm:clarg} only depends on $s_n, \veps$, so that one may optimise the left side with respect to $\rho$. In particular, in view of Assumption~\ref{ass:generalnoise}, we may choose some sequence $\rho\to \infty$, or take $\rho =1$. A version holds under Assumption~\ref{ass:generalnoise}\ref{ass:scale}; we omit the details.


Some ideas of the proof are inspired from \cite{sucandes16}, who derived an asymptotically sharp bound for the minimax in-probability risk in terms of the quadratic loss and Gaussian noise. Weighted classification loss is, like quadratic loss, a sum over coordinates, so one can split the global loss into blocks and define a least favourable prior in a similar way as for the former. There are two important differences:  firstly, our working with weighted classification losses leads to the study of different Bayes estimators, and secondly, we shall study the Bayes risk globally instead of reducing the problem to the study of Bayes risks over blocks.  The latter turns out to be necessary, as unlike for the quadratic loss, there is no ``concentration'' for the loss over a given block for (weighted or unweighted) classification losses.

\begin{proof}  
	The proof is similar to that of Theorem~\ref{th:minimax-multilevel}; we give a stand-alone proof here.
	Since $L_\rho\geq 0$,  there is nothing to prove if $\Lambda_n\leq \eps$, so assume that $\Lambda_n> \eps$. 
	Let $q= \floor{n/s_n}$ and let $n'=qs_n$.
	Let $\cP_\cL$ be the set of all prior distributions on $\Theta(\ba,s_n)$. For any $\lambda \in (0,1)$ we have
	\begin{equation} \label{eqn:ForCLarg} \inf_{\vphi} \sup_{\theta\in\Theta(\ba,s_n)}
		P_{\te}\left[L_\rho(\te,\vphi)\ge \la \rho s_n\right]  \ge \sup_{\pi\in\cP_\cL} \inf_{\vphi} 
		P_{\pi}\left[L_\rho(\te,\vphi)\ge \la \rho s_n\right],  \end{equation}
	where $P_\pi$ denotes the distribution of $(\te,X)$ in the Bayesian setting $\te\sim \pi$ and $X\given \te\sim P_\te$.  Let us define a specific prior $\pi$ as a product prior over $s_n$ blocks of consecutive coordinates $Q_1=\{1,2,\ldots, q\}, Q_2=\{q+1,\ldots,2q\},\ldots, Q_{s_n}=\{(s_n-1)q+1,\ldots,n'\}$. We write $Q_\infty$ for the (possibly empty) set $\{n'+1,\dots,n\}$. Let $\be[m,a]$ denote the vector of $\R^q$, with coordinates defined, for $1\le i, m \le q$ and $a>0 $, by 
	\[ (\be[m,a])_i=
	\begin{cases}
		&  a \quad\ \quad\:  \text{if } i=m;\\
		&  0  \quad\ \quad \:\:\;\text{if } i\neq m.
	\end{cases}
	\]
	In words, $\be[m,a] \in \R^q$ is the $1$-sparse vector with its only nonzero coordinate, at position $m$, equalling $a$. 
	Over each block $Q_j$, $1\leq j\leq s_n$, one takes the following prior:  
	first draw an integer $I_j$ from the  uniform distribution $\cU(Q_j)$ over the block $Q_j$ and next set $\be_{I_j}=a_j$  
	as above.   
	That is, $\pi_j$ generates $\be^j\in\RR^q$, to be identified with $(\be_i)_{i\in Q_j}$, according to	\begin{equation}\label{lfprior}
		I_j\sim \cU(Q_j),\qquad \be^j\given I_j \sim \delta_{\be[I_j\, \text{mod} \, q,a_j]},
	\end{equation} 
	with `mod $q$' meaning modulo the integer $q$. 
	If $Q_\infty$ is non-empty, set $\beta^\infty\sim \delta_0$. 
	By definition, $\pi$ belongs to $\cP_\cL$. 
	Using the fact that the classification loss is a sum of losses over all coordinates,  one may rewrite $L_{\rho}(\te,\vphi)=\sum_{j=1}^{s_n} L^j + L^\infty$, with $L^j$ the contribution of block $Q_j$. For $1\leq j\leq s_n$, we note that for $\theta$ sampled from $\pi$,
	\[ L^j =L^j_\rho(\te,\vphi)=\rho \ind{\vphi_{I_j}=0} +\sum_{i\in Q_j,\, i\neq I_j} \ind{\vphi_{i}\neq 0}.\]
	Then for any integer $\rho$ and $\la\in(0,1)$, noting that $L^\infty\geq 0$ we see that
	\begin{align*}
		P_{\pi}\left[L_\rho(\te,\vphi)\ge  \la \rho s_n\right] & \ge 
		P_{\pi}\left[\sum_{j=1}^{s_n} L^j \ge \la \rho s_n\right]  = 1 - P_{\pi}\left[\sum_{j=1}^{s_n} L^j < \la \rho s_n\right].
	\end{align*}
	For a multiple testing procedure $\vphi$, let $A^\vphi=\{i:\, \vphi_i(X)\neq 0\}$ denote its support and $A^\vphi_j=A^\vphi\cap Q_j$ its support within the $j$th block. 
	Let us consider the event $\cE$ defined as
	\[ \cE = \cE(\vphi)=\cE(\vphi,\rho,\la)
	=\left\{ L^1+\cdots + L^{s_n} < \la \rho s_n \right\}.\]
	To complete the proof, it is enough to bound $\sup_\vphi P_\pi[\cE]$ from above for a suitable $\lambda$.  Let us further define
	\[ N_\vphi(X,\te) = \sum_{j=1}^{s_n} \ind{I_j\in A_j^\vphi,\  \abs{A_j^\vphi}\le \rho}.\]
	
	One next notes that the following inequality holds:
	\begin{equation} \label{tr:ineqcl}
		L^1+\cdots + L^{s_n} \ge (s_n-N_\vphi(X,\te))\rho.
	\end{equation}
	To check this, it suffices to verify that for any index $j$ for which the indicator equals $0$ in the sum defining $N_\vphi(X,\te)$, the corresponding loss $L^j$ is at least $\rho$. The latter is true because if $I_j\notin A_j^\vphi$ we have  $\rho\ind{\vphi_{I_j}=0}=\rho$, and if $\abs{A_j^\vphi}\ge \rho+1$ then $\ind{\vphi_{i}\neq 0}$ equals $1$ at least $\rho$ times for $i\in Q_j, i\neq I_j$. 
	This leads to the desired inequality, which itself further implies that, for any integer $\rho\ge 1$,
	\begin{equation*}
		\cE \subset \{N_\vphi(X,\te)\ge  s_n - \la s_n\}. 
	\end{equation*}  
	We wish to bound $P_\pi[\cE]=E_\pi(P_\pi[\cE\given X])$ from above uniformly in $\vphi$, where $E_\pi$ denotes the (Bayesian) expectation under $P_\pi$, and it suffices to bound from above 
	\[ \sup_\vphi E_\pi P_\pi[\cE \given X] \le \sup_\vphi E_\pi
	P_\pi[N_\vphi(X,\te)\ge  (1 - \la) s_n \given X]. \]
	Conditional on $X$, the distribution of $N_\vphi(X,\theta)$ is Poisson-binomial, that is,  the law of a sum of  $s_n$ independent Bernoulli variables $Z_j$ with parameters $p^\vphi_j(X)$ given by \[  p^\vphi_j(X) = 
	\ind{\abs{A_j^\vphi}\le \rho} P_\pi[I_j\in A_j^\vphi \given X].\]	
	Let us now investigate the posterior distribution $P_\pi[\cdot \given X]$, restricted to coordinates $\theta_i,$ $i\in\braces{1,\dots,n'}=\cup_{j=1}^{s_n} Q_j$. By definition  the prior distribution is a product over the blocks $Q_j$. The model, that is the law of $X\given \theta$, is also of product form, so by Bayes' formula the posterior $\pi[\cdot\given X]$ is a product $\otimes_{j=1}^{s_n} \pi_j[\cdot\given X^j]$, where $X^j$ denotes the observations $(X_i : i\in Q_j)$ over the block $Q_j$. Writing $w_i^j(X)=P_\pi(I_j=i \mid X^j)$ for the posterior probability that $I_j=i$, we have
	\begin{align*} \pi_j[\cdot\given X^j]&=\sum_{i\in Q_j} w_i^j(X) \delta_{\be[i \text{ mod } q,a_j]}, 
		\\
		w_i^j(X)=& \frac{f_{a_j}(X_i)/f_0(X_i)}{\sum_{k\in Q_j} f_{a_j}(X_k)/f_0(X_k)}= \frac{h(X_i,a_j)}{\sum_{k\in Q_j} h(X_k,a_j)},\\ 
		h(x,a):=& f_a(x)/f_0(x).
	\end{align*} 
	By definition, $P_\pi[I_j\in A_j^{\vphi}\given X]=\sum_{i\in A_j^{\vphi}} h(X_i,a_j)/\sum_{k\in Q_j} h(X_k,a_j).$  Among (frequentist) estimators $\vphi(X)$ such that $\abs{A_j^{\vphi}}=\#\braces{i : \vphi_i(X)\neq 0} \leq \rho$, this expression is maximal for any estimator whose support $A_j^{\vphi}$ over the block $Q_j$ is equal to the set $A_j^X$  consisting of indices $i_1^j,\cdots, i_\rho^j$ corresponding to the $\rho$ largest observations among $(X_i : i\in Q_j)$; to see this, recall that Assumption~\eqref{ass:generalnoise} tells us that $h(X_k,a_j)$ is increasing in $X_k$. 
	

	Recalling $ p^\vphi_j(X) = 
	\ind{ \abs{A_j^\vphi}\le \rho} P_\pi[I_j\in A_j^\vphi \given X]$, for any procedure $\vphi$ and all $j$, we have
	\[ p^\vphi_j(X)  \le P_\pi[ I_j \in A_j^X\given X]=\pi_j[I_j\in A_j^X\given X^j]=:p_j^*(X^j). \]
	Write $\mathbf{p^\vphi}$ for the vector of $p^\vphi_j$'s, and similarly for $\mathbf{p^*}$. Lemma~\ref{lem:stodg} implies that, given $X$, $\operatorname{PBin}[\mathbf{p^*}]$ stochastically dominates $\operatorname{PBin}[\mathbf{p^\vphi}]$ (see the definition of the Poisson-Binomial distribution just before Lemma~\ref{lem:stodg}),  so that 
	\begin{align*}
		&  P_\pi[N_{\vphi}(X,\te)\ge (1-\la)s_n\given X]
		=  P_\pi[\operatorname{PBin}[\mathbf{p^\vphi}]\ge (1-\la)s_n\given X] \\
		& \qquad \ \le P_\pi[\operatorname{PBin}[\mathbf{p^*}]\ge (1-\la)s_n\given X],
	\end{align*}
	Let us note that $E_\pi p_j^*(X^j)=P_\pi[I_j\in A_j^X]$ and 
	\begin{equation*}
		P_\pi[I_j\in A_j^X] = P_\pi[X_{I_j} \sim f_{a_j} \text{ belongs to the $\rho$ largest among coordinates of }  X^j].
	\end{equation*} 
	Lemma~\ref{lem:ControlOfpn} (coupled with the fact that $I_j$ has a uniform distribution over $Q_j$ and is independent of $X$) tells us that the probability on the right is $p_j:= \overline{F}_{a_j}(a_n^*) + \eta_{j,n}$, 
	for some sequences $\eta_{j,n} = \eta_{j,n}(\ba)$ tending to zero as $n\to \infty$, uniformly in $j\leq s_n$ as $n \to \infty$. Write $\eta_n= s_n^{-1}\sum_{j\leq s_n} \eta_{j,n}.$
	
	This implies that the parameters $p_j^*(X^j)$ of the Poisson-binomial 
	$\operatorname{PBin}[\mathbf{p^*}]$ have expectations given by the vector $\bm{p}=(p_j)_{j\leq n}$ in the Bayesian model under $E_\pi$. Taking  the expectation under $E_\pi$ in the last but one display and using Lemma~\ref{lem:pbcondg} leads to
	\[ \sup_{\vphi} P_\pi[N_{\vphi}(X,\te)\ge (1-\la)s_n]
	\le P[ \operatorname{PBin}[\bm{p}]\ge (1-\la)s_n ].
	\] Note that $\operatorname{PBin}(\bm{p})$ has mean $s_n(1-\Lambda_n + \eta_n)$.
	
	If $1-\Lambda_n +\eta_n\geq s_n^{-1/2}$, define $\lambda$ by
	\[ 1-\la=(1+\eps) (1-\Lambda_n +\eta_n ),\] which, recalling that we have excluded already the case $\Lambda_n\leq \eps$, indeed has a solution $\lambda=\lambda_n(\eps,\ba)\in (0,1)$, at least for $n$ large enough that $\eta_n<\eps^2/2\leq \eps \Lambda_n /2$. We apply Bernstein's inequality for deviations of the Poisson Binomial distribution  from its mean  (Lemma~\ref{th:bernstein}) to obtain 
	\[ 
	P[ \operatorname{PBin}[\bm{p}]\ge (1-\la)s_n ]
	\le \exp(-s_n (1-\Lambda_n + \eta_n))\eps^2/3)	\le \exp(-s_n^{1/2} \eps^2/3).
	\]
	If instead $1- \Lambda_n +\eta_n< s_n^{-1/2}$, we set
	\[ 1- \lambda = (1+\eps)s_n^{-1/2}\]
	and Bernstein's inequality (Lemma~\ref{th:bernstein}), with the upper bound $s_n^{1/2}$ for the variance and expectation of $\operatorname{PBin}[\bm{p}]$, yields
	\[ P[\operatorname{PBin}[\bm{p}]\geq (1-\lambda)s_n]\leq \exp\brackets[\Big]{-\frac{\eps^2 s_n^{1/2}/2}{1+\eps/3}}\leq \exp(-\eps^2 s_n^{1/2}/3).\]
	Combining, and returning to \eqref{eqn:ForCLarg}, we deduce that
	\[ \inf_\vphi \sup_{\theta\in\Theta(\ba,s_n)} P_\theta \brackets[\Big]{L_\rho(\te,T)\geq (1- \lambda)\rho s_n}\geq 1-\exp(-\sqrt{s_n}\eps^2/3),\]
	for  $1-\lambda=\max\braces[\Big]{\tfrac{1+\eps}{\sqrt{s_n}},(1+\eps)\brackets[\big]{1-\Lambda_n + \eta_n}}$.
	Noting that $1- (1+\eps)(1-\Lambda_n +\eta_n)\geq \Lambda_n -\eps,$ at least for $n$ large, concludes the proof.
\end{proof}

\section{~~Further discussion and comparison of Assumption~\ref{ass:generalnoise} with  \cite{rabinovichpreprint}}\label{sec:rab}


The closest previous model to ours that we know of comes from \cite{rabinovichpreprint}, in which the authors consider data defined for some function $f$ by
\begin{align*} X_i =  \begin{cases} W_i & \theta_i = 0 \\
		f(W_i) & \text{otherwise},\end{cases} \\
	W = (W_i : i\leq n)\sim \mathbb{P}_n.
\end{align*}
This helped inspire our Assumption~\ref{ass:generalnoise} but there are some key differences:
\begin{enumerate}
	\item They do not assume $\mathbb{P}_n$ is of product form, i.e.\ the $X_i$ need not be independent.
	\item They have a single function $f$ controlling the non-null signals, in contrast to our setting where each non-null $X_i$ has its own $\theta_i$ (though they could in principle use the non-iid nature of the $W_i$ to partially accommodate this).
	\item In place of Assumption~\ref{ass:generalnoise}, they assume that $f$ is non-decreasing and $f(w)\geq w$ for all $w$ in the support of the variables $W_i$. They assume the distribution of each $W_i$ is non-atomic.
	\item Their analysis is for `top $K$' procedures only.
\end{enumerate}

	The key advantage of their assumptions is that they can accommodate non-independent data. For their results to be practical, they assume in Section 4.1.1 that some concentration inequality holds. The key reason we need independence is for concentration arguments, so it is possible that our results could be generalised to hold under some version of their concentration assumptions. 
	
	Similarly, while they do not assume anything of the form \eqref{eqn:modelassumption2} and \eqref{eqn:modelassumption3}, we need this only to ensure we get matching upper bounds to the lower bounds, which they do not target.
	
	Finally, the key advantage of our assumptions is that we do not need to restrict our attention to top-$K$ procedures: we are able to prove that procedures based on absolute value thresholding are asymptotically optimal under \eqref{eqn:monotonicity-location}--\eqref{eqn:monotonicity-scale}.
	In contrast, note that their assumption that $f$ is increasing and $f(w)\geq w$ does not justify that top-$K$ procedures will be optimal. Indeed, if $f(w)=\ceil{w}$, then (due to the non-atomicity assumption) the optimal procedure will not be a top-$K$ procedure but rather will be
	\[ (\vphi_i(X))_{i\leq n} = (\II\braces{ X_i\in \mathbb{Z}})_{i\leq n},\]
	and will have $\FDR$ and $\FNR$ both equal to zero. Note also that top-$K$ procedures are provably suboptimal for boundary data in our setting: see Remark~\ref{rem:topKsuboptimal} and Corollary \ref{cor-topsub}.

\end{document}